 \renewcommand{\nomgroup}[1]{%
 \ifthenelse{\equal{#1}{L}}{\item[\textbf{List of notations}]}{%
 \ifthenelse{\equal{#1}{G}}{\item[\textbf{General conventions}]}{}}}
\newcommand{\abs}[1]{\vert#1\vert}
\newcommand{\av}[1]{\langle#1\rangle}
\newcommand{\Abs}[1]{\Vert#1\Vert}
\newcommand{\jump}[1]{\llbracket#1\rrbracket}
\newcommand{\dsp}{\displaystyle}
\newcommand{\bU}{{\mathbf U}}
\newcommand{\ovV}{\overline{V}}
\newcommand{\ovu}{\overline{u}}
\newcommand{\uU}{\underline{U}}
\newcommand{\uV}{{\underline V}}
\newcommand{\uw}{{\underline w}}
\newcommand{\uP}{{\underline P}}
\newcommand{\R}{{\mathbb R}}
\newcommand{\N}{{\mathbb N}}
\newcommand{\dt}{\partial_t}
\newcommand{\dz}{\partial_z}
\newcommand{\dx}{\partial_x}
\newcommand{\dy}{\partial_y}
\newcommand{\cC}{{\mathcal C}}
\newcommand{\cI}{{\mathcal I}}
\newcommand{\curl}{\mbox{\textnormal{curl} }}
\newcommand{\dive}{\mbox{\textnormal{div} }}
\newcommand{\eps}{\varepsilon}
\newcommand{\bom}{\boldsymbol{\omega}}
\newcommand{\bug}{{\bf U}_G}
\newcommand{\dbug}{\dot{\bf U}_G}
\newtheorem{proposition}{Proposition}
\newtheorem{corollary}{Corollary}
\newtheorem{lemma}{Lemma}
\theoremstyle{remark}
\newtheorem{remark}{Remark}
\newtheorem{definition}{Definition}
\newtheorem{notation}{Notation}
\title{On the dynamics of floating structures}
\author{David Lannes}
\address{Institut de Math\'ematiques de Bordeaux\\ Universit\'e de Bordeaux et CNRS UMR 5251\\ 351 Cours de la Lib\'eration \\33405 Talence Cedex, France}
\thanks{The author has been partially funded by the ANR-13-BS01-0003-01 DYFICOLTI and  the ANR-
13-BS01-0009-01 BOND }
\begin{document}

\maketitle

\begin{abstract}
This paper addresses the floating body problem which consists in studying the interaction of surface water waves with a floating body.  We propose a new formulation of the water waves problem that can easily be generalized in order to take into account the presence of a floating body. The resulting equations have a compressible-incompressible structure in which the interior pressure exerted by the fluid on the floating body is a Lagrange multiplier that can be determined through the resolution of a $d$-dimensional elliptic equation, where $d$ is the horizontal dimension.
In the case where the object is freely floating, we decompose the hydrodynamic force and torque exerted by the fluid on the solid in order to exhibit an added mass effect; in the one dimensional case $d=1$, the computations can be carried out explicitly.\\
We also show that this approach in which the interior pressure appears as a Lagrange multiplier can be implemented on reduced asymptotic models such as the nonlinear shallow water equations and the Boussinesq equations; we also show that it can  be transposed to the discrete version of these reduced models  and propose simple numerical schemes in the one dimensional case. We finally present several numerical computations based on these numerical schemes; in order to validate these computations we exhibit explicit solutions in some particular configurations such as the return to equilibrium problem in which an object is dropped from a non-equilibrium position in a fluid which is initially at rest.

\end{abstract}

\section{Introduction}

\subsection{General setting}

Krylov published in 1898 a method to compute the hydrodynamic loads for ship motions in waves, assuming that the presence of the ship did not perturb the waves, but the {\it floating body problem} was probably formulated by Fritz John in two celebrated papers \cite{John1,John2}. It consists in studying the motion of the mechanical system formed by a fluid  and a partially immersed solid ${\mathcal C}(t)$. The fluid is delimited above  by a free surface, and is assumed to be incompressible and in irrotational motion, while the solid ${\mathcal C}(t)$ can have a prescribed motion or can be freely floating. In the latter case, the motion of the solid is governed by Newton's laws in which the gravity force (and possibly other external forces) is complemented by the force and torque exerted by the liquid on the solid.

This is a complex problem in which two free boundary problems are involved. The first one is the standard water waves problem consisting in describing the evolution of the surface of the fluid when it is in contact with the air. The second free boundary problem comes from the fact that the {\it wetted surface} $\partial_{\rm w}{\mathcal C}(t)$, i.e. the portion of the boundary of the solid in contact with the fluid, depends on time. For these reasons, Fritz John considered a much simplified problem. Expressing the velocity $\bU$ in the fluid domain $\Omega$ in terms of a velocity potential $\Phi$,
$$
\bU=\nabla_{X,z}\Phi \quad \mbox{ and }\quad \Delta_{X,z}\Phi=0\quad  \mbox{ in } \quad\Omega, \quad \mbox{ and }\quad \partial_n \Phi=0\quad \mbox{ at the bottom,}
$$
he made the following assumptions
\begin{itemize}
\item A linear model for the evolution of the free surface waves is considered in the {\it exterior domain} (i.e. where the surface of the fluid is not in contact with the solid), namely
$$
\begin{cases}
\dt \zeta- (\dz \Phi)_{\vert_{z=0}}=0,\\
\dt \Phi_{\vert_{z=0}}+g\zeta=0,
\end{cases}
$$
where $g$ is the gravity and $\zeta$ the parametrization of the free surface above the rest state $z=0$.
\item The motion of the solid is assumed to be of small amplitude.
\item The variations of the wetted surface $\partial_{\rm w}{\mathcal C}(t)$ with time are neglected.
\end{itemize}
On the {\it interior domain} (i.e. under the structure),  the continuity of the normal velocity across $\partial_{\rm w}{\mathcal C}$, yields the additional condition
$$
\partial_n \Phi=\underline{U}_{{\rm w}}\cdot n
$$
where $\underline{U}_{{\rm w}}$  is the velocity of the solid on the wetted surface and $n$ the upward unit normal vector; when the solid is in forced motion, this is a known function of time, and when the solid is freely floating it must be deduced from the Newton's laws that govern the motion of the solid. In the latter case, it is necessary to know the pressure  exerted by the fluid on the bottom of the boat (called the {\it interior pressure} $\uP_{\rm i}$); this is done in \cite{John1} using the linearized Bernoulli equation,
$$
-\frac{\uP_{\rm i}-P_{\rm atm}}{\rho}=(\dt \Phi)_{\vert_{z=\zeta_{\rm w}}}+g\zeta_{\rm w},
$$
where $P_{\rm atm}$ is the atmospheric pressure, and  $\zeta_{\rm w}$ the parametrization of the bottom of the solid.\\
Finally, some transition conditions are needed at the {\it contact line} that separates the interior and exterior domains.  In \cite{John1}, these conditions are not stated clearly and not completely correct; as we shall see, this is mainly because the velocity potential $\Phi$ is not the appropriate quantity to express such transition conditions. In \cite{John2}, it is further assumed that the motion in time is harmonic at some given frequency, so that the full problem reduces to a spectral problem, in which the main difficulty becomes the analysis and/or numerical computation of the associated Green functions.

Fritz John's approach of the floating body problem, though oversimplified in many aspects (it misses in particular the nonlinear effects, the evolution of the wetted surface, etc.), is still used and studied a lot, both theoretically and numerically. It has been slightly generalized to include second order effects \cite{Ogilvie} (though still neglecting the time variations of the wetted surface) and is still the principal method used in the extensive  literature devoted to floating structures such as wave power devices for instance  \cite{MMc,LN}; it is also the basis of softwares like WAMIT, widely used to compute the motion of offshore structures in waves.

More recently, the nonlinear effects in the floating body problem have been taken into account in various numerical studies, mostly based on boundary element methods for the resolution of the potential equation (see for instance  the review \cite{Folley}). The nonlinear aspect of the underlying hydrodynamics is taken into account by a nonlinear boundary element method (see \cite{Grilli_etc,HagueSwan} for instance), and the hydrodynamic forces on the wetted surface can be computed at each time (see for instance \cite{Kashiwagi}), which allows the description of the  evolution of the contact line. These methods require the resolution of boundary integral equations and have a big computational cost. This is also the case of the CFD approach based on the numerical resolution of the full Navier-Stokes equations (see \cite{ParoliniQuarteroni} and references therein). 

All these methods have in common that they require the resolution of a $(d+1)$-dimensional elliptic problem in the fluid domain ($d$ is the horizontal dimension), or a boundary integral equation, in order to compute the interior pressure $\uP_{\rm i}$ through Bernoulli's equation as explained above; moreover, the presence of the time derivative of the velocity potential in this expression yields considerable stability issues in the numerical simulations \cite{Kashiwagi}. 

In this paper, we propose a different approach than the one initiated by F. John and in particular, we no longer seek to recover the interior pressure $\uP_{\rm i}$ through Bernoulli's equation. More precisely, we propose a new formulation of the full (nonlinear) floating body problem in which
\begin{itemize}
\item The transition conditions at the contact line can be expressed in a simple way and the evolution law for the contact line can be derived.
\item The problem is stated as a $d$-dimensional compressible-incompressible model in which the interior pressure $\uP_{\rm i}$ is found as the Lagrange multiplier associated to the constraint that the surface of the fluid coincides with the boundary of the solid under the floating body (i.e. $\zeta=\zeta_{\rm w}$).
\end{itemize}
The interest of this formulation, itself based on a new formulation of the standard water waves equations in terms of $(\zeta,Q)$, where $Q$ is the horizontal discharge, is that the dimensionality of the elliptic equation one has to solve to find $\uP_{\rm i}$ is reduced: it is now a simple $d$-dimensional elliptic equation (as opposed to the $d+1$ elliptic equation on the potential one has to solve in the approach described above). 
Replacing $\uP_{\rm i}$ by the solution of this elliptic equation, one can moreover eliminate the constraint $\zeta=\zeta_{\rm w}$ in the interior region, exactly in the same way as the incompressible Euler equations can be transformed into an unconstrained quasilinear evolution equation on the velocity.
Note also that the compressible-incompressible structure mentioned above is typical of congested flows that appear in several contexts such as two-phase flows \cite{BBCR,BPZ,PZ}, traffic jams \cite{BDMR},  formation of crowds \cite{DH}, granular flows \cite{LLM,Perrin}, compressible-low Mach coupling in gaz dynamics \cite{PDD}, etc. 

\medbreak

We also want in this paper  to take advantage, with this new formulation of the floating body problem, of the progresses that have been made in recent years in the mathematical study of the motion of a rigid body ${\mathcal C}(t)$ totally immersed in an incompressible perfect fluid confined to a domain $\Omega$. This is also a problem that has attracted a lot of attention, starting with the works of d'Alembert, Kelvin and Kirchoff. The equations governing the motion are provided by the Euler equations for the dynamics of the fluid in the region $\Omega\backslash {\mathcal C}(t)$ outside the solid, often (but not necessarily) complemented with an irrotationality assumption. The existence and uniqueness of classical solutions to this problem has been proved in \cite{ORT,RR,HMT}; in \cite{GST} the authors used the added mass effect to prove that the regularity of the motion of the solid is limited only by the regularity of the boundary of the solid. Roughly speaking, the added mass effect consists in the fact that some components of the hydrodynamics force and torque applied on the solid act as if the mass-inertia matrix in Newton's law were modified by the addition of a positive matrix. This is because a rigid body has to accelerate not only itself but also the fluid around it. Exploiting this effect is necessary for a sharp mathematical analysis of the equations \cite{GST,GMS} and plays also a crucial role for the stability of numerical simulations in many fluid-structure interaction problems \cite{CausinGerbeauNobile}. This added-mass effect can be quite complex however, since it   depends strongly on the location of the solid with respect to the boundaries of the fluid domain \cite{GMS}; in the case a floating body considered here, the analysis is complicated by the fact that the boundary of the fluid domain is a free surface, which moreover intersects the surface of the body. The second goal of the paper is therefore to
\begin{itemize}
\item Exhibit the added-mass effect in our compressible-incompressible formulation of the floating body problem
\item Take advantage of the simplicity of the elliptic equation on the interior pressure $\uP_{\rm i}$ to get a simple expression of the mass-inertia matrix (which becomes explicit in $1+1$ dimension).
\end{itemize}

Of course, the resulting formulation of the floating body problem remains quite complex. From the mathematical viewpoint, proving a local well-posedness result is a very challenging issue since, not speaking of the coupling with the solid motion, it requires several results on the water waves equations that are important open problems. For instance, the regularity of the surface on the whole domain is not expected to be better than Lipschitz because there is an angle/wedge at the contact line\footnote{At the day, the best result in terms of low regularity for the surface elevation in the water waves equations is $H^{3/2+d/2-\eps}(\R^d)$, for some $\eps>0$ explicit \cite{ABZ}.}, there is no result on the mixed initial-boundary value problem for the water waves equations, etc. The numerical simulation of the full water waves equations is also quite demanding. For these reasons, and with the goal of being able to study numerically real wave-structure interactions, and in particular nonlinear effects (efforts on offshore platforms in extreme events, wave energy converters, etc.), one is led to derive simplified asymptotic models. We shall consider here the case of shallow water configurations for which the asymptotics of the water-waves equations (without floating body) is now well understood \cite{AL1,Iguchi,L_book}. There are however only a few references that extend the resulting asymptotic models in the presence of a floating body. In \cite{KKK} the authors used a Boussinesq model to describe the flow under the free surface, while solving the potential equation for $\Phi$ under the floating body (from which the interior pressure $\uP_{\rm i}$ is recovered along the lines described above). Closer to our approach, \cite{Jiang} and \cite{Mario} propose a system of two Boussinesq systems (one under the structure, and the other one under the free surface), and the interior pressure is numerically solved so that these two sets of equations are compatible; the formulation used in these references does not however allow to write a simple explicit elliptic equation on the interior pressure as in the approach we propose here. The third goal of this paper is therefore
\begin{itemize}
\item To use the strategy explained above (in the case of the full water waves equations) in order to allow for the presence of a floating structure in various shallow water models --- we consider here the nonlinear shallow water equations and a Boussinesq system. More precisely, we show that in the presence of a floating body, these models can be written under the form of a compressible-incompressible system. To every model corresponds a particular Lagrange multiplier and therefore a particular  interior pressure $\uP_{\rm i}$.
 \item To generalize this approach to numerical schemes; we show in particular how to find a discretization of the interior pressure in such a way that it plays the role of a discrete Lagrange multiplier. 
 \item Show the efficiency of this method with some numerical computations for the one dimensional nonlinear shallow water and Boussinesq models.
\end{itemize}
In these numerical computations, the fact that the interior pressure  is the discrete Lagrange multiplier associated to the constraint that the surface of the fluid under the floating structure coincides with the boundary of this latter allows us  to solve the equations in the full computational domain (without having to handle the coupling between the interior and exterior regions); the surface elevation computed in this way coincides at machine precision with the bottom of the solid in the wetted region. Under the assumptions described above, the floating body problem can therefore be solved numerically very efficiently.

\subsection{Organization of the paper}

We first describe in Section \ref{sectWWfloat} how the waves are affected by the presence of a floating structure, without considering the motion of the solid itself. The formulation of the equations is first given in \S \ref{sectFSEF}; it follows from this formulation that the horizontal discharge (or the vertically averaged vertical velocity $\ovV$) is a natural quantity to express the transition conditions at the contact line. We therefore seek in \S \ref{sectFSEzV} a formulation of the water waves equations in terms of this variable (and of the surface elevation). After proving that such a formulation exists and is closed (i.e. that all the physical quantities involved can be reconstructed in terms of the horizontal discharge and of the surface elevation), we generalize this formulation in \S \ref{sectFSEzVfloat}  in the presence of a floating structure. This formulation has a compressible-incompressible structure: it is compressible in the exterior region, and incompressible under the floating structure. The interior pressure $\uP_{\rm i}$ naturally appears as the Lagrange multiplier associated to the "incompressibility" condition $\zeta=\zeta_{\rm w}$, and it can be found by solving a simple $d$-dimensional elliptic equation.\\
In Section \ref{sectsolid}, the motion of the floating structure is considered. We first consider in \S \ref{sectpresc} the case of a solid with a forced motion, while the case of a freely floating solid is studied in \S \ref{sectfloat}. In the latter case, the motion of the solid is found through Newton's laws where the force corresponding to the interior pressure $\uP_{\rm i}$ is the buoyancy force; this force is decomposed into several components, one of which corresponding to an added mass effect. Let us mention that in both cases (forced motion and freely floating body), specific attention is paid to the one-dimensional case: the elliptic equation for the interior pressure $\uP_{\rm i}$ is then one-dimensional and can be solved explicitly.\\
The evolution of the contact line is then studied in Section \ref{sectcontact}, in the one dimensional case in \S \ref{sectcontact1d}, and in the two-dimensional case in \S \ref{sectcontact2d}. We also explain in \S \ref{sectvertical} the modifications one has to carry out when the boundaries of the floating structure are vertical at the contact line.\\
In Section \ref{sectAsfloat}, we replace the water waves equations for the free surface by simpler asymptotic models. The case of the nonlinear shallow water equations is considered in \S \ref{sectSW}, while the Boussinesq equations are treated in \S \ref{sectBouss}.\\
We then show in Section \ref{sectdiscrete} how to implement our approach at the level of the numerical scheme. To this end, we consider a simple one dimensional configuration in which the solid is only allowed to move vertically and has vertical lateral walls (the contact points are then independent of time). We show how to discretize the interior pressure in such a way that it plays the role of a discrete Lagrange multiplier for the numerical scheme. The equations are presented in \S \ref{sect_modSW} when the hydrodynamic model is the nonlinear shallow water equations. Particular attention is paid to the ordinary differential equation resulting from Newton's law. We are in particular able to find a simple nonlinear second order ODE governing the motion of the solid in the return to equilibrium problem (the solid is dropped from an out of equilibrium position in a fluid initially at rest).
The numerical scheme is then presented and studied in \S \ref{sectnumSW}, and this approach is extended in \S \ref{sect_modBouss} when the underlying hydrodynamic model is the Boussinesq system. \\
The numerical computations based on these schemes are then presented in Section \ref{sectcomput}. For the nonlinear shallow water equations, several configurations are considered in \S \ref{sectnumsimSW}: a fixed solid, a solid in prescribed motion, and a freely floating solid. In the last two cases we can derive formulas for explicit solutions for some configurations and use them to validate our numerical simulations. Numerical simulations when the hydrodynamical model is the Boussinesq system are then presented in \S \ref{sectnumBouss}.

Finally, several results are postponed to Appendices. In Appendix \ref{Appinteriorpressure} we derive an alternative equation for the interior pressure, while the equations of motion for the solid structure are given in Appendix \ref{appbodyframe} in a body frame instead of the Eulerian frame.

\subsection{Notations}

We just introduce here some basic notations; {\it a  full table of notations is provided at the end of the paper.}\\
- We denote by $d=1,2$ the horizontal dimension.\\
- The gradient operator with respect to the horizontal variables $X\in \R^d$ is denoted by $\nabla$; the full $(d+1)$-dimensional gradient operator is denoted $\nabla_{X,z}$, where $z$ is the vertical variable.\\
- If ${\bf A}\in \R^{d+1}$, we denote by ${\bf A}_{\rm h}\in \R^d$ its horizontal components, and by ${A}_{\rm v}$ its vertical (last) component.\\
- We denote by ${\bf e}_z$ the unit upward vertical vector, and by ${\bf e}_x$ and ${\bf e}_y$ the unit vectors in the horizontal directions $x$ and $y$.\\
- When $d=2$, we write $X=(x,y)$ and sometimes use the notation $\partial_1=\dx$, $\partial_2=\dy$.\\
- We denote with single vertical bars $\vert \cdot \vert$ norms over the horizontal plane $\R^2$, and with a double bar $\Vert \cdot \Vert$ norms over the fluid domain $\Omega$. For instance,
$$
\abs{f}_2=\Big(\int_{\R^d}\abs{f}^2\Big)^{1/2}\quad\mbox{ and }\quad
\Abs{F}_2=\Big(\int_{\R^d}\Abs{f}^2\Big)^{1/2}.
$$

\section{Water waves and floating structures}\label{sectWWfloat}

\subsection{The free surface Euler equations with a floating structure}\label{sectFSEF}

Let us consider here the dynamics of the waves in the presence of a
partially immersed device (typically a ship or a floating wave energy converter). Denoting by $\cC(t)$ the
volume occupied by the (solid) device at time $t$, we write $\partial\cC(t)$ its
boundary and $\partial_{\rm w}\cC(t)$ the {\it wetted surface}, that is, the
portion of $\partial\cC(t)$ in contact with the water, and by ${\mathcal
  I}(t)\subset \R^d$ ($d$ being the horizontal dimension) its projection on the horizontal plane, which we shall
refer to as the {\it interior domain}. The {\it exterior domain} ${\mathcal E}(t)$ is then naturally defined as
$$
{\mathcal E}(t)=\R^d\backslash \overline{{\mathcal I}(t)}.
$$
We consider in
this paper the  case where overhanging waves do not occur and where the wetted surface can be
parametrized by a graph of some function $\zeta_{\rm w}(t,X)$, for all
$X\in {\mathcal I}(t)$. The surface of the water is therefore
determined by the graph of a function $X\in \R^d\mapsto \zeta(t,X)$ satisfying the constraint  $\zeta(t,X)=\zeta_{\rm w}(t,X)$ on  ${\mathcal I}(t)$. Denoting by $h_0$ the typical depth at rest and by $-h_0+b(X)$ a parametrization of the bottom, the domain $\Omega(t)$ occupied by the
fluid at time $t$ is therefore given by
$$
\Omega(t)=\{(X,z)\in \R^{d+1}, -h_0+b(x)<z<\zeta(t,X)\}.
$$
\begin{figure}
\includegraphics[width=\textwidth]{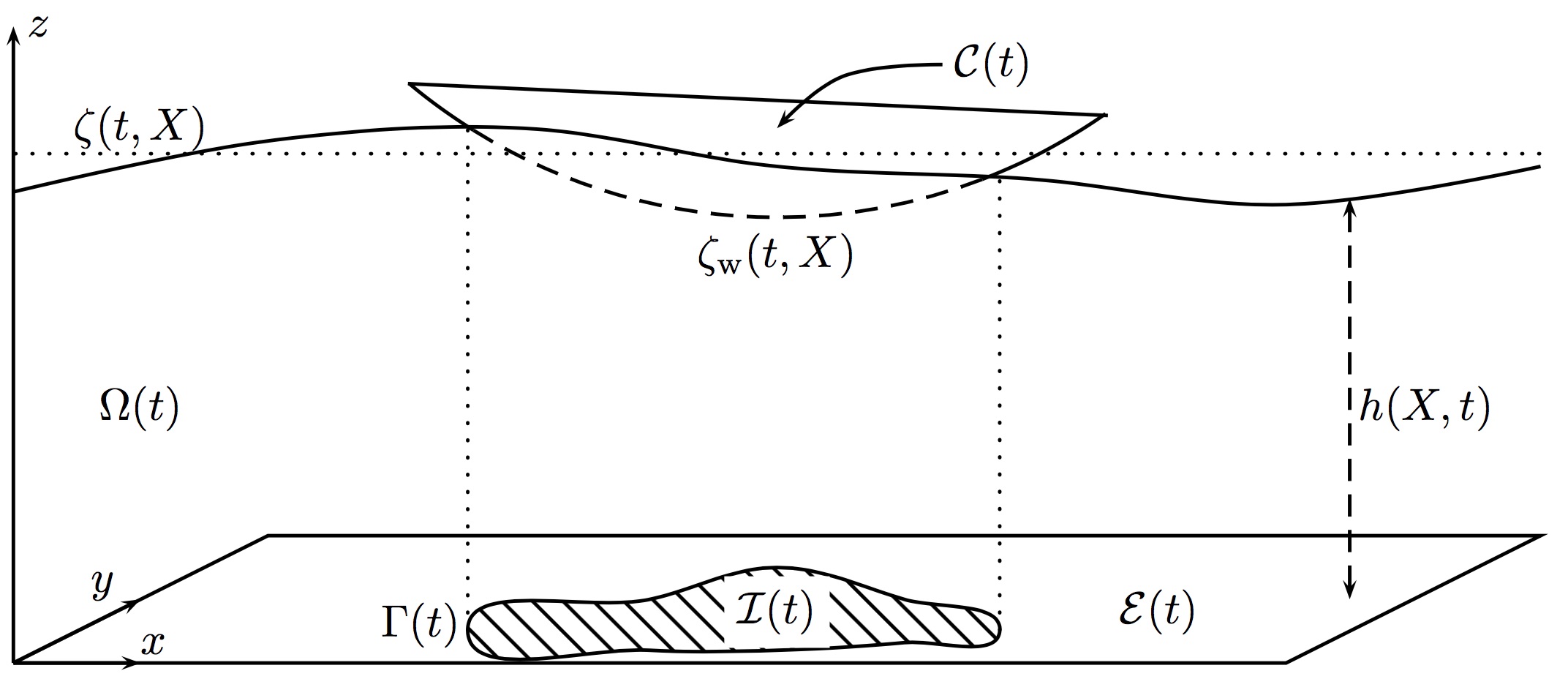}
\end{figure}
\begin{notation}\label{notaintext}
For any function $f$ defined on $\R^d$, we denote with a subscript ${\rm i}$ its restriction to the interior domain ${\mathcal I}(t)$ and with a subscript ${\rm e}$ its restriction to the exterior domain ${\mathcal E}(t)$,
$$
f_{\rm i}=f_{\vert_{{\mathcal I}(t)}}\quad\mbox{ and }\quad f_{\rm e}=f_{\vert_{{\mathcal E}(t)}}.
$$
\end{notation}

We assume that the flow is incompressible, irrotational, of constant density $\rho$, and inviscid. We can then formulate the equations as  a set of equations in $\Omega(t)$, complemented with boundary conditions and a constraint associated to the presence of the immersed structure:
\begin{itemize}
\item {\it Equations in the fluid domain $\Omega(t)$}. Denoting by ${\bf U}$ and $P$ the velocity and pressure fields, the equations  are given by
\begin{eqnarray}
\label{Eul1}
\partial_t {\bf U}+{\bf U}\cdot \nabla_{X,z}{\bf U}&=&-\frac{1}{\rho}\nabla_{X,z}P-g{\bf e}_z\\
\label{Eul2}
\dive \bU&=&0,\\
\label{Eul3}
\curl \bU&=&0,
\end{eqnarray}
where $g$ is the acceleration of gravity and $\rho$ the constant density of the water. 
\item {\it Boundary conditions at the surface}. The surface being bounding (i.e. no fluid particle crosses it), one gets the traditional kinematic equation
\begin{eqnarray}
\label{Eul5}
\dt \zeta-\uU\cdot N&=&0\quad \quad \mbox{ with }\quad N=\left(\begin{array}{c} -\nabla\zeta \\1 \end{array}\right) ,
\end{eqnarray}
where we denoted $\uU(t,X)=\bU(t,X,\zeta(t,X))$ the trace of the the velocity field $\bU$ at the free surface.\\
 The assumption that the pressure is given by the constant atmospheric pressure $P_{\rm atm}$ on the exterior domain (i.e. on the portion of the surface of the fluid that is not in contact with the immersed structure), gives if we write $\underline{P}=P_{\vert_{z=\zeta}}$ and with Notation \ref{notaintext},
\begin{equation}
\label{Eul4}
\underline{P}_{\rm e}=P_{\rm atm}.
\end{equation}
\item {\it Boundary condition at the bottom}.
Assuming that the bottom is impermeable, we get another boundary condition at the bottom
\begin{equation}\label{Eul6}
U_b\cdot N_b=0  \quad \mbox{ with }\quad N_b=\left(\begin{array}{c} -\nabla b \\1 \end{array}\right) ,
\end{equation}
where we denoted by $U_b$ the trace of $\bU$ at the bottom.
\item {\it Constraint in the interior domain}. By definition, the surface of the fluid coincides with the bottom of the solid structure in the interior domain; according to Notation \ref{notaintext}, this yields
\begin{equation}\label{constraint}
 \zeta_{\rm i}=\zeta_{\rm w}.
\end{equation}
\end{itemize}
It is important to insist on the fact that the {\it interior pressure} $\underline{P}_{\rm i}$ 
is not known and must be determined from the above equations. Similarly, the interior and exterior domains ${\mathcal I}(t)$ and ${\mathcal E}(t)$ are also unknowns of the problem that we must determine. To this end, we need another set of boundary, or {\it transition}, conditions at the contact line (defined as the part of the bottom of the boat which is in contact both with air and water). Let us first give some notations.
\begin{notation}
We denote by $\Gamma(t):=\partial {\mathcal I}(t)=\partial {\mathcal E}(t)$ the projection of the contact line on the horizontal plane 
\end{notation}
It is implicitly assumed that $\zeta_{\rm i}$, $\underline{P}_{\rm i}$, etc. (resp.  $\zeta_{\rm e}$, $\uP_{\rm e}$, etc.) are smooth in ${\mathcal I}(t)$ (resp. ${\mathcal E(t)}$) and that they can be extended by continuity on the closure of these domains; however, they are certainly not smooth on the whole horizontal plane $\R^d$. We only have the following boundary conditions at the contact line:
\begin{itemize}
\item {\it Continuity of the surface elevation}. There is no jump of the surface elevation at the contact line,
\begin{equation}\label{contact3}
\zeta_{\rm e}(t,\cdot)= \zeta_{\rm i}(t,\cdot) \quad\mbox{ on }\quad \Gamma(t).
\end{equation}
\item {\it Continuity of the pressure at the contact line}. We assume that
\begin{equation}\label{contact1}
\uP_{\rm i}(t,\cdot)=P_{\rm atm} \quad\mbox{ on }\quad \Gamma(t).
\end{equation}
\end{itemize}

The equations \eqref{Eul1}-\eqref{Eul6} together with the constraint \eqref{constraint} and the boundary conditions at the contact line \eqref{contact3}-\eqref{contact1} form the {\it free surface Euler equation in the presence of a floating body ${\mathcal C}(t)$}, that we can also see as   {\it constrained free surface Euler equations}.
\begin{remark}
The transition conditions \eqref{contact3}-\eqref{contact1} obviously exclude the configuration where the boundaries of the solid are vertical at the contact line. We show in \S \ref{sectvertical} how to handle such configurations and generalize \eqref{contact3}-\eqref{contact1}.
\end{remark}
Of course, further information is needed on the motion of the solid: it can be fixed, in prescribed motion, or freely floating for instance. These situations will be addressed in Section \ref{sectsolid}.

\subsection{A formulation of the classical water waves equation in $(\zeta,Q)$ variables}\label{sectFSEzV}

Taking ${\mathcal I}(t)=\emptyset$ and ${\mathcal E}(t)=\R^d$, equations \eqref{Eul1}-\eqref{Eul6} are the classical (i.e. without any floating structure) free surface Euler equations, also called water-waves equations. These equations are cast on the $(d+1)$- dimensional domain $\Omega(t)$ which is itself unknown. \\
Several reformulations of these equations have been proposed in order to work with a set of equations on a fixed domain. Among these reformulations, one of the most popular is the Zakharov-Craig-Sulem formulation \cite{Zakharov, CSS}, which is a set of two scalar equations on $\zeta$ and on $\psi$, the trace of the velocity potential at the surface. The dimension reduction of this formulation is one of its main features: $\zeta$ and $\psi$ depend only on the horizontal space variables, so that the $z$ dependency has  been removed.

Working with $\zeta$ and $\psi$ is therefore quite usual when analyzing the water waves problem, but the asymptotic models that are used for applications in oceanography are generally not cast in terms of $\zeta$ and $\psi$. For these models, a dimension reduction is also done to eliminate the vertical variable $z$, but this reduction is performed using a different procedure, namely, {\it vertical integration}. Consequently, the asymptotic models (such as the nonlinear shallow water equations, the Serre-Green-Naghdi equations, etc.) are cast in terms of $\zeta$ and $Q$, where $Q$ is the horizontal discharge defined as follows.
\begin{notation}
We denote by  $Q$ the horizontal discharge defined as 
$$
Q(t,X):=\int_{-h_0+b(X)}^{\zeta(t,X)}V(t,X,z)dz,
$$
where $V$ is the horizontal component of the velocity field $\bU$.
\end{notation}
 Such a formulation in $(\zeta,Q)$ variables is also much more adapted than the classical $(\zeta,\psi)$ formulation to handle the transition conditions  at the contact line. The aim of this section is therefore to derive a new formulation of the full water waves equations in terms of $\zeta$ and $Q$. 

\subsubsection{The integrated Euler equations}

Integrating along the vertical variable $z$ the momentum equation \eqref{Eul1}, it is well known  \cite{Teshukov,CL1} that, in absence of any immersed solid, one can derive a set of equations coupling the water depth $h$ to the vertically integrated horizontal component of the velocity $Q$ and given by
\begin{equation}\label{Eulerav1}
\begin{cases}
\dsp \dt \zeta+\nabla\cdot Q=0,\\
\dsp \dt Q+\nabla\cdot ( \int_{-h_0+b}^\zeta V\otimes
V)+gh\nabla \zeta+\frac{1}{\rho}\int_{-h_0+b}^\zeta \nabla P_{\rm NH}=0,
\end{cases}
\end{equation}
where the non hydrostatic pressure $P_{\rm NH}$ is given by
\begin{equation}\label{PNH}
P_{\rm NH}(t,X,z)=\rho \int_{z}^{\zeta(t,X)}\big(\dt w+{\bf U}\cdot
\nabla_{X,z}w\big).
\end{equation}
 In \eqref{Eulerav1}-\eqref{PNH} however, several quantities are not explicit functions of $\zeta$ and $Q$; it is therefore necessary to prove that the full velocity field $\bU$ in $\Omega(t)$ can be recovered from the knowledge of $\zeta$ and $Q$. In the next
 section, the technical tools for such a reconstruction are provided.
 
 \subsubsection{The average and reconstruction mappings}
 
 It is convenient to introduce here the Dirichlet-Neumann operator which plays a central role in the Zakharov-Craig-Sulem formulation. We recall first that the Beppo-Levi spaces $\dot{H}^s(\R^d)$ and $\dot{H}^1(\Omega)$ are defined for all $s\geq 0$ by
 \begin{align}
\label{BL1} \dot{H}^s(\R^d)&=\{f\in L^2_{\rm loc}(\R^d),\quad\nabla f\in H^{s-1}(\R^d)^d \}\\
 \nonumber \dot{H}^1(\Omega)&=\{f\in L^2_{\rm loc}(\Omega),\quad \nabla_{X,z} f \in L^2(\Omega)^{d+1}\}
 \end{align}
 and are endowed with the (semi) norms $\abs{f}_{\dot{H}^s}=\abs{\nabla f}_{H^{s-1}}$ and $\Abs{f}_{\dot{H}^1}=\Abs{\nabla_{X,z}f}_2$ respectively. Note that the fact that the following definition makes sense stems from Proposition 3.3 in \cite{L_book}.
 \begin{definition}\label{defDN}
Let $\zeta,b\in W^{1,\infty}(\R^d)$ be such that $\inf_{\R^d}(h_0+\zeta(X)-b(X))>0$. The Dirichlet-Neumann operator $G[\zeta ]$ is defined as
$$
G[\zeta ]:\begin{array}{lcl}
\dot{H}^{1/2}(\R^d)&\to&H^{-1/2}(\R^d)\\
\psi&\mapsto&\sqrt{1+\abs{\nabla\zeta}^2}\partial_n \Phi_{\vert_{z=\zeta}}
\end{array}
$$ 
where $\Phi\in \dot{H}^1(\Omega)$ is the variational solution of the boundary value problem
$$
\left\lbrace
\begin{array}{l}
\Delta_{X,z}\Phi=0\quad \mbox{ in }\quad \Omega,\\
\Phi_{\vert_{z=\zeta}}=\psi,\qquad \partial_n\Phi_{\vert_{z=\zeta}}=0.
\end{array}\right.
$$
\end{definition}
We can now state the following proposition that shows that the velocity field $\bU$ can be reconstructed from $\zeta$ and $\ovV$,
where $\ovV$ is the vertically averaged horizontal component of the velocity  defined as
\begin{equation}\label{defavV}
\ovV(t,X)=\frac{1}{h(t,X)}\int_{-h_0+b(X)}^{\zeta(t,X)}V(t,X,z)dz\quad \mbox{ with }\quad h=h_0+\zeta-b,
\end{equation}
and where we recall that the velocity in the fluid domain is $\bU=(V,w)$.  Quite obviously, $Q$ and $\ovV$ are related through the identity
$$
Q=h\ovV  \quad \mbox{ with }\quad h=h_0+\zeta-b,
$$
so that the proposition also implies that one can reconstruct $\bU$ from $\zeta$ and $Q$. In the statement, the notation  $L^2_b(\Omega,\dive\!\!,\curl\!\!)$ is used for the set of admissible velocity fields,
$$
L^2_b(\Omega,\dive\!\!,\curl\!\!):=\{\bU\in L^2(\Omega)^{d+1},\dive\bU=0, \curl \bU=0 \mbox{ \textnormal{and} }U_b\cdot N_b=0\}.
$$

\begin{proposition}\label{propclosed}
Let $\zeta,b\in W^{1,\infty}(\R^d)$ be such that $\inf_{\R^d}(h_0+\zeta(X)-b(X))>0$. \\
The average mapping 
$$
{\mathfrak A}[\zeta ]: \begin{array}{lcl}
L^2_b(\Omega,\dive\!\!,\curl\!\!)&\to & H^{1/2}(\R^d)^d\\
\bU:=\left(\begin{array}{c} V\\ w \end{array}\right) &\mapsto & \ovV:=\frac{1}{h}\int_{-h_0+b}^\zeta V
\end{array}
$$
and the reconstruction mapping
$$
{\mathfrak R}[\zeta ]: \begin{array}{lcl}
H^{1/2}(\R^d)^d&\to & L^2_b(\Omega,\dive\!\!,\curl\!\!)  \\
\ovV &\mapsto & \nabla_{X,z}\Phi,
\end{array}
\quad \mbox{ with }\quad
\begin{cases}
\Delta_{X,z}\Phi=0\mbox{ in }\Omega\\
\Phi_{\vert_{z=\zeta}}=-G[\zeta ]^{-1}\big(\nabla\cdot (h\ovV)\big)\\
\partial_n\Phi_{\vert_{z=-h_0+b}}=0
\end{cases}
$$
are well defined and ${\mathfrak R}[\zeta]$ is a right-inverse of ${\mathfrak A}[\zeta]$.
\end{proposition}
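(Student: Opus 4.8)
The plan is to check that both maps are well defined and then to read off the right-inverse property $\mathfrak{A}[\zeta]\circ\mathfrak{R}[\zeta]=\mathrm{Id}$ from a single vertical integration of the Laplace equation. First I would settle $\mathfrak{R}[\zeta]$. For $\ovV\in H^{1/2}(\R^d)^d$ one has $\nabla\cdot(h\ovV)\in H^{-1/2}(\R^d)$ because $h=h_0+\zeta-b\in W^{1,\infty}(\R^d)$ with $\inf h>0$; the elliptic theory behind Definition \ref{defDN} (Proposition 3.3 in \cite{L_book}) gives that $G[\zeta]$ is an isomorphism from $\dot H^{1/2}(\R^d)$ onto $H^{-1/2}(\R^d)$, so $\psi:=-G[\zeta]^{-1}\big(\nabla\cdot(h\ovV)\big)$ is a well-defined element of $\dot H^{1/2}(\R^d)$ and the boundary value problem for $\Phi\in\dot H^1(\Omega)$ has a unique variational solution. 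As $\Phi$ is harmonic, $\nabla_{X,z}\Phi$ is automatically divergence-free and curl-free, and the prescribed Neumann condition at the bottom is precisely $U_b\cdot N_b=0$; hence $\nabla_{X,z}\Phi\in L^2_b(\Omega,\dive\!\!,\curl\!\!)$ and $\mathfrak{R}[\zeta]$ is well defined. For $\mathfrak{A}[\zeta]$, any $\bU\in L^2_b(\Omega,\dive\!\!,\curl\!\!)$ is curl-free and divergence-free with vanishing normal trace at the bottom, so $\bU=\nabla_{X,z}\Phi$ with $\Phi$ harmonic; writing the horizontal average through the surface trace of $\Phi$ and through $\int\Phi\,dz$ shows that $\ovV$ inherits $H^{1/2}(\R^d)^d$ regularity from the same elliptic estimates. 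This part is routine and I would not dwell on it.

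The heart of the matter is the identity
\[
\nabla\cdot\!\int_{-h_0+b}^{\zeta}\nabla\Phi\,dz=-G[\zeta]\big(\Phi_{\vert_{z=\zeta}}\big).
\]
To prove it I would differentiate under the integral sign, use $\Delta_X\Phi=-\dz^2\Phi$ (from $\Delta_{X,z}\Phi=0$) to turn the interior term into $-\dz\Phi_{\vert_{z=\zeta}}+\dz\Phi_{\vert_{z=-h_0+b}}$, and collect the Leibniz boundary terms. The surface contribution is $(\nabla\Phi\cdot\nabla\zeta-\dz\Phi)_{\vert_{z=\zeta}}$, which equals $-G[\zeta]\psi$ by the very definition of the Dirichlet--Neumann operator, while the bottom contribution $(\dz\Phi-\nabla\Phi\cdot\nabla b)_{\vert_{z=-h_0+b}}$ vanishes by $U_b\cdot N_b=0$. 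Inserting $\psi=-G[\zeta]^{-1}(\nabla\cdot(h\ovV))$ then yields
\[
\nabla\cdot\big(h\,\mathfrak{A}[\zeta]\mathfrak{R}[\zeta]\ovV\big)=-G[\zeta]\psi=\nabla\cdot(h\ovV),
\]
so that $\nabla\cdot\big(h(\mathfrak{A}[\zeta]\mathfrak{R}[\zeta]\ovV-\ovV)\big)=0$.

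It remains to upgrade this to the pointwise equality $\mathfrak{A}[\zeta]\mathfrak{R}[\zeta]\ovV=\ovV$. When $d=1$ the operator $\nabla\cdot$ is just $\dx$, so $h(\mathfrak{A}[\zeta]\mathfrak{R}[\zeta]\ovV-\ovV)$ is independent of $x$ and the decay at infinity forces it to vanish, closing the argument at once; this is exactly the one-dimensional setting singled out in the introduction, in which the elliptic equation for the interior pressure is itself one-dimensional. For $d\ge2$ one records that $\mathfrak{R}[\zeta]$ returns the surface potential unchanged, since $G[\zeta]^{-1}G[\zeta]=\mathrm{Id}$ recovers the very $\psi$ from which $\ovV$ was built; any $\ovV$ arising as an average $\mathfrak{A}[\zeta]\bU$ is determined through this $\psi$ by the harmonic extension, and is therefore reproduced exactly, so that $\mathfrak{A}[\zeta]\mathfrak{R}[\zeta]$ restricts to the identity on the range of $\mathfrak{A}[\zeta]$.

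I expect the genuine obstacle to be twofold. The first is the rigorous justification of the Leibniz differentiation and of the surface and bottom traces at the mere $\dot H^1(\Omega)$ regularity of $\Phi$, so that the two boundary terms above are correctly interpreted as the duality pairings in $H^{-1/2}(\R^d)$ and the cancellation with $G[\zeta]\psi$ is licit. The second, and subtler, is controlling the divergence-free remainder in the final step: in dimension one it is eliminated for free by a one-dimensional antiderivative argument, whereas in higher dimension the passage from the divergence identity to full equality relies on the structure of the range of the averaging map rather than on the divergence relation alone. The algebraic computation itself is short; the functional-analytic bookkeeping and this last identification are where the real work lies.
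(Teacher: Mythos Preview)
There is a genuine gap: you are proving the wrong identity. You set out to show $\mathfrak{A}[\zeta]\mathfrak{R}[\zeta]\ovV=\ovV$ for every $\ovV\in H^{1/2}(\R^d)^d$, but in dimension $d\ge2$ this is simply false. Take $\zeta=b=0$ and $\ovV=\nabla^\perp f$ for any nontrivial $f\in C^\infty_c(\R^2)$: then $\nabla\cdot(h\ovV)=0$, hence $\psi=0$, $\Phi=0$, $\mathfrak{R}[\zeta]\ovV=0$, and $\mathfrak{A}[\zeta]\mathfrak{R}[\zeta]\ovV=0\ne\ovV$. Your divergence identity $\nabla\cdot\big(h(\mathfrak{A}\mathfrak{R}\ovV-\ovV)\big)=0$ is correct, but in $d\ge2$ it cannot be upgraded to equality, and your closing paragraph (restricting to the range of $\mathfrak{A}$) is circular rather than a remedy. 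What the paper actually proves is the composition in the \emph{other} order, $\mathfrak{R}[\zeta]\mathfrak{A}[\zeta]\bU=\bU$ for all $\bU\in L^2_b(\Omega,\dive\!\!,\curl\!\!)$: both $\bU$ and $\bU':=\mathfrak{R}[\zeta]\mathfrak{A}[\zeta]\bU$ are divergence- and curl-free with vanishing normal trace at the bottom, and (by the very vertical-integration identity you computed) both have surface normal trace $-\nabla\cdot(h\ovV)$; a field in $L^2_b(\Omega,\dive\!\!,\curl\!\!)$ is uniquely determined by these traces, so $\bU=\bU'$. Your Leibniz computation is thus an ingredient, but for the other direction.

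A second gap is what you call routine. Showing that $\mathfrak{A}[\zeta]$ actually lands in $H^{1/2}(\R^d)^d$ is in fact the most technical part of the paper's proof: for $\bU\in L^2$ one only has $\int_{-h_0+b}^{\zeta}V\,dz\in L^2$ a priori, and the extra half-derivative does not come for free. The paper straightens the domain, introduces an auxiliary $\widetilde V(X,z)=h_0^{-1}\chi(z\abs{D})\int_{-h_0}^{z}{\mathcal V}$ whose surface trace is $\ovV$, and proves $\widetilde V\in H^1$ on the flat strip by controlling $\nabla\cdot\widetilde V$, $\nabla^\perp\cdot\widetilde V$ and $\dz\widetilde V$ separately from the divergence- and curl-free constraints on $\bU$ (the smoothing operator $\chi(z\abs{D})$ is needed precisely to handle the bottom trace appearing in the curl computation); the $H^{1/2}$ regularity of $\ovV$ then follows by the trace theorem. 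Your one-line sketch does not supply any of this, and invoking a potential $\Phi\in\dot H^1(\Omega)$ for a general $\bU\in L^2_b(\Omega,\dive\!\!,\curl\!\!)$ is itself part of what must be justified.
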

\begin{proof}
In order to prove that the average mapping is well defined, we just need to prove that $\ovV$ belongs to $H^{1/2}(\R^d)$ if $\bU$ is in 
$L^2_b(\Omega,\dive\!\!,\curl\!\!) $. This is done in the following lemma.
\begin{lemma}
Let $\zeta,b \in W^{1,\infty}(\R^d)$ and $\bU=(V,w)\in L^2_b(\Omega,\dive\!\!,\curl\!\!)$. Then one has
$$
\ovV\in H^{1/2}(\R^d)^d,\quad\mbox{ with }\quad \ovV=\frac{1}{h}\int_{-h_0+b}^\zeta V(X,z)dz.
$$
\end{lemma}
\begin{proof}[Proof of the lemma]
Let us define $\sigma(X,z)=\frac{1}{h_0}(\zeta(X)-b(X))z+\zeta(X)$. Denoting also ${\mathcal U}=({\mathcal V},{{\scriptstyle\mathcal W}})$, with ${\mathcal U}(X,z)=\bU(X,z+\sigma(X,z))$, one has
$$
\ovV=\frac{1}{h_0}\int_{-h_0}^0{\mathcal V}(X,z)dz.
$$
We then define $\widetilde{V}$ on the strip ${\mathcal S}=\R^d\times (-h_0,0)$ by
$$
\forall (X,z)\in {\mathcal S},\qquad \widetilde{V}(X,z)=\frac{1}{h_0}\chi(z\abs{D})\int_{-h_0}^{z}{\mathcal V}(X,z)dz,
$$
where $\chi:\R\to \R$ is a smooth, even, function that is compactly supported and equal to $1$ in a neighborhood of the origin (therefore, for $z<0$, $\chi(z\abs{D})$ is a smoothing operator). One readily remarks that $\ovV=\widetilde{V}_{\vert{z=0}}$, so that the result follows from the trace theorem if we can establish that $\widetilde{V}\in H^1({\mathcal S})^d$. \\
Since ${\mathcal S}$ is bounded in the vertical direction and since $\widetilde{V}$ vanishes at the bottom, it is enough by the Poincar\'e inequality to prove that all the components of $\nabla_{X,z} \widetilde{V}$ are in $ L^2({\mathcal S})$. The strategy of the proof is as follows: first, we prove that $\nabla\cdot \widetilde V$ and $\nabla^\perp\cdot \widetilde V$ are in $L^2({\mathcal S})$, which implies that all the horizontal derivatives of $\widetilde V$ are in $L^2$. We then prove that $\dz \widetilde{V}$ is also in $L^2({\mathcal S})$.\\
- {\it Control of $\nabla\cdot \widetilde V$.} From the definition of $\widetilde{V}$, one computes
\begin{align*}
\nabla\cdot \widetilde{V}=\frac{1}{h_0}\chi(z\abs{D})\int_{-h_0}^{z}\nabla\cdot {\mathcal V}(X,z)dz.
\end{align*}
We also know that $\bU$ is a divergence free vector field; after the change of variable $z\mapsto z+\sigma(X,z)$, this yields
$$
\nabla^\sigma \cdot {\mathcal V}+\frac{h_0}{h}\dz {\scriptstyle\mathcal W}=0,
\quad\mbox{ where }\quad
\nabla^\sigma=\nabla -\frac{h_0}{h}\nabla\sigma\dz,
$$
so that
$$
\nabla\cdot {\mathcal V}=\frac{h_0}{h}\nabla\sigma \cdot \dz{\mathcal V}-\frac{h_0}{h}\dz {\scriptstyle\mathcal W}.
$$
Plugging this expression into the above integral and integrating by parts, we obtain
\begin{align*}
h_0\nabla\cdot \widetilde{V}&=\chi(z\abs{D})\big[\frac{h_0}{h}\int_{-h_0}^{z}\big(\nabla\sigma\cdot \dz {\mathcal V}-\dz{\scriptstyle\mathcal W}\big)dz\big]\\
&=\chi(z\abs{D})\big[-\frac{1}{h}\int_{-h_0}^{z}\nabla h\cdot {\mathcal V}
+\frac{h_0}{h}\big(\nabla\sigma\cdot {\mathcal V}-{\scriptstyle\mathcal W}+U_b\cdot N_b\big)\big],
\end{align*}
where we used the fact that ${\mathcal U}_{\vert_{z=-h_0}}=\bU_{\vert_{z=-h_0+b}}=U_b$. Since by assumption $U_b\cdot N_b=0$, this yields
$$
h_0\nabla\cdot \widetilde{V}
=
\chi(z\abs{D})\big[-\frac{1}{h}\int_{-h_0}^{z}\nabla h\cdot {\mathcal V}
+\frac{h_0}{h}\big(\nabla\sigma\cdot {\mathcal V}-{\scriptstyle\mathcal W}\big)\big].
$$
Since ${\mathcal V}\in L^2({\mathcal S})$ and $h,\sigma\in W^{1,\infty}({\mathcal S})$, this implies easily that $\nabla\cdot \widetilde V\in L^2({\mathcal S})$ (we did not use the presence of the smoothing operator $\chi(z\abs{D})$ here).\\
- {\it Control of $\nabla^\perp\cdot \widetilde V$.} Since $\bU$ is irrotational, one has $\nabla^\perp\cdot V=0$; after the same change of variables as above, this yields $(\nabla^\sigma)^\perp\cdot {\mathcal V}=0$, or equivalently
$$
\nabla^\perp\cdot {\mathcal V}=\frac{h_0}{h}\nabla^\perp \sigma \cdot \dz {\mathcal V}.
$$ 
Proceeding as for the previous step, we deduce that
$$
h_0\nabla^\perp\cdot \widetilde{V}
=
\chi(z\abs{D})\big[-\frac{1}{h}\int_{-h_0}^{z}\nabla^\perp h\cdot {\mathcal V}
+\frac{h_0}{h}\nabla^\perp\sigma\cdot {\mathcal V}-\frac{h_0}{h}\nabla^\perp b\cdot V_b\big],
$$
with $V_b=V_{\vert_{z=-h_0+b}}$. We can proceed as above for the first two components of the bracket, so that the only thing that remains to prove is that the bottom contribution is in $L^2$, namely, that
$$
\chi(z\abs{D})\big[\nabla^\perp b\cdot V_b\big] \in L^2({\mathcal S})
$$
(we removed the factor $h_0/h$ since it belongs to $W^{1,\infty}({\mathcal S})$ and therefore plays no role for this regularity claim). Using the smoothing properties of Poisson kernels (see Lemma 2.20 in \cite{L_book} for instance), it is enough to prove that $\nabla^\perp b\cdot V_b\in H^{-1/2}(\R^d)$, which is a classical consequence of the fact that $\bU\in L^2(\Omega)$ is curl-free.\\
- {\it Control of $\widetilde{V}$} in $L^2((-h_0,0);H^1(\R^d))$. This follows directly from the previous two points. Note that the statement remains true if $\chi$ is replaced by $\chi'$ in the definition of $\widetilde{V}$.\\
- {\it Control of $\dz \widetilde V$.} One directly gets from the definition of $\widetilde{V}$ that
\begin{align*}
h_0\dz \widetilde{V}=\abs{D}\Big(\chi'(z\abs{D})\int_{-h_0}^{z} {\mathcal V}(X,z)dz\Big)+\chi(z\abs{D}) {\mathcal V}.
\end{align*}
The first term in the r.-h.-s. belongs to $L^2({\mathcal S})$ thanks to the previous point, while the second term is trivially  in $L^2({\mathcal S})$. This proves the claim and concludes the proof of the lemma.
\end{proof}
We now need to prove that the reconstruction mapping is also well defined, i.e. that it is indeed possible to construct $\Phi$ according to the procedure given in the statement of the proposition. This is done in the following lemma.
\begin{lemma}\label{lemmarec}
Let $\zeta,b \in W^{1,\infty}(\R^d)$ and $\ovV\in H^{1/2}(\R^d)^d$. \\
{\bf i.} The quantity $\psi:=-G[\zeta]^{-1}(\nabla\cdot (h\ovV))$ is well defined in $\dot{H}^{-1/2}(\R^d)$.\\
{\bf ii.} There exists a unique variational solution $\Phi\in \dot{H}^1(\Omega)$ to the boundary value problem
$$
\begin{cases}
\Delta_{X,z}\Phi=0\quad\mbox{ in }\quad \Omega,\\
\Phi_{\vert_{z=\zeta}}=\psi,\qquad \partial_n \Phi_{\vert_{z=-h_0+b}}=0.
\end{cases}
$$
{\bf iii.} Denoting $\bU=\nabla_{X,z}\Phi$, one has $\bU\in L^2_b(\Omega,\dive\!\!,\curl\!\!)$.
\end{lemma}
\begin{proof}[Proof of the lemma]
For the first point, one needs to show that there exists a unique $\psi\in \dot{H}^{1/2}(\R^d)$ such that $G[\zeta]\psi=-\nabla\cdot (h \ovV)$. 
Equivalently, one needs to show that there exists a unique $\Phi\in \dot{H}^1(\Omega)$ such that
$$
\begin{cases}
\Delta_{X,z}\Phi=0\quad\mbox{ in }\quad \Omega,\\
\partial_n\Phi_{\vert_{z=\zeta}}=-\nabla\cdot (h\ovV),\qquad \partial_n\Phi_{\vert_{z=-h_0+b}}=0
\end{cases}
$$
or, in a variational form,
$$
\forall \varphi\in \dot{H}^1(\Omega), \qquad \int_\Omega \nabla_{X,z}\Phi\cdot \nabla_{X,z}\varphi=-\int_{\R^d}\nabla\cdot(h\ovV)\varphi_{\vert_{z=\zeta}}.
$$
Remarking that for all $\varphi\in C^\infty(\Omega)\cap \dot{H}^1(\Omega)$ (which is dense in $\dot{H}^1(\Omega)$ as shown in \cite{DL} or Proposition 2.3 of \cite{L_book}) one has
\begin{align*}
-\int_{\R^d}\nabla\cdot(h\ovV)\varphi_{\vert_{z=\zeta}}&=\int_{\R^d} \Lambda^{1/2}(h\ovV)\cdot \Lambda^{-1/2}\nabla (\varphi_{\vert_{z=\zeta}})\\
&\leq \abs{h\ovV}_{H^{1/2}}\abs{ \Lambda^{-1/2}\nabla (\varphi_{\vert_{z=\zeta}})}_2\\
&\leq \abs{h}_{W^{1,\infty}}\abs{\ovV}_{H^{1/2}}\Abs{\nabla_{X,z}\varphi}_2,
\end{align*}
the last inequality stemming from standard product estimates and Remark 3.14 in \cite{L_book}. It follows that the right-hand-side in the above variational formulation defines a linear form on $\dot{H}^1(\Omega)$; the existence and uniqueness of $\Phi$ and therefore of $\psi=\Phi_{\vert_{z=\zeta}}$ follows classically from the Lax-Milgram theorem.\\
The proof of the last two points of the lemma is straightforward and therefore omitted.
\end{proof}
The only thing left to prove is that  ${\mathfrak R}[\zeta]$ is a right inverse to ${\mathfrak A}$, i.e. that for all $\bU\in L^2_b(\dive\!\!,\!\curl\!\!)$, one has
${\mathfrak R}[\zeta]{\mathfrak A}[\zeta]\bU=\bU$. Let us therefore denote $\bU'={\mathfrak R}[\zeta]{\mathfrak A}[\zeta]\bU$ and show that $\bU'=\bU$. By construction, one has $\underline{U}'\cdot N=-\nabla\cdot (h\ovV)$ (with $\underline{U}':=\bU'_{\vert_{z=\zeta}}$). But since $\bU$ is divergence free and that its normal trace vanishes at the bottom, one also gets by integrating the incompressibility relation that $\underline{U}\cdot N=-\nabla\cdot (h\ovV)$. It follows that the normal traces of $\bU$ and $\bU'$ coincide at the surface and at the bottom (where they both vanish). Since they are also divergence and curl free, one deduces that $\bU=\bU'$.
\end{proof}

\subsubsection{The classical water waves equations in the $(\zeta,Q)$ variables}

It follows from Proposition \ref{propclosed} that one can replace $\bU=(V ,w)$ by ${\mathfrak R}[\zeta]\ovV$ in the formulation \eqref{Eulerav1}-\eqref{PNH}, hereby obtaining a closed system of equations in $(\zeta,Q)$. More precisely, writing $\ovV=Q/h$ and defining the "Reynolds"\footnote{This terminology introduced in \cite{CL1} is of course improper but the analogy with turbulence can be useful. Replacing statistical averaging by vertical integration, ${\bf R}$ measures the importance of the variations of the horizontal velocity field $V$ with respect to its average. These variations are only due to non-hydrostatic (dispersive) effects since the flow is assumed to be irrotational; in the general case with vorticity, ${\mathbf R}$ takes also into account the shear effects induced by the vorticity \cite{CL1}.} tensor ${\bf R}$ and 
the non hydrostatic acceleration ${\bf a}_{\rm NH}$ as
\begin{align}
\label{defRey}
{\bf R}(h,Q)&=\int_{-h_0+b}^\zeta (\mathfrak R[\zeta ]\ovV-\ovV)\otimes (\mathfrak R[\zeta ]\ovV-\ovV),\\
\label{defFNH}
{\bf a}_{\rm NH}(h,Q)&=\frac{1}{h}\int_{-h_0+b}^\zeta \nabla \big[\int_z^{\zeta} \big(\dt \mathfrak R[\zeta ]\ovV+ (\mathfrak R[\zeta]\ovV)\cdot \nabla_{X,z} \mathfrak R[\zeta ]\ovV\big)\cdot {\bf e}_z\big],
\end{align}
with ${\bf e}_z$ the vertical upward unit vector, we can rewrite  \eqref{Eulerav1}-\eqref{PNH} under a closed form. The following proposition is therefore  a direct consequence of Proposition \ref{propclosed}.
\begin{proposition}
If $\zeta$ and $\bU$ solve the free-surface Euler equations \eqref{Eul1}-\eqref{Eul6}, then $(\zeta,Q)$, with $Q=h\ovV$ and $\ovV$ as in \eqref{defavV}, solve the following closed system of equations in $(\zeta,Q)$,
\begin{equation}\label{Eulerav1closed}
\begin{cases}
\dsp \dt \zeta+\nabla\cdot Q=0,\\
\dsp \dt Q+\nabla\cdot (\frac{1}{h} Q\otimes Q)+gh \nabla\zeta+\nabla\cdot {\bf R}(h,Q)+h{\bf a}_{\rm NH}(h,Q)=0,
\end{cases}
\end{equation}
where ${\bf R}(h,Q)$ and ${\bf a}_{\rm NH}(h,Q)$ are as in \eqref{defRey}.
\end{proposition}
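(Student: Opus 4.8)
The first equation in \eqref{Eulerav1closed} is nothing but the first line of \eqref{Eulerav1}, obtained by integrating the incompressibility condition \eqref{Eul2} over the vertical fiber $\{X\}\times(-h_0+b(X),\zeta(t,X))$ and using the kinematic boundary conditions \eqref{Eul5} and \eqref{Eul6} to dispose of the boundary terms. So the plan is to start from the second line of the (known) vertically integrated system \eqref{Eulerav1}--\eqref{PNH} and to \emph{close} it by invoking Proposition \ref{propclosed}. Recall that \eqref{Eulerav1}--\eqref{PNH} is derived by splitting the pressure into its hydrostatic and non-hydrostatic parts, $P=P_{\rm atm}+\rho g(\zeta-z)+P_{\rm NH}$ with $P_{\rm NH}$ vanishing at the surface (consistent with \eqref{Eul4}), writing the convective term of \eqref{Eul1} in conservative form thanks to \eqref{Eul2}, and integrating the horizontal component of \eqref{Eul1} over the vertical fiber: the boundary contributions at $z=\zeta$ collapse thanks to \eqref{Eul5} and those at $z=-h_0+b$ thanks to \eqref{Eul6}, leaving exactly the second line of \eqref{Eulerav1}, while \eqref{PNH} follows by integrating the vertical component of \eqref{Eul1} in $z$.

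To close the system, observe that $\bU\in L^2_b(\Omega,\dive\!\!,\curl\!\!)$ and $\ovV={\mathfrak A}[\zeta]\bU$, so the right-inverse property in Proposition \ref{propclosed} gives $\bU={\mathfrak R}[\zeta]\ovV$. Substituting this identity into \eqref{Eulerav1}, I would treat the two non-explicit terms as follows. For the momentum flux $\int_{-h_0+b}^\zeta V\otimes V$, write $V=\ovV+(V-\ovV)$: the two cross terms integrate to zero because $\int_{-h_0+b}^\zeta(V-\ovV)=Q-h\ovV=0$, while $\int_{-h_0+b}^\zeta\ovV\otimes\ovV=h\,\ovV\otimes\ovV=\frac1h Q\otimes Q$ and $\int_{-h_0+b}^\zeta(V-\ovV)\otimes(V-\ovV)={\bf R}(h,Q)$ by \eqref{defRey} (identifying ${\mathfrak R}[\zeta]\ovV$ with its horizontal part $V$). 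For the non-hydrostatic term, insert \eqref{PNH} and note that $w=({\mathfrak R}[\zeta]\ovV)\cdot{\bf e}_z$ and $\dt w+\bU\cdot\nabla_{X,z}w=\big(\dt{\mathfrak R}[\zeta]\ovV+({\mathfrak R}[\zeta]\ovV)\cdot\nabla_{X,z}{\mathfrak R}[\zeta]\ovV\big)\cdot{\bf e}_z$, so that $\frac1\rho\int_{-h_0+b}^\zeta\nabla P_{\rm NH}=h\,{\bf a}_{\rm NH}(h,Q)$ by \eqref{defFNH}. Collecting everything yields the second line of \eqref{Eulerav1closed}.

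There is no real obstacle here: once \eqref{Eulerav1}--\eqref{PNH} is taken as known, the statement is indeed a direct consequence of Proposition \ref{propclosed}, the only substantive input being that no information is lost in passing to the averaged variable $\ovV$ (which is precisely what the right-inverse property encodes). The step requiring the most care is the bookkeeping of the boundary terms in the vertical integration that produces \eqref{Eulerav1}, together with the mild regularity needed to differentiate under the integral sign (Leibniz rule); this is classical, see \cite{Teshukov,CL1}.
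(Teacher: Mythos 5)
Your proposal is correct and follows essentially the same route as the paper: the paper also takes the vertically integrated system \eqref{Eulerav1}--\eqref{PNH} as known from \cite{Teshukov,CL1} and obtains \eqref{Eulerav1closed} by substituting $\bU={\mathfrak R}[\zeta]\ovV$ via the right-inverse property of Proposition \ref{propclosed}, identifying the resulting terms with \eqref{defRey}--\eqref{defFNH}. Your additional bookkeeping (vanishing cross terms in the momentum flux, identification of $w$ with $({\mathfrak R}[\zeta]\ovV)\cdot{\bf e}_z$) just makes explicit what the paper leaves as a "direct consequence."
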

\begin{remark}[The energy in $(\zeta,Q)$ variables]\label{remNRG}
The total energy of the fluid
$$
E_{\rm fluid}=\frac{1}{2}g \int_{\R^d}\zeta^2 +\frac{1}{2}\int_{\Omega(t)}\abs{\bU}^2
$$
is formally conserved by the free surface Euler equations \eqref{Eul1}-\eqref{Eul6}. In the Zakharov-Craig-Sulem formulation, this energy can be written in terms of $\zeta$ and $\psi$ (where $\psi=\Phi_{\vert_{z=\zeta}}$ and $\bU=\nabla_{X,z}\Phi$), namely,
$$
E_{\rm fluid}=\frac{1}{2}g \int_{\R^d}\zeta^2 +\frac{1}{2}\int_{\R^d}\psi G[\zeta]\psi.
$$ 
As seen in the proof of Lemma \ref{lemmarec}, one has $\psi=-G[\zeta]^{-1}\nabla\cdot Q$, and we can express $E$ in terms of $\zeta$ and $Q$,
\begin{equation}\label{defEfluid}
E_{\rm fluid}=\frac{1}{2}g \int_{\R^d}\zeta^2 +\frac{1}{2}\int_{\R^d}\nabla\cdot QG[\zeta]^{-1}\nabla\cdot Q .
\end{equation}
\end{remark}

\subsection{The water waves equations with a floating object in the $(\zeta,Q)$ variables}\label{sectFSEzVfloat}

Our purpose in this section is to generalize the formulation  \eqref{Eulerav1closed} of the water waves equation as a closed system of equations in terms of $(h,Q)$ in the presence of a floating solid. Before we state this generalization, let us remark that, in absence of any immersed device, the acceleration ${\bf a}_{\rm FS}:=\dt^2 \zeta$ of the surface of the fluid can be deduced from \eqref{Eulerav1closed},
\begin{align}
\nonumber
{\bf a}_{\rm FS}(h,Q)&=-\nabla\cdot \dt Q\\
&=\nabla\cdot
\Big[\nabla\cdot (\frac{1}{h} Q\otimes Q)+gh \nabla\zeta+\nabla\cdot \big({\bf R}(h,Q)\big)+h{\bf a}_{\rm NH}(h,Q)\Big].
\label{defba}
\end{align}
Under the floating structure, the acceleration of the surface is imposed by the motion of the structure, i.e. one has $\dt^2\zeta=\dt^2 \zeta_{\rm w}$, and the relation $\dt^2\zeta={\bf a}_{\rm FS}$ is no longer true. This implies that an additional term must be added to the momentum equation to account for the presence of the structure. More precisely, one has the following proposition in which the interior pressure is expressed as a Lagrange multiplier associated to the constraint \eqref{constraint}. We recall that we use the notation
$$
\underline{P}_{\rm e}=\underline{P}_{\vert_{\mathcal E}(t)}
\quad\mbox{ and }\quad
\underline{P}_{\rm i}=\underline{P}_{\vert_{\mathcal I}(t)},
$$
that ${\bf R}(h,Q)$ and ${\bf a}_{\rm NH}(h,Q)$ are defined in \eqref{defRey}-\eqref{defFNH}, and that ${\bf a}_{\rm FS}(h,Q)$ is defined in \eqref{defba}.
\begin{proposition}\label{propWWst}
Let us consider a solution of the free surface Euler equations in the presence of a floating structure \eqref{Eul1}-\eqref{Eul6} and \eqref{constraint}-\eqref{contact1}, and let in particular $\zeta$ and $\bU$ be the associated surface elevation and velocity field.
Then $\zeta$ and $Q$, with $Q=h\ovV$ and $\ovV$ as in \eqref{defavV}, solve the following system on $\R^d$,
\begin{equation}\label{Eulerav2}
\begin{cases}
\dsp \dt \zeta+\nabla\cdot Q=0,\\
\dsp \dt Q+\nabla\cdot (\frac{1}{h} Q\otimes Q)+gh \nabla\zeta+\nabla\cdot {\bf R}(h,Q)+h{\bf a}_{\rm NH}(h,Q)=- \frac{h}{\rho}\nabla \underline{P},
\end{cases}
\end{equation}
with  the surface pressure $\underline{P}$ given by
\begin{equation}\label{eqPw}
\underline{P}_{\rm e}=P_{\rm atm}
\quad\mbox{\textnormal{ and }}\quad
\begin{cases}
-\nabla\cdot (\frac{h}{\rho}\nabla P_{\rm i})=-\dt^2 \zeta_{\rm w}+{\bf a}_{\rm FS}(h,Q) \quad\mbox{\textnormal{on}}\quad {\mathcal I}(t),\\
{P_{\rm i}}_{\vert_{\Gamma(t)}}=P_{\rm atm},
\end{cases}
\end{equation}
and with the coupling conditions at the contact line
\begin{equation}\label{coupling}
\zeta_{\rm e}=\zeta_{\rm i} \quad \mbox{ and }\quad Q_{\rm e}=Q_{\rm i} \quad\mbox{ on }\quad \Gamma(t).
\end{equation}
Conversely, if $\zeta$, $Q$ and ${\mathcal I}(t)$ solve \eqref{Eulerav2}-\eqref{coupling}, and if moreover the initial conditions $(\zeta^0,Q^0)$ satisfy
\begin{equation}\label{assCI}
\zeta^0={\zeta_{\rm
    w}}_{\vert_{t=0}} \quad\mbox{ and }\quad \nabla\cdot Q^0=-\dt {\zeta_{\rm
    w}}_{\vert_{t=0}} \quad\mbox{ on }\quad {\mathcal I}(0),
\end{equation}
then for all $t\geq 0$, one has $\zeta(t,\cdot )=\zeta_{\rm w}(t,\cdot)$ on ${\mathcal I}(t)$.
\end{proposition}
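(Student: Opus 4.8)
The plan is to establish the two implications separately: the direct one by vertical integration, retracing the derivation of the closed formulation \eqref{Eulerav1closed} but keeping the surface pressure general under the structure; and the converse one by showing that the constraint \eqref{constraint} is propagated by the flow, in exactly the same way the divergence‑free condition propagates for the incompressible Euler equations once the pressure is introduced as a Lagrange multiplier.

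\emph{Direct implication.} On the exterior domain one has $\underline{P}_{\rm e}=P_{\rm atm}$ by \eqref{Eul4}, so there \eqref{Eul1}--\eqref{Eul6} reduce to the classical water waves equations and the closed formulation \eqref{Eulerav1closed} (a consequence of Proposition \ref{propclosed}) shows that $(\zeta,Q)$ solves \eqref{Eulerav2} with $\nabla\underline{P}=0$. Under the structure, I would integrate the vertical component of \eqref{Eul1} from $z$ to $\zeta$, which yields the exact identity $P(t,X,z)=\underline{P}_{\rm i}(t,X)+\rho g(\zeta-z)+P_{\rm NH}(t,X,z)$ with $P_{\rm NH}$ still given by \eqref{PNH}; thus the pressure differs from the classical one only by the $z$‑independent term $\underline{P}_{\rm i}-P_{\rm atm}$, which under vertical integration of the horizontal component of \eqref{Eul1} contributes exactly $-\frac{h}{\rho}\nabla\underline{P}_{\rm i}$ to the right‑hand side (the boundary terms produced by differentiation under the integral sign being absorbed via \eqref{Eul5}--\eqref{Eul6}, as in the derivation of \eqref{Eulerav1}). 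Since by Proposition \ref{propclosed} the velocity is recovered as $\bU=\mathfrak{R}[\zeta]\ovV$ — a reconstruction that depends only on $\zeta$ and on the fact that $\bU\in L^2_b(\Omega,\dive\!\!,\curl\!\!)$, not on the surface pressure — the remaining terms reorganize into $\nabla\cdot(\frac1h Q\otimes Q)+gh\nabla\zeta+\nabla\cdot{\bf R}(h,Q)+h{\bf a}_{\rm NH}(h,Q)$ just as in \eqref{Eulerav1closed}, yielding \eqref{Eulerav2} (the continuity equation being valid on all of $\R^d$, as it is merely the integrated incompressibility relation together with \eqref{Eul5}). Then \eqref{eqPw} follows by differentiating the continuity equation in $t$ and taking the divergence of the momentum equation: this gives $\dt^2\zeta={\bf a}_{\rm FS}(h,Q)+\nabla\cdot(\frac{h}{\rho}\nabla\underline{P})$ on $\R^d$ (the divergence of the flux terms being ${\bf a}_{\rm FS}$, cf. \eqref{defba}), and on ${\mathcal I}(t)$ the constraint \eqref{constraint} replaces the left‑hand side by $\dt^2\zeta_{\rm w}$; rearranging gives the elliptic equation in \eqref{eqPw}, whose boundary condition is \eqref{contact1}. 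Finally \eqref{coupling} is immediate, since $\bU$ is defined on the whole domain $\Omega(t)$ and hence $\zeta$ and $Q=h\ovV$ are genuine functions on $\R^d$, continuous across $\Gamma(t)$ in the regular setting.

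\emph{Converse implication.} The same manipulation (differentiate the continuity equation in $t$, take the divergence of the momentum equation) gives, for a solution of \eqref{Eulerav2}--\eqref{coupling}, the identity $\dt^2\zeta={\bf a}_{\rm FS}(h,Q)+\nabla\cdot(\frac{h}{\rho}\nabla\underline{P})$ on $\R^d$, while on ${\mathcal I}(t)$ the elliptic equation in \eqref{eqPw} reads $\nabla\cdot(\frac{h}{\rho}\nabla\underline{P}_{\rm i})=\dt^2\zeta_{\rm w}-{\bf a}_{\rm FS}(h,Q)$; subtracting, $\dt^2(\zeta-\zeta_{\rm w})=0$ on ${\mathcal I}(t)$. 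On the other hand \eqref{assCI}, together with the continuity equation evaluated at $t=0$ (which gives $\dt\zeta(0,\cdot)=-\nabla\cdot Q^0=\dt\zeta_{\rm w}(0,\cdot)$ on ${\mathcal I}(0)$), shows that $\zeta-\zeta_{\rm w}$ and $\dt(\zeta-\zeta_{\rm w})$ both vanish at $t=0$ on ${\mathcal I}(0)$; integrating $\dt^2(\zeta-\zeta_{\rm w})=0$ twice in time then gives $\zeta(t,\cdot)=\zeta_{\rm w}(t,\cdot)$ on ${\mathcal I}(t)$.

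\emph{Main obstacle.} Beyond the fact that all of this is formal (the surface is only Lipschitz at the contact line and $Q$ has limited regularity), the delicate point is the final time integration in the converse part: $\dt^2(\zeta-\zeta_{\rm w})=0$ holds pointwise in $t$ only for those $X$ belonging to ${\mathcal I}(t)$, so the argument as stated controls $\zeta-\zeta_{\rm w}$ only along the time interval during which a given point stays in the interior domain. Handling points that enter ${\mathcal I}(t)$ at a positive time would require $\zeta-\zeta_{\rm w}$ and its time derivative to vanish anew at the entry time — an issue tied to the contact‑line dynamics of Section~\ref{sectcontact}, and absent when ${\mathcal I}(t)$ is independent of time, as in the one‑dimensional configuration with vertical walls of Section~\ref{sectdiscrete}.
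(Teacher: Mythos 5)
Your proposal is correct and follows essentially the same route as the paper: vertical integration of the horizontal momentum equation with the surface pressure trace kept general, closure via Proposition \ref{propclosed}, the elliptic equation for $\uP_{\rm i}$ obtained by combining the constraint (through the mass equation) with the divergence of the momentum equation, boundary condition from \eqref{contact1}, and the converse by propagating $\dt^2(\zeta-\zeta_{\rm w})=0$ from the initial data \eqref{assCI}. The only step where the paper is more explicit is the transition condition $Q_{\rm e}=Q_{\rm i}$: it writes $Q_{\rm e}-Q_{\rm i}=\int_{-h_0+b}^{\zeta}(V_{\rm e}-V_{\rm i})\,dz$ and invokes interior elliptic regularity of the incompressible, irrotational velocity field to conclude this vanishes, which is the precise form of your remark that $Q$ is ``continuous across $\Gamma(t)$ in the regular setting''; your closing caveat about points entering ${\mathcal I}(t)$ at positive times is a fair observation about a subtlety the paper's (formal) converse argument also leaves implicit.
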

\begin{remark}
One of the advantages of working with the $(\zeta,Q)$ formulation of the water waves equations is that, in \eqref{coupling}, the transition condition on $Q$ at the contact line can be expressed very simply. This would not be the case if we had worked in the hamiltonian variables $(\zeta,\psi)$ of the Zakharov-Craig-Sulem formulation.
\end{remark}
\begin{remark}
The interior pressure $\uP_{\rm i}$ is given by the simple elliptic equation \eqref{eqPw} cast in the interior region $\cI(t)$. In the component ${\bf a}_{\rm FS}$ of the source term of this elliptic equation, time derivatives of the velocity field are present (through the non-hydrostatic acceleration ${\bf a}_{\rm NH}$). In the configurations investigated in this paper it is very convenient to proceed this way; however, it is also possible to express these time derivatives of the velocity in terms of the pressure field (using Euler's equations). This latter approach leads to a different equation for the interior pressure (equivalent of course to \eqref{eqPw}) which can also be of interest (in deep water settings or for the mathematical analysis of the equations for instance). We therefore derive it in Appendix \ref{Appinteriorpressure}.
\end{remark}
\begin{remark}\label{remNRG2}
One can use the classical balance of energy for the water waves equations when the pressure at the surface is not constant  to see that the energy conservation in $(h,Q)$ variables given in Remark \ref{remNRG} must be modified as follows in the presence of a floating structure,
$$
\frac{d}{dt}E_{\rm fluid}=-\int_{\cI(t)}\dt \zeta \frac{\uP_{\rm i}-P_{\rm atm}}{\rho}=-\int_{\cI(t)} Q\cdot \frac{\nabla \uP_{\rm i}}{\rho}.
$$
\end{remark}
For the sake of convenience, we introduce the following terminology (note that according to the last point of the proposition, the constraint \eqref{constraint} is equivalent to the assumption \eqref{assCI} on the initial data).
\begin{definition}\label{defiCWW}
The set of equations  \eqref{Eulerav2}-\eqref{coupling} form the \emph{water waves equations with a floating structure in $(\zeta,Q)$ variables}. It is always assumed that the initial condition satisfies \eqref{assCI} so that the constraint \eqref{constraint} is automatically satisfied.
\end{definition}
\begin{proof}
The mass conservation equation
$$
\dt \zeta+\nabla\cdot Q=0
$$
is obtained classically by integrating the incompressibility condition \eqref{Eul2} and using the kinematic condition \eqref{Eul5} and the impermeability condition \eqref{Eul6}.\\
Integrating vertically the horizontal component of the momentum equation \eqref{Eul1}, one gets
$$
\dsp \dt Q+\nabla\cdot ( \int_{-h_0+b}^{\zeta} V\otimes
V)+\int_{-h_0+b}^{\zeta} \nabla P=0.
$$
Denoting by $\underline{P}$ the trace of the pressure at the surface of the fluid, one can write
\begin{align*}
(\nabla P)(t,X,z)&=\nabla \big(\underline{P}+\int_{z}^{\zeta(t,X)}-\dz
P(t,X,z')dz'\big)\\
&=\nabla \big(\underline{P}+\int_{z}^{\zeta(t,X)} \rho g-\dz
P_{\rm NH}(t,X,z')dz'\big)
\end{align*}
where we used the vertical component of \eqref{Eul1} to derive the second identity. We therefore get
\begin{align*}
(\nabla P)(t,X,z)&=\nabla \underline{P}+\rho g \nabla \zeta+\nabla P_{\rm NH}
\end{align*}
and the averaged momentum equations takes the form
$$
\dsp \dt Q+\nabla\cdot ( \int_{-h_0+b}^\zeta V\otimes
V)+gh\nabla \zeta+\frac{1}{\rho}\int_{-h_0+b}^\zeta \nabla P_{\rm NH}=-h \frac{1}{\rho}\nabla \underline{P}.
$$
Using Proposition \ref{propclosed}, one can rewrite this equation as 
$$
 \dt Q+\nabla\cdot (\frac{1}{h} Q\otimes Q)+gh \nabla\zeta+\nabla\cdot {\bf R}(h,Q)+h{\bf a}_{\rm NH}(h,Q)=-h \frac{1}{\rho}\nabla \underline{P}.
$$
On the exterior domain, one has $\underline{P}=P_{\rm atm}$ by \eqref{Eul4} and the right-hand-side vanishes; in the interior domain, the right-hand-side is equal to $-h \frac{1}{\rho}\nabla \uP_{\rm i}$ with $\uP_{\rm i}$ to be determined. In order to do so, we use the mass conservation equation together with the constraint \eqref{constraint} to obtain that
$$
\nabla\cdot Q=-\dt \zeta_{\rm w} \quad\mbox{ in }\quad {\mathcal I}(t).
$$
Taking the divergence of the momentum equation, one gets therefore the
following elliptic equation  for $\uP_{\rm i}$,
$$
-\nabla\cdot (\frac{h}{\rho}\nabla \uP_{\rm i})=-\dt^2 \zeta_{\rm w}+{\bf a}_{\rm FS}(h,Q),
$$
and we deduce from \eqref{contact1} the boundary conditions ${\uP_{\rm i}}=P_{\rm atm}$ on the boundary $\Gamma(t)=\partial{\mathcal I}(t)$. To obtain the boundary condition on $Q$ stated in \eqref{coupling}, we just need to remark that
$$
Q_{\rm e}(t,X)-Q_{\rm i}(t,X)=\int_{-h_0+b}^\zeta \big(V_{\rm e}(t,X,z)-V_{\rm i}(t,X,z)\big)dz; 
$$
since the flow is incompressible and irrotational, we know by standard elliptic theory that the velocity field $\bU$ and therefore $V$ is smooth in the interior of $\Omega$. This implies that the r.-h.-s. in the above expression vanishes, and therefore that $Q_{\rm e}=Q_{\rm i}$ on $\Gamma(t)$.
This achieves the proof of the first part of the proposition. \\
For the second part, we just need to remark that \eqref{Eulerav2}-\eqref{eqPw} imply that $\dt^2\zeta=\dt^2\zeta_{\rm w}$ so that $\zeta=\zeta_{\rm w}$ provided that $(\zeta,\dt\zeta)$ and $(\zeta_{\rm w}, \dt \zeta_{\rm w})$ coincide at $t=0$, leading to the assumptions on the initial conditions made in the statement of the proposition.
\end{proof}

\section{Coupling with the solid dynamics}\label{sectsolid}

We address in this section the coupling of the water waves equations with a floating structure \eqref{Eulerav2}-\eqref{eqPw} with the motion of the partially immersed solid which at time $t$ occupies the volume ${\mathcal C}(t)$. This coupling was already present in \eqref{Eulerav2}-\eqref{eqPw} but through the presence of the second time derivative $\dt^2 \zeta_{\rm w}$ which is not a natural quantity to describe the dynamics of the solid. We therefore want to derive a version of the equations \eqref{Eulerav2}-\eqref{eqPw} in terms of the velocity of the center of mass of the solid, and of its angular velocity.\\
We first consider in \S \ref{sectpresc} the case where the motion of the solid is prescribed; the case of a freely floating object is then treated in \S \ref{sectfloat}. In both cases, the general two-dimensional case is treated first, and the one-dimensional case where considerable simplifications can be performed is considered subsequently.
\begin{notation}
Throughout this section, we shall denote by $G(t)=(X_G(t),z_G(t))\in \R^{2+1}$ the coordinates of the center of mass of the solid and by
${\bf U}_G$ its velocity
$$
{\bf U}_G(t)=\left(\begin{array}{c} V_G(t) \\ w_G(t) \end{array}\right)=\left(\begin{array}{c} \dot{X}_G(t) \\ \dot{z}_G(t) \end{array}\right),   $$
where the dot stands for the time derivative.\\
 We also denote by $\bom(t)=(\bom_{\rm h}(t),\omega_{\rm v}(t))\in \R^{2+1}$ the angular velocity of the solid.
  \end{notation}

As for the kinematic condition \eqref{Eul5}, one easily derives that
\begin{equation}\label{Eul5sol}
\dt \zeta_{\rm w}-\underline{U}_{\rm w}\cdot N_{\rm w}=0 \quad\mbox{ on }\quad {\mathcal I}(t)
\quad\mbox{ with }\quad N_{\rm w}=\left(\begin{array}{c}-\nabla\zeta_{\rm w}\\ 1\end{array}\right)
\end{equation}
and where $\underline{U}_{\rm w}$ denotes the velocity of the solid on the wetted surface,
$$
\forall X\in {\mathcal I}(t),\qquad \underline{U}_{\rm w}(t,X)={\bf U}_{\mathcal C}(t,X,\zeta(t,X)),
$$
and ${\bf U}_{\mathcal C}(t,X,z)$ is the velocity at time $t$ of the point $(X,z)\in {\mathcal C}(t)$. From standard solid mechanics, we have therefore, 
\begin{equation}\label{defUc}
\underline{U}_{\rm w}={\bf U}_G+\bom\times {\bf r}_G
\quad \mbox{ with }\quad {\mathbf r}_G(t,X)=\left(\begin{array}{c} X-X_G(t)\\ \zeta_{\rm w}(t,X)-z_G(t)\end{array}\right),
\end{equation}
so that \eqref{Eul5sol} gives the following relation
\begin{align}
\label{eqdtzeta}
\dt \zeta_{\rm w}=&\big({\bf U}_G+\bom\times {\bf r}_G\big)\cdot N_{\rm w}\quad\mbox{ in }\quad {\mathcal I}(t).
\end{align}
We now have to distinguish two different situations
\begin{itemize}
\item The solid is in prescribed motion, in which case $G$ and $\bom$ are known functions of time
\item The solid is  freely floating, in which case the evolution of $G$ and $\bom$ are unknown functions whose evolution is coupled to the wave motion.
\end{itemize}
\subsection{The case of a structure with a prescribed motion}\label{sectpresc}

When the motion of the solid is prescribed, there is no influence of the flow on its motion, but the flow is of course affected by the presence of the solid. This influence is taken into account by the interior pressure $\underline{P}_{\rm i}$ in the equations \eqref{Eulerav2}-\eqref{eqPw} (the flow is pressurized). In the following proposition, we show how this pressure can be computed in terms of the position of the center of mass and of the rotation matrix. 
We consider first the most general case $d=2$; the simplifications in the one dimensional case $d=1$ where many computations can be carried out explicitly are described in \S \ref{sect1dpresc}.
\subsubsection{The general two dimensional case}
Before stating the main result of this section, it is convenient to introduce the following notations. We first define the second fundamental form associated to the solid structure. Denoting by ${{\bf n}}_{\rm w}=\frac{1}{\abs{N_{\rm w}}}N_{\rm w}$ the upward unit normal vector to the solid ${\mathcal C}(t)$ on the wetted surface $\partial_{\rm w}\cC(t)$, and by $T_X\partial_{\rm w}\cC$ the tangent plane to this surface at the point $(X,\zeta_{\rm w}(t,X))$, the second fundamental form is the bilinear mapping
$$
{\bf{\rm II}}: \begin{array}{lcl}
T_X\partial_{\rm w}\cC \times T_X\partial_{\rm w}\cC & \to & \R\\
({\bf t}, {\bf t'})&\mapsto& -(\nabla_{\bf t}{\bf n}_{\rm w},{\bf t}'),
\end{array}
$$
where $\nabla_{{\bf t}}{{\bf n}}_{\rm w}$ is the directional derivative of ${{\bf n}}_{\rm w}$ in the direction ${\bf t}$. We also define the tangent vector $\uU_{{\rm w},\tau}$ as
$$
\uU_{{\rm w},\tau}=\uU_{{\rm w}}-(\uU_{{\rm w}}\cdot N_{\rm w}){\bf e}_{z}=\left(\begin{array}{c} \uV_{{\rm w}} \\ \uV_{{\rm w}}\cdot \nabla\zeta_{\rm w}\end{array}\right),
$$
and we also define ${\mathcal Q}[{\bf r}_G](\cdot)$ as the quadratic form
$$
{\mathcal Q}[{\bf r}_G](V_G,\bom)
=(\bom\times N_{\rm w})\cdot \big(\bom\times {\bf r}_G-2\uU_{{\rm w},\tau}\big)-\sqrt{1+\abs{\nabla\zeta_{\rm w}}^2}{\bf{\rm II}}(\uU_{{\rm w},\tau},\uU_{{\rm w},\tau}).
$$
We recall that it is always assumed that the initial condition satisfies \eqref{assCI} so that the constraint \eqref{constraint} is automatically satisfied.
\begin{proposition}\label{proppresc}
Denoting by  $\bug$ the velocity  of the center of mass and by $\bom$ the angular velocity, the water waves equations with a floating structure then take the form
$$
\begin{cases}
\dsp \dt \zeta+\nabla\cdot Q=0,\\
\dsp \dt Q+\nabla\cdot (\frac{1}{h} Q\otimes Q)+gh \nabla\zeta+\nabla\cdot {\bf R}(h,Q)+h{\bf a}_{\rm NH}(h,Q)=S^{\rm I}+S^{\rm II}+S^{\rm III},
\end{cases}
$$
with the coupling conditions at the contact line
$$
\zeta_{\rm e}=\zeta_{\rm i} \quad \mbox{ and }\quad Q_{\rm e}=Q_{\rm i} \quad\mbox{ on }\quad \Gamma(t),
$$
and with the source terms  given in the exterior and interior domains by
$$
S^{j}_{\rm e}=0 \quad\mbox{ and }\quad S^{j}_{\rm i}=-\frac{h}{\rho}\nabla \uP_{\rm i}^j\qquad (j={\rm I,II,III})
$$
where: - $\uP_{\rm i}^{\rm I}$ corresponds to the interior pressure one would have if the solid were fixed,
$$
\begin{cases}
-\nabla\cdot (\frac{h}{\rho}\nabla \uP_{\rm i}^{\rm I})=
{\bf a}_{\rm FS}(h,Q) \quad\mbox{\textnormal{on}}\quad {\mathcal I}(t),\\
{\uP_{\rm i}^{\rm I}}_{\vert_{\Gamma(t)}}=P_{\rm atm},
\end{cases}
$$
- $\uP_{\rm i}^{\rm II}$ depends linearly on the first time derivatives of ${\bf U}_G$ and $\bom$,
$$
\begin{cases}
-\nabla\cdot (\frac{h}{\rho}\nabla \uP_{\rm i}^{\rm II})=-\big(\dot{{\bf U}}_G+\dot{\bom}\times {\bf r}_G\big)\cdot N_{\rm w} \quad\mbox{\textnormal{on}}\quad {\mathcal I}(t),\\
{\uP_{\rm i}^{\rm II}}_{\vert_{\Gamma(t)}}=0,
\end{cases}
$$
- $\uP_{\rm i}^{\rm III}$ gathers the quadratic terms in $V_G$ and $\bom$,
$$
\begin{cases}
-\nabla\cdot (\frac{h}{\rho}\nabla \uP_{\rm i}^{\rm III})={\mathcal Q}[{\bf r}_G](V_G,\bom)
 \quad\mbox{\textnormal{on}}\quad {\mathcal I}(t),\\
{\uP_{\rm i}^{III}}_{\vert_{\Gamma(t)}}=0.
\end{cases}
$$
\end{proposition}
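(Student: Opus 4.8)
The plan is to start from Proposition~\ref{propWWst}, which, since the constraint \eqref{constraint} is assumed throughout, still gives in the prescribed-motion case that ${\uP_{\rm e}}=P_{\rm atm}$ and that the interior pressure solves
$$
-\nabla\cdot\big(\frac{h}{\rho}\nabla\uP_{\rm i}\big)=-\dt^2\zeta_{\rm w}+{\bf a}_{\rm FS}(h,Q)\quad\mbox{on }\cI(t),\qquad {\uP_{\rm i}}_{\vert_{\Gamma(t)}}=P_{\rm atm}.
$$
All the content is therefore in rewriting $\dt^2\zeta_{\rm w}$ --- the only place where the (now known) solid motion enters --- in terms of ${\bf U}_G$, $\bom$ and their first time derivatives, after which the decomposition of the statement will follow from the linearity of $P\mapsto-\nabla\cdot(\frac{h}{\rho}\nabla P)$ and the uniqueness of solutions of the associated Dirichlet problem on $\cI(t)$.

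First I would differentiate the kinematic relation \eqref{eqdtzeta}, written as $\dt\zeta_{\rm w}=\uU_{\rm w}\cdot N_{\rm w}$ with $\uU_{\rm w}={\bf U}_G+\bom\times{\bf r}_G$, at a fixed point $X\in\cI(t)$. Using ${\bf r}_G=(X-X_G,\zeta_{\rm w}-z_G)$ one has $\dt{\bf r}_G=-{\bf U}_G+(\dt\zeta_{\rm w}){\bf e}_z$, and from $N_{\rm w}=(-\nabla\zeta_{\rm w},1)$ one has $\dt N_{\rm w}=(-\nabla\dt\zeta_{\rm w},0)$. Sorting the resulting expression by homogeneity in $({\bf U}_G,\bom)$ gives
$$
\dt^2\zeta_{\rm w}=(\dot{\bf U}_G+\dot{\bom}\times{\bf r}_G)\cdot N_{\rm w}+\big([\bom\times\dt{\bf r}_G]\cdot N_{\rm w}+\uU_{\rm w}\cdot\dt N_{\rm w}\big),
$$
where the first term is affine in the solid accelerations and the bracket is quadratic in the solid velocities. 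The heart of the proof is to check that this quadratic remainder equals $-{\mathcal Q}[{\bf r}_G](V_G,\bom)$.

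To do so, I would expand $\uU_{\rm w}\cdot\dt N_{\rm w}=-\uV_{\rm w}\cdot\nabla(\dt\zeta_{\rm w})$ by differentiating $\dt\zeta_{\rm w}=\uU_{\rm w}\cdot N_{\rm w}$ in the horizontal variable; one recognizes that $\sum_j(\uV_{\rm w})_j\,\partial_j{\bf r}_G=\uU_{{\rm w},\tau}$ is exactly the tangential velocity defined before the statement, and that the part of the expansion carrying the Hessian of $\zeta_{\rm w}$ equals $\sqrt{1+\abs{\nabla\zeta_{\rm w}}^2}\,{\bf{\rm II}}(\uU_{{\rm w},\tau},\uU_{{\rm w},\tau})$ by the very definition of the second fundamental form of $\partial_{\rm w}\cC$ viewed as the graph $z=\zeta_{\rm w}$. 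In parallel, $[\bom\times\dt{\bf r}_G]\cdot N_{\rm w}$ is rewritten using ${\bf U}_G=\uU_{\rm w}-\bom\times{\bf r}_G$ and the scalar triple-product identities, and one checks that the two contributions proportional to $(\dt\zeta_{\rm w})(\bom\times{\bf e}_z)\cdot N_{\rm w}$ cancel. Collecting everything, the quadratic remainder reduces to $2(\bom\times N_{\rm w})\cdot\uU_{{\rm w},\tau}-(\bom\times N_{\rm w})\cdot(\bom\times{\bf r}_G)+\sqrt{1+\abs{\nabla\zeta_{\rm w}}^2}\,{\bf{\rm II}}(\uU_{{\rm w},\tau},\uU_{{\rm w},\tau})$, which is precisely $-{\mathcal Q}[{\bf r}_G](V_G,\bom)$. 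This bookkeeping --- identifying which second-order terms recombine into the shape operator of the wetted surface and which cancel --- is the only delicate point: it is elementary but the cancellations are easy to get wrong, so it is where I would be most careful.

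It then remains to conclude. Plugging $\dt^2\zeta_{\rm w}=(\dot{\bf U}_G+\dot{\bom}\times{\bf r}_G)\cdot N_{\rm w}-{\mathcal Q}[{\bf r}_G](V_G,\bom)$ into the equation for $\uP_{\rm i}$ gives
$$
-\nabla\cdot\big(\frac{h}{\rho}\nabla\uP_{\rm i}\big)={\bf a}_{\rm FS}(h,Q)-(\dot{\bf U}_G+\dot{\bom}\times{\bf r}_G)\cdot N_{\rm w}+{\mathcal Q}[{\bf r}_G](V_G,\bom)\quad\mbox{on }\cI(t),
$$
with ${\uP_{\rm i}}_{\vert_{\Gamma(t)}}=P_{\rm atm}$. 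Defining $\uP_{\rm i}^{\rm I},\uP_{\rm i}^{\rm II},\uP_{\rm i}^{\rm III}$ as the solutions of the three Dirichlet problems of the statement --- the datum $P_{\rm atm}$ being carried by $\uP_{\rm i}^{\rm I}$ and homogeneous data assigned to the other two --- linearity and uniqueness give $\uP_{\rm i}=\uP_{\rm i}^{\rm I}+\uP_{\rm i}^{\rm II}+\uP_{\rm i}^{\rm III}$. Since in the exterior domain $\uP=P_{\rm atm}$ and hence $\nabla\uP=0$, setting $S^j_{\rm e}=0$ and $S^j_{\rm i}=-\frac{h}{\rho}\nabla\uP_{\rm i}^j$ yields $S^{\rm I}+S^{\rm II}+S^{\rm III}=-\frac{h}{\rho}\nabla\uP$ on all of $\R^d$, so the momentum equation of Proposition~\ref{proppresc} coincides with that of \eqref{Eulerav2}; the mass-conservation equation and the contact-line conditions are unchanged, which completes the argument.
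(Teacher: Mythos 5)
Your proposal is correct and follows essentially the same route as the paper: differentiate $\dt\zeta_{\rm w}=\uU_{\rm w}\cdot N_{\rm w}$ in time, isolate the part linear in $(\dot{\bf U}_G,\dot\bom)$, identify the quadratic remainder with $-{\mathcal Q}[{\bf r}_G](V_G,\bom)$ using $\uU_{{\rm w},\tau}$ and the graph representation of the second fundamental form, and then split $\uP_{\rm i}$ by linearity of the Dirichlet problem \eqref{eqPw}. Your explicit cancellation of the $(\dt\zeta_{\rm w})(\bom\times{\bf e}_z)\cdot N_{\rm w}$ contributions is just a rephrasing of the paper's step absorbing that term into $\uU_{{\rm w},\tau}$ via $\bU_G-(\uU_{\rm w}\cdot N_{\rm w}){\bf e}_z=\uU_{{\rm w},\tau}-\bom\times{\bf r}_G$, and it does check out.
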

\begin{remark}
The formula for $\uP_{\rm i}^{\rm II}$ and $\uP_{\rm i}^{\rm III}$ involve ${\bf r}_G$ and $N_{\rm w}$ which require the knowledge of $G=(X_G,z_G)$ and $\zeta_{\rm w}$. They can both be deduced from $\bug$ and $\bom$. Indeed, the position of the center of mass is found by solving (denoting by $G_0$ the initial position of the center of mass)
$$
\dot G=\bug,\qquad G(0)=G_0
$$
while $\zeta_{\rm w}$ is determined by the position of the solid which, at time $t$, is given by
$$
{\mathcal C}(t)=\big\{G(t)+\Theta(t)(M-G_0), M\in {\mathcal C}(0)\big\}
$$
with the rotation matrix $\Theta$ satisfying
$$
\dot\Theta=\bom\times \Theta,\qquad \Theta(0)=\mbox{\rm Id}.
$$
\end{remark}
\begin{remark}
\item- If ${\mathcal C}(t)$ is a sphere with a fixed center of mass (i.e. if $\bug=0$), then ${\mathcal Q}[{\bf r}_G](V_G,\bom)=0$ and $S^{\rm III}$ is therefore identically zero. This follows from simple computations and from the observation that, for a sphere of radius $R$, one has 
$$
{\bf{\rm II}}({\bf t}, {\bf t}')=\frac{1}{R}({\bf t}, {\bf t}'), \qquad \uU_{{\rm w},\tau}=\uU_{{\rm w}}=\bom\times{\bf r}_G
\quad\mbox{ and }\quad N_{\rm w}=-\frac{1}{R}\sqrt{1+\abs{\nabla\zeta_{\rm w}}^2}{\bf r}_G.
$$
\item - Similarly, ${\mathcal Q}[{\bf r}_G](V_G,\bom)=0$ if the solid is constrained to move vertically (so that $V_G=0$ and $\bom=0$). We shall use this remark in \S \ref{sectdiscrete} for the configuration for which we provide numerical schemes and simulations.
\end{remark}
\begin{proof}
By linearity of \eqref{eqPw}, it is enough to prove that 
\begin{equation}\label{P61}
\dt^2\zeta_{\rm w}=(\dbug+\dot{\bom}\times {\bf r}_G)\cdot N_{\rm w}- {\mathcal Q}[{\bf r}_G](V_G,\bom)
\quad\mbox{\textnormal{on}}\quad {\mathcal I}(t).
\end{equation}
 Time differentiating \eqref{eqdtzeta}, one gets in ${\mathcal I}(t)$,
$$
\dt^2 \zeta_{\rm w}=\big(\dbug+\dot{\bom}\times {\bf r}_G+\bom\times \dot{\bf r}_G\big)\cdot N_{\rm w}
+\uU_{{\rm w}}\cdot \dt {N}_{\rm w},
$$
and we therefore look closer at the terms $(\bom\times \dot{{\bf r}}_G)\cdot N_{\rm w}$ and $\uU_{{\rm w}}\cdot \dt N_{\rm w}$:
\begin{itemize}
\item[-] The term $(\bom\times \dot{{\bf r}}_G)\cdot N_{\rm w}$. By definition of ${\bf r}_{G}$ and using the fact that $\dt \zeta_{\rm w}=\uU_{{\rm w}}\cdot N_{\rm w}$, we get
\begin{align*}
(\bom\times \dot{{\bf r}}_G)\cdot N_{\rm w}&=-(\bom\times N_{\rm w})\cdot \dot{{\bf r}}_G\\
&=(\bom\times N_{\rm w})\cdot \big(\bU_G-(\uU_{{\rm w}}\cdot N_{\rm w}){\bf e}_z\big),
\end{align*}
and therefore
$$
(\bom\times \dot{{\bf r}}_G)\cdot N_{\rm w}=(\bom\times N_{\rm w})\cdot (\uU_{{\rm w},\tau}-\bom\times {\bf r}_G).
$$
\item[-] The term $\uU_{{\rm w}}\cdot \dt {N}_{\rm w}$. Since the vertical component of $N_{\rm w}$ is time and space independent, and denoting by $\uV_{\rm w}$ the horizontal component of $\uU_{{\rm w}}$, one has
\begin{align*}
\uU_{{\rm w}}\cdot \dt {N}_{\rm w}=&-\uV_{{\rm w}}\cdot \nabla (\uU_{{\rm w}}\cdot N_{\rm w})\\
=&-\big((\uV_{{\rm w}}\cdot \nabla) \uU_{{\rm w}}\big)\cdot N_{\rm w}+\uV_{{\rm w}}\cdot \big((\uV_{{\rm w}}\cdot \nabla)\nabla\zeta_{\rm w}\big).
\end{align*}
Recalling that $\uU_{{\rm w}}=\bU_G+\bom\times {\bf r}_G$, we deduce that
\begin{align*}
\uU_{{\rm w}}\cdot \dt {N}_{\rm w}
=&-\big[\bom\times \big((\uV_{{\rm w}}\cdot \nabla) {\bf r}_G\big)\big]\cdot N_{\rm w}+\uV_{{\rm w}}\cdot H(\zeta_{\rm w})\uV_{{\rm w}},
\end{align*}
where $H(\zeta_{\rm w})$ denotes the Hessian matrix of $\zeta_{\rm w}$. Remarking further that 
$(\uV_{{\rm w}}\cdot \nabla) {\bf r}_G=\uU_{{{\rm w}},\tau}$, we finally get
$$
\uU_{{\rm w}}\cdot \dt {N}_{\rm w}=(\bom\times N_{\rm w})\cdot \uU_{{{\rm w}},\tau}+\uV_{{\rm w}}\cdot H(\zeta_{\rm w})\uV_{{\rm w}}.
$$
\end{itemize}
Gathering all these elements, we get that
$$
\dt^2 \zeta_{\rm w}=\big(\dbug+\dot{\bom}\times {\bf r}_G\big)\cdot N_{\rm w}
+(\bom\times N_{\rm w})\cdot \big(2\uU_{{\rm w},\tau}-\bom\times {\bf r}_G  \big)+\uV_{{\rm w}}\cdot H(\zeta_{\rm w})\uV_{{\rm w}}.
$$
The identity \eqref{P61} follows therefore if we can show that 
$$
{\bf {\rm II}}(\uU_{{\rm w},\tau},\uU_{{\rm w},\tau})=\frac{1}{\sqrt{1+\abs{\nabla\zeta_{\rm w}}^2}} \uV_{{\rm w}}\cdot H(\zeta_{\rm w})\uV_{{\rm w}}.
$$
In the canonical basis $({\bf t}_1,{\bf t}_2)$ of the tangent space, with ${\bf t}_1=(1,0,\partial_x\zeta_{\rm w})^T$ and ${\bf t}_2=(0,1,\dy\zeta_{\rm w})^T$, the of the second fundamental form is $\frac{1}{\sqrt{1+\abs{\nabla\zeta_{\rm w}}^2}}H(\zeta_{\rm w})$, and the tangent vector $\uU_{{\rm w},\tau}$ is represented by $\uV_{{\rm w}}$, so that the result follows.
\end{proof}

\subsubsection{Simplification in the one dimensional case}\label{sect1dpresc}

When the horizontal dimension $d$ is equal to $1$, the number of unknown variables reduces:
\begin{itemize}
 \item For the solid. In dimension $d=1$, the velocity of the center of mass has no transverse component, $\bug=(u_G,0,w_G)^T$, and $\bom=(0,\omega,0)^T$ is perpendicular to the $(x,z)$ plane. We therefore adapt our notations for the sake of simplicity
 $$
 G=(x_G,z_G)^T,\qquad \bug=(u_G,w_G)^T,\qquad {\bf r}_G=(x-x_G,\zeta_{\rm w}-z_G)^T.
 $$
 Instead of  the six components vector $(\bug,\bom)$, the motion of the solid is determined by the three dimensional vector $(u_G,w_G,\omega)$. 
\item  In the fluid. Similarly, in the fluid, the velocity $\bU=(u,0,w)^T$ has no transverse component, and the horizontal discharge takes the form $Q=(q,0)$. The water waves equations \eqref{Eulerav2} in $(\zeta,Q)$ variables therefore simplify into a system of two scalar equations on $(\zeta,q)$, in which the operators ${\bf R}(h,Q)$ and ${\bf a}_{\rm NH}(h,Q)$ defined in \eqref{defRey}-\eqref{defFNH} are therefore denoted ${\bf R}(h,q)$ and ${\bf a}_{\rm NH}(h,q)$ for the sake of clarity.
\item For the interior domain. Assuming (as we shall always do without loss of generality in dimension $d=1$) that the interior domain is an interval, we write 
$$
{\mathcal I}(t)=\big(x_-(t),x_+(t)\big).
$$
\end{itemize}
The water waves equations with a floating structure \eqref{Eulerav2}-\eqref{coupling}, as well as the equations for the interior pressure given in Proposition \ref{proppresc} take a much simpler form due the smaller number of variables. The most striking simplification however is that among the three components of the interior pressure described in Proposition \ref{proppresc}, the computations of the last two -- that take into account the motion of the solid structure -- can be carried out explicitly. It is convenient at this point to introduce the following notation for a horizontal averaging in the interior domain that take into account the shape of the immersed region of the solid.
\begin{notation}\label{notav}
If $f$ is a function defined on ${\mathcal I}(t)=\big(x_-(t),x_+(t)\big)$, we define its average and oscillating components as
$$
\av{f}:=\frac{1}{\dsp \int_{x_-}^{x_+}\frac{1}{h}}\int_{x_-}^{x_+}\frac{f}{h}
\quad \mbox{ and }\quad
f^*:=f-\av{f}.
$$
\end{notation}
We can now state the following proposition in which it is shown that the contributions due to the motion of the solid in the momentum equation for the fluid can be computed explicitly.
 \begin{proposition}\label{propWWCNU}
Assume that $d=1$ and that the position of the center of mass and the angular velocity are some given functions of time $t\mapsto G(t)=(x_G(t),z_G(t))$ and $t\mapsto \omega(t)$. The water waves equations with a floating structure \eqref{Eulerav2}-\eqref{coupling} take the form
$$
\begin{cases}
\dsp \dt \zeta+\dx q=0,\\
\dsp \dt q+\dx (\frac{1}{h}q^2)+gh \dx\zeta+\dx {\bf R}(h,q)+h{\bf a}_{\rm NH}(h,q)=S^{\rm I}+S^{\rm II}+S^{\rm III},
\end{cases}
$$
where the source terms $S^{\rm I}$, $S^{\rm II}$ and $S^{\rm III}$ are given by
\begin{align*}
S^{\rm I}_{\rm e}&=0 \quad \mbox{ and }\quad S^{\rm I}_{\rm i}=\big[\dx (\frac{1}{h}q^2)+gh\dx\zeta+\dx {\bf R}(h,q)+h{\bf a}_{\rm NH}(h,q)\big]^*\\
S^{\rm II}_{\rm e}&=0 \quad \mbox{ and }\quad S^{\rm II}_{\rm i}=\dot \bU_G^\perp\cdot {\bf r}_G^*+\frac{1}{2}\dot\omega (\abs{{\bf r}_G}^2)^*\\
S^{\rm III}_{\rm e}&=0 \quad \mbox{ and }\quad S^{\rm III}_{\rm i}=-u_G^2(\dx\zeta_{\rm w})^*+2u_G\omega\big({\bf r}_G\cdot N^\perp_{\rm w}\big)^* +\omega^2 \big((\zeta_{\rm w}-z_G){\bf r}_G\cdot N^\perp_{\rm w}\big)^*
\end{align*}
and with the coupling conditions at the contact points
$$
\zeta_{\rm e}=\zeta_{\rm i} \quad \mbox{ and }\quad q_{\rm e}=q_{\rm i} \quad\mbox{ at }\quad x=x_\pm(t).
$$
\end{proposition}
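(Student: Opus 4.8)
The plan is as follows. The first two equations of the statement, and the coupling conditions at the contact points $x=x_\pm(t)$, are merely the $d=1$ transcription of \eqref{Eulerav2} and \eqref{coupling} (with $Q=(q,0)$ and $\bU=(u,0,w)^T$), and the decomposition of the right-hand side into $S^{\rm I}+S^{\rm II}+S^{\rm III}$, with $\uP_{\rm i}=\uP_{\rm i}^{\rm I}+\uP_{\rm i}^{\rm II}+\uP_{\rm i}^{\rm III}$ solving the three elliptic problems listed there, is already contained in Proposition~\ref{proppresc} (valid for $d=1,2$). So the only thing left to prove is that, when $d=1$, those three elliptic boundary value problems can be solved explicitly, in the sense that the source terms $S^{j}_{\rm i}=-\frac h\rho\dx\uP_{\rm i}^{j}$ are given by the stated formulas.

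The key observation is an elementary one-dimensional inversion formula. If $-\dx\big(\frac h\rho\dx P\big)=F$ on ${\mathcal I}(t)=(x_-,x_+)$ and $P$ takes the same value at the two endpoints $x_\pm$ (namely $P_{\rm atm}$ for $j={\rm I}$, and $0$ for $j={\rm II},{\rm III}$), then, setting $S_{\rm i}=-\frac h\rho\dx P$, one has on the one hand $\dx S_{\rm i}=F$, and on the other hand $\int_{x_-}^{x_+}\frac1h S_{\rm i}=-\frac1\rho\big(P(x_+)-P(x_-)\big)=0$, i.e.\ $\av{S_{\rm i}}=0$ in the sense of Notation~\ref{notav}. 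Hence $S_{\rm i}=G^*$ for \emph{any} $x$-antiderivative $G$ of $F$ (subtracting the constant $S_{\rm i}-G$ does not change the $(\cdot)^*$). The proof thus reduces to exhibiting, for each $j$, one such antiderivative.

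For $j={\rm I}$ this is immediate: the source is $F={\bf a}_{\rm FS}(h,q)$, which by \eqref{defba} is already of the form $\dx\big[\dx(\frac1h q^2)+gh\dx\zeta+\dx{\bf R}(h,q)+h{\bf a}_{\rm NH}(h,q)\big]$, so one reads off $G$ and obtains $S^{\rm I}_{\rm i}$ as stated. For $j={\rm II}$ and $j={\rm III}$ one first records the one-dimensional form of the geometric data of Proposition~\ref{proppresc}: ${\bf r}_G=(x-x_G,\zeta_{\rm w}-z_G)^T$ with $\dx{\bf r}_G=(1,\dx\zeta_{\rm w})^T$, $N_{\rm w}=(-\dx\zeta_{\rm w},1)^T$, $\bom\times{\bf r}_G=-\omega{\bf r}_G^\perp$ and $\bom\times N_{\rm w}=-\omega N^\perp_{\rm w}$, $\uU_{\rm w}=\bug+\bom\times{\bf r}_G$ so that its horizontal component is $\uV_{\rm w}=u_G+\omega(\zeta_{\rm w}-z_G)$, $\uU_{{\rm w},\tau}=\uV_{\rm w}(1,\dx\zeta_{\rm w})^T$, and $\sqrt{1+\abs{\dx\zeta_{\rm w}}^2}\,{\bf{\rm II}}(\uU_{{\rm w},\tau},\uU_{{\rm w},\tau})=\uV_{\rm w}^2\,\dx^2\zeta_{\rm w}$. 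Then, for $j={\rm II}$ (where $F=-(\dbug+\dot\bom\times{\bf r}_G)\cdot N_{\rm w}$), a one-line differentiation using $\dx{\bf r}_G=(1,\dx\zeta_{\rm w})^T$ shows that $G=\dbug^\perp\cdot{\bf r}_G+\frac12\dot\omega\abs{{\bf r}_G}^2$ is an antiderivative, and since $\dbug$ and $\dot\omega$ do not depend on $x$ one gets $S^{\rm II}_{\rm i}=G^*=\dbug^\perp\cdot{\bf r}_G^*+\frac12\dot\omega(\abs{{\bf r}_G}^2)^*$. For $j={\rm III}$ (where $F={\mathcal Q}[{\bf r}_G](V_G,\bom)$), one expands this quadratic form with the above substitutions, groups the result into its $u_G^2$, $u_G\omega$ and $\omega^2$ parts, and then checks by direct differentiation in $x$ that $G=-u_G^2\dx\zeta_{\rm w}+2u_G\omega\,({\bf r}_G\cdot N^\perp_{\rm w})+\omega^2(\zeta_{\rm w}-z_G)\,({\bf r}_G\cdot N^\perp_{\rm w})$ is an antiderivative; pulling the $x$-independent coefficients $u_G^2,u_G\omega,\omega^2$ out of $(\cdot)^*$ then gives exactly the stated expression for $S^{\rm III}_{\rm i}$.

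The conceptual content — the inversion formula of the second paragraph, and the case $j={\rm I}$ — is essentially free, and $j={\rm II}$ costs a single differentiation. The only step requiring genuine (though still entirely elementary) computation is $j={\rm III}$: reducing ${\mathcal Q}[{\bf r}_G]$ and the second fundamental form to one dimension, organizing the outcome by homogeneity in $u_G$ and $\omega$, and verifying the proposed antiderivative term by term; here the main thing to watch is the sign and orientation conventions for the $\perp$ operation and for the cross products $\bom\times{\bf r}_G$ and $\bom\times N_{\rm w}$.
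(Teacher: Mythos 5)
Your proposal is correct and follows essentially the same route as the paper: reduce to $d=1$, invert the interior elliptic problems explicitly (your inversion formula is the paper's Lemma~\ref{lemexplicit}, phrased via ``any antiderivative plus the vanishing of $\av{S_{\rm i}}$''), and write each source $\big({\bf a}_{\rm FS}$, $-(\dbug+\dot\bom\times{\bf r}_G)\cdot N_{\rm w}$, ${\mathcal Q}[{\bf r}_G]\big)$ in divergence form before taking the oscillating part; your claimed antiderivatives for $j={\rm II},{\rm III}$ coincide (up to the sign identity ${\bf r}_G\cdot N_{\rm w}^\perp=-{\bf r}_G^\perp\cdot N_{\rm w}$) with those computed in the paper. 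The only cosmetic difference is that you handle the common boundary value $P_{\rm atm}$ directly instead of shifting to zero boundary data.
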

\begin{proof}
We shall repeatedly use the following lemma in this proof.
\begin{lemma}
\label{lemexplicit}
Let $x_-<x_+$ and $g,h\in C([x_-,x_+])$ be such that $\inf h>0$ on $[x_-,x_+]$. There exists a unique solution $P\in C^1([x_-,x_+])$ to the boundary value problem
$$
\begin{cases}
-\dx (h \dx P)=\dx g \quad\mbox{ in }\quad (x_-,x_+),\\
P(x_\pm)=0,
\end{cases}
$$
and moreover one has, using Notation \ref{notav},
$$
-h\dx P= g^*\quad\mbox{ in }\quad [x_-,x_+].
$$
\end{lemma}
\begin{proof}[Proof of the lemma]
Integrating, one gets
$$
-h\dx P=g+c
$$
for some integration constant $c$. Dividing by $h$, using the fact that $P(x_-)=0$, and integrating again yields
$$
-P=\int_{x_-}^x\frac{g}{h}+c\int_{x_-}^x\frac{1}{h}.
$$
The value of $c$ is then given by the fact that $P(x_+)=0$, namely,
$$
c=- \av{g};
$$
plugging this into the expression for $-h\dx P$ derived above, this gives the result.
\end{proof}
In dimension $d=1$, the equation for $P_{\rm i}^{\rm I}$ becomes, in $\big(x_-(t),x_+(t)\big)$,
\begin{align*}
-\dx\big(\frac{h}{\rho}\dx \underline{P}_{\rm i}^{\rm I})&={\bf a}_{\rm FS}(h,q) \\
&=\dx\big[\dx (\frac{1}{h}q^2)+gh\dx\zeta+\dx {\bf R}(h,q)+h{\bf a}_{\rm NH}(h,q)\big],
\end{align*}
with the boundary condition ${\uP_{\rm i}}_{\vert_{x_\pm(t)}}=P_{\rm atm}$. It follows from Lemma \ref{lemexplicit} that
$$
-\frac{h}{\rho}\dx \uP_{\rm i}^{\rm I}=\big[\dx (\frac{1}{h}q^2)+gh\dx\zeta+\dx {\bf R}(h,q)+h{\bf a}_{\rm NH}(h,q)\big]^*.
$$
Similarly, the equation for $\uP_{\rm i}^{\rm II}$ is
\begin{align*}
-\dx (\frac{h}{\rho}\dx \uP_{\rm i}^{\rm II})&=\dot{u}_G\dx\zeta_{\rm w}-\dot{w}_G+\dot{\omega}  \big((x-x_G)+(\zeta_{\rm w}-z_G)\dx\zeta_{\rm w}\big)   \\
&=\dx\big[\dot{\bU}^\perp_G\cdot {\bf r}_G +\frac{1}{2}\dot{\omega} \vert {\bf r}_G \vert^2\big],
\end{align*}
with the boundary conditions ${\uP_{\rm i}^{\rm II}}_{\vert_{x_\pm(t)}}=0$. It follows therefore from Lemma \ref{lemexplicit} that
$$
-\frac{h}{\rho}\dx \uP_{\rm i}^{\rm II}=\dot \bU_G^\perp\cdot {\bf r}_G^*+\frac{1}{2}\dot\omega (\abs{{\bf r}_G}^2)^*.
$$
Finally, one has for $\uP^{\rm III}_{\rm i}$,
\begin{align*}
-\dx(\frac{h}{\rho} \dx \uP_{\rm i}^{\rm III})=&-u_G^2\dx^2\zeta_{\rm w}-2u_G\omega \big(1+(\dx\zeta_{\rm w})^2+(\zeta_{\rm w}-z_G)\dx^2\zeta_{\rm w}\big)\\
&-\omega^2 \big((x-x_G)\dx\zeta_{\rm w}+(\zeta_{\rm w}-z_G)(1+2(\dx\zeta_{\rm w})^2)+(\zeta_{\rm w}-z_G)^2\dx^2\zeta_{\rm w}\\
=&-\dx \big[ u_G^2 \dx\zeta_{\rm w}+2u_G\omega \big((x-x_G)+(\zeta_{\rm w}-z_G)\dx\zeta_{\rm w}\big)\\
&\phantom{-\dx\big[}+\omega^2 \big((x-x_G)(\zeta_{\rm w}-z_G)+(\zeta_{\rm w}-z_G)^2\dx\zeta_{\rm w}\big]
\end{align*}
with the boundary conditions ${\uP_{\rm i}^{\rm III}}_{\vert_{x_\pm(t)}}=0$. We therefore get from Lemma \ref{lemexplicit} that
$$
-\frac{h}{\rho} \dx \uP_{\rm i}^{\rm III}=-u_G^2(\dx\zeta_{\rm w})^*-2u_G\omega\big({\bf r}_G^\perp\cdot N_{\rm w}\big)^* -\omega^2 \big((\zeta_{\rm w}-z_G){\bf r}_G^\perp\cdot N_{\rm w}\big)^*.
$$
Setting $S^{\rm j}_{\rm i}=-\frac{h}{\rho}\dx \uP_{\rm i}^{\rm j}$ ($j={\rm I,II,III}$) then gives the result.
\end{proof}

\subsection{The case of a freely floating structure}\label{sectfloat}

When the solid is freely floating, its  motion is still determined by the velocity of its center of mass and by its angular velocity. These two quantities are however no longer prescribed functions of time and must be found by solving Newton's laws in which the force and torque exerted by the fluid on the solid play an important role. This strong coupling is investigated here, and we exhibit in particular the added mass effect that it induces.\\
We consider first the most general case $d=2$; the simplifications in the one dimensional case $d=1$ are described in \S \ref{sect1dfloat} below.
\subsubsection{The general two dimensional case}

As above for the case of a prescribed motion, the motion of the solid is determined by the velocity of its center of mass $\bug(t)=(V_G(t),w_G(t))\in \R^{2+1}$ and its angular velocity $\bom(t)=(\bom_{\rm h}(t),\omega_{\rm v}(t))\in \R^{2+1}$. The difference is that these functions are not a priori known any more and must be determined through Newton's laws for the floating solid.
 We shall need the following notations.
\begin{notation}
 We denote by ${\mathfrak m}$ the mass of the solid object, and by  ${\mathfrak I}(t)$ the inertia matrix of the body relative to the center of mass and measured in the Eulerian frame; this frame being inertial, the inertia matrix depends on time. Its value is determined from its value at time $t=0$ through the formula
 \begin{equation}\label{eqI}
 {\mathfrak I}(t)=\Theta(t) {\mathfrak I}(0)\Theta(t)^T,
 \end{equation}
 where $\Theta(t)\in SO(3)$ is the rotation matrix found by solving the ODE
 \begin{equation}\label{eqTheta}
 \dot\Theta=\bom\times \Theta,\qquad \Theta(0)=\mbox{Id}_{3\times 3}.
 \end{equation}
\end{notation}  
We can now state Newton's laws for the conservation of linear momentum and angular momentum,
\begin{align}
\label{Newton1}
{\mathfrak m} \dbug&=-{\mathfrak m} g{\bf e}_z+F_{\rm fluid}\\
\label{Newton2}
\frac{d}{dt}\big({\mathfrak I}{\bom}\big)&=T_{\rm fluid},
\end{align}
where $F_{\rm fluid}$ and $T_{\rm fluid}$ are respectively the resulting force and torque exerted by the fluid on the solid\footnote{It is actually the resulting force and torque after deducing the contribution due to the atmospheric pressure.},
\begin{equation}\label{forcetorque}
F_{\rm fluid}=\int_{{\mathcal I}(t)}(\uP_{\rm i}-P_{\rm atm}) N_{\rm w} 
\quad\mbox{ and }\quad
T_{\rm fluid}=\int_{{\mathcal I}(t)} (\uP_{\rm i}-P_{\rm atm}){\bf r}_G\times N_{\rm w}, 
\end{equation}
where we recall that  $N_{\rm w}=\left(\begin{array}{c}-\nabla\zeta_{\rm w} \\ 1\end{array}\right)$.
We shall show that part of the contribution of $F_{\rm fluid}$ and $T_{\rm fluid}$ can be put under the form of an added mass operator in Newton's laws \eqref{Newton1}-\eqref{Newton2}; to this end, we need to introduce the elementary potentials.
\begin{definition}[Elementary potentials]\label{defelempot}
Let ${\mathcal I}\subset{\R^2}$ be a bounded domain with Lipschitz boundary $\Gamma$,  $\zeta_{\rm w}\in W^{1,\infty}(\cI)$ and $h\in C(\overline{\cI})$ be such that $\inf_{\cI} h>0$. Let also $G=(X_G,z_G)\in \R^3$ and denote by $N_{\rm w}$ and ${\bf r}_G$ the vectors fields 
$$
\forall X\in \cI, \qquad N_{\rm w}(X)=\left(\begin{array}{c} -\nabla \zeta_{\rm w}(X) \\ 1 \end{array}\right)
 \quad\mbox{ and }\quad
{\bf r}_G(X)=\left(\begin{array}{c} X-X_G \\ \zeta_{\rm w}(X)-z_G\end{array}\right).
$$
We define the elementary potentials $\Phi_{\mathcal I}^{(j)}$ ($j=1,\dots,6$) as the unique solutions of the boundary value problems, for $j=1,2,3$,
$$
\begin{cases}
-\nabla\cdot h\nabla \Phi^{(j)}_{{\mathcal I}}=(N_{\rm w})_j \quad \mbox{on}\quad {\mathcal I}\\
\Phi^{(j)}_{{\mathcal I}}\,_{\vert_\Gamma}=0
\end{cases}
\mbox{and }\quad
\begin{cases}
-\nabla\cdot h\nabla \Phi^{(j+3)}_{{\mathcal I}}=({\bf r}_G\times N_{\rm w})_j \quad \mbox{on}\quad {\mathcal I}\\
\Phi^{(j+3)}_{{\mathcal I}}\,_{\vert_\Gamma}=0.
\end{cases}
$$
\end{definition}
\begin{definition}\label{defmassiner} The \emph{mass-inertia} matrix is the (time-dependent) $6\times 6$ block diagonal matrix defined as
$$
{\mathcal M}(t):=\mbox{\rm diag}\big( {\mathfrak m}\mbox{\rm Id}_{3\times 3},{\mathfrak I}(t)\big).
$$
Using the elementary potentials introduced in Definition \ref{defelempot}, we define the \emph{added mass-inertia matrix} as
$$
{\mathcal M}_{\rm a}[h,\boldsymbol{\Phi}_{\mathcal I}]:=\rho \big(\int_{{\mathcal I}}  \frac{1}{h}(h\nabla \Phi^{(j)}_ {{\mathcal I}}) \cdot (h\nabla \Phi^{(k)}_{{\mathcal I}}) \big)_{1\leq j,k\leq 6},
$$
with the notation $\boldsymbol{\Phi}_{\mathcal I}=\big(\Phi^{(1)}_ {{\mathcal I}},\dots, \Phi^{(6)}_ {{\mathcal I}}\big)$. \\
If $S_{\rm i}$ is a $\R^3$-valued function defined on $\cI$, we also define ${\mathcal F}[h,\boldsymbol{\Phi}_{\mathcal I}]S_{\rm i}\in \R^6$ as
$$
{\mathcal F}[h,\boldsymbol{\Phi}_{\mathcal I}]S_{\rm i}=-\rho \big(\int_{{\mathcal I}}  \frac{1}{h}(h\nabla \Phi^{(j)}_ {{\mathcal I}}) \cdot S_{\rm i} \big)_{1\leq j\leq 6}.
$$
\end{definition}
We can now state the following proposition describing the influence of the fluid on the solid motion. Note that the equation for the motion of the solid body are given in the inertial Eulerian frame. It might be convenient in some situation to write these equations in a reference frame moving with the body. Such a formulation is provided in Appendix \ref{appbodyframe}.
\begin{proposition}\label{propfloat}
For a freely floating body, the water waves equation with a floating structure  take the form
$$
\begin{cases}
\dsp \dt \zeta+\nabla\cdot Q=0,\\
\dsp \dt Q+\nabla\cdot (\frac{1}{h}Q\otimes Q)+gh \nabla\zeta+\nabla\cdot {\bf R}(h,Q)+h{\bf a}_{\rm NH}(h,Q)=S^{\rm I}+S^{\rm II}+S^{\rm III},
\end{cases}
$$
with the coupling conditions at the contact line
$$
\zeta_{\rm e}=\zeta_{\rm i} \quad \mbox{ and }\quad Q_{\rm e}=Q_{\rm i} \quad\mbox{ on }\quad \Gamma(t),
$$
and with the source terms  $S^j_{\rm i}$ ($j={\rm I,II,III}$) as in Proposition \ref{proppresc}. Moreover, the velocity $\bug$ of the center of mass and the angular velocity $\bom$ satisfy the ODE 
$$\big({\mathcal M}+{\mathcal M}_{\rm a}[h,\boldsymbol{\Phi}_{{\mathcal I}(t)}]\big)\left(\begin{array}{c} {\dbug} \\ \dot{\bom} \end{array}\right)=\left(\begin{array}{c} -{\mathfrak m}g {\bf e}_z \\
{\mathfrak I}\bom\times \bom 
\end{array}\right)
+{\mathcal F}[h,\boldsymbol{\Phi}_{{\mathcal I}(t)}](S_{\rm i}^{\rm I}+S_{\rm i}^{\rm III}\big).
$$
In particular, one has conservation of the total energy,
$$
\frac{d}{dt}E_{\rm tot}=0 \quad \mbox{ with }\quad E_{\rm tot}:=E_{\rm fluid}+E_{\rm solid}
$$
where $E_{\rm fluid}$ is as in \eqref{defEfluid} while $E_{\rm solid}$ is given by
$$
E_{\rm solid}={\mathfrak m}g z_G+\frac{1}{2}{\mathcal M}\left(\begin{array}{c} {\bug} \\ {\bom} \end{array}\right)\cdot \left(\begin{array}{c} {\bug} \\ {\bom} \end{array}\right).
$$
\end{proposition}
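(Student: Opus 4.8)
The first part of the statement needs no new work. The identity \eqref{P61} for $\dt^2\zeta_{\rm w}$ proved for Proposition \ref{proppresc} is purely kinematic and does not use that the solid's motion is prescribed; hence, by linearity of \eqref{eqPw}, the decomposition $\uP_{\rm i}=\uP_{\rm i}^{\rm I}+\uP_{\rm i}^{\rm II}+\uP_{\rm i}^{\rm III}$ (compatible with the boundary conditions, since $P_{\rm atm}+0+0=P_{\rm atm}$ on $\Gamma(t)$) and the corresponding splitting $S^{\rm I}+S^{\rm II}+S^{\rm III}$ of the source term carry over verbatim from Propositions \ref{propWWst} and \ref{proppresc}. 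What changes is only that $\bug$ and $\bom$ --- hence $S^{\rm II},S^{\rm III}$, which involve $\dbug,\dot\bom,V_G,\bom$ (and, through ${\bf r}_G,N_{\rm w}$, the solid position, itself reconstructed from $\bug,\bom$) --- are now unknowns coupled to the wave motion by Newton's laws.

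The core of the argument is the ODE, and the plan is to evaluate the force and torque \eqref{forcetorque} by inserting $\uP_{\rm i}-P_{\rm atm}=(\uP_{\rm i}^{\rm I}-P_{\rm atm})+\uP_{\rm i}^{\rm II}+\uP_{\rm i}^{\rm III}$ and treating the three pieces separately. For the $\uP_{\rm i}^{\rm I}$ and $\uP_{\rm i}^{\rm III}$ pieces I would use Definition \ref{defelempot} to write $(N_{\rm w})_j=-\nabla\cdot(h\nabla\Phi^{(j)}_{\mathcal I})$ and $({\bf r}_G\times N_{\rm w})_j=-\nabla\cdot(h\nabla\Phi^{(j+3)}_{\mathcal I})$, then integrate by parts over ${\mathcal I}(t)$: the boundary terms vanish because $\uP_{\rm i}^{\rm I}-P_{\rm atm}$ and $\uP_{\rm i}^{\rm III}$ vanish on $\Gamma(t)$ (and so do the $\Phi^{(j)}_{\mathcal I}$), and since $S_{\rm i}^{\rm I}=-\frac{h}{\rho}\nabla\uP_{\rm i}^{\rm I}$ and $S_{\rm i}^{\rm III}=-\frac{h}{\rho}\nabla\uP_{\rm i}^{\rm III}$, this produces precisely ${\mathcal F}[h,\boldsymbol{\Phi}_{{\mathcal I}(t)}](S_{\rm i}^{\rm I}+S_{\rm i}^{\rm III})$ as the corresponding contribution to $(F_{\rm fluid},T_{\rm fluid})\in\R^6$. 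For the $\uP_{\rm i}^{\rm II}$ piece I would compare its source term,
\[
-(\dbug+\dot\bom\times{\bf r}_G)\cdot N_{\rm w}=-\sum_{k=1}^{3}(\dbug)_k(N_{\rm w})_k-\sum_{k=1}^{3}(\dot\bom)_k({\bf r}_G\times N_{\rm w})_k ,
\]
with the sources of the elementary potentials, and use the linearity of \eqref{eqPw} again to get $\uP_{\rm i}^{\rm II}=-\rho\sum_{k=1}^{6}\dot Y_k\,\Phi^{(k)}_{\mathcal I}$ with $Y:=(\bug,\bom)\in\R^6$. Inserting this into \eqref{forcetorque} and integrating by parts (again all boundary terms drop out since the $\Phi^{(k)}_{\mathcal I}$ vanish on $\Gamma(t)$) turns the $j$-th component of the $\uP_{\rm i}^{\rm II}$ contribution into $-\rho\sum_k\dot Y_k\int_{\mathcal I}\frac{1}{h}(h\nabla\Phi^{(j)}_{\mathcal I})\cdot(h\nabla\Phi^{(k)}_{\mathcal I})=-\big({\mathcal M}_{\rm a}[h,\boldsymbol{\Phi}_{{\mathcal I}(t)}]\,\dot Y\big)_j$, i.e.\ exactly $-{\mathcal M}_{\rm a}(\dbug,\dot\bom)$. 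Hence $(F_{\rm fluid},T_{\rm fluid})={\mathcal F}[h,\boldsymbol{\Phi}_{{\mathcal I}(t)}](S_{\rm i}^{\rm I}+S_{\rm i}^{\rm III})-{\mathcal M}_{\rm a}(\dbug,\dot\bom)$; substituting this into Newton's laws written as ${\mathcal M}(\dbug,\dot\bom)=(-{\mathfrak m}g{\bf e}_z,\,{\mathfrak I}\bom\times\bom)+(F_{\rm fluid},T_{\rm fluid})$ --- which uses \eqref{eqI}--\eqref{eqTheta} to compute $\frac{d}{dt}({\mathfrak I}\bom)={\mathfrak I}\dot\bom+\bom\times({\mathfrak I}\bom)$ --- and moving ${\mathcal M}_{\rm a}(\dbug,\dot\bom)$ to the left yields the claimed ODE.

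For the energy identity I would compute $\frac{d}{dt}E_{\rm solid}$ directly. The $\dot{\mathcal M}$-contribution to $\frac12\frac{d}{dt}\big({\mathcal M}(\bug,\bom)\cdot(\bug,\bom)\big)$ is $\frac12\dot{\mathfrak I}\bom\cdot\bom=\frac12\big(\bom\times({\mathfrak I}\bom)\big)\cdot\bom=0$, so $\frac{d}{dt}E_{\rm solid}={\mathfrak m}g\,w_G+{\mathcal M}(\dbug,\dot\bom)\cdot(\bug,\bom)$; using Newton's laws again with $\big(\bom\times({\mathfrak I}\bom)\big)\cdot\bom=0$ and ${\bf e}_z\cdot\bug=w_G=\dot z_G$ gives $\frac{d}{dt}E_{\rm solid}=F_{\rm fluid}\cdot\bug+T_{\rm fluid}\cdot\bom$. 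The cyclic identity $({\bf r}_G\times N_{\rm w})\cdot\bom=(\bom\times{\bf r}_G)\cdot N_{\rm w}$, together with $\uU_{\rm w}=\bug+\bom\times{\bf r}_G$ from \eqref{defUc} and $\uU_{\rm w}\cdot N_{\rm w}=\dt\zeta_{\rm w}$ from \eqref{Eul5sol}, then gives $F_{\rm fluid}\cdot\bug+T_{\rm fluid}\cdot\bom=\int_{{\mathcal I}(t)}(\uP_{\rm i}-P_{\rm atm})\dt\zeta_{\rm w}=\int_{{\mathcal I}(t)}(\uP_{\rm i}-P_{\rm atm})\dt\zeta$ (using the constraint $\zeta=\zeta_{\rm w}$ on ${\mathcal I}(t)$). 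By Remark \ref{remNRG2} this is the opposite of $\frac{d}{dt}E_{\rm fluid}$ (with $E_{\rm fluid}$ and $E_{\rm solid}$ normalized consistently in $\rho$), so $\frac{d}{dt}E_{\rm tot}=0$.

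The main obstacle --- the one step that needs care rather than bookkeeping --- is the identification of the added-mass term: one must exploit the linearity of \eqref{eqPw} to isolate $\uP_{\rm i}^{\rm II}$ and recognize its source as the right linear combination of the elementary-potential sources, and then perform the integrations by parts in \eqref{forcetorque} checking that the homogeneous boundary conditions on $\Phi^{(j)}_{\mathcal I}$, $\uP_{\rm i}^{\rm II}$ and $\uP_{\rm i}^{\rm III}$ make every boundary term vanish, so that the force and torque integrals collapse onto the symmetric Gram-type matrix ${\mathcal M}_{\rm a}$ and onto ${\mathcal F}$. The symmetry, hence positivity, of ${\mathcal M}_{\rm a}$ is built into this structure and is what guarantees that ${\mathcal M}+{\mathcal M}_{\rm a}$ is invertible and the ODE well posed.
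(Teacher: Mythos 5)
Your proposal is correct and follows essentially the same route as the paper's proof: the decomposition of $\uP_{\rm i}$, the identification of the $\uP_{\rm i}^{\rm II}$ contribution to \eqref{forcetorque} as $-{\mathcal M}_{\rm a}[h,\boldsymbol{\Phi}_{\cI}](\dbug,\dot\bom)$ and of the $\uP_{\rm i}^{\rm I},\uP_{\rm i}^{\rm III}$ contributions as ${\mathcal F}[h,\boldsymbol{\Phi}_{\cI}](S^{\rm I}_{\rm i}+S^{\rm III}_{\rm i})$ via the elementary potentials and integrations by parts with vanishing boundary terms, the rigid-body identity $\frac{d}{dt}({\mathfrak I}\bom)={\mathfrak I}\dot\bom+\bom\times({\mathfrak I}\bom)$, and the same energy computation based on $\dot{\mathfrak I}\bom\cdot\bom=0$, $\uU_{\rm w}\cdot N_{\rm w}=\dt\zeta$ and Remark \ref{remNRG2}. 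Your only (harmless) variation is to write $\uP_{\rm i}^{\rm II}=-\rho\sum_k \dot Y_k\Phi^{(k)}_{\cI}$ explicitly by superposition and integrate by parts once, where the paper instead integrates by parts twice; your parenthetical remark about normalizing $E_{\rm fluid}$ and $E_{\rm solid}$ consistently in $\rho$ is also a fair reading of the statement.
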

\begin{remark}
It is shown in Appendix \ref{appadded} that ${\mathcal M}_{\rm a}[h,\boldsymbol{\Phi}_{{\mathcal I}}]$ is not the exact added mass-inertia matrix -- indeed, some of the terms in  ${\mathcal F}[h,\boldsymbol{\Phi}_{\mathcal I}](S_{\rm i}^{\rm I}+S_{\rm i}^{\rm III}\big)$ contribute to it. It is however much more convenient to work with the present form, in particular for the derivation of simplified asymptotic models in shallow water in Section \ref{sectAsfloat}.
\end{remark}
\begin{remark}
In order to compute the elementary potentials that appear in the expression for the added mass, one needs to solve a $d$-dimensional elliptic problem in the (bounded) interior region $\cI(t)$. This has to be compared with the $(d+1)$-dimensional elliptic equation one has to solve in the (unbounded) fluid region $\Omega(t)$ in order to compute the Kirchoff potential that appear classically in the expression for the added mass (see for inctance \cite{GST,GMS}).
 \end{remark}
\begin{remark}
As we shall see in \S \ref{sectSW}, Archimedes' force is contained in ${\mathcal F}[h,\boldsymbol{\Phi}_{{\mathcal I}(t)}]S_{\rm i}^{\rm I}$.
\end{remark}
\begin{proof}
The first step is to rewrite the equation for the angular momentum \eqref{Newton2} under the form
\begin{equation}\label{identangular}
{\mathfrak I}\dot\bom ={\mathfrak I}\bom \times \bom+T_{\rm fluid};
\end{equation}
this is a classical computation in solid mechanics that we reproduce for the sake of completeness. Using \eqref{eqI} and \eqref{eqTheta}, one has
\begin{align*}
\frac{d}{dt}({\mathfrak I}\bom)&=\dot\Theta {\mathfrak I}(0)\Theta^T\bom+\Theta{\mathfrak I}(0)\dot{\Theta}^T\bom+{\mathfrak I}\dot\bom\\
&=\bom\times {\mathfrak I}\bom-{\mathfrak I}\dot\Theta\Theta^T\bom+{\mathfrak I}\dot\bom.
\end{align*}
Since $\dot{\Theta}\Theta^T\bom=\bom\times (\Theta\Theta^T\bom)=\bom\times\bom=0$, \eqref{identangular} follows.\\
Next, with the notations of Proposition \ref{proppresc}, the interior pressure can be decomposed as
$$
\uP_{\rm i}=\uP_{\rm i}^{\rm I}+\uP_{\rm i}^{\rm II}+\uP_{\rm i}^{\rm III};
$$
  we can accordingly decompose the force $F_{\rm fluid}$ and the torque $T_{\rm fluid}$ as
$$
F_{\rm fluid}=F_{\rm fluid}^{\rm I}+F_{\rm fluid}^{\rm II}+F_{\rm fluid}^{\rm III}
\quad\mbox{ and }\quad
T_{\rm fluid}=T_{\rm fluid}^{\rm I}+T_{\rm fluid}^{\rm II}+T_{\rm fluid}^{\rm III},
$$
with
$$
F_{\rm fluid}^{\rm I}=\int_{{\mathcal I}(t)}(\uP_{\rm i}^{\rm I}-P_{\rm atm}) N_{\rm w} 
\quad\mbox{ and }\quad
T_{\rm fluid}^{\rm I}=\int_{{\mathcal I}(t)} (\uP_{\rm i}^{\rm I}-P_{\rm atm}){\bf r}_G\times N_{\rm w},
$$
and
$$
F_{\rm fluid}^{j}=\int_{{\mathcal I}(t)}\uP_{\rm i}^{j} N_{\rm w} 
\quad\mbox{ and }\quad
T_{\rm fluid}^{j}=\int_{{\mathcal I}(t)} \uP_{\rm i}^{j}{\bf r}_G\times N_{\rm w}
\qquad (j={\rm II, III}).
$$
In order to rewrite \eqref{Newton1} and \eqref{identangular} under the desired form, the only things to prove are therefore that
\begin{equation}\label{pkey}
\left(\begin{array}{c} F_{\rm fluid}^{\rm II} \\ T_{\rm fluid}^{\rm II} \end{array}\right)=-{\mathcal M}_{\rm a}[h,\boldsymbol{\Phi}_{{\mathcal I}(t)}] \left(\begin{array}{c} \dbug \\ \dot\bom \end{array}\right)
\end{equation}
and 
\begin{equation}\label{pkey2}
\left(\begin{array}{c} F_{\rm fluid}^{\rm j} \\ T_{\rm fluid}^{\rm j} \end{array}\right)=-\frac{1}{\rho}{\mathcal F}[h,\boldsymbol{\Phi}_{\mathcal I}]\nabla \uP_{\rm i}^{j}\quad (j={\rm I}, {\rm III}).
\end{equation}
By definition of the elementary potentials, we have
\begin{align*}
F_{\rm fluid}^{\rm II}&=-\sum_{j=1}^3\int_{{\mathcal I}(t)} P_{\rm i}^{\rm II} \nabla\cdot (h\nabla \Phi^{(j)}_ {{\mathcal I}(t)}){\bf e}_j,
\end{align*}
so that, after integration by parts, we get
\begin{align*}
F^{\rm II}
&=-\sum_{j=1}^3\int_{{\mathcal I}(t)} \nabla\cdot (h\nabla P_{\rm i}^{\rm II})  \Phi^{(j)}_ {{\mathcal I}(t)}{\bf e}_j\\
&=-\rho \sum_{j=1}^3\int_{{\mathcal I}(t)} (\dbug+\dot\bom\times {\bf r}_G)\cdot N  \Phi^{(j)}_ {{\mathcal I}(t)}{\bf e}_j,
\end{align*}
where we used the definition of $P_{\rm i}^{\rm II}$ given in Proposition \ref{proppresc} to derive the second identity. Using again the definition of the elementary potentials, we get further that
\begin{align*}
F_{\rm fluid}^{\rm II}
=\rho \sum_{j=1}^3\sum_{k=1}^3 &\Big[\big(\int_{{\mathcal I}(t)}   \Phi^{(j)}_ {{\mathcal I}(t)} \nabla\cdot(h\nabla \Phi^{(k)}_{{\mathcal I}(t)})  {\bf e}_j\otimes {\bf e}_k\big) \dbug \\
&+
\big(\int_{{\mathcal I}(t)}   \Phi^{(j)}_ {{\mathcal I}(t)} \nabla\cdot(h\nabla \Phi^{(k+3)}_{{\mathcal I}(t)})  {\bf e}_j\otimes {\bf e}_k\big) \dot\bom\Big]
\end{align*}
and, after integration by parts,
\begin{align*}
F_{\rm fluid}^{\rm II}=-\rho \sum_{j=1}^3\sum_{k=1}^3&\Big[ \big(\int_{{\mathcal I}(t)}   \nabla \Phi^{(j)}_ {{\mathcal I}(t)} \cdot h\nabla \Phi^{(k)}_{{\mathcal I}(t)}  {\bf e}_j\otimes {\bf e}_k\big) \dbug  \\
&+
\big(\int_{{\mathcal I}(t)}   \nabla \Phi^{(j)}_ {{\mathcal I}(t)} \cdot h\nabla \Phi^{(k+3)}_{{\mathcal I}(t)})  {\bf e}_j\otimes {\bf e}_k\big) \dot\bom\Big].
\end{align*}
Proceeding similarly for the torque, we have
\begin{align*}
T_{\rm fluid}^{\rm II}&=-\sum_{j=1}^3 \int_{{\mathcal I}(t)}P_{\rm i}^{\rm II}\nabla\cdot (h\nabla \Phi_{{\mathcal I}(t)}^{j+3}){\bf e}_j;
\end{align*}
integrating by parts and proceeding as above, we then get
\begin{align*}
T_{\rm fluid}^{\rm II}&=-\rho \sum_{j=1}^3\int_{{\mathcal I}(t)} (\dbug+\dot\bom\times {\bf r})\cdot N  \Phi^{(j+3)}_ {{\mathcal I}(t)}{\bf e}_{j}
\end{align*}
and using again the definition of the elementary potentials, we finally get
\begin{align*}
T_{\rm fluid}^{\rm II}=-\rho \sum_{j=1}^3\sum_{k=1}^3&\Big[ \big(\int_{{\mathcal I}(t)}   \nabla \Phi^{(j+3)}_ {{\mathcal I}(t)} \cdot h\nabla \Phi^{(k)}_{{\mathcal I}(t)}  {\bf e}_j\otimes {\bf e}_k\big) \dbug  \\
&+\big(\int_{{\mathcal I}(t)}   \nabla \Phi^{(j+3)}_ {{\mathcal I}(t)} \cdot h\nabla \Phi^{(k+3)}_{{\mathcal I}(t)})  {\bf e}_j\otimes {\bf e}_k\big) \dot\bom\Big].
\end{align*}
These expressions for $F^{\rm II}_{\rm fluid}$ and $T_{\rm fluid}^{\rm II}$ yield \eqref{pkey}.\\
For \eqref{pkey2}, we just need to remark that
\begin{align*}
 F_{\rm fluid}^{\rm I} = \sum_{j=1}^3\int_\cI \uP_{\rm i}^{\rm I} N_j {\bf e}_j=- \sum_{j=1}^3\int_\cI \uP_{\rm i}^{\rm I} \nabla \cdot h \nabla \Phi_\cI^{(j)}{\bf e}_j
 \end{align*}
 Integrating by parts, we deduce that
 \begin{align*}
 F_{\rm fluid}^{\rm I} 
 &=\sum_{j=1}^3\int_\cI \nabla \uP_{\rm i}^{\rm I}\cdot h \nabla \Phi_\cI^{(j)}{\bf e}_j
 \end{align*}
so that, proceeding similarly for the torque and for the component $\uP_{\rm i}^{\rm III}$ of the pressure, \eqref{pkey2} follows easily.\\
In order to prove the conservation of energy, let us recall first that owing to Remark \ref{remNRG2}, one has
$$
\frac{d}{dt}E_{\rm fluid}=-\int_{\cI}\dt \zeta \frac{\uP_{\rm i}-P_{\rm atm}}{\rho},
$$
and we therefore turn to compute the time derivative of $E_{\rm solid}$. One gets
\begin{align*}
\frac{d}{dt} E_{\rm solid}&={\mathfrak m}g w_G+{\mathcal M} \left(\begin{array}{c} {\bug} \\ {\bom} \end{array}\right)\cdot \left(\begin{array}{c} {\dbug} \\ \dot{\bom} \end{array}\right)+\frac{1}{2}\dot{\mathcal M}\left(\begin{array}{c} {\bug} \\ {\bom} \end{array}\right)\cdot \left(\begin{array}{c} {\bug} \\ {\bom} \end{array}\right)\\
&=\left(\begin{array}{c} {\bug} \\ {\bom} \end{array}\right)\cdot \big[{\mathcal M}\left(\begin{array}{c} {\dbug} \\ \dot{\bom} \end{array}\right)+mg {\bf e}_z\big],
\end{align*}
where we used the fact that $\dot{{\mathfrak I}}\bom\cdot \bom=0$. By Newton's laws, this gives
\begin{align*}
\frac{d}{dt} E_{\rm solid}&=\left(\begin{array}{c} {\bug} \\ {\bom} \end{array}\right)\cdot  \int_\cI \frac{\uP_{\rm i}-P_{\rm atm}}{\rho} \left(\begin{array}{c}
N_{\rm w}\\{\bf r}_G\times N_{\rm w} \end{array}\right)\\
&=\int_\cI \frac{\uP_{\rm i}-P_{\rm atm}}{\rho} \uU_{\rm w}\cdot N_{\rm w},
\end{align*}
the last line stemming from \eqref{defUc}. Since $\uU_{\rm w}\cdot N_{\rm w}=\dt \zeta$ in the interior region $\cI$, one has $\frac{d}{dt} E_{\rm solid}=-\frac{d}{dt} E_{\rm fluid}$ and the result follows\footnote{The result could also be inferred from the fact the if we write the fluid equation in Zakharov variables $(\zeta,\psi)$, the equations on $(\zeta,\psi,G,\Theta)$ are formally Hamiltonian \cite{DGZ}.}.
\end{proof}

\subsubsection{Simplifications in the one dimensional case} \label{sect1dfloat}

When the horizontal dimension $d$ is equal to $1$, the velocity of the center of mass has no transverse component, $\bug=(u_G,0,w_G)$, and $\bom=(0,\omega,0)$ is perpendicular to the $(x,z)$ plane; the inertia matrix is given by ${\mathfrak I}=\mbox{diag}(0,{\mathfrak i}_0,0)$, with ${\mathfrak i}_0$ independent of time and the rotation matrix $\Theta$ takes the form
$$
\Theta(t)=\left(\begin{array}{ccc}
\cos \theta(t) & 0 &- \sin \theta(t) \\
0 & 1 & 0\\
\sin \theta(t) & 0 &\cos \theta(t) 
\end{array}\right)
$$
and one has $\omega=-\dot \theta$ (this sign convention ensures that $\theta$ is orientated according to the standard trigonometric convention in the plane $(Oxz)$). Newton's laws therefore reduce to
\begin{align}
\label{Newton1_1d}
{\mathfrak m} \left(\begin{array}{c} \dot u_G \\ \dot w_G \end{array}\right)&=-{\mathfrak m} g{\bf e}_z+F_{\rm fluid}\\
\label{Newton2_1d}
{\mathfrak i}_0\dot{\omega}&=T_{\rm fluid}
\end{align}
with
\begin{equation}\label{defFT_1d}
F_{\rm fluid}=\int_{{\mathcal I}(t)}(\uP_{\rm i}-P_{\rm atm}) N_{\rm w}
\quad\mbox{ and }\quad
T_{\rm fluid}=-\int_{{\mathcal I}(t)} (\uP_{\rm i}-P_{\rm atm}){\bf r}_G^\perp\cdot N_{\rm w},
\end{equation}
and where ${\bf r}_G=(x-x_G,\zeta_{\rm w}-z_g)^T$ and $N_{\rm w}=(-\dx\zeta_{\rm w},1)^T$. The mass-inertia matrix is now a $3\times3$ diagonal matrix independent of time,
$$
{\mathcal M}_0=\mbox{\rm diag}({\mathfrak m}, {\mathfrak m}, {\mathfrak i}_0).
$$
In addition to the simplifications already seen in Proposition \ref{propWWCNU}, the elementary potentials can be computed explicitly in dimension $d=1$; consequently, the force and torque exerted by the fluid on the solid take a much simpler form. 
Denoting
$$
{\bf T}({\bf r}_G)=\left(\begin{array}{c} -{\bf r}_G^\perp \\ \frac{1}{2}\abs{{\bf r}_G}^2\end{array}\right)
$$
we can define (recall that the definition of the oscillating component $f^*$ of a function $f$ has been given in Notation \ref{notav})
$$
\widetilde{\mathcal M}_{\rm a}[h,{\bf r}_G]=\int_{\cI}\frac{1}{h}{\bf T}( {\bf r}_G)^*\otimes {\bf T}({\bf r}_G)^*
\quad\mbox{ and }\quad
\widetilde{\mathcal F}[h,{\bf r}_G]S_{\rm i}^j=\int_{\cI}\frac{1}{h}S_{\rm i}^j {\bf T}({\bf r}_G)^*;
$$
we can now state the following proposition describing the interaction of water waves with a freely floating object.
\begin{proposition}
Assume that $d=1$ and that the body is freely floating.  The water waves equations with a floating structure then take the form
$$
\begin{cases}
\dsp \dt \zeta+\dx q=0,\\
\dsp \dt q+\dx (\frac{1}{h} q^2)+gh \dx\zeta+\dx {\bf R}(h,\ovu)+h{\bf a}_{\rm NH}(h,\ovu)=S^{\rm I}+S^{\rm II}+S^{\rm III},
\end{cases}
$$
with the coupling conditions at the contact points
$$
\zeta_{\rm e}=\zeta_{\rm i} \quad \mbox{ and }\quad q_{\rm e}=q_{\rm i} \quad\mbox{ at }\quad x=x_\pm(t)
$$
and with the source terms $S^{\rm I}$, $S^{\rm II}$ and $S^{\rm III}$ as in Proposition \ref{propWWCNU}. Moreover, the velocity of the center of mass $\bug$ and the angular velocity $\omega$ satisfy the ODE
$$
\big({\mathcal M}_0+\widetilde{\mathcal M}_{\rm a}[h,{\bf r}_G]\big)\left(\begin{array}{c} \dbug\\\dot\omega\end{array}\right)=
\left(\begin{array}{c}-{\mathfrak m} g{\bf e}_z\\
0
\end{array}\right)+\widetilde{\mathcal F}[h,{\bf r}_G](S_{\rm i}^{\rm I}+S_{\rm i}^{\rm III}).
$$
\end{proposition}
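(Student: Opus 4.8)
The plan is to split the statement into its hydrodynamic part and its ODE part. For the hydrodynamic part --- the two scalar equations for $(\zeta,q)$, the coupling conditions at $x=x_\pm(t)$, and the source terms $S^{\rm I},S^{\rm II},S^{\rm III}$ together with the pressure splitting $\uP_{\rm i}=\uP_{\rm i}^{\rm I}+\uP_{\rm i}^{\rm II}+\uP_{\rm i}^{\rm III}$ --- I would just observe that they are produced exactly as in the proof of Proposition~\ref{propWWCNU}: that derivation never uses that $\bug$ and $\omega$ are prescribed, only that they are functions of time, which stays true when they are governed by Newton's laws. Hence the real content is the ODE for $(\bug,\omega)$. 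First I would record that in $d=1$ the inertia matrix is ${\mathfrak I}=\mbox{\rm diag}(0,{\mathfrak i}_0,0)$, so ${\mathfrak I}\bom\times\bom=0$, and therefore \eqref{Newton1_1d}--\eqref{Newton2_1d} collapse into ${\mathcal M}_0(\dbug,\dot\omega)=(-{\mathfrak m}g{\bf e}_z,0)+(F_{\rm fluid},T_{\rm fluid})$, where by \eqref{defFT_1d} one has $(F_{\rm fluid},T_{\rm fluid})=\int_{\cI(t)}(\uP_{\rm i}-P_{\rm atm})\big(N_{\rm w},-{\bf r}_G^\perp\cdot N_{\rm w}\big)$.

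Then I would rewrite this force--torque integral by means of ${\bf T}({\bf r}_G)=\big(-{\bf r}_G^\perp,\frac{1}{2}\abs{{\bf r}_G}^2\big)$. Since $\dx{\bf r}_G=(1,\dx\zeta_{\rm w})$, one checks $\dx{\bf r}_G^\perp=N_{\rm w}$ and $\dx(\frac{1}{2}\abs{{\bf r}_G}^2)={\bf r}_G\cdot\dx{\bf r}_G={\bf r}_G^\perp\cdot N_{\rm w}$, hence $\dx{\bf T}({\bf r}_G)=-\big(N_{\rm w},-{\bf r}_G^\perp\cdot N_{\rm w}\big)$ and $(F_{\rm fluid},T_{\rm fluid})=-\int_{\cI(t)}(\uP_{\rm i}-P_{\rm atm})\dx{\bf T}({\bf r}_G)$. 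The boundary conditions of Proposition~\ref{propWWCNU} give $\uP_{\rm i}^{\rm I}=P_{\rm atm}$ and $\uP_{\rm i}^{\rm II}=\uP_{\rm i}^{\rm III}=0$ at $x=x_\pm$, so $\uP_{\rm i}-P_{\rm atm}$ vanishes at $x=x_\pm$; an integration by parts then removes every boundary term and yields $(F_{\rm fluid},T_{\rm fluid})=\int_{\cI(t)}\dx\uP_{\rm i}\,{\bf T}({\bf r}_G)$. Inserting $S^j_{\rm i}=-\frac{h}{\rho}\dx\uP_{\rm i}^j$, this equals $-\rho\int_{\cI(t)}\frac{1}{h}(S^{\rm I}_{\rm i}+S^{\rm II}_{\rm i}+S^{\rm III}_{\rm i}){\bf T}({\bf r}_G)$; and since, from the explicit formulas of Proposition~\ref{propWWCNU}, each $S^j_{\rm i}$ is a sum of oscillating parts $f^*$ with coefficients constant in $x$, one has $\int_{\cI}\frac{1}{h}S^j_{\rm i}=0$ (Notation~\ref{notav}), which lets me replace ${\bf T}({\bf r}_G)$ by its oscillating part ${\bf T}({\bf r}_G)^*$.

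The last step is to identify the three contributions. For $j={\rm II}$, the identity ${\bf a}^\perp\cdot{\bf b}=-{\bf a}\cdot{\bf b}^\perp$ turns the formula $S^{\rm II}_{\rm i}=\dbug^\perp\cdot{\bf r}_G^*+\frac{1}{2}\dot\omega(\abs{{\bf r}_G}^2)^*$ of Proposition~\ref{propWWCNU} into $S^{\rm II}_{\rm i}=(\dbug,\dot\omega)\cdot{\bf T}({\bf r}_G)^*$, with $(\dbug,\dot\omega)$ constant in $x$; hence $-\rho\int_{\cI}\frac{1}{h}S^{\rm II}_{\rm i}{\bf T}({\bf r}_G)^*=-\rho\big(\int_{\cI}\frac{1}{h}{\bf T}({\bf r}_G)^*\otimes{\bf T}({\bf r}_G)^*\big)(\dbug,\dot\omega)=-\rho\,\widetilde{\mathcal M}_{\rm a}[h,{\bf r}_G](\dbug,\dot\omega)$, while the $j={\rm I},{\rm III}$ terms give directly $-\rho\,\widetilde{\mathcal F}[h,{\bf r}_G](S^{\rm I}_{\rm i}+S^{\rm III}_{\rm i})$. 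Substituting back into ${\mathcal M}_0(\dbug,\dot\omega)=(-{\mathfrak m}g{\bf e}_z,0)+(F_{\rm fluid},T_{\rm fluid})$ and moving the added--mass term to the left-hand side gives the announced ODE (with the constant $\rho$ absorbed into $\widetilde{\mathcal M}_{\rm a}$ and $\widetilde{\mathcal F}$ as in the statement). Equivalently, this is the $d=1$ specialization of Proposition~\ref{propfloat}, in which Lemma~\ref{lemexplicit} makes the elementary potentials of Definition~\ref{defelempot} explicit, namely $h\dx\Phi^{(j)}_{\cI}=\big({\bf T}({\bf r}_G)^*\big)_j$, so that ${\mathcal M}_{\rm a}[h,\boldsymbol\Phi_{\cI}]$ and ${\mathcal F}[h,\boldsymbol\Phi_{\cI}]$ reduce to $\widetilde{\mathcal M}_{\rm a}[h,{\bf r}_G]$ and $\widetilde{\mathcal F}[h,{\bf r}_G]$.

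I do not expect any genuine analytic obstacle here: once Proposition~\ref{propWWCNU} is available, everything comes down to the integration by parts above and to the explicit horizontal averaging of Lemma~\ref{lemexplicit}. The delicate points are purely organizational --- keeping track of the three pressure components, checking that the boundary terms at $x_\pm$ really cancel, and pinning down the $\perp$ manipulations together with the placement of the density factor and of the signs in $\widetilde{\mathcal M}_{\rm a}$ and $\widetilde{\mathcal F}$.
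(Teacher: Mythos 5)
Your argument is correct and is in substance the paper's own proof: your integration by parts of the force--torque density against ${\bf T}({\bf r}_G)$ is exactly the pairing with the elementary potentials (in $d=1$, Lemma \ref{lemexplicit} gives $h\dx\boldsymbol{\Phi}_\cI={\bf T}({\bf r}_G)^*$), which is precisely how the paper specializes Proposition \ref{propfloat}, and your use of the explicit source terms of Proposition \ref{propWWCNU} plus the weighted mean-zero property of starred quantities reproduces the same identification of $\widetilde{\mathcal M}_{\rm a}$ and $\widetilde{\mathcal F}$. The only caveat is the bookkeeping you already flagged: to match the displayed ODE one must absorb $\rho$ into $\widetilde{\mathcal M}_{\rm a}$ and $-\rho$ (not just $\rho$) into $\widetilde{\mathcal F}$, a discrepancy of constants that originates in the paper's own displayed definitions (compare Definition \ref{defmassiner}) rather than in your derivation.
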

\begin{proof}
In dimension $d=1$, only three elementary potentials are necessary; relabelling for the sake of simplicity, these potentials are given by the equations on $\cI(t)=(x_-(t),x_+(t))$,
$$
-\dx (h \dx \Phi_\cI^{(1)})=-\dx \zeta_{\rm w},\qquad -\dx (h \dx \Phi_\cI^{(2)})=1,\qquad -\dx (h \dx \Phi_\cI^{(3)})=-{\bf r}_G^\perp\cdot N_{\rm w},
$$
with the boundary conditions ${\Phi_\cI^{(j)}}=0$ at $x=x_\pm(t)$ ($j=1,2,3$).
 We therefore write $\boldsymbol{\Phi}_{\cI}$ the three-dimensional vector with coordinates $(\boldsymbol{\Phi}_{\cI})_j=\Phi_{\cI}^{(j)}$. 
A straightforward adaptation of Proposition \ref{propfloat} to the one dimensional case shows that \eqref{Newton1_1d}-\eqref{Newton2_1d} can be put under the form
$$
\big({\mathcal M}_0+{\mathcal M}_{\rm a}[h,\boldsymbol{\Phi}_{\cI}]\big)\left(\begin{array}{c} \dbug\\\dot\omega\end{array}\right)=
\left(\begin{array}{c}-{\mathfrak m} g{\bf e}_z\\
0
\end{array}\right)+{\mathcal F}[h,\boldsymbol{\Phi}_\cI](S_{\rm i}^{\rm I}+S_{\rm i}^{\rm III})
$$
with 
$$
{\mathcal M}_{\rm a}[h,\boldsymbol{\Phi}_{\cI}]=\rho \int_{\cI}\frac{1}{h} (h\dx \boldsymbol{\Phi}_\cI) \otimes (h\dx \boldsymbol{\Phi}_\cI)
$$
and
$$
{\mathcal F}[h,\boldsymbol{\Phi}_\cI]S_{\rm i}^{\rm I}=-\int_{\cI} \frac{1}{h}(h\dx \uP_{\rm i}^{\rm I}) (h\dx \boldsymbol{\Phi}_\cI),\qquad
{\mathcal F}[h,\boldsymbol{\Phi}_\cI]S_{\rm i}^{\rm III}=-\int_{\cI}\frac{1}{h}h\dx \uP_{\rm i}^{\rm III}(h\dx \boldsymbol{\Phi}_\cI)  .
$$
Using the definition of ${\boldsymbol \Phi}_\cI$ and Lemma \ref{lemexplicit}, and using the Notation \ref{notav}, one gets the following expression for $-h\dx {\boldsymbol \Phi}_\cI$,
$$
-h\dx \boldsymbol{\Phi}_{\cI}=\left(\begin{array}{c} ({\bf r_G}^*)^\perp \\ -\frac{1}{2}(\abs{{\bf r}_G}^2)^*\end{array}\right)
$$
so that ${\mathcal M}_{\rm a}(h,\boldsymbol{\Phi}_{\cI})=\widetilde{\mathcal M}_{\rm a}[h,{\bf r}_G]$ 
and ${\mathcal F}[h,\boldsymbol{\Phi}_\cI]S_{\rm i}^{j}=\widetilde{\mathcal F}[h,{\bf r}_G]S_{\rm i}^{j}$, and the proposition follows.
\end{proof}
\section{Comments on the evolution of the contact line}\label{sectcontact}

The evolution of the contact line $\Gamma(t)$, and therefore of the interior and exterior domains ${\mathcal I}(t)$ and ${\mathcal E}(t)$, is governed by the equations \eqref{Eul1}-\eqref{contact1}; this evolution is however quite implicit, and the goal of this section is to derive more explicit formulations of this evolution. We first consider the one dimensional case $d=1$ and then turn to the general two dimensional situation.

\subsection{Evolution of the contact line in the one dimensional case ($d=1$)}\label{sectcontact1d}

Assuming that the wetted surface is connected, one can write the interior domain as an interval
$$
{\mathcal I}(t)=\big(x_-(t),x_+(t)\big),
$$
and we need to find the time evolution of the boundary points $x_\pm(t)$. The following proposition gives an expression for the time derivative $\dot{x}_\pm(t)$ of $x_\pm(t)$ in terms of the position and velocity of the center of mass,  and of the angular velocity of the solid.
\begin{proposition}
Denoting by $G=(x_G,z_G)$ the center of mass of the solid and by $\omega$ its angular velocity, the contact points $x_\pm$ satisfy the nonlinear ODEs in time
\begin{align*}
\dot{x}_{\pm}=&-\frac{(\dx \zeta_{\rm w})_\pm}{(\dx \zeta_{\rm e})_\pm-(\dx \zeta_{\rm w})_\pm}\big(\dot{x}_G+\omega({\zeta_{{\rm w},\pm}}-z_G)\big) \\
&+\frac{1}{(\dx \zeta_{\rm e})_\pm-(\dx \zeta_{\rm w})_\pm}\big(
\dot{z}_G-\omega(x_\pm-x_G)+(\dx q_{\rm e})_\pm \big),
\end{align*}
where for any function $f(t,x)$, we used the notation $f_\pm(t)=f(t,x_\pm(t))$.
\end{proposition}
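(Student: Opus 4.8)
The plan is to differentiate in time the relation that defines the contact point, namely the continuity of the free surface there. By the transition condition \eqref{contact3} together with the constraint \eqref{constraint}, one has $\zeta_{\rm e}(t,x_\pm(t))=\zeta_{\rm w}(t,x_\pm(t))$ for every $t$. I would set $F_\pm(t):=\zeta_{\rm e}(t,x_\pm(t))-\zeta_{\rm w}(t,x_\pm(t))$, which vanishes identically, and differentiate it with the chain rule to get
$$
(\dt\zeta_{\rm e})_\pm+\dot x_\pm\,(\dx\zeta_{\rm e})_\pm-(\dt\zeta_{\rm w})_\pm-\dot x_\pm\,(\dx\zeta_{\rm w})_\pm=0,
$$
that is, $\dot x_\pm\big[(\dx\zeta_{\rm e})_\pm-(\dx\zeta_{\rm w})_\pm\big]=(\dt\zeta_{\rm w})_\pm-(\dt\zeta_{\rm e})_\pm$. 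It then remains to compute the two time derivatives on the right-hand side.

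For $(\dt\zeta_{\rm e})_\pm$ I would use the mass conservation equation of the water waves system \eqref{Eulerav2}, which in one space dimension reads $\dt\zeta+\dx q=0$; restricting it to the exterior domain and taking the trace at the contact point gives $(\dt\zeta_{\rm e})_\pm=-(\dx q_{\rm e})_\pm$. For $(\dt\zeta_{\rm w})_\pm$ I would start from the kinematic identity \eqref{eqdtzeta} for the wetted surface, $\dt\zeta_{\rm w}=(\bug+\bom\times{\bf r}_G)\cdot N_{\rm w}$ on ${\mathcal I}(t)$, and specialise it to $d=1$: with $\bug=(\dot x_G,\dot z_G)$, $\bom=(0,\omega,0)$, ${\bf r}_G=(x-x_G,\zeta_{\rm w}-z_G)^T$ and $N_{\rm w}=(-\dx\zeta_{\rm w},1)^T$, a short computation gives $\bom\times{\bf r}_G=\big(\omega(\zeta_{\rm w}-z_G),\,-\omega(x-x_G)\big)$ in the $(x,z)$ plane, whence
$$
\dt\zeta_{\rm w}=-\big(\dot x_G+\omega(\zeta_{\rm w}-z_G)\big)\dx\zeta_{\rm w}+\dot z_G-\omega(x-x_G).
$$
Evaluating this at $x=x_\pm(t)$, plugging it together with $(\dt\zeta_{\rm e})_\pm=-(\dx q_{\rm e})_\pm$ into the identity above, and dividing by $(\dx\zeta_{\rm e})_\pm-(\dx\zeta_{\rm w})_\pm$ produces exactly the two ODEs in the statement.

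The step I expect to require the most care is not this algebra but the justification that it is allowed. One needs $t\mapsto x_\pm(t)$ to be $C^1$ and $\zeta_{\rm e},\zeta_{\rm w},q_{\rm e}$ to be smooth up to $\Gamma(t)$ from the appropriate side, so that $(\dx\zeta_{\rm e})_\pm$, $(\dx\zeta_{\rm w})_\pm$, $(\dx q_{\rm e})_\pm$ are well-defined one-sided traces; and, crucially, one needs the denominator $(\dx\zeta_{\rm e})_\pm-(\dx\zeta_{\rm w})_\pm$ not to vanish. This quantity is the jump of the surface slope across the contact point, and its non-vanishing is precisely the assumption that the bottom of the solid and the free surface meet transversally there --- i.e. that the wedge at the contact line is genuine --- which is also what excludes the vertical-wall configuration treated separately in \S\ref{sectvertical}. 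Under this transversality, the implicit function theorem applied to $\zeta_{\rm e}(t,\cdot)-\zeta_{\rm w}(t,\cdot)=0$ furnishes the $C^1$ curves $x_\pm(t)$ and legitimises the chain rule used above; with this in hand, the argument closes.
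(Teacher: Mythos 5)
Your proposal is correct and follows essentially the same route as the paper: differentiate the contact-point identity $\zeta_{\rm e}(t,x_\pm(t))=\zeta_{\rm w}(t,x_\pm(t))$ in time, substitute $\dt\zeta_{\rm e}=-\dx q_{\rm e}$ from the mass equation and the kinematic relation \eqref{eqdtzeta} for $\dt\zeta_{\rm w}$, and solve for $\dot x_\pm$. Your additional remarks on one-sided regularity and on the transversality condition $(\dx\zeta_{\rm e})_\pm\neq(\dx\zeta_{\rm w})_\pm$ (implicit in the paper, and consistent with the exclusion of vertical walls treated in \S\ref{sectvertical}) are sound but do not change the argument.
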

\begin{remark}
It is instructive to compare the evolution equation of the proposition to the equation describing the evolution of the shoreline in the case of a vanishing depth. In this latter case, the condition \eqref{contact3bis1d} below, should be replaced by $h_{\rm e}(t,\gamma(t,\alpha))=0$. Following the same steps as in the proof below, one would obtain 
\begin{align*}
\dot{x}_\pm (\dx \zeta_{\rm e})_\pm&=(\dx q_{\rm e})_\pm\\
&=(\dx h_{\rm e})_\pm \ovu_\pm+ h_{\rm e,\pm}(\dx \ovu)_\pm.
\end{align*}
Since by definition $h_{\rm e,\pm}=0$, one obtains the kinematic equation
$$
\dot{x}_\pm=\ovu.
$$
The evolution equation of the contact line stated in the proposition involves derivatives of $\zeta_{\rm e}$ and $q_{\rm e}$ and is therefore more singular than the kinematic equation obtained for the evolution of the shoreline.
\end{remark}
\begin{proof}
By definition of $x_\pm(t)$ and using the boundary condition \eqref{contact3} and the constraint \eqref{constraint}, one gets that for all $t$,
\begin{equation}\label{contact3bis1d}
h_{\rm e}(t,x_\pm(t))=h_{\rm w}(t,x_\pm(t)).
\end{equation}
Differentiating this relation yields
$$
\dot{x}_\pm\big((\dx h_{\rm e})_\pm - (\dx h_{\rm w})_\pm\big)=- \big((\dt \zeta_{\rm e})_\pm - (\dt \zeta_{\rm w})_\pm\big).
$$
Using the first equation of \eqref{Eulerav2}, one can replace $\dt h_{\rm e}=-\dx q_{\rm e}$ so that
$$
\dot{x}_\pm\big((\dx \zeta_{\rm e})_\pm - (\dx \zeta_{\rm w})_\pm\big)=(\dt h_{\rm w})_\pm+(\dx q_{\rm e})_\pm.
$$
We also know from \eqref{eqdtzeta} that 
$$
\dt h_{\rm w}=-\big(\dot{x}_G+\omega(\zeta-z_G)\big)\dx \zeta_{\rm w}+\dot{z}_G-\omega(x-x_G),
$$
 so that the formula of the proposition follows easily.
\end{proof}

\subsection{Evolution of the contact line in the two dimensional case ($d=2$)}\label{sectcontact2d}

Assuming that the wetted surface is connected and that the boundary $\Gamma(t)$ of the interior domain can be parametrized by a closed curve $\gamma(t,\cdot):[0,1]\to \R^2$, namely,
$$
\Gamma(t):=\{\gamma(t,\alpha),\alpha\in [0,1]\},
$$
we need to determine the time evolution of $\gamma$.
\begin{proposition}\label{propmvt2}
Denote by $G=(X_G,z_G)$ the center of mass of the solid and by $\bom$ its angular velocity, and assume that on the time interval $[0,T]$ the contact line 
$\Gamma(t)$ is parametrized by a $C^2$ function $\gamma:[0,T]\times[0,1]\to \R^2$, regular everywhere (i.e. $\partial_\alpha\gamma$ never vanishes).\\
{\bf i.} The function $\gamma$ solves
\begin{align}
\nonumber
\dt \gamma=&-\left\lbrace\frac{(\nabla\zeta_{\rm e}-\nabla\zeta_{\rm w})\otimes\nabla\zeta_{\rm w}}{\Abs{\nabla \zeta_{\rm e}-\nabla \zeta_{\rm w}}^2}\right\rbrace_{\vert_\gamma} \big(\dot{X}_G-\bom_{\rm h}^\perp({\zeta_{\rm w}}_{\vert_\gamma}-z_G)+\omega_{\rm v}(\gamma-X_G)^\perp\big)\\
\label{equatgamma}
&+\left\lbrace\frac{(\nabla\zeta_{\rm e}-\nabla\zeta_{\rm w})}{\Abs{\nabla \zeta_{\rm e}-\nabla \zeta_{\rm w}}^2}\right\rbrace_{\vert_\gamma}
\big(\dot{z}_G+\bom_{\rm h}^\perp\cdot (\gamma-X_G)+(\nabla\cdot Q_{\rm e})_{\vert_{\gamma}}\big)+a\partial_\alpha\gamma,
\end{align}
for some scalar function $a\in C^1([0,T]\times [0,1])$ and where for any function $f(t,X)$, we used the notation $f_{\vert_\gamma}(t,\alpha)=f(t,\gamma(t,\alpha))$. \\
{\bf ii.} Conversely, if there exists a scalar function $a\in C^1([0,T]\times [0,1])$ such that $\gamma$ solves \eqref{equatgamma} and if $\gamma(0,\cdot)$ is a parametrization of the contact line at $t=0$, then it is a parametrization of $\Gamma(t)$ for all times.\\
{\bf iii.} Different choices of the scalar function $a$ in \eqref{equatgamma} correspond to different parametrizations of the same curve.
\end{proposition}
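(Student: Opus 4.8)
The plan is to use the characterization of the contact line as the set where the exterior free surface meets the bottom of the solid, and to differentiate that relation along the moving curve $\gamma$. For \textbf{i}, I would start from the identity
$$
\zeta_{\rm e}(t,\gamma(t,\alpha))=\zeta_{\rm w}(t,\gamma(t,\alpha)),
$$
which holds on $\Gamma(t)=\partial\cI(t)$ by \eqref{contact3} and the constraint \eqref{constraint}, and differentiate it in $t$, obtaining
$$
\big(\nabla\zeta_{\rm e}-\nabla\zeta_{\rm w}\big)_{\vert_\gamma}\cdot\dt\gamma=\big(\dt\zeta_{\rm w}-\dt\zeta_{\rm e}\big)_{\vert_\gamma}.
$$
The right-hand side is then made explicit: the first equation of \eqref{Eulerav2} gives $\dt\zeta_{\rm e}=-\nabla\cdot Q_{\rm e}$, while $\dt\zeta_{\rm w}$ is obtained from \eqref{eqdtzeta}, i.e. $\dt\zeta_{\rm w}=(\bug+\bom\times{\bf r}_G)\cdot N_{\rm w}$. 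Expanding the triple product with $\bom=\bom_{\rm h}+\omega_{\rm v}{\bf e}_z$, ${\bf r}_G=\big((X-X_G),\zeta_{\rm w}-z_G\big)^T$ and $N_{\rm w}=(-\nabla\zeta_{\rm w},1)^T$ — a routine computation — yields
$$
\dt\zeta_{\rm w}=-\nabla\zeta_{\rm w}\cdot\big(\dot X_G-\bom_{\rm h}^\perp(\zeta_{\rm w}-z_G)+\omega_{\rm v}(X-X_G)^\perp\big)+\big(\dot z_G+\bom_{\rm h}^\perp\cdot(X-X_G)\big).
$$

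This turns the differentiated constraint into a single scalar equation fixing the component of $\dt\gamma$ along $D:=(\nabla\zeta_{\rm e}-\nabla\zeta_{\rm w})_{\vert_\gamma}$. Under the transversality assumption implicit in the statement (a genuine wedge at the contact line, i.e. $D\neq0$, which makes the denominator $\Abs{\nabla\zeta_{\rm e}-\nabla\zeta_{\rm w}}^2$ meaningful) and since $\partial_\alpha\gamma$ never vanishes, the pair $\{D,\partial_\alpha\gamma\}$ is an orthogonal basis of $\R^2$: differentiating $\zeta_{\rm e}=\zeta_{\rm w}$ tangentially along $\Gamma(t)$ gives $D\cdot\partial_\alpha\gamma=0$. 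Writing $\dt\gamma=\lambda D+a\,\partial_\alpha\gamma$ and taking the scalar product with $D$ determines $\lambda=\Abs{D}^{-2}D\cdot\dt\gamma$; substituting the expression above for $\dt\zeta_{\rm w}-\dt\zeta_{\rm e}$ and rewriting $D(\nabla\zeta_{\rm w}\cdot(\cdot))$ as $(D\otimes\nabla\zeta_{\rm w})(\cdot)$ produces exactly \eqref{equatgamma}. The coefficient $a$ is by definition the tangential component of $\dt\gamma$, and it is $C^1$ provided $\gamma$ and the traces on $\gamma$ of $\nabla\zeta_{\rm e}$, $\nabla\zeta_{\rm w}$ and $\nabla\cdot Q_{\rm e}$ are regular enough.

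For \textbf{ii} I would run this in reverse: given a $C^2$ solution $\gamma$ of \eqref{equatgamma}, its component along $D$ is precisely $\big(\nabla\zeta_{\rm e}-\nabla\zeta_{\rm w}\big)_{\vert_\gamma}\cdot\dt\gamma=\big(\dt\zeta_{\rm w}-\dt\zeta_{\rm e}\big)_{\vert_\gamma}$, which is equivalent to $\frac{d}{dt}\big[(\zeta_{\rm e}-\zeta_{\rm w})(t,\gamma(t,\alpha))\big]=0$ (here one uses that $\dt\zeta_{\rm e}=-\nabla\cdot Q_{\rm e}$ and \eqref{eqdtzeta} hold up to $\Gamma(t)$). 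Hence if $\gamma(0,\cdot)$ parametrizes $\Gamma(0)$, then $(\zeta_{\rm e}-\zeta_{\rm w})(t,\gamma(t,\alpha))\equiv0$ for all $t$, so $\gamma(t,\cdot)$ stays on the boundary of the coincidence set; that it remains a full, regular, closed parametrization of $\Gamma(t)$ (no portion sliding into $\cI(t)$ or $\cE(t)$) again uses $D\neq0$, the regularity of $\gamma$, and closedness inherited from $t=0$. Finally, \textbf{iii} follows: by \textbf{ii} every admissible choice of $a\in C^1$ yields a parametrization of the very same curve $\Gamma(t)$; conversely, starting from a solution $\gamma$ with coefficient $a$ and any target $\widetilde a\in C^1$, the reparametrized map $\widetilde\gamma(t,\alpha)=\gamma(t,\phi(t,\alpha))$ solves \eqref{equatgamma} with coefficient $\widetilde a$ as soon as $\phi$ solves the transport equation $\dt\phi=\widetilde a(t,\alpha)\,\partial_\alpha\phi-a(t,\phi)$ with $\phi(0,\cdot)=\mathrm{id}$, which is solvable by characteristics — and since $\widetilde\gamma$ and $\gamma$ trace the same curve, the $a$-free terms of \eqref{equatgamma} are unchanged.

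The bookkeeping in \textbf{i} (the explicit expansion of $(\bom\times{\bf r}_G)\cdot N_{\rm w}$ into its slope and vertical parts) is purely mechanical, so the real content — and the main obstacle — is the structural observation that $D\neq0$ together with $D\perp\partial_\alpha\gamma$ converts one scalar differentiated constraint into a genuine evolution law with a free tangential gauge $a$, and, in \textbf{ii}, making rigorous that \eqref{equatgamma} actually pins $\gamma$ to $\Gamma(t)$ and covers all of it, rather than merely keeping it inside the zero set of $\zeta_{\rm e}-\zeta_{\rm w}$; both rely on regularity/transversality hypotheses on the underlying solution that should be stated explicitly.
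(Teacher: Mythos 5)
Your proposal is correct and follows essentially the same route as the paper: differentiate the coincidence relation $\zeta_{\rm e}=\zeta_{\rm w}$ along $\gamma$ in $t$, use the tangential differentiation to get $(\nabla\zeta_{\rm e}-\nabla\zeta_{\rm w})\perp\partial_\alpha\gamma$ and decompose $\dt\gamma$ accordingly, substitute $\dt\zeta_{\rm e}=-\nabla\cdot Q_{\rm e}$ and \eqref{eqdtzeta}, reverse the argument for \textbf{ii}, and prove \textbf{iii} by the reparametrization $\tilde\gamma=\gamma\circ\varphi$ with $\varphi$ solving a transport equation. Your version of that reparametrization equation, $\dt\varphi=\tilde a\,\partial_\alpha\varphi-a\circ\varphi$, is in fact stated slightly more carefully than in the paper's proof, which is a welcome refinement rather than a deviation.
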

\begin{remark}
Choosing a particular function $a$ in \eqref{equatgamma} is equivalent to choosing a particular parametrization for the curve $\Gamma(t)$. For instance, if it can be parametrized as a polar curve by choosing
$$
\gamma(t,\alpha):=\big(\rho(t,\alpha)\cos(2\pi \alpha), \rho(t,\alpha)\sin(2\pi\alpha)\big), 
$$
one gets
$$
\dt \rho=-\frac{1}{\partial_\rho \zeta_{\rm e}-\partial_\rho \zeta_{\rm w}}\big(\nabla\cdot Q_{\rm e}-(\bug+\bom\times{\bf r}_G)\cdot N_{\rm w}\big)_{\vert_{\gamma}},
$$
where we used the notation $\partial_\rho =\cos(2\pi\alpha)\dx \zeta+\sin(2\pi\alpha)\dy$. The  polar parametrization is therefore unique; it corresponds to a particular choice of the function $a$ in Proposition \ref{propmvt2}.
\end{remark}
\begin{proof}
By definition of $\gamma$ and using the boundary condition \eqref{contact3} and the constraint \eqref{constraint}, one gets that for all $t$ and all $\alpha\in [0,1]$,
\begin{equation}\label{contact3bis}
h_{\rm e}(t,\gamma(t,\alpha))=h_{\rm w}(t,\gamma(t,\alpha)).
\end{equation}
Differentiating this relation with respect to $t$ gives
$$
\dt \zeta_{\rm e}+\dt\gamma\cdot \nabla h_{\rm e}=\dt \zeta_{\rm w}+\dt\gamma\cdot \nabla h_{\rm w}
$$
so that we can write
$$
\dt \gamma=-\frac{\nabla h_{\rm e}-\nabla h_{\rm w}}{\Abs{\nabla h_{\rm e}-\nabla h_{\rm w}}^2}(\dt \zeta_{\rm e}-\dt \zeta_{\rm w})+f(\nabla h_{\rm e}-\nabla h_{\rm w})^\perp
$$
for some scalar function $f$. Differentiating \eqref{contact3bis} with respect to $\alpha$, we get that
$$
\partial_\alpha\gamma\cdot (\nabla h_{\rm e}-\nabla h_{\rm w})=0,
$$
so that $\partial_\alpha \gamma$ is proportional to $(\nabla h_{\rm e}-\nabla h_{\rm w})^\perp$. Since $\partial_\alpha\gamma\neq 0$, we deduce from the above that
\begin{equation}\label{eqgamma}
\dt \gamma=-\big[\frac{\nabla h_{\rm e}-\nabla h_{\rm i}}{\Abs{\nabla h_{\rm e}-\nabla h_{\rm i}}^2}(\dt \zeta_{\rm e}-\dt \zeta_{\rm w})\big]_{\vert_\gamma}+a\partial_\alpha \gamma
\end{equation}
for some scalar function $a$. Taking the scalar product of this expression with $\partial_\alpha\gamma$ and using the regularity assumptions made on $\gamma$, we deduce that $a$ is $C^1$ in space and time.\\
 Conversely, if $\gamma$ solves an equation of the form \eqref{eqgamma}, and if $\gamma(0,\cdot)$ is a parametrization of the contact line at $t=0$,  then one easily gets that \eqref{contact3bis} holds for all times, so that $\gamma(t,\cdot)$ is a parametrization of the contact line for all times.\\
Let us show now that different choices of $a$ in \eqref{eqgamma} correspond to different parametrizations of the $\Gamma(t)$. More precisely, let $a$ and $\tilde a$ be two $C^1$ functions of space and time, and let us show that there exists a reparametrization $\varphi(t,\cdot):[0,1]\to [0,1]$ such that if $\gamma$ solves \eqref{eqgamma} then $\tilde\gamma(t,\alpha):=\gamma(t,\varphi(t,\alpha))$ solves \eqref{eqgamma} with $a$ replaced by $\tilde a$. From the definition of $\tilde\gamma$, one has
\begin{align*}
\dt \tilde\gamma&=\dt\gamma\circ \varphi+\dt\varphi \partial_\alpha \gamma\circ\varphi\\
&=-\big[\frac{\nabla h_{\rm e}-\nabla h_{\rm w}}{\Abs{\nabla h_{\rm e}-\nabla h_{\rm w}}^2}(\dt \zeta_{\rm e}-\dt \zeta_{\rm w})\big]_{\vert_{\tilde\gamma}}+\big(a\circ\varphi+\dt \varphi\big) \partial_\alpha \gamma\circ\varphi,
\end{align*}
so that the claim is proved by solving the ODE $\dt\varphi=b\circ\varphi-a\circ \varphi$ and taking $\tilde\alpha=\varphi(t,\alpha)$ as new parameter.\\
The last step of the proof consists in showing that \eqref{eqgamma} can be put under the form \eqref{equatgamma}. Using the relation
$$
\dt \zeta_{\rm e}=-\nabla\cdot Q_{\rm e}, 
$$
which corresponds to the first equation of \eqref{Eulerav2}, together with  \eqref{eqdtzeta},
\begin{align*}
\dt \zeta_{\rm w}&=\big(\bU_G+\bom\times {\bf r}_G\big)\cdot N_{\rm w}\\
&=-\big(\dot{X}_G-\bom_{\rm h}^\perp(\zeta-z_G)+\omega_{\rm v}(X-X_G)^\perp\big)\cdot \nabla\zeta_{\rm w}+\big(\dot{z}_G+\bom_{\rm h}^\perp\cdot (X-X_G)\big),
\end{align*}
the result follows directly from \eqref{eqgamma}. 
\end{proof}

\subsection{The case of vertical walls}\label{sectvertical}

The boundary conditions \eqref{contact3}-\eqref{contact1} at the contact line are valid under the condition that in the neighborhood of the contact line, the boundary of the solid is not vertical. Since such a configuration, represented in Figure \ref{figvert}, is also of interest (and we shall use it in Sections \ref{sectdiscrete} and \ref{sectcomput} for the numerical aspects), we show here how to handle it. Of course, the walls would not stay vertical if the solid were allowed to rotate along the horizontal axis, and we must therefore assume that the motion of the solid is constrained (by some additional exterior force) to avoid these situations. The angular velocity is therefore of the form $\bom=(0,0,\omega_{\rm v})$. 
\begin{figure}
\includegraphics[width=\textwidth]{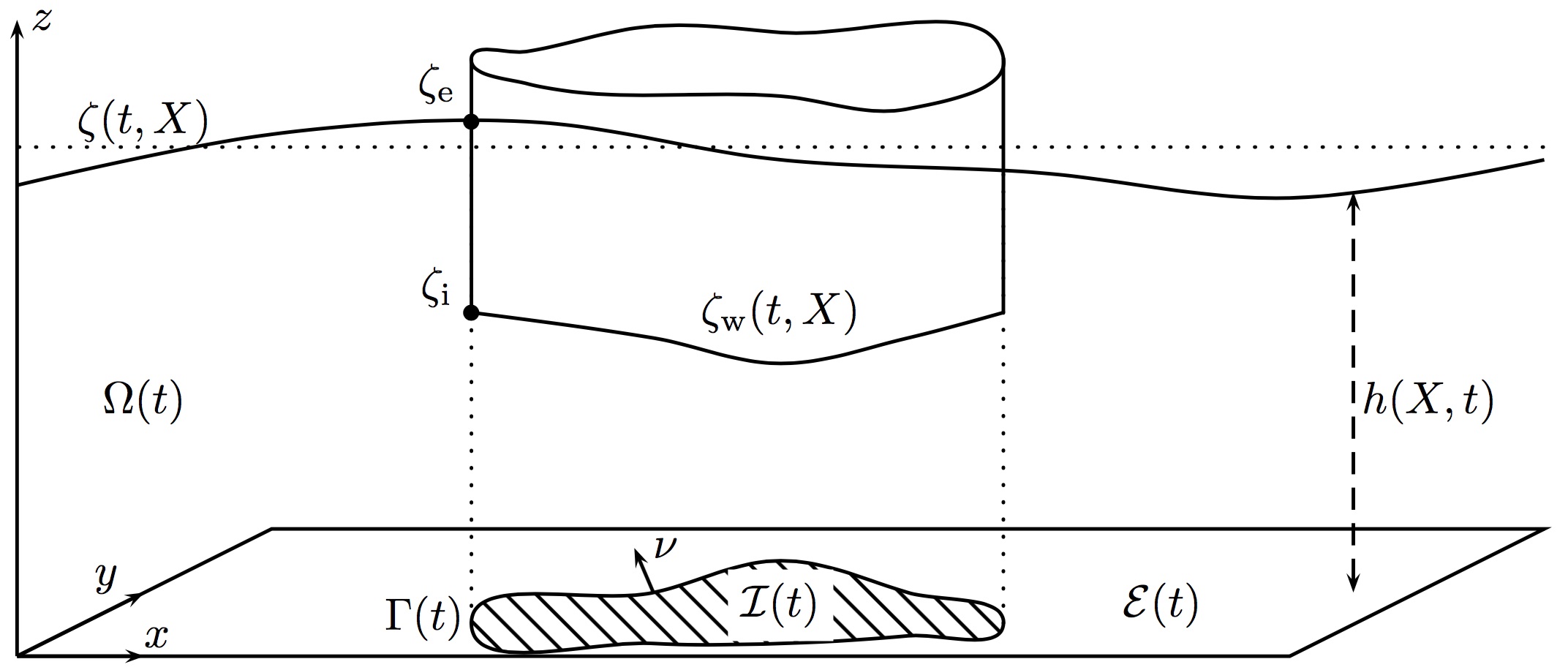}
\caption{The case of vertical walls}
\label{figvert}
\end{figure}

In the presence of vertical walls, we relax the continuity condition \eqref{contact3} on the water elevation, and consequently replace the continuity condition \eqref{contact1} on the pressure by a more general expression. This generalization of the boundary conditions \eqref{contact3}-\eqref{contact1} is the following:
\begin{itemize}
\item {\it Continuity of the normal velocity at the vertical walls}. Denoting by $\nu\in \R^d$ the unit normal vector to $\Gamma(t)$ pointing towards the exerior region ${\mathcal E}(t)$, one has
\begin{equation}\label{contactV}
V\cdot \nu=V_{\mathcal C}\cdot \nu \quad\mbox{ on the immersed part of the vertical walls}, 
\end{equation}
where we recall that $V$ and $V_{{\mathcal C}}$ denote the horizontal velocities of the fluid and of the solid respectively.
\item {\it Consistency of the pressure jump at the contact line}. Integrating the vertical component of Euler's equation \eqref{Eul1} between $z=\zeta_{\rm i}$ and $z=\zeta_{\rm e}$ yields the condition
\begin{equation}\label{contact1_vert}
\uP_{\rm i}(t,\cdot)=P_{\rm atm}+\rho g(\zeta_{\rm e}-\zeta_{\rm i}) +\rho\int_{\zeta_{\rm i}}^{\zeta_{\rm e}}(\dt w+\bU\cdot \nabla_{X,z}w)\quad\mbox{ on }\quad \Gamma(t),
\end{equation}
where $w$ is the vertical component of the velocity field $\bU$ in the fluid domain.
\end{itemize}
Of course, whenever $\zeta_{\rm i}=\zeta_{\rm e}$ (in particular when the boundary at the contact line is not vertical), \eqref{contact1_vert} coincides with \eqref{contact1}. Allowing for the possibility of vertical walls imposes the presence of a fourth source term $S^{\rm IV}$ in the momentum equation of Proposition \ref{proppresc} (and Proposition \ref{propfloat} when the solid is freely floating), and the transition condition on $Q$ at the contact line must be modified. We shall use the following notations.
\begin{notation}\label{notaspec}
 For the sake of simplicity, we still denote by $\big({\mathcal M}+{\mathcal M}_{\rm a}[h,\boldsymbol{\Phi}_{{\mathcal I}}]\big)$ the $4\times 4$ matrix 
with entries $\big({\mathcal M}+{\mathcal M}_{\rm a}[h,\boldsymbol{\Phi}_{{\mathcal I}}]\big)_{ij}$ ($i,j=1,2,3,6$) and by ${\mathcal F}[h,\boldsymbol{\Phi}_{\mathcal I}]$ the four dimensional vector with entries $({\mathcal F}[h,\boldsymbol{\Phi}_{\mathcal I}])_i$ ($i=1,2,3,6$).
\end{notation}
\begin{proposition}\label{proppresc_vert}
Denoting by  $\bug=(V_G,w_G)$ the velocity  of the center of mass of the solid and assuming that the motion is constrained so that its angular velocity is of the form $\bom=(0,0,\omega_{\rm v})$, the water waves equations with a floating structure can be written
$$
\begin{cases}
\dsp \dt \zeta+\nabla\cdot Q=0,\\
\dsp \dt Q+\nabla\cdot (\frac{1}{h} Q\otimes Q)+gh \nabla\zeta+\nabla\cdot {\bf R}(h,Q)+h{\bf a}_{\rm NH}(h,Q)\\
\phantom{\dsp \dt Q+\nabla\cdot (\frac{1}{h} Q\otimes Q)+gh \nabla\zeta+\nabla\cdot {\bf R}(h,Q)}=S^{\rm I}+S^{\rm II}+S^{\rm III}+S^{\rm IV},
\end{cases}
$$
with the coupling condition at the contact line
$$
 \big(Q_{\rm e}-Q_{\rm i}\big)\cdot \nu=(\zeta_{\rm e}-\zeta_{\rm i})\big(V_G+\omega_{\rm v}(X-X_G)^\perp\big)\cdot \nu \quad\mbox{ on }\quad \Gamma(t),
$$
and $S^{\rm I}$, $S^{\rm II}$ and $S^{\rm III}$ as in Proposition \ref{proppresc}, and with  
$S^{\rm IV}_{\rm e}=0$ and 
$$
S^{\rm IV}_{\rm i}=-\frac{h}{\rho}\nabla P_{\rm i}^{\rm IV} \quad\mbox{ where }\quad 
\begin{cases}
-\nabla\cdot (\frac{h}{\rho}\nabla \uP_{\rm i}^{\rm IV})=0 \quad\mbox{\textnormal{on}}\quad {\mathcal I}(t),\\
{\uP_{\rm i}^{\rm IV}}_{\vert_{\Gamma(t)}}=\rho g(\zeta_{\rm e}-\zeta_{\rm i}) +\rho\int_{\zeta_{\rm i}}^{\zeta_{\rm e}}(\dt w+\bU\cdot \nabla_{X,z}w).
\end{cases}
$$
If the solid structure is freely floating, the evolution of $\bug$ and $\bom$ is given by
\begin{align*}
\big({\mathcal M}+{\mathcal M}_{\rm a}[h,\boldsymbol{\Phi}_{{\mathcal I}}]\big)\left(\begin{array}{c} {\dbug} \\ \dot{\omega_{\rm v}} \end{array}\right)=&\left(\begin{array}{c} -{\mathfrak m}g {\bf e}_z \\
0
\end{array}\right)
+{\mathcal F}[h,\boldsymbol{\Phi}_{\mathcal I}](S_{\rm i}^{\rm I}+S_{\rm i}^{\rm III}\big) \\
&+\int_\cI \uP_{\rm i}^{\rm IV}\left(\begin{array}{c}  N_{\rm w}\\   (X-X_G)\cdot \nabla^\perp\zeta_{\rm w}
\end{array}\right).
\end{align*}
\end{proposition}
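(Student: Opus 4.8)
The plan is to retrace the proofs of Propositions~\ref{propWWst}, \ref{proppresc} and \ref{propfloat}, keeping track only of the two places where the vertical–wall conditions \eqref{contactV}--\eqref{contact1_vert} differ from the conditions \eqref{contact3}--\eqref{contact1} used there: the boundary data for the interior pressure and the transition condition for $Q$ at the contact line. The bulk derivation is unaffected: vertical integration of the horizontal component of \eqref{Eul1}, the splitting of $\nabla P$ into its hydrostatic and non-hydrostatic parts, and the reconstruction $\bU={\mathfrak R}[\zeta]\ovV$ of Proposition~\ref{propclosed} are purely local in $\cI(t)$ or ${\mathcal E}(t)$, so they yield, as before,
$$
\dt Q+\nabla\cdot(\tfrac1h Q\otimes Q)+gh\nabla\zeta+\nabla\cdot{\bf R}(h,Q)+h{\bf a}_{\rm NH}(h,Q)=-\tfrac h\rho\nabla\uP,\qquad \uP_{\rm e}=P_{\rm atm}.
$$
The constraint \eqref{constraint} also still holds --- the wetted surface is the bottom of the solid, below the vertical walls --- so $\dt^2\zeta_{\rm i}=\dt^2\zeta_{\rm w}$, and taking the divergence of the interior momentum equation gives again $-\nabla\cdot(\tfrac h\rho\nabla\uP_{\rm i})=-\dt^2\zeta_{\rm w}+{\bf a}_{\rm FS}(h,Q)$ on $\cI(t)$.

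Next I would treat the boundary data. By \eqref{contact1_vert}, $\uP_{\rm i}$ equals $P_{\rm atm}$ plus $\rho g(\zeta_{\rm e}-\zeta_{\rm i})+\rho\int_{\zeta_{\rm i}}^{\zeta_{\rm e}}(\dt w+\bU\cdot\nabla_{X,z}w)$ on $\Gamma(t)$. Since the elliptic problem for $\uP_{\rm i}$ is linear in both its source term and its boundary data, I decompose $\uP_{\rm i}=\uP_{\rm i}^{\rm I}+\uP_{\rm i}^{\rm II}+\uP_{\rm i}^{\rm III}+\uP_{\rm i}^{\rm IV}$, where $\uP_{\rm i}^{\rm I},\uP_{\rm i}^{\rm II},\uP_{\rm i}^{\rm III}$ carry the source term and are split exactly as in the proof of Proposition~\ref{proppresc} (the identity \eqref{P61} for $\dt^2\zeta_{\rm w}$ holds for any $\bom$, in particular for $\bom=(0,0,\omega_{\rm v})$) with boundary data $P_{\rm atm}$, $0$, $0$ respectively, and $\uP_{\rm i}^{\rm IV}$ then solves $-\nabla\cdot(\tfrac h\rho\nabla\uP_{\rm i}^{\rm IV})=0$ with precisely the extra boundary term above; existence and uniqueness of each piece follow from standard elliptic theory on the bounded Lipschitz domain $\cI(t)$ with $h$ bounded from below. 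Setting $S^j_{\rm e}=0$ and $S^j_{\rm i}=-\tfrac h\rho\nabla\uP_{\rm i}^j$ for $j={\rm I},\dots,{\rm IV}$ yields the stated momentum equation with the four source terms.

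For the transition condition on $Q$ I would argue geometrically near $\Gamma(t)$. Writing $Q=\int_{-h_0+b}^\zeta V$ on each side and using $\zeta_{\rm i}=\zeta_{\rm w}$, one has on $\Gamma(t)$
$$
(Q_{\rm e}-Q_{\rm i})\cdot\nu=\int_{-h_0+b}^{\zeta_{\rm w}}(V_{\rm e}-V_{\rm i})\cdot\nu+\int_{\zeta_{\rm w}}^{\zeta_{\rm e}}V_{\rm e}\cdot\nu .
$$
Below the bottom of the solid both sides of $\Gamma(t)$ lie in the interior of $\Omega(t)$, where $\bU$ is smooth by elliptic regularity, so the first integral vanishes; on the immersed part of the vertical wall, i.e. for $\zeta_{\rm w}<z<\zeta_{\rm e}$, condition \eqref{contactV} gives $V_{\rm e}\cdot\nu=V_{\mathcal C}\cdot\nu$, and because $\bom=(0,0,\omega_{\rm v})$ the horizontal solid velocity $V_{\mathcal C}=V_G+\omega_{\rm v}(X-X_G)^\perp$ does not depend on $z$, so the second integral equals $(\zeta_{\rm e}-\zeta_{\rm w})\big(V_G+\omega_{\rm v}(X-X_G)^\perp\big)\cdot\nu=(\zeta_{\rm e}-\zeta_{\rm i})\big(V_G+\omega_{\rm v}(X-X_G)^\perp\big)\cdot\nu$, which is the claimed coupling condition.

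Finally, for the freely floating case I would reproduce the argument of Proposition~\ref{propfloat}, decomposing $F_{\rm fluid}$ and $T_{\rm fluid}$ according to $\uP_{\rm i}=\uP_{\rm i}^{\rm I}+\uP_{\rm i}^{\rm II}+\uP_{\rm i}^{\rm III}+\uP_{\rm i}^{\rm IV}$. The contributions of $\uP_{\rm i}^{\rm I},\uP_{\rm i}^{\rm II},\uP_{\rm i}^{\rm III}$ are treated verbatim as there --- integration by parts against the elementary potentials $\Phi^{(j)}_{\cI(t)}$ ($j\in\{1,2,3,6\}$, with Notation~\ref{notaspec}) turns the $\uP_{\rm i}^{\rm II}$ term into $-{\mathcal M}_{\rm a}[h,\boldsymbol{\Phi}_{\cI}]({\dbug},\dot\omega_{\rm v})^T$ and the $\uP_{\rm i}^{\rm I},\uP_{\rm i}^{\rm III}$ terms into ${\mathcal F}[h,\boldsymbol{\Phi}_{\cI}](S^{\rm I}_{\rm i}+S^{\rm III}_{\rm i})$ --- while for $\uP_{\rm i}^{\rm IV}$ the same integration by parts does not simplify (its boundary data is nonzero and it is already annihilated by $-\nabla\cdot(\tfrac h\rho\nabla\cdot)$), so this contribution is kept as $\int_\cI\uP_{\rm i}^{\rm IV}\,\big(N_{\rm w},\,(X-X_G)\cdot\nabla^\perp\zeta_{\rm w}\big)^T$; here one only needs to check that the vertical component of ${\bf r}_G\times N_{\rm w}$ equals $(X-X_G)\cdot\nabla^\perp\zeta_{\rm w}$, the torque density conjugate to $\omega_{\rm v}$. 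Collecting the terms in Newton's laws \eqref{Newton1}--\eqref{Newton2} restricted to the components $i\in\{1,2,3,6\}$ gives the stated ODE. The only genuinely new point --- hence the main obstacle --- is the transition condition on $Q$: one must be careful to separate the sub-solid region (where $\bU$ is continuous across $\Gamma(t)$) from the wetted vertical wall (where only the normal velocities match), and to use the $z$-independence of the horizontal solid velocity coming from the constraint $\bom=(0,0,\omega_{\rm v})$; everything else is a bookkeeping variant of the earlier proofs.
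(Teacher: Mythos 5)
Your proposal is correct and takes essentially the same route as the paper: the paper proves only the transition condition on $Q$, writing $Q_{\rm e}-Q_{\rm i}=\int_{\zeta_{\rm i}}^{\zeta_{\rm e}}V$ by smoothness of $V$ in the interior of the fluid region and then using \eqref{contactV} together with the $z$-independence of $V_{\mathcal C}\cdot\nu$, which is exactly your argument. The remaining points are dismissed there as a close adaptation of Propositions \ref{proppresc} and \ref{propfloat}, and your write-up simply makes that adaptation explicit (the fourth pressure component carrying the nonhomogeneous boundary data, restriction to the components $1,2,3,6$, and the identity between the vertical component of ${\bf r}_G\times N_{\rm w}$ and $(X-X_G)\cdot\nabla^\perp\zeta_{\rm w}$).
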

\begin{remark}\label{remd1vert}
The adatptation of the proposition to the one dimensional case $d=1$ is straighforward. Moreover, the source term $S^{\rm IV}$ can then be computed explicitly; with the notations of Proposition \ref{propWWCNU}, one has
$$
S^{\rm IV}=-\frac{p_+-p_-}{\rho}\frac{1}{\int_{x_-}^{x^+}1/h}
\quad\mbox{ with }\quad \frac{p_{\pm}}{\rho}=\big[ g(\zeta_{\rm e}-\zeta_{\rm i})+\int_{\zeta_{\rm i}}^{\zeta_{\rm e}}(\dt w+\bU\cdot \nabla_{X,z}w)\big]_{\vert_{x=x_\pm}} ,
$$
and the corresponding pressure $\uP^{\rm IV}_{\rm i}$ is
$$
\uP^{\rm IV}_{\rm i}=p_-+(p_+-p_-)\frac{\int_{x_-}^x 1/h}{\int_{x_-}^{x_+}1/h}.
$$
\end{remark}
\begin{proof}
We just prove the transition condition on $Q$ at the contact line. The rest of the proof is a close adaptation of the proof of Propositions \ref{propfloat} and \ref{proppresc}. By definition, one has
$$
Q_{\rm e}(t,X)=\int_{-h_0+b}^{\zeta_{\rm e}(t,x)} V(t,X,z)dz
\quad\mbox{ and }\quad
Q_{\rm i}(t,X)=\int_{-h_0+b}^{\zeta_{\rm i}(t,x)} V(t,X,z)dz.
$$
As for the proof of Proposition \ref{propWWst}, we know that $V$ is smooth in the interior of the fluid region, so that
$$
Q_{\rm e}(t,X)-Q_{\rm i}(t,X)=\int_{\zeta_{\rm i}(t,X)}^{\zeta_{\rm e}(t,X)}V(t,X,z)dz
$$
Taking the scalar product with $\nu$ and using the transition condition \eqref{contactV}, we get
$$
(Q_{\rm e}-Q_{\rm i})\cdot \nu=\int_{\zeta_{\rm i}}^{\zeta_{\rm e}}V_{\mathcal C}\cdot \nu,
$$
which yields the result since $V_{\mathcal C}\cdot \nu$ does not depend on $z$ and because $V_{\mathcal C}=V_G+\omega_{\rm v}(X-X_G)^\perp$.
\end{proof}
\section{Asymptotic models}\label{sectAsfloat}

It is classical in the theory of water waves to derive simpler models from the governing equations. The same approximations lead to simplified versions of  the water waves equations with a floating structure (with the terminology of Definition \ref{defiCWW}) . We shall consider here two important regimes: the nonlinear shallow water equations which is a fully nonlinear model (in the sense that no smallness assumption is made on the size of the waves), and the Boussinesq model which is a weakly nonlinear model, but which takes into account the non-hydrostatic dispersive effects neglected in the nonlinear shallow water equations. The former is studied in \S \ref{sectSW} and the latter in \S \ref{sectBouss}.

\subsection{The shallow water approximation}\label{sectSW}

In absence of any immersed structure, the shallow water approximation consists in performing two approximations\footnote{For the classical water waves equations (without floating body) these approximations are valid at leading order under the assumption that $\mu\ll 1$, where $\mu=\frac{h_0^2}{L^2}$ is the shallowness parameter given by the square of the ratio of the depth over the typical horizontal scale. The nonlinear shallow water equations are obtained by neglecting all the terms of size $O(\mu)$ in the dimensionless water waves equations; see for instance \cite{L_book}.} on the averaged Euler equations \eqref{Eulerav1closed},
\begin{enumerate}
\item Neglect the vertical variations of the horizontal velocity in the quadratic term. This leads to
$$
 \int_{-h_0+b}^\zeta V\otimes V \approx h \ovV\otimes \ovV=\frac{1}{h}Q\otimes Q.
$$
\item Neglect the non-hydrostatic acceleration,
$$
{\bf a}_{\rm NH}\approx 0.
$$
\end{enumerate}
We show in this section how to simplify the water waves equations with a floating structure  \eqref{Eulerav2}-\eqref{coupling}  under these approximations.
The same simplifications as above must be performed on the momentum equation in \eqref{Eulerav2}, but must also be consistently made in \eqref{eqPw} for the computations of the interior pressure $\uP_{\rm i}$. This means that the terms ${\bf R}(\zeta,Q)$ and ${\bf a}_{\rm NH}(\zeta,Q)$ must be neglected in \eqref{Eulerav2}, but also in \eqref{defba}. The resulting  \emph{nonlinear shallow water equations with a floating structure} are given by
\begin{equation}\label{SWf}
\begin{cases}
\dsp \dt \zeta+\nabla\cdot Q=0,\\
\dsp \dt Q+\nabla\cdot (\frac{1}{h} Q\otimes Q)+gh\nabla \zeta=-h \frac{1}{\rho}\nabla \underline{P},
\end{cases}
\end{equation}
where $\uP_{\rm e}=P_{\rm atm}$ in ${\mathcal E}(t)$ and $\uP_{\rm i}$ is given by
\begin{equation}\label{eqPwSW}
\begin{cases}
-\nabla\cdot (\frac{h}{\rho}\nabla \uP_{\rm i})=-\dt^2 \zeta_{\rm w}+\nabla\cdot\big[\nabla\cdot (\frac{1}{h}Q\otimes Q)+gh\nabla\zeta\big] \quad\mbox{in}\quad {\mathcal I}(t),\\
{\uP_{\rm i}}_{\vert_{\Gamma(t)}}=P_{\rm atm},
\end{cases}
\end{equation}
and with the boundary conditions at the contact line
\begin{equation}\label{contSW}
Q_{\rm e}=Q_{\rm i} \quad \mbox{ and }\quad \zeta_{\rm e}=\zeta_{\rm i}\quad\mbox{ on }\Gamma(t).
\end{equation}
\begin{remark}
As for the full water waves equations with a floating structure (see Proposition \ref{propWWst}), the constraint
$$
\zeta=\zeta_{\rm w} \quad\mbox{ on }\quad {\mathcal I}(t)
$$
is satisfied at all time provided that the initial conditions verify
$$
\qquad \zeta^0={\zeta_{\rm
    w}}_{\vert_{t=0}}\quad \mbox{ and }\quad \nabla\cdot Q^0=-\dt {\zeta_{\rm
    w}}_{\vert_{t=0}} \quad\mbox{ on }\quad {\mathcal I}(0),
$$
which we shall always assume.
\end{remark}
\begin{remark}
The equations \eqref{SWf}-\eqref{eqPwSW} can alternatively be written as 
$$
\begin{cases}
\dsp \dt \zeta+\nabla\cdot Q=0,\\
\dsp \dt Q+\nabla\cdot (\frac{1}{h} Q\otimes Q)+gh\nabla \zeta=-h \frac{1}{\rho}\nabla \underline{P},\\
h\leq h_{\rm w}, \qquad (h-h_{\rm w})P=0.
\end{cases}
$$
This is a typical example of {\it congested flow}; in the case $g=0$, this model appears in various contexts such as traffic flows \cite{BDMR,DH}, granular flows \cite{LLM,Perrin,PZ}, hydrodynamics in pipes \cite{BEG}, compressible-low Mach coupling in gaz dynamics \cite{PDD}, etc. As remarked in \cite{DH} the transition conditions on the contact line play a crucial role, and the computation of the evolution of the corresponding free boundary $\Gamma(t)$ is very delicate. For this reason, the "compressible" part of of the equations is often approximated as the limit of a singular incompressible system \cite{BPZ,DH,GPSW}. Our approach offers an alternative to this method; we establish in \cite{IguchiLannes} a well-posedness result for \eqref{SWf}-\eqref{contSW} in the one dimensional case, which describes in particular the evolution of the contact line.
\end{remark}
The simpler form of the elliptic equation for the interior pressure allows us to give a more explicit and more instructive form of the hydrodynamic forces acting on the solid; we shall denote by ${\mathcal F}_{\rm Arch}$ and ${\mathcal F}_{\rm NL}$ the Archimedes\footnote{The standard Archimedes force is the vertical component of the force, given by $-g\int_\cI \zeta$, which is the opposite of the weight of the fluid that the body displaces (with respect to the still water level). Note that this force can be oriented {\it downwards}, for instance  if the object is floating on a large amplitude wave so that $\zeta=\zeta_{\rm w}\geq 0$ in the interior region $\cI(t)$.} and nonlinear force/torque respectively, given by the surfacic integrals
\begin{equation}\label{defForce1}
{\mathcal F}_{\rm Arch}=-g\int_{\cI(t)}\zeta_{}  \left(\begin{array}{c}N_{} \\ {\bf r}_G\times N_{}\end{array}\right)
\quad\mbox{ and }\quad
 {\mathcal F}_{\rm NL}=-\int_{\cI(t)}\nabla\cdot \big(\frac{1}{h_{}}Q\otimes Q\big)\cdot \nabla {\boldsymbol \Phi}_\cI,
\end{equation}
and by ${\mathcal F}_{\rm \Gamma}$ the contribution\footnote{As we shall see in \S \ref{sectsolfloatvert}, this component contains damping forces as well as excitation forces coming from the wave field.} coming from the contact line
\begin{equation}\label{defForce2}
{\mathcal F}_{\rm \Gamma}=-g\int_{\Gamma(t)} h_{} \zeta_{}\partial_\nu {\boldsymbol \Phi}_\cI;
\end{equation}
as usual $\boldsymbol{\Phi}_{\mathcal I}=\big(\Phi^{(1)}_ {{\mathcal I}},\dots, \Phi^{(6)}_ {{\mathcal I}}\big)$ is as in Definition \ref{defelempot}, 
and we denoted by $\nu\in \R^2$  the outward unit normal vector to $\Gamma(t)$.
\begin{proposition}\label{propprescSW}
Denoting by  $\bug$ the velocity  of the center of mass and by $\bom$ the angular velocity, the nonlinear shallow water equations with a floating structure then take the form
$$
\begin{cases}
\dsp \dt \zeta+\nabla\cdot Q=0,\\
\dsp \dt Q+\nabla\cdot (\frac{1}{h} Q\otimes Q)+gh \nabla\zeta=S_{\rm SW}^{\rm I}+S^{\rm II}+S^{\rm III},
\end{cases}
$$
with the coupling conditions at the contact line
$$
 Q_{\rm e}=Q_{\rm i} 
\quad \mbox{ and }\quad
\zeta_{\rm e}=\zeta_{\rm i} 
\quad\mbox{ on }\quad \Gamma(t),
$$
and with the source terms $S^{\rm II}$ and $S^{\rm III}$ as in Proposition \ref{proppresc}, while $S^{\rm I}_{\rm SW}$ is given by
$$
S^{\rm I}_{\rm SW,e}=0 \quad\mbox{ and }\quad S^{\rm I}_{\rm SW,i}=-\frac{h}{\rho}\nabla \uP_{\rm SW,i}^{\rm I}
$$
where
$$
\begin{cases}
-\nabla\cdot (\frac{h}{\rho}\nabla \uP_{\rm SW,i}^{\rm I})=
\nabla\cdot\big[\nabla\cdot (\frac{1}{h} Q\otimes Q)+gh\nabla\zeta\big]  \quad\mbox{\textnormal{on}}\quad {\mathcal I}(t),\\
{\uP_{\rm SW,i}^{\rm I}}_{\vert_{\Gamma(t)}}=P_{\rm atm}.
\end{cases}
$$
In the case where the solid is freely floating, the evolution of $\bug$ and $\bom$ is given by the ODE
$$\big({\mathcal M}+{\mathcal M}_{\rm a}[h,\boldsymbol{\Phi}_{{\mathcal I}}]\big)\left(\begin{array}{c} {\dbug} \\ \dot{\bom} \end{array}\right)=\left(\begin{array}{c} -{\mathfrak m}g {\bf e}_z \\
{\mathfrak I}\bom\times \bom 
\end{array}\right)
+{\mathcal F}_{\rm Arch}+{\mathcal F}_{\rm \Gamma}+{\mathcal F}_{\rm NL}
+{\mathcal F}[h,\boldsymbol{\Phi}_{{\mathcal I}}] S_{\rm i}^{\rm III},
$$
with the same notations as in Proposition \ref{propfloat}. In particular, one has conservation of the total energy,
$$
\frac{d}{dt}E_{\rm SW,tot}=0\quad \mbox{ with }\quad E_{\rm SW,tot}=E_{\rm SW}+E_{\rm solid},
$$
and where $E_{\rm solid}$ is as in Proposition \ref{propfloat} while $E_{\rm SW}$ is given by
$$
E_{\rm SW}=\frac{1}{2}\int_{\R^d} \frac{1}{h}\abs{Q}^2+\frac{1}{2}\int_{\R^d}g \zeta^2.
$$
\end{proposition}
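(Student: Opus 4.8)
The plan is to read Proposition \ref{propprescSW} as the shallow-water reduction of Propositions \ref{proppresc} and \ref{propfloat}. First I would carry the two approximations of \S\ref{sectSW} (drop $\mathbf{R}$ and $\mathbf{a}_{\rm NH}$) not only in the momentum equation of \eqref{Eulerav2} but also, consistently, in the expression \eqref{defba} for $\mathbf{a}_{\rm FS}$ which feeds the elliptic problem \eqref{eqPw}. This immediately turns the evolution equations and the coupling conditions of Proposition \ref{propfloat} into the displayed system: the term $\nabla\cdot\mathbf{R}(h,Q)+h\mathbf{a}_{\rm NH}(h,Q)$ disappears from the left-hand side, the elliptic problem defining $S^{\rm I}$ becomes exactly the one for $\uP_{\rm SW,i}^{\rm I}$ in \eqref{eqPwSW}, and $S^{\rm II}$, $S^{\rm III}$ (which carry no non-hydrostatic contribution) are unchanged. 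Up to this point nothing new is needed; only the Newton ODE and the energy identity require an argument.

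For the Newton ODE, in view of Proposition \ref{propfloat} it suffices to establish the decomposition
\[
\mathcal{F}[h,\boldsymbol{\Phi}_{\cI}]\,S^{\rm I}_{\rm SW,i}={\mathcal F}_{\rm Arch}+{\mathcal F}_{\rm \Gamma}+{\mathcal F}_{\rm NL},
\]
with ${\mathcal F}_{\rm Arch}$, ${\mathcal F}_{\rm \Gamma}$, ${\mathcal F}_{\rm NL}$ as in \eqref{defForce1}--\eqref{defForce2} and the overall density normalization being that of \S\ref{sectFSEF} and \S\ref{sectfloat}. To prove it I would use the elliptic equation \eqref{eqPwSW} to observe that
\[
W:=\frac{h}{\rho}\nabla \uP_{\rm SW,i}^{\rm I}+\nabla\cdot\Big(\frac{1}{h}Q\otimes Q\Big)+gh\nabla\zeta
\]
is divergence free on $\cI(t)$, so that $S^{\rm I}_{\rm SW,i}=-\frac{h}{\rho}\nabla\uP_{\rm SW,i}^{\rm I}=\nabla\cdot(\frac1h Q\otimes Q)+gh\nabla\zeta-W$. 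Inserting this into the definition of $\mathcal{F}[h,\boldsymbol{\Phi}_{\cI}]$ (Definition \ref{defmassiner}): the contribution of $W$ vanishes after one integration by parts, since $\nabla\cdot W=0$ and each $\Phi^{(j)}_{\cI}$ vanishes on $\Gamma(t)$; the contribution of $\nabla\cdot(\frac1h Q\otimes Q)$ reproduces ${\mathcal F}_{\rm NL}$; and for the contribution of $gh\nabla\zeta$, one integration by parts, using $-\nabla\cdot(h\nabla\Phi^{(j)}_{\cI})=(N_{\rm w})_j$ for $j\le 3$ and $=(\mathbf{r}_G\times N_{\rm w})_j$ for $4\le j\le 6$ (Definition \ref{defelempot}), produces the interior term $-g\int_{\cI}\zeta\,(N_{\rm w})_j$ (resp.\ with $\mathbf{r}_G\times N_{\rm w}$), that is ${\mathcal F}_{\rm Arch}$ since $\zeta=\zeta_{\rm w}$ on $\cI(t)$, together with the boundary term $-g\int_{\Gamma}h\zeta\,\partial_\nu\Phi^{(j)}_{\cI}$, which is ${\mathcal F}_{\rm \Gamma}$. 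This boundary term is precisely the object absent from the full-water-waves statement: it survives because, unlike $\Phi^{(j)}_{\cI}$, the weighted normal derivative $h\partial_\nu\Phi^{(j)}_{\cI}$ need not vanish on $\Gamma(t)$, and it carries the nonzero boundary value of the hydrostatic part of the pressure. Substituting the displayed identity into the ODE of Proposition \ref{propfloat} (with $S^{\rm I}$ replaced by $S^{\rm I}_{\rm SW}$) yields the stated ODE.

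Conservation of $E_{\rm SW,tot}$ follows exactly as for $E_{\rm tot}$ in Proposition \ref{propfloat}. The shallow-water energy balance for \eqref{SWf} with a non-constant surface pressure --- the analogue of Remark \ref{remNRG2}, obtained by testing the momentum equation against $Q/h$ and using $\dt h=\dt\zeta=-\nabla\cdot Q$ --- gives $\frac{d}{dt}E_{\rm SW}=-\int_{\cI(t)}\dt\zeta\,\frac{\uP_{\rm i}-P_{\rm atm}}{\rho}$, the boundary term on $\Gamma(t)$ dropping because $\uP_{\rm i}=P_{\rm atm}$ there. On the other hand, differentiating $E_{\rm solid}$ and using \eqref{identangular}, Newton's laws \eqref{Newton1}--\eqref{Newton2} with \eqref{forcetorque}, the identity $\uU_{\rm w}=\bug+\bom\times\mathbf{r}_G$ from \eqref{defUc}, and $\uU_{\rm w}\cdot N_{\rm w}=\dt\zeta_{\rm w}=\dt\zeta$ on $\cI(t)$ (from \eqref{eqdtzeta} and the constraint), gives $\frac{d}{dt}E_{\rm solid}=+\int_{\cI(t)}\dt\zeta\,\frac{\uP_{\rm i}-P_{\rm atm}}{\rho}$, so the sum is constant. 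The only step requiring genuine care --- and thus the main, though mild, obstacle --- is the bookkeeping in the decomposition of $\mathcal{F}[h,\boldsymbol{\Phi}_{\cI}]S^{\rm I}_{\rm SW,i}$: correctly isolating the hydrostatic piece $gh\nabla\zeta$ and keeping track of which boundary terms on $\Gamma(t)$ survive each integration by parts (this is what separates ${\mathcal F}_{\rm Arch}$ from ${\mathcal F}_{\rm \Gamma}$); everything else is a transcription of arguments already carried out for the full water waves equations.
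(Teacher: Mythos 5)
Your proposal is correct and follows essentially the same route as the paper's proof: everything reduces to the identity $\mathcal{F}[h,\boldsymbol{\Phi}_{\cI}]S^{\rm I}_{\rm SW,i}=\mathcal{F}_{\rm Arch}+\mathcal{F}_{\rm \Gamma}+\mathcal{F}_{\rm NL}$, obtained from the elliptic equation for $\uP^{\rm I}_{\rm SW,i}$, the defining property of the elementary potentials and integrations by parts using $\Phi^{(j)}_{\cI}{}_{\vert_{\Gamma(t)}}=0$, after which the energy conservation is derived exactly as in Proposition \ref{propfloat}. Your divergence-free field $W$ simply reorganizes the same integrations by parts that the paper performs by transferring the derivatives onto the pressure before invoking the elliptic equation, so the content is identical (up to the $\rho$-normalization looseness already present in \eqref{defForce1}--\eqref{defForce2} and in the paper's own computation).
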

\begin{remark}[Vertical walls] \label{remSWvert}
Following what has been done for the full water waves equations in \S \ref{sectvertical}, it is possible to allow for the possibility of vertical walls by removing the condition $\zeta_{\rm e}=\zeta_{\rm i}$ from the coupling conditions at the contact line, and by adding a source term $S^{\rm IV}_{\rm SW}$ to the momentum equation
with  
$S^{\rm IV}_{\rm SW,e}=0$ and 
$$
S^{\rm IV}_{\rm SW,i}=-\frac{h}{\rho}\nabla P_{\rm SW,i}^{\rm IV} \quad\mbox{ where }\quad 
\begin{cases}
-\nabla\cdot (\frac{h}{\rho}\nabla \uP_{\rm SW,i}^{\rm IV})=0 \quad\mbox{\textnormal{on}}\quad {\mathcal I}(t),\\
{\uP_{\rm SW,i}^{\rm IV}}_{\vert_{\Gamma(t)}}=\rho g(\zeta_{\rm e}-\zeta_{\rm i});
\end{cases}
$$
the difference between $P^{\rm IV}_{\rm i}$ and $P^{\rm IV}_{\rm SW,i}$ is that the non-hydrostatic term $\rho\int_{\zeta_{\rm i}}^{\zeta_{\rm e}}(\dt w+\bU\cdot \nabla_{X,z}w)$ has been neglected in the latter, consistently with the approximations made to derive the nonlinear shallow water equations.
\end{remark}
\begin{proof}
The only point that deserves a proof is the derivation of the ODE for $\bom$ and $\bug$ and the energy conservation. It is given by the same ODE as in Proposition \ref{propfloat}, with $S^{\rm I}$ replaced by $S^{\rm I}_{\rm SW}$, namely,
$$\big({\mathcal M}+{\mathcal M}_{\rm a}[h,\boldsymbol{\Phi}_{{\mathcal I}(t)}]\big)\left(\begin{array}{c} {\dbug} \\ \dot{\bom} \end{array}\right)=\left(\begin{array}{c} -{\mathfrak m}g {\bf e}_z \\
{\mathfrak I}\bom\times \bom 
\end{array}\right)
+{\mathcal F}[h,\boldsymbol{\Phi}_{{\mathcal I}(t)}](S_{\rm SW,i}^{\rm I}+S_{\rm i}^{\rm III}\big).
$$
We therefore need to prove that ${\mathcal F}[h,\boldsymbol{\Phi}_{{\mathcal I}(t)}]S_{\rm SW,i}^{\rm I}={\mathcal F}_{\rm Arch}+{\mathcal F}_{\rm \Gamma}+{\mathcal F}_{\rm NL}$, with ${\mathcal F}_{\rm Arch}$, ${\mathcal F}_{\rm \Gamma}$ and ${\mathcal F}_{\rm NL}$ as in \eqref{defForce1}-\eqref{defForce2}. By definition, one has
\begin{align*}
{\mathcal F}[h,\boldsymbol{\Phi}_{{\mathcal I}(t)}]S_{\rm SW,i}^{\rm I}&=\int_{\cI}\uP_{\rm SW,i} \left(\begin{array}{c}N_{\rm w} \\ {\bf r}_G\times N_{\rm w}\end{array}\right)
\\
&=-\int_{\cI}\uP_{\rm SW,i} \nabla\cdot (h \nabla{\boldsymbol \Phi}_\cI),
\end{align*}
by definition of the elementary potentials. Integrating by parts and using the definition of $\uP_{\rm SW,i}$, one gets
\begin{align*}
{\mathcal F}[h,\boldsymbol{\Phi}_{{\mathcal I}(t)}]S_{\rm SW,i}^{\rm I}&=\int_\cI  \nabla\big[ gh\nabla\zeta+\nabla\cdot(\frac{1}{h}Q\otimes Q)\big]{\boldsymbol \Phi}_\cI,
\end{align*}
so that the result follows from a simple integration by parts.\\
For the energy conservation, one readily gets
$$
\frac{d}{dt}E_{\rm sw}=-\int_{{\mathcal I}(t)} \frac{\uP_{\rm SW,i}-P_{\rm atm}}{\rho}\dt \zeta_{\rm w}.
$$
with $\uP_{\rm SW,i}=\uP_{\rm SW,i}^{\rm I}+\uP_{\rm i}^{\rm II}+\uP_{\rm i}^{\rm III}$. The fact that $\frac{d}{dt}E_{\rm solid}=-\frac{d}{dt}E_{\rm SW}$ is then obtained as in the proof of Proposition \ref{propfloat}.
\end{proof}

\subsection{The Boussinesq approximation}\label{sectBouss}

In the nonlinear shallow water model used in the previous section, the (non hydrostatic) dispersive effects are neglected, which is not satisfactory for many applications. We consider here a Boussinesq model, which is the simplest nonlinear model that includes dispersive effects.
In absence of any immersed structure, the Boussinesq approximation consists in performing the following three approximations\footnote{The Boussinesq regime consists in assuming that $\mu\ll 1$ as for the nonlinear shallow water equations, but also requires a smallness assumption on the amplitude of the surface variations, namely, $\eps=O(\mu)$, where $\eps=\frac{a}{h_0}$ is the ratio of the typical amplitude of the surface variations over the depth at rest. A similar smallness assumption is also made on the bottom variations. The Boussinesq equations are obtained by dropping the terms of order $O(\mu^2)$ in the dimensionless water waves equations. If the smallness assumption on $\eps$, namely, $\eps=O(\mu)$, is removed, more terms should be kept for the non-hydrostatic acceleration. The corresponding regime is called Serre-Green-Naghdi (or fully nonlinear Boussinesq). We refer to \cite{LB} for more details, and to \cite{L_book} for a mathematical justification of these approximations. We treat here the Boussinesq regime for the sake of clarity, but the Serre-Green-Naghdi regime could be treated similarly, albeit with more complicated expressions.} on the averaged Euler equations \eqref{Eulerav1closed},
\begin{enumerate}
\item Neglect the vertical variations of the horizontal velocity in the quadratic. This leads, as in the shallow water approximation, to
$$
 \int_{-h_0+b}^\zeta V\otimes V \approx \frac{1}{h} Q\otimes Q.
$$
\item Neglect the variations of the surface elevation and of the bottom in the above approximations,
$$
 \frac{1}{h} Q\otimes Q\approx \frac{1}{h_0}Q\otimes Q
$$
\item Take into account the leading order term of the non-hydrostatic acceration
$$
h{\bf a}_{\rm NH}\approx -\frac{h_0^2}{3}\Delta \dt Q.
$$
\end{enumerate}
The resulting  \emph{Boussinesq equations with a floating structure} are given by
\begin{equation}\label{Boussf}
\begin{cases}
\dsp \dt h+\nabla\cdot Q=0,\\
\dsp \big[1-\frac{h_0^2}{3}\Delta\big]  \dt Q+\nabla\cdot (\frac{1}{h_0} Q\otimes Q)+gh\nabla \zeta=-h \frac{1}{\rho}\nabla \underline{P},
\end{cases}
\end{equation}
where $\uP_{\rm e}=P_{\rm atm}$ in ${\mathcal E}(t)$ and
\begin{equation}\label{eqPwBouss}
\begin{cases}
-\nabla\!\cdot\! (\frac{h}{\rho}\nabla \uP_{\rm i})=-\big[1-\frac{h_0^2}{3}\Delta\big] \dt^2 \zeta_{\rm w}+\nabla\!\cdot\!\big[\nabla\!\cdot\! (\frac{1}{h_0} Q\otimes Q)+gh\nabla\zeta\big] \quad \mbox{in } {\mathcal I}(t),\\
{\uP_{\rm i}}_{\vert_{\Gamma(t)}}=P_{\rm atm},
\end{cases}
\end{equation}
and with the boundary conditions at the contact line
\begin{equation}\label{contBouss}
Q_{\rm e}=Q_{\rm i} \quad \mbox{ and }\quad \zeta_{\rm e}=\zeta_{\rm i}\quad\mbox{ on }\Gamma(t)
\end{equation}
(and here again with the assumption \eqref{assCI} on the initial condition so that the constraint \eqref{constraint} is automatically satisfied). As shown in the following proposition, the source terms  created by the motion of the solid in the momentum equations must be modified, as well as the added mass-inertia matrix. The proof being a simple adaptation of the proof of Proposition \ref{propfloat}, it is omitted.
\begin{proposition}\label{propprescBouss}
Denoting by  $\bug$ the velocity  of the center of mass and by $\bom$ the angular velocity, the Boussinesq equations with a floating structure  take the form
$$
\begin{cases}
\dsp \dt \zeta+\nabla\cdot Q=0,\\
\dsp \big[1-\frac{h_0^2}{3}\Delta\big] \dt Q+\nabla\cdot (\frac{1}{h_0} Q\otimes Q)+gh \nabla\zeta=S_{\rm B}^{\rm I}+S^{\rm II}_{\rm B}+S^{\rm III}_{\rm B},
\end{cases}
$$
with the coupling conditions at the contact line
$$
 Q_{\rm e}=Q_{\rm i} 
\quad \mbox{ and }\quad
\zeta_{\rm e}=\zeta_{\rm i} 
\quad\mbox{ on }\quad \Gamma(t),
$$
and with the source terms $S^{\rm j}_{\rm B}$ ($j={\rm I,II,III}$)  given by
$$
S^{\rm j}_{\rm B,e}=0 \quad\mbox{ and }\quad S^{\rm j}_{\rm B,i}=-\frac{h}{\rho}\nabla \uP_{\rm B,i}^{\rm j} 
$$
where ${\uP_{\rm B, i}^{\rm I}}_{\vert_{\Gamma(t)}}=P_{\rm atm}$ and ${\uP_{\rm B,i}^{\rm II}}_{\vert_{\Gamma(t)}}={\uP_{\rm B,i}^{\rm III}}_{\vert_{\Gamma(t)}}=0$, and
$$
\begin{cases}
\dsp-\nabla\cdot (\frac{h}{\rho}\nabla \uP_{\rm B,i}^{\rm I})&=\nabla\cdot \big[ \nabla\cdot\big(\frac{1}{h_0}Q\otimes Q \big) +gh \nabla\zeta \big] \\
\dsp -\nabla\cdot (\frac{h}{\rho}\nabla \uP_{\rm B,i}^{\rm II})&=-(1-\frac{h_0^2}{3}\Delta)\big[\big(\dot{{\bf U}}_G+\dot{\bom}\times {\bf r}_G\big)\cdot N_{\rm w} \big]\\
\dsp -\nabla\cdot (\frac{h}{\rho}\nabla \uP_{\rm B,i}^{\rm III})&=(1-\frac{h_0^2}{3}\Delta){\mathcal Q}[{\bf r}_G](V_G,\bom),
 \end{cases}
 \quad\mbox{\textnormal{on}}\quad {\mathcal I}(t).
$$
In the case where the solid is freely floating, the evolution of $\bug$ and $\bom$ is given by the ODE
$$\big({\mathcal M}+{\mathcal M}_{\rm a}[h,\boldsymbol{\Phi}_{{\mathcal I}}]+{\mathcal M}_{\rm B}\big)\left(\begin{array}{c} \!\!{\dbug}  \!\!\\  \!\!\dot{\bom}  \!\!\end{array}\right)=\left(\begin{array}{c} -{\mathfrak m}g {\bf e}_z \\
{\mathfrak I}\bom\times \bom 
\end{array}\right)
+{\mathcal F}_{\rm Arch}+{\mathcal F}_{\rm \Gamma}+{\mathcal F}_{\rm B,NL}
+{\mathcal F}[h,\boldsymbol{\Phi}_{\mathcal I}]S_{\rm B,i}^{\rm III},
$$
where ${\mathcal F}_{\rm Arch}$ and ${\mathcal F}_{\rm \Gamma}$  are as in Proposition \ref{propprescSW} and
$$
{\mathcal F}_{\rm B,NL}= -\int_{\cI(t)}\nabla\cdot \big(\frac{1}{h_{0}}Q\otimes Q\big)\cdot \nabla {\boldsymbol \Phi}_\cI
\quad\mbox{ and }\quad {\mathcal M}_{\rm B}=-\rho\frac{h_0^2}{3}\int_\cI \boldsymbol{\Phi}_\cI\otimes \Delta\left(\begin{array}{c}\!\! N_{} \!\!\\ \!\!{\bf r}_G\times N_{}\!\!
\end{array}\right).
$$
\end{proposition}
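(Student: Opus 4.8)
The plan is to follow the proof of Proposition \ref{propfloat} almost verbatim, isolating the two points where the dispersive operator $1-\frac{h_0^2}{3}\Delta$ changes something, namely the pressure component $\uP_{\rm B,i}^{\rm II}$ and the additional inertia matrix ${\mathcal M}_{\rm B}$.

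First I would obtain the decomposition $\uP_{\rm i}=\uP_{\rm B,i}^{\rm I}+\uP_{\rm B,i}^{\rm II}+\uP_{\rm B,i}^{\rm III}$. Since the map $P\mapsto-\nabla\cdot(\frac h\rho\nabla P)$ with homogeneous Dirichlet data on $\Gamma(t)$ is linear, it is enough to split the right-hand side of \eqref{eqPwBouss}. Substituting the identity \eqref{P61}, $\dt^2\zeta_{\rm w}=(\dbug+\dot\bom\times{\bf r}_G)\cdot N_{\rm w}-{\mathcal Q}[{\bf r}_G](V_G,\bom)$ --- which is established in the proof of Proposition \ref{proppresc} and is purely kinematic, hence model-independent --- into the source $-[1-\frac{h_0^2}{3}\Delta]\dt^2\zeta_{\rm w}+\nabla\cdot[\nabla\cdot(\frac1{h_0}Q\otimes Q)+gh\nabla\zeta]$ of \eqref{eqPwBouss} and distributing $1-\frac{h_0^2}{3}\Delta$ produces exactly the three elliptic problems for $\uP_{\rm B,i}^{\rm I}, \uP_{\rm B,i}^{\rm II}, \uP_{\rm B,i}^{\rm III}$ stated in the proposition. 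Setting $S^j_{\rm B,i}=-\frac h\rho\nabla\uP_{\rm B,i}^j$ and $S^j_{\rm B,e}=0$ then gives the momentum equation, the coupling conditions being those of \eqref{contBouss}. I would also note that the sources of $\uP_{\rm B,i}^{\rm II}$ and $\uP_{\rm B,i}^{\rm III}$ now carry $\Delta N_{\rm w}$ and $\Delta{\bf r}_G$, i.e. third derivatives of $\zeta_{\rm w}$, so that solvability of these problems requires a mild extra smoothness assumption on the wetted surface.

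For the freely floating case I would reproduce the two steps of the proof of Proposition \ref{propfloat}: rewrite \eqref{Newton2} as ${\mathfrak I}\dot\bom={\mathfrak I}\bom\times\bom+T_{\rm fluid}$ using \eqref{eqI} and \eqref{eqTheta} (a purely kinematic computation, unchanged), then decompose $F_{\rm fluid}$ and $T_{\rm fluid}$ along $\uP_{\rm i}=\uP_{\rm B,i}^{\rm I}+\uP_{\rm B,i}^{\rm II}+\uP_{\rm B,i}^{\rm III}$. The contributions of $j={\rm I}$ and $j={\rm III}$ are handled exactly as before: integrating by parts against the elementary potentials $\boldsymbol\Phi_{\mathcal I}$, which vanish on $\Gamma(t)$, identifies $\big(F_{\rm fluid}^j,T_{\rm fluid}^j\big)^T$ with $-\frac1\rho{\mathcal F}[h,\boldsymbol\Phi_{\mathcal I}]\nabla\uP_{\rm B,i}^j$; the $j={\rm III}$ term is kept as ${\mathcal F}[h,\boldsymbol\Phi_{\mathcal I}]S_{\rm B,i}^{\rm III}$, whereas for $j={\rm I}$ a further integration by parts --- as in the proof of Proposition \ref{propprescSW} --- isolates ${\mathcal F}_{\rm Arch}$, ${\mathcal F}_\Gamma$ and a nonlinear term which is now ${\mathcal F}_{\rm B,NL}$ because the momentum flux carries $\frac1{h_0}$ instead of $\frac1h$, the divergence-free remainder produced when ``solving'' for $h\nabla\uP_{\rm B,i}^{\rm I}$ pairing to zero against $\nabla\boldsymbol\Phi_{\mathcal I}$ once again because $\boldsymbol\Phi_{\mathcal I}$ vanishes on $\Gamma(t)$.

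The one genuinely new computation is the $j={\rm II}$ term. I would write $(\dbug+\dot\bom\times{\bf r}_G)\cdot N_{\rm w}=\dbug\cdot N_{\rm w}+\dot\bom\cdot({\bf r}_G\times N_{\rm w})=\sum_{k=1}^6 y_k s_k$, where $y=(\dbug,\dot\bom)$ and $s_1,\dots,s_6$ are the sources of $\Phi_{\mathcal I}^{(1)},\dots,\Phi_{\mathcal I}^{(6)}$ (the components of $N_{\rm w}$ and of ${\bf r}_G\times N_{\rm w}$). Moving the operator $-\nabla\cdot(h\nabla\cdot)$ from $\Phi_{\mathcal I}^{(j)}$ onto $\uP_{\rm B,i}^{\rm II}$ in $F_{\rm fluid}^{\rm II}=-\sum_j\int_{\mathcal I}\uP_{\rm B,i}^{\rm II}\nabla\cdot(h\nabla\Phi_{\mathcal I}^{(j)}){\bf e}_j$ (legitimate since both factors have zero Dirichlet data on $\Gamma(t)$), then splitting $1-\frac{h_0^2}{3}\Delta$: the ``$1$'' part reproduces $-{\mathcal M}_{\rm a}[h,\boldsymbol\Phi_{\mathcal I}]\,y$ after one more integration by parts (using $\Phi_{\mathcal I}^{(k)}\vert_\Gamma=0$), exactly as in Proposition \ref{propfloat}; the ``$\frac{h_0^2}{3}\Delta$'' part should be left as is --- not integrated by parts --- and gives $\frac{h_0^2}{3}\rho\sum_{j,k}y_k\big(\int_{\mathcal I}\Phi_{\mathcal I}^{(j)}\Delta s_k\big){\bf e}_j=-{\mathcal M}_{\rm B}\,y$, with ${\mathcal M}_{\rm B}$ as stated. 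The torque $T_{\rm fluid}^{\rm II}$ is treated identically with $\Phi_{\mathcal I}^{(j+3)}$ in place of $\Phi_{\mathcal I}^{(j)}$, so that $\big(F_{\rm fluid}^{\rm II},T_{\rm fluid}^{\rm II}\big)^T=-\big({\mathcal M}_{\rm a}[h,\boldsymbol\Phi_{\mathcal I}]+{\mathcal M}_{\rm B}\big)y$; plugging this together with the $j={\rm I},{\rm III}$ terms into \eqref{Newton1} and \eqref{Newton2} yields the ODE. I expect the main difficulty to be purely organizational: keeping track of which integrations by parts generate boundary terms on $\Gamma(t)$ --- all of which vanish thanks to the Dirichlet data of the elementary potentials and of $\uP_{\rm B,i}^{\rm II}$ --- and resisting the temptation to integrate the Laplacian by parts in the $\uP_{\rm B,i}^{\rm II}$ computation, so that the answer appears in the precise form of ${\mathcal M}_{\rm B}$ given in the statement rather than in an equivalent but different-looking form.
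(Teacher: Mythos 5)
Your proposal is correct and is exactly the adaptation the paper has in mind: the paper omits this proof, stating it is "a simple adaptation of the proof of Proposition \ref{propfloat}", and you carry out precisely that adaptation (linearity of the elliptic problem plus the model-independent identity \eqref{P61} for the pressure decomposition, then the double integration by parts against the elementary potentials with vanishing boundary terms, the $j={\rm I}$ term handled as in Proposition \ref{propprescSW} with $1/h$ replaced by $1/h_0$). In particular your treatment of the dispersive contribution --- keeping $\int_\cI \Phi_\cI^{(j)}\Delta s_k$ un-integrated so that it appears directly as $-{\mathcal M}_{\rm B}$, with the signs matching the stated formula --- is the right computation.
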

\begin{remark}
Contrary to ${\mathcal M}_{\rm a}[h,\boldsymbol{\Phi}_{{\mathcal I}}]$, the dispersive correction ${\mathcal M}_{\rm B}$ is not necessarily positive nor symmetric.
\end{remark}
\begin{remark}
The energy formally conserved in the case of a freely floating object is $E_{\rm B,tot}:=E_{\rm B}+E_{\rm solid}$, with $E_{\rm solid}$ as in Proposition \ref{propfloat} and
$$
E_{\rm B}=\frac{1}{2}\int_{\R^d} \big(\frac{1}{h_0}\abs{Q}^2+\frac{h_0}{3}\abs{\dx Q}^2\big)+\frac{1}{2}\int_{\R^d}g \zeta^2.
$$
\end{remark}
\begin{remark}[Vertical walls]\label{remBoussvert}
Similarly to what has been done in Remark \ref{remSWvert} for the shallow water model, it is possible to allow for the possibility of vertical walls, provided one adds the same\footnote{In the Boussinesq regime, the term $\rho\int_{\zeta_{\rm i}}^{\zeta_{\rm e}}(\dt w+\bU\cdot \nabla_{X,z}w)$ can also be neglected in the definition of $P^{\rm IV}$ provided in \S \ref{sectvertical}. In order to check that this is the case, we recall that the horizontal velocity does not depend on $z$ at leading order in $\mu$, so that one gets from the incompressibility condition that $w\approx-(z+h_0)\nabla\cdot \ovV$, and therefore, neglecting the nonlinear terms that are smaller by a factor $\eps=O(\mu)$ in the Boussinesq regime,
\begin{align*}
\rho \int_{\zeta_{\rm i}}^{\zeta_{\rm e}}(\dt w+\bU\cdot \nabla_{X,z}w)&\approx -\rho \nabla\cdot \dt \ovV\int_{\zeta_{\rm i}}^{\zeta_{\rm e}} (z+h_0)\\
&=-\rho \nabla\cdot \dt \ovV\frac{1}{2}(h_{\rm e}^2-h_{\rm i}^2).
\end{align*}
Now, in the exterior region, one has at leading order in the Boussinesq regime $\dt \ovV=-g\nabla\zeta$, and therefore
$$
\rho \int_{\zeta_{\rm i}}^{\zeta_{\rm e}}(\dt w+\bU\cdot \nabla_{X,z}w)\approx 
\rho \frac{1}{2}g(h_{\rm e}^2-h_{\rm i}^2) \Delta \zeta.
$$
Under the assumptions that the surface variations are small (in the sense that $\eps=O(\mu)$), one has $(h_{\rm e}^2-h_{\rm i}^2)=O(\eps)$ in dimensionless variables and $\rho \int_{\zeta_{\rm i}}^{\zeta_{\rm e}}(\dt w+\bU\cdot \nabla_{X,z}w)$ is therefore of size $O(\eps\mu)$ and must therefore be neglected at the precision of the model. Note that this would not be the case for the Green-Naghdi model for which the assumption $\eps=O(\mu)$ is removed.
} extra source term $S^{\rm IV}_{\rm SW}$ as in Remark \ref{remSWvert} to the momentum equation. 
\end{remark}

\section{On the discretization of the wave-structure interaction}\label{sectdiscrete}
 
We have derived in Section \ref{sectWWfloat} the equations describing the evolution of water waves in the presence of a floating structure; in Section \ref{sectAsfloat}, the same approach has been used to show how one has to modify simpler asymptotic models (such as the nonlinear shallow water equations) when a floating structure is present. The key point was that in order for the interior pressure to be a Lagrange multiplier associated to the constraint $\zeta=\zeta_{\rm w}$ in $\cI$ in the simplified model, one had to modify consistently the elliptic equation defining $\uP_{\rm i}$.  The goal of this section is to push this strategy one step further, namely, at the discrete level. More precisely, starting from a numerical scheme approximating some hydrodynamic model without any floating structure, we show what the corresponding discretization of the additional terms describing the wave-structure interactions should be in order for the interior pressure to be a discrete Lagrange multiplier.

For the sake of simplicity, we shall only consider simple configurations here:
\begin{itemize}
\item The hydrodynamical models we shall consider are the one-dimensional shallow water equations and the one-dimensional Boussinesq equations
\item We assume that the solid can only move vertically
\item We assume that the structure has vertical sidewalls, so that the interior region is independent of time, ${\mathcal I}=(x_-,x_+)$.
\end{itemize}
More complex configurations (moving contact points, more degrees of freedom for the solid structure, two dimensional case, etc) will be considered in future works; our point here is to show that the discretization of the terms describing the fluid-structure interaction must be chosen carefully and depend strongly on the numerical scheme used for the fluid model. To be more precise, an important feature of our formulations is that if the initial conditions satisfy the compatibility condition \eqref{assCI} then the constraint $\zeta_{\rm i}=\zeta_{\rm w}$ is automatically satisfied. The discretization of the source terms due to the floating structure must be done in such a way that this property is preserved at the discrete level.

\noindent
{\bf NB.} {\it For the sake of simplicity, we consider a flat bottom ($b=0$) throughout this section}.

\subsection{The equations for the nonlinear shallow water model}\label{sect_modSW}

 When the solid can only move vertically, the horizontal coordinate of the center of mass remains constant; we take it equal to zero for simplicity, i.e. $x_G=0$. The position of the solid is therefore fully determined by the vertical coordinate $z_G(t)$ of its center of mass; it is a given function of time when the motion of the solid structure is prescribed, and must be found through Newton's law when it is freely floating in the vertical direction.

\subsubsection{The case of a prescribed vertical motion}

We recall that Proposition \ref{propprescSW} describes the shallow water equations in the presence of a floating structure; in the particular case of vertical motion considered here, the source term $S^{\rm III}$ vanishes; moreover, the source terms $S^{\rm I}_{\rm SW}$ and $S^{\rm II}$ can be simplified in horizontal dimension $d=1$ as in Proposition \ref{propWWCNU} (and taking into account that $u_G=\omega=0$ here) into
\begin{align*}
S_{\rm SW,e}^{\rm I}=0,&\qquad S_{\rm SW,i}^{\rm I}=\big[\dx\big(\frac{1}{h}q^2+\frac{1}{2}gh^2 \big)\big]^*\\
S_{\rm SW,e}^{\rm II}=0,&\qquad S_{\rm SW,i}^{\rm II}=-\ddot z_G x^*,
\end{align*}
where we refer to Notation \ref{notav} for the definition of the oscillating component $f^*$; as shown in \S \ref{sectvertical} and Remark \ref{remSWvert} (see also Remark  \ref{remd1vert} for the simplifications in the case $d=1$), an additional term $S^{\rm IV}_{\rm SW}$ must also be added due to the fact that the walls are vertical at the contact points,
$$
S_{\rm SW,e}^{\rm IV}=0,\qquad S_{\rm SW,i}^{\rm IV}=-g\jump{\zeta_{\rm e}-\zeta_{\rm i}}\frac{1}{\int_{x_-}^{x_+}1/h},
$$
where we used the notation $\jump{f}=f(x_+)-f(x_-)$. Therefore, in conservative form, the equations take the form
\begin{equation}\label{eqnum1}
\dt U+\dx \big({\mathcal F}(U))=(0,S)^T
\end{equation}
with 
$$
U=\left( \begin{array}{c}\zeta \\ q \end{array}\right),\qquad
{\mathcal F}(U)=\left( \begin{array}{c}{\mathcal F}_1(U) \\ {\mathcal F}_2(U) \end{array}\right)=\left( \begin{array}{c} q \\ \frac{1}{h}q^2+\frac{1}{2}g h^2 \end{array}\right),
$$
where the source term $S$ is given by
\begin{equation}\label{eqnum2}
S_{\rm e}=0,\qquad S_{\rm i}=\big[\dx\big(\frac{1}{h}q^2+\frac{1}{2}gh^2\big) \big]^*-\ddot z_G x^*-g\jump{\zeta_{\rm e}-\zeta_{\rm i}}\frac{1}{\int_{x_-}^{x_+}1/h},
\end{equation}
and with the continuity condition at $x=x_\pm$
\begin{equation}\label{eqnum3}
q_{\rm e}=q_{\rm i}.
\end{equation}
It is also assumed that the initial condition satisfies the condition \eqref{assCI},
$$
\qquad \zeta^0={\zeta_{\rm
    w}}_{\vert_{t=0}}\quad \mbox{ and }\quad \dx q_{\rm i}^0=-\dt {\zeta_{\rm
    w}}_{\vert_{t=0}} \quad\mbox{ on }\quad {\mathcal I}(0),
$$
which ensures that the constraint $\zeta_{\rm i}=\zeta_{\rm w}$ is automatically satisfied at all times.

\subsubsection{The case of an object freely floating in the vertical direction}\label{sectsolfloatvert}

If the solid is freely floating, the motion of its center of mass is given by Proposition \ref{propprescSW}; we can in the present case (where the motion is purely vertical) simplify the differential equation on the center of mass, as shown in the  proposition below. We recall first that the average and oscillating components of a function $f$ defined on the interior region $(x_+,x_-)$ have been introduced in Notation \ref{notav}. In the present configuration of a purely vertical motion, the added mass coefficient can be easily expressed in terms of the associated variance.
\begin{notation}\label{notavar}
If $f$ is a scalar function defined on $\cI=(x_-,x_+)$, we defined its variance by
$$
\mbox{\rm Var}(f)=\av{f^2}-\av{f}^2.
$$
\end{notation}
We shall also denote by $\zeta_{\rm w,eq}$ and $z_{G,\rm eq}$  the parametrization of the wetted surface and the position of the center of mass at equilibrium, and similarly $h_{\rm w, eq}=h_0+\zeta_{\rm w,eq}$. Away from equilibrium, the water depth under the solid is therefore fully determined by the distance of the center of mass to its equilibrium position,  $\delta_G=z_G-z_{G,\rm eq}$; consequently, one has $h_{\rm w}=h_{\rm w,eq}+\delta_G$, and $\mbox{\rm Var}(x)$ and $\av{x}$ are functions of $\delta_G$ only. \begin{proposition}\label{propODE}
	If the hydrodynamic model is the nonlinear shallow water model \eqref{eqnum1} and the object is freely floating, the distance $\delta_G=z_G-z_{G,\rm eq}$ of the center of mass to its equilibrium position  satisfies the ODE
	\begin{equation}\label{ODE1}
	\begin{cases}
	\dsp \big({\mathfrak m}+{m}_{\rm a}(\delta_G)\big)\ddot \delta_G=-{\mathfrak c}\delta_G+\rho g(\zeta_{\rm _e,+}x^*_{+}-\zeta_{\rm e,-}x^*_{-})+F_{\rm NL}(\delta_G,\dot{\delta}_G,\av{q}),\\
	 \dsp \frac{d}{dt}{\av{q}}=-g \frac{\zeta_{\rm e,+}-\zeta_{\rm e,-}}{\alpha(\delta_G)}+H_{\rm NL}(\delta_G,\dot{\delta}_G,\av{q}),
 \end{cases}
 \end{equation}
	where $\zeta_{\rm e,\pm}(t)=\zeta_{\rm e}(t,x_\pm)$ and the added mass ${m}_{\rm a}(\delta_G)$, the stiffness coefficient ${\mathfrak c}$ and $\alpha(\delta_G)$ 
	are given by
	\begin{align*}
	{m}_a(\delta_G)=\rho \alpha(\delta_G){\mbox{\rm Var}(x)}, \qquad {\alpha}(\delta_G)=\int_{x_-}^{x_+}\frac{1}{h_{\rm w}}
	\quad\mbox{ and }\quad {\mathfrak c}&=\rho g  (x_+-x_-);
	\end{align*}
	denoting $q_{\rm i}=\av{q}-x^*\dot\delta_G$, the nonlinear terms  $F_{\rm NL}$ and $H_{\rm NL}$ are given by
	\begin{align*}
	F_{\rm NL}(\delta_G,\dot{\delta}_G,\av{q})&=\rho \alpha(\delta_G)\Big\langle x^*\dx \big[\frac{q_{\rm i}^2}{h_{\rm w}}\big]\Big\rangle \\
H_{\rm NL}(\delta_G,\dot\delta_G,\av{q})&=\big(\av{\frac{x}{h_{\rm w}}}-\av{x}\av{\frac{1}{h_{\rm w}}}\big)(\dot\delta_G)^2-\Big\langle \dx \big[\frac{q_{\rm i}^2}{h_{\rm w}}\big]\Big\rangle.
	\end{align*}
\end{proposition}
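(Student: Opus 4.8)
The plan is to deduce Proposition~\ref{propODE} from Proposition~\ref{propprescSW} (freely floating solid, nonlinear shallow water) together with its vertical-wall modification Remark~\ref{remSWvert}, specialised to $d=1$, $x_G=0$, $u_G=\omega=0$ and the time-independent interior domain $\cI=(x_-,x_+)$. In this configuration $S^{\rm III}$ vanishes (since ${\mathcal Q}[{\bf r}_G](V_G,\bom)=0$ for a purely vertical motion), the constraint $\zeta_{\rm i}=\zeta_{\rm w}$ holds for all times because the initial data satisfy \eqref{assCI}, and $h_{\rm w}=h_{\rm w,eq}+\delta_G$, so $\dt h_{\rm w}=\dot\delta_G$ while $\alpha(\delta_G)=\int_{x_-}^{x_+}1/h_{\rm w}$ and $\mbox{\rm Var}(x)$ depend on $\delta_G$ only. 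By Proposition~\ref{propWWCNU} and Remark~\ref{remd1vert} the momentum equation on $\cI$ is
\[
\dt q_{\rm i}+\dx\big(\tfrac1{h_{\rm w}}q_{\rm i}^2+\tfrac12 gh_{\rm w}^2\big)=\big[\dx\big(\tfrac1{h_{\rm w}}q_{\rm i}^2+\tfrac12 gh_{\rm w}^2\big)\big]^*-\ddot\delta_G\,x^*-g\jump{\zeta_{\rm e}-\zeta_{\rm w}}\tfrac{1}{\alpha(\delta_G)},
\]
and only the vertical component of Newton's law \eqref{Newton1_1d}--\eqref{defFT_1d} survives, namely ${\mathfrak m}\ddot\delta_G=-{\mathfrak m} g+\int_\cI(\uP_{\rm i}-P_{\rm atm})$.

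I would first record the relevant elementary potential: in dimension one with vertical motion only, $\Phi^{(2)}_\cI$ solves $-\dx(h_{\rm w}\dx\Phi^{(2)}_\cI)=1$ with homogeneous boundary data, so Lemma~\ref{lemexplicit} gives $-h_{\rm w}\dx\Phi^{(2)}_\cI=x^*$; inserting this into the added-mass formula of \S\ref{sect1dfloat} and using $\int_\cI(x^*)^2/h_{\rm w}=(\av{x^2}-\av{x}^2)\int_\cI 1/h_{\rm w}=\alpha(\delta_G)\mbox{\rm Var}(x)$ yields $m_{\rm a}(\delta_G)=\rho\alpha(\delta_G)\mbox{\rm Var}(x)$. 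The same coefficient reappears in the force computation below, so this step is essentially a consistency check.

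For the \textbf{first ODE} I would evaluate the vertical fluid force directly. Writing $\uP_{\rm i}=\uP^{\rm I}_{\rm SW,i}+\uP^{\rm II}_{\rm i}+\uP^{\rm IV}_{\rm SW,i}$, its boundary values are $(\uP_{\rm i}-P_{\rm atm})_{\vert x_\pm}=p_\pm:=\rho g(\zeta_{\rm e,\pm}-\zeta_{\rm w,\pm})$, while $\dx\uP_{\rm i}=-\tfrac{\rho}{h_{\rm w}}(S^{\rm I}_{\rm SW,i}+S^{\rm II}_{\rm i}+S^{\rm IV}_{\rm SW,i})$ by definition of the source terms. Integrating $\int_\cI(\uP_{\rm i}-P_{\rm atm})$ by parts against $x^*$ (whose $x$-derivative is $1$) rewrites the force as $p_+x^*_+-p_-x^*_-+\rho\alpha(\delta_G)\av{x^*(S^{\rm I}_{\rm SW,i}+S^{\rm II}_{\rm i}+S^{\rm IV}_{\rm SW,i})}$. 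Here $\av{x^*S^{\rm IV}_{\rm SW,i}}=0$ (in one dimension $S^{\rm IV}_{\rm SW,i}$ is constant in $x$ and $\av{x^*}=0$); $\rho\alpha(\delta_G)\av{x^*S^{\rm II}_{\rm i}}=-\rho\alpha(\delta_G)\mbox{\rm Var}(x)\ddot\delta_G=-m_{\rm a}(\delta_G)\ddot\delta_G$, which passes to the left-hand side; $\rho\alpha(\delta_G)\av{x^*[\dx(\tfrac1{h_{\rm w}}q_{\rm i}^2)]^*}=\rho\alpha(\delta_G)\av{x^*\dx[q_{\rm i}^2/h_{\rm w}]}=F_{\rm NL}$; and the hydrostatic remainder, $p_\pm x^*_\pm$ together with $\rho\alpha(\delta_G)\av{x^*\dx(\tfrac12 gh_{\rm w}^2)}$, is assembled by one more integration by parts, the identities $h_{\rm w}=h_0+\zeta_{\rm w}$ and $p_\pm=\rho g(\zeta_{\rm e,\pm}-\zeta_{\rm w,\pm})$, and the equilibrium relation ${\mathfrak m} g=-\rho g\int_{x_-}^{x_+}\zeta_{\rm w,eq}$ (Newton's law at rest): all the interior surface values $\zeta_{\rm w,\pm}$ and the $h_0$-contributions cancel, the constant terms collapse to $-{\mathfrak c}\delta_G$ with ${\mathfrak c}=\rho g(x_+-x_-)$, and there remains $\rho g(\zeta_{\rm e,+}x^*_+-\zeta_{\rm e,-}x^*_-)$. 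This is the first line of \eqref{ODE1}.

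For the \textbf{second ODE}, mass conservation and the constraint give $\dx q_{\rm i}=-\dt\zeta_{\rm w}=-\dot\delta_G$ on $\cI$, hence $q_{\rm i}=c(t)-\dot\delta_G x$; averaging against $1/h_{\rm w}$ pins down $c(t)$ and yields the relation $q_{\rm i}=\av{q}-x^*\dot\delta_G$ used in the statement. Differentiating this in time, with $\dt x^*=-\tfrac{d}{dt}\av{x}$ and $\tfrac{d}{dt}\av{x}=-\dot\delta_G(\av{x/h_{\rm w}}-\av{x}\av{1/h_{\rm w}})$ (a short computation from $\dt h_{\rm w}=\dot\delta_G$, using $\av{x/h_{\rm w}}=(\int_\cI x/h_{\rm w}^2)/\int_\cI 1/h_{\rm w}$), gives $\dt q_{\rm i}=\tfrac{d}{dt}\av{q}-(\av{x/h_{\rm w}}-\av{x}\av{1/h_{\rm w}})(\dot\delta_G)^2-x^*\ddot\delta_G$. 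Matching this with the expression for $\dt q_{\rm i}$ from the momentum equation --- in which $-\dx(\cdot)+[\dx(\cdot)]^*=-\av{\dx(\cdot)}$ collapses the flux, the two $-x^*\ddot\delta_G$ terms cancel, and $-\av{\dx(\tfrac12 gh_{\rm w}^2)}-g\jump{\zeta_{\rm e}-\zeta_{\rm w}}/\alpha(\delta_G)$ reduces to $-g(\zeta_{\rm e,+}-\zeta_{\rm e,-})/\alpha(\delta_G)$ once the $\zeta_{\rm w,\pm}$ and $h_0$ cancel --- produces the second line of \eqref{ODE1} with $H_{\rm NL}$ exactly as stated; in particular the $(\dot\delta_G)^2$ term is the contribution of the time dependence of the average $\av{\cdot}$, and this equation does not invoke Newton's law. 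The only delicate points in the whole argument are bookkeeping: carrying the vertical-wall source $S^{\rm IV}$ consistently through both the fluid momentum equation and Newton's law, the cancellation in the first ODE that replaces $\zeta_{\rm w,\pm}$ by $\zeta_{\rm e,\pm}$ and produces the stiffness ${\mathfrak c}$ via the equilibrium condition, and keeping the $\tfrac{d}{dt}\av{x}$ term in the second ODE.
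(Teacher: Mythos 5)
Your proposal is correct and follows essentially the same route as the paper: specialise the shallow-water floating-structure formulation to $d=1$, purely vertical motion and vertical walls, identify the added mass $\rho\,\alpha(\delta_G)\mbox{\rm Var}(x)$ from the $\uP^{\rm II}$ component via the explicit one-dimensional potential $-h_{\rm w}\dx\Phi^{(2)}_\cI=x^*$, obtain the stiffness $-{\mathfrak c}\delta_G$ and the term $\rho g(\zeta_{\rm e,+}x^*_+-\zeta_{\rm e,-}x^*_-)$ from the hydrostatic part, the wall pressure $\uP^{\rm IV}$ and the equilibrium condition, and derive the $\av{q}$ equation from the interior momentum equation using $q_{\rm i}=\av{q}-x^*\dot\delta_G$ and $\frac{d}{dt}\av{x}=-\dot\delta_G\big(\av{x/h_{\rm w}}-\av{x}\av{1/h_{\rm w}}\big)$. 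The only cosmetic difference is that you re-derive the intermediate force balance by integrating Newton's vertical law by parts against $x^*$ rather than quoting the ODE already furnished by Proposition \ref{propprescSW}, Remark \ref{remSWvert} and the one-dimensional simplifications of \S\ref{sect1dfloat}, which amounts to the same computation.
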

\begin{remark}
 In this equation  it is necessary to know the boundary values $\zeta_{\rm e,\pm}$ of the surface elevation in the exterior domain. These quantities are of course determined through the resolution of the fluid equations \eqref{eqnum1}-\eqref{eqnum2}. 
 \end{remark}
\begin{remark}
 The excitation forces are due to the incoming waves, while damping forces are due to the motion of the structure; these two forces are contained in the term $\rho g(\zeta_{\rm _e,+}x^*_{+}-\zeta_{\rm e,-}x^*_{-})$ in the equation for $\delta_G$.  In the {\it return to equilibrium} problem considered in Corollary \ref{coroeq} below, there are no incoming waves and this force reduces to a purely damping force.
\end{remark}
\begin{proof}
In the case where the solid structure is freely floating in the vertical direction, we deduce from Proposition \ref{propprescSW} and Remark \ref{remSWvert} that the vertical coordinate $z_G(t)$ of the center of mass ((or equivalently its distance to equilibrium $\delta_G$) is found by solving the second order ODE
$$
\big({\mathfrak m}+{\mathfrak m}_{\rm a}(h_{\rm w})\big)\ddot \delta_G=-{\mathfrak m} g+\rho\int_{x_-}^{x_+}\frac{x^*}{h_{\rm w}}S^{\rm I}_{\rm i,SW}+
\int_{x_-}^{x_+}P^{\rm IV},
$$
with $S^{\rm I}_{\rm i,SW}$ as given in the previous section, and where the added mass ${\mathfrak m}_{\rm a}(h_{\rm w})$ and the pressure jump $P^{\rm IV}$ at the vertical walls are given by
\begin{align*}
{\mathfrak m}_a(h_{\rm w})&=\rho \int_{x_-}^{x_+}\frac{(x^*)^2}{h_{\rm w}},\\
P^{\rm IV}&= p_-+(p_+-p_-)\frac{\int_{x_-}^x 1/h_{\rm w}}{\int_{x_-}^{x_+}1/h_{\rm w}}\quad\mbox{ with }\quad p_\pm=\rho g (\zeta_{\rm e}-\zeta_{\rm i})_{\vert_{x=x_\pm}}.
\end{align*}
We can therefore rewrite the equation on $\ddot{\delta}_G$ under the form
$$
\big({\mathfrak m}+{\mathfrak m}_{\rm a}(h_{\rm w})\big)\ddot \delta_G=-\big({\mathfrak m} g+\rho g \int_{x_-}^{x^+}\zeta_{\rm w}\big)+\rho g(\zeta_{\rm _e,+}x^*_{+}-\zeta_{\rm e,-}x^*_{\rm -})+\rho \int_{x_-}^{x_+}\frac{x^*}{h_{\rm w}}\dx (\frac{q_{\rm i}^2}{h_{\rm w}}).
$$
It is obvious that ${\mathfrak m}_{\rm a}(h_{\rm w})=m_a(\delta_G)$ with $m_a(\delta_G)$ as  given in the statement of the proposition; moreover, we easily get from the mass conservation equation that 
$$
q_{\rm i}=-(x-\av{x})\dot{\delta}_G+\av{q}.
$$
 The only thing left to prove is therefore that one has 
$$
-{\mathfrak m} g-\rho g \int_{x_-}^{x_+}\zeta_{\rm w}=-\rho g  (x_+-x_-)\delta_G.
$$
  By definition of  $\zeta_{\rm w,eq}$, one has 
$$
-{\mathfrak m}g - \rho g \int_{x_-}^{x_+}\zeta_{\rm w,eq}=0.
$$
Since the lateral boundaries of the solid are vertical, we have moreover that, away from equilibrium, 
$$
\rho g \int_{x_-}^{x_+}\zeta_{\rm w}(t)-\rho g \int_{x_-}^{x_+}\zeta_{\rm w,eq}=\rho g  (x_+-x_-) \delta_G,
$$
which proves the result.\\
We finally turn to derive the equation on $\av{q}$. Taking into account the formula for $q_{\rm i}$ derived above, the second equation of \eqref{eqnum1} can be written in the interior region as
\begin{align*}
-x^*\ddot\delta_G+\frac{d}{dt}\av{x}\dot\delta_G+\frac{d}{dt}\av{q}&=-\av{\dx\big(\frac{1}{h_{\rm w}}q_{\rm i}^2+\frac{1}{2}g h_{\rm w}^2\big)}-\ddot\delta_G x^*-g\jump{\zeta_{\rm e}-\zeta_{\rm i}}\frac{1}{\int_{x_-}^{x_+}1/h_{\rm w}}\\
&=-\ddot\delta_G x^*-\frac{1}{\alpha(\delta_G)}\big[ \int_{x_-}^{x_+}\frac{1}{h_{\rm w}}\dx\big(\frac{q_{\rm i}^2}{h_{\rm w}}\big)+g(\zeta_{\rm e,+}-\zeta_{\rm e,-})\big]
\end{align*}
and therefore
\begin{align*}
\frac{d}{dt}\av{q}&=-\frac{d}{dt}\av{x}\dot\delta_G-\frac{1}{\alpha(\delta_G)}\big[ \int_{x_-}^{x_+}\frac{1}{h_{\rm w}}\dx\big(\frac{q_{\rm i}^2}{h_{\rm w}}\big)+g(\zeta_{\rm e,+}-\zeta_{\rm e,-})\big]\\
&=:-\frac{1}{\alpha(\delta_G)}\big[g(\zeta_{\rm e,+}-\zeta_{\rm e,-})\big]+H_{\rm NL}(\delta_G,\dot\delta_G,\av{q});
\end{align*}
the result follows therefore from the observation that 
$$
\frac{d}{dt} \av{x}=-\big(\av{\frac{x}{h_{\rm w}}}-\av{x}\av{\frac{1}{h_{\rm w}}}\big)\dot\delta_G.
$$
\end{proof}

A particularly interesting situation is the {\it return to equilibrium problem}, which consists in starting from a configuration where the solid is not at its equilibrium state ($z_G\neq z_{G,\rm eq}$, or equivalently $\delta_G\neq 0$), with water at rest ($\zeta_{\rm e}=0$, $q=0$), and let it evolve towards its equilibrium state. This is a particular case of the situation considered in Proposition \ref{propODE} in which the ODE takes a more explicit form. In order to get an even simpler formulation, we assume that the solid is symmetric.
\begin{corollary}[Return to equilibrium problem]\label{coroeq}
Assume that the solid is symmetric around the axis $x=x_0$ where $x_0=\frac{1}{2}(x_++x_-)$. Then, for the return to equilibrium problem, 
and as long as the following smallness condition on the velocity is satisfied,
	$$
	\dot\delta_G<\frac{16}{27}\sqrt{gh_0}\frac{h_0}{x_+-x_-},
	$$
the position of the solid is fully determined by the ODE
\begin{equation}\label{ODE1expl}
	\begin{cases}\big({\mathfrak m}+{m}_{\rm a}(\delta_G)\big)\ddot \delta_G=-{\mathfrak c}\delta_G-\nu(\dot{\delta}_G)+\beta(\delta_G)(\dot{\delta}_G)^2,\\
	(\delta_G,\dot\delta_G)_{\vert_{t=0}}=(\delta_G^0,0).
	\end{cases}
	\end{equation}
	The nonlinear damping $\nu(\dot\delta_G)$ and the coefficient $\beta(\delta_G)$ are given by
	\begin{align*}
	\nu(\dot\delta_G)&=\rho g (x_+-x_-)\big[h_0-\big(\tau_0(\frac{x_+-x_-}{4\sqrt{g}}\dot{\delta}_G)\big)^2\big]\\
	\beta(\delta_G)&=\rho \int_{x_-}^{x_+}\frac{x-x_0}{h_{\rm w}}\dx \Big(\frac{(x-x_0)^2}{h_{\rm w}}\Big),
	\end{align*}
	where the function $\tau_0(\cdot)$ is as in \eqref{deftau0} below.
\end{corollary}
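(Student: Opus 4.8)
The plan is to start from the two-scalar-equation system \eqref{ODE1} of Proposition \ref{propODE}, use the symmetry of the configuration to decouple and simplify it, and then close it by solving the shallow water equations explicitly in the exterior region, where the flow turns out to be a simple wave.

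\emph{Symmetry reduction.} Since the solid is symmetric about $x=x_0$, moves only vertically, and the fluid is initially at rest, the whole solution is invariant under the reflection $x\mapsto 2x_0-x$ (under which $q$ changes sign while $\zeta$ and $h$ are unchanged), this symmetry being propagated by uniqueness for the coupled system \cite{IguchiLannes}. In particular $h_{\rm w}$ is even and $q$ is odd about $x_0$, so $\av{x}=x_0$, $\av{q}\equiv 0$, hence (by the formula $q_{\rm i}=\av{q}-x^*\dot\delta_G$ of Proposition \ref{propODE}) $q_{\rm i}=-(x-x_0)\dot\delta_G$; moreover $\zeta_{\rm e,+}=\zeta_{\rm e,-}$, which together with $\av{q}\equiv 0$ turns the second equation of \eqref{ODE1} into an identity. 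Feeding $q_{\rm i}=-(x-x_0)\dot\delta_G$ into the expression for $F_{\rm NL}$ in Proposition \ref{propODE} gives $F_{\rm NL}=\rho(\dot\delta_G)^2\int_{x_-}^{x_+}\frac{x-x_0}{h_{\rm w}}\dx\big(\frac{(x-x_0)^2}{h_{\rm w}}\big)=\beta(\delta_G)(\dot\delta_G)^2$.

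\emph{The exterior flow and the validity range.} It remains to express $\zeta_{\rm e,+}$ through $\dot\delta_G$. In $\{x>x_+\}$ the flow obeys the homogeneous one-dimensional shallow water system over a flat bottom, whose Riemann invariants are $\ovu\pm 2\sqrt{gh}$. Since the fluid there starts at rest, every point of $\{x>x_+\}$ is joined to the undisturbed far field by a characteristic of the family $\frac{dx}{dt}=\ovu-\sqrt{gh}$, so that, as long as the solution stays smooth, $\ovu-2\sqrt{gh}\equiv-2\sqrt{gh_0}$ there: the exterior flow is a simple wave issued from the contact point. Evaluating this relation at $x=x_+$, writing $h_+=h_0+\zeta_{\rm e,+}$ and using the transition condition $q_{\rm e}(x_+)=q_{\rm i}(x_+)=-\frac{x_+-x_-}{2}\dot\delta_G$ from the previous step, one gets
\begin{equation*}
2\sqrt{g}\,h_+\big(\sqrt{h_+}-\sqrt{h_0}\big)=-\frac{x_+-x_-}{2}\dot\delta_G ;
\end{equation*}
with $\tau=\sqrt{h_+}$ this is the cubic $\tau^3-\sqrt{h_0}\,\tau^2=-\frac{x_+-x_-}{4\sqrt{g}}\dot\delta_G$, i.e. (by the definition \eqref{deftau0} of $\tau_0$) $\sqrt{h_+}=\tau_0\big(\frac{x_+-x_-}{4\sqrt g}\dot\delta_G\big)$, so $\zeta_{\rm e,+}=\tau_0(\cdot)^2-h_0$. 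The map $\tau\mapsto\tau^3-\sqrt{h_0}\tau^2$ is increasing on $\{\tau\ge\frac23\sqrt{h_0}\}$ with range $[-\frac4{27}h_0^{3/2},+\infty)$, so the branch $\tau_0$ normalized by $\tau_0(0)=\sqrt{h_0}$ is well defined precisely when $\frac{x_+-x_-}{4\sqrt g}\dot\delta_G<\frac4{27}h_0^{3/2}$, that is, $\dot\delta_G<\frac{16}{27}\sqrt{gh_0}\frac{h_0}{x_+-x_-}$; equivalently, the boundary discharge $\frac{x_+-x_-}{2}\abs{\dot\delta_G}$ must stay below the maximal discharge $\frac8{27}\sqrt g\,h_0^{3/2}$ that a rarefaction issued from still water of depth $h_0$ can carry. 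On this range, substituting $\zeta_{\rm e,+}=\zeta_{\rm e,-}=\tau_0(\cdot)^2-h_0$ together with $x^*_\pm=\pm\frac{x_+-x_-}{2}$ into the first equation of \eqref{ODE1} turns its forcing term $\rho g(\zeta_{\rm e,+}x^*_+-\zeta_{\rm e,-}x^*_-)$ into $\rho g(x_+-x_-)\big(\tau_0(\cdot)^2-h_0\big)=-\nu(\dot\delta_G)$; combined with $F_{\rm NL}=\beta(\delta_G)(\dot\delta_G)^2$ and with $m_{\rm a}$, ${\mathfrak c}$ unchanged from Proposition \ref{propODE}, this is exactly \eqref{ODE1expl}, the initial data $(\delta_G,\dot\delta_G)_{\vert_{t=0}}=(\delta_G^0,0)$ being the ``dropped from rest'' hypothesis.

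\emph{Main difficulty.} The delicate step is justifying rigorously that the exterior flow is a genuine, shock-free simple wave and identifying the breakdown of this structure with the smallness condition. When $\dot\delta_G>0$ the exterior boundary acts as a receding piston and the associated rarefaction is automatically smooth, the only obstruction being vacuum formation, which is exactly excluded by the stated bound; when $\dot\delta_G<0$ one has a compressive simple wave which stays smooth at least for short time, the relevant well-posedness (including shock-free propagation of the exterior simple wave and the regularity of the contact line) being supplied by \cite{IguchiLannes}.
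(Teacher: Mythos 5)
Your proposal is correct and follows essentially the same route as the paper's proof: the symmetry reduction giving $\av{q}=0$, $q_{\rm i}=-(x-x_0)\dot\delta_G$ and $F_{\rm NL}=\beta(\delta_G)(\dot\delta_G)^2$, then the vanishing outgoing Riemann invariant in the exterior region evaluated at the contact point, yielding the same cubic and the root $\tau_0$ of \eqref{deftau0} under the same smallness condition (you work at $x_+$ with $\ovu-2\sqrt{gh}$ while the paper works at $x_-$ with the other invariant, an immaterial difference). The only marginal inaccuracy, confined to your closing commentary, is that the obstruction at the threshold $r=\tfrac{4}{27}h_0^{3/2}$ is the outflow at the contact point becoming sonic (with $h_+=\tfrac49 h_0$), not vacuum formation, but this does not affect the derivation.
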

\begin{remark}
In the return to equilibrium problem, there is no incoming wave; the force $\rho g(\zeta_{\rm _e,+}x^*_{+}-\zeta_{\rm e,-}x^*_{-})$ reduces therefore to its damping component $-\nu(\dot\delta_G)$. The fact that it is indeed a (nonlinear) damping force comes from the observation that $\nu(\dot\delta_G)\dot\delta_G$ is always positive.
\end{remark}
\begin{remark}
Linearizing around the equilibrium state, the ordinary differential equation \eqref{ODE1expl} becomes
\begin{equation}\label{ODElin}
\big({\mathfrak m}+{m}_{\rm a}(0)\big)\ddot \delta_G=-{\mathfrak c}\delta_G-\rho g \frac{(x_+-x_-)^2}{2}\frac{\dot\delta_G}{\sqrt{gh_0}},
\end{equation}
which is a standard damped harmonic oscillator equation. This linear equation matches the equation derived in \cite{John1} under further assumptions on the shape of the solid\footnote{Actually, the mass ${\mathfrak m}$ is assumed to be negligible  with respect to $m_a(0)$ in \cite{John1}, and with our notations, equation (3.2.12) of \cite{John1} corresponds to 
$$
{m}_{\rm a}(0)\ddot \delta_G=-{\mathfrak c}\delta_G--\rho g \frac{(x_+-x_-)^2}{2}\frac{\dot\delta_G}{\sqrt{gh_0}}.
$$}. The nonlinear ODE \eqref{ODE1expl} can be numerically solved with standard tools. A comparison with the solution of the linear ODE \eqref{ODElin} is shown in Figure \ref{fig_ODE} in the case where the floating body is the same as the one described in \S \ref{sectnumSW} below, and with the solid density given by $\rho_{\rm s}=0.8 \rho$. These computations show that nonlinear effects play a significant role for large amplitudes and should therefore not be neglected for the description of floating structures in the presence of large amplitude waves for instance.
\begin{center}
\begin{figure}\includegraphics[width=0.75\textwidth]{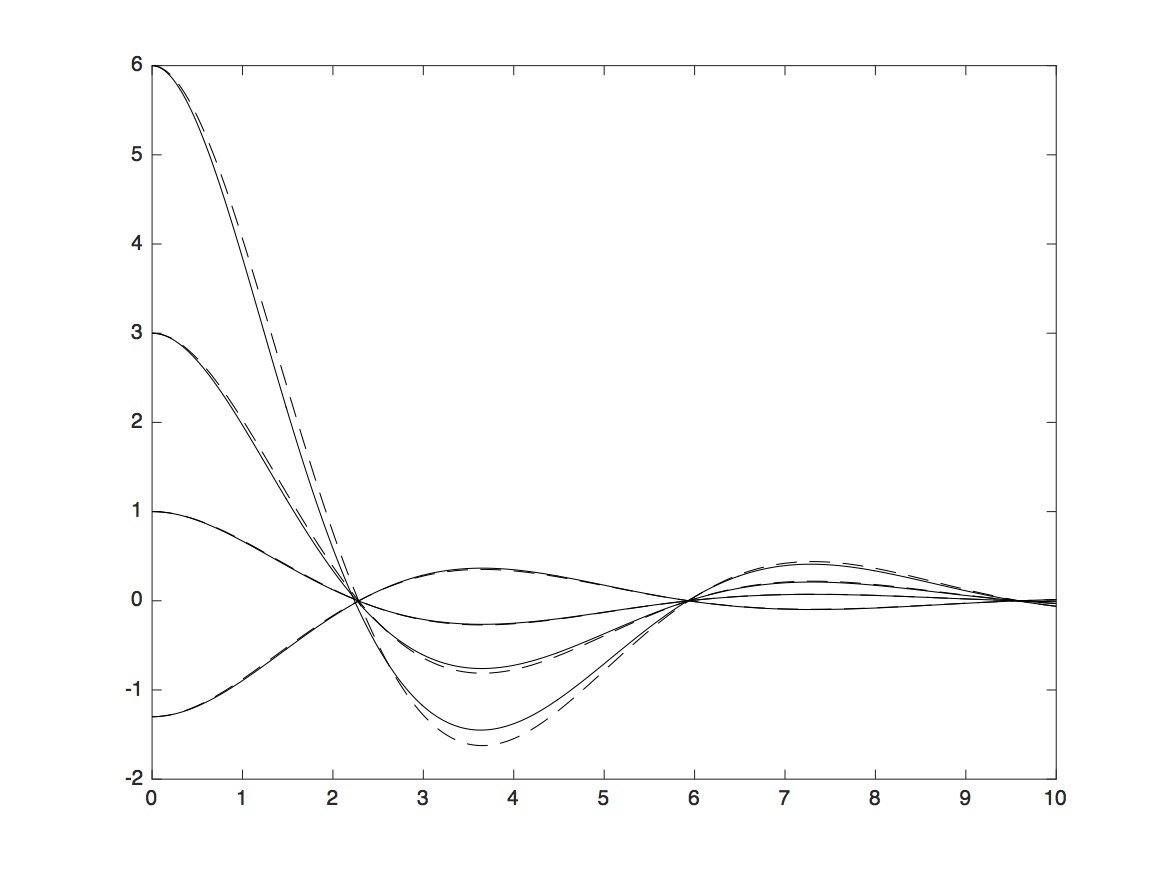}
\caption{Time evolution of the distance to equilibrium $\delta_G$ as given by the full nonlinear equation \eqref{ODE1expl} (full) and its linear approximation \eqref{ODElin} (dash). Four different initial positions are considered.}
\label{fig_ODE}
\end{figure}
\end{center}
\end{remark}
\begin{remark}
The corollary furnishes through \eqref{ODE1expl} an explicit solution for the return to equilibrium problem. We shall use this explicit solution to validate the numerical scheme derived in \S \ref{sectnumSW} below general wave-structure interactions.
\end{remark}
\begin{remark}\label{remvalforced}
A byproduct of the proof of the corollary is that the water elevation at the contact points $x_\pm$ is related to the velocity of the center of mass through the relation 
$$
\zeta_{\rm e}(t,x_\pm)=\big(\tau_0(\frac{x_+-x_-}{4\sqrt{g}}\dot{\delta}_G)\big)^2-h_0.
$$
This relation remains true when the solid is in forced oscillation in a fluid initially at rest (as for the return to equilibrium problem, there are no incoming waves); we shall therefore use it as a validation for the numerical computations of \S \ref{sectforcednum}.
\end{remark}
\begin{proof}
We just have to use Proposition \ref{propODE} and express $\zeta_{\rm e,\pm}$ and $\av{q}$ in terms of $\delta_G$ and $\dot{\delta}_G$.\\
By symmetry reasons, one has $\av{x}=x_0$ and $q_+=-q_-$ and therefore $\av{q}=0$.  Replacing in the formula for $F_{\rm NL}$ given in Proposition \ref{propODE}, we get
$$
F_{\rm NL}(\delta_G,\dot{\delta}_G,\av{q})=\rho (\dot{\delta}_G)^2\int_{x_-}^{x_+}\frac{(x-x_0)}{h_{\rm }}\dx \big(\frac{1}{h_{\rm w}}(x-x_0)^2\big),
$$
and the expression for the coefficient $\beta(\delta_G)$ follows easily.\\
In order to express $\zeta_{\rm e,\pm}$ in terms of $\delta_G$, let us recall first that in the exterior region, one has
$$
\dt R+(\sqrt{gh}+\frac{q}{h})R=0 \quad \mbox{ and }\quad \dt L-(\sqrt{gh}-\frac{q}{h})L=0,
$$
where $R$ and $L$ are respectively the right and left Riemann invariant associated to the nonlinear shallow water equations \eqref{eqnum1} and given by
$$
R=\frac{q}{h} + 2(\sqrt{gh}-\sqrt{gh_0})
 \quad\mbox{ and }\quad L=\frac{q}{h} - 2(\sqrt{gh}-\sqrt{gh_0}).
$$
Since the fluid is initially at rest, $R$ vanishes identically on $(-\infty,x_-)$ and $L$ vanishes identically on $(x_+,\infty)$. In particular, evaluating at $x=x_-$, one finds that  $\sqrt{h(t,x_-)}$ is a root of the third order polynomial equation
\begin{equation}\label{roots}
\tau^3-\sqrt{h_0}\tau^2+\frac{q_-}{2\sqrt{g}}=0.
\end{equation}
For each value of $r:=\frac{q_-}{2\sqrt{g}}$, there exists one or three real roots of this third order equation (see Figure \ref{figroot}):
\begin{itemize}
\item One positive real root $\tau_0(r)$ if $r<0$
\item Two positive roots $\tau_0(r)$ and $\tau_1(r)$, and one negative real root $\tau_2(r)$ if $0<r<\frac{4}{27}h_0^{3/2}=:r_0$.
\item One negative eigenvalue $\tau_2(r)$ if $\frac{q_-}{2\sqrt{g}}>r_0$.
\end{itemize}
Since the solid is dropped with zero initial velocity and with the fluid initially at rest, the relevant root is the one that passes through the point $(0,\sqrt{h_0})$ and it is given by
\begin{equation}\label{deftau0}
\tau_0(r)=\frac{1}{3}\Big(\sqrt{h_0}+C(r)+\frac{h_0}{C(r)}\Big)
\end{equation}
with the complex constant $C(r)$ equal to
$$
C(r)=\frac{3}{2}\Big(-4r+2r_0+4\sqrt{r(r-r_0)}\Big)^{1/3}
$$
(the smallness assumption made in the Corollary ensures that $r<r_0$).
It follows therefore that
\begin{align*}
\zeta_{\rm e}(t,x_-)&=\big(\tau_0(\frac{q_-}{2\sqrt{g}})\big)^2-h_0\\
&=\big(\tau_0(\frac{x_+-x_-}{4\sqrt{g}}\dot{\delta}_G)\big)^2-h_0,
\end{align*}
where we used the as above that $q_{\rm i}=-(x-x_0)\dot\delta_G$. Since moreover $\zeta_{\rm e}(t,x_+)=\zeta_{\rm e}(t,x_-)$ for symmetry reasons, the expression for $\rho g(\zeta_{\rm _e,+}x^*_{+}-\zeta_{\rm e,-}x^*_{-})$ simplifies into $-\nu(\dot\delta_G)$ as claimed in the statement of the corollary.
\begin{center}
\begin{figure}\includegraphics[width=0.55\textwidth]{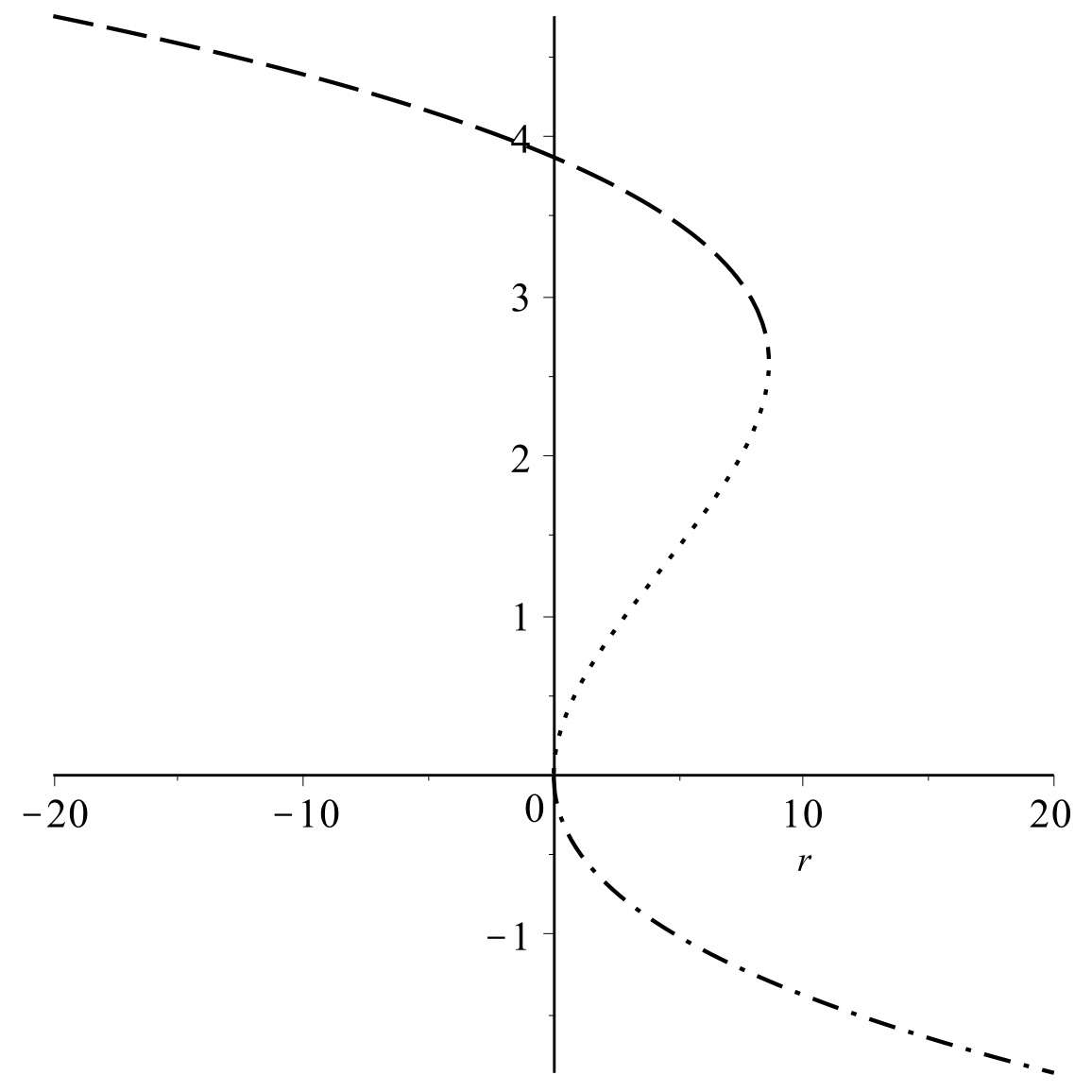}
\caption{Roots of the third order polynomial \eqref{roots}. $\tau_0(r)$ (dash), $\tau_1(r)$ (dots) and $\tau_2(r)$ (dash-dots); here, $h_0=15$ and $r_0=8.61 $.}
\label{figroot}
\end{figure}
\end{center}
\end{proof}
\subsection{The numerical scheme for the nonlinear shallow water equations}\label{sectnumSW}

We first present in \S \ref{sectschemefluid} the numerical scheme we shall use for the flow model (here, the nonlinear shallow water equations \eqref{eqnum1}); the associated discretization of the terms describing the wave-structure interaction is then presented in \S \ref{sectadapt} when the motion of the solid is assumed to be prescribed. The coupling with the motion of the solid itself in the case where it is freely floating (in the vertical direction) is then described in \S \ref{sectfreenum}.

\subsubsection{The numerical scheme for the hydrodynamical model}\label{sectschemefluid}

For the sake of simplicity, we shall show how to discretize the source terms describing the wave-structure interaction in the case where the numerical scheme for the flow method is the simple Lax-Friedrichs scheme. More precise, higher order, schemes would complicate the analysis, and shall be considered in future works. Let us introduce first some
notations.
\begin{notation}
- We denote by $[0,L]$ the computational domain; for some $N\in {\mathbb N}^*$, we let $\delta_x=L/N$ and define
$(x_j)_{0\leq j\leq N}$ and $(x_{j+1/2})_{0\leq j\leq
  N-1}$ by $x_j=j\delta_x$ and $x_{j+1/2}=(j+1/2)\delta_x$. We define $N+1$ finite volumes by $K_j=[x_{j-1/2},x_{j+1/2}]$ for $1\leq j\leq N-1$ and $K_0=[0,x_{1/2}]$, $K_N=[x_{N-1/2},L]$.\\
- For the time discretization, we denote by $\delta_t$ the time step and
$t^n=n\delta_t$, and we write $U^n_j=(h^n_j,q^n_j)$ the approximation
of $U(t^n,\cdot)$ on $K_j$; for second order time derivatives, we write
$$
\ddot{U}^n_j:=\frac{1}{\delta_t^2}\big( U^{n+1}_j-2U^n_j+U^{n-1}_j).
$$
- We also denote by ${\mathcal F}^n_{j+1/2}$ an
approximation at time $t^n$ of the flux at the interface
$x_{j+1/2}$. \\
- The ratio of
the time and space steps is denoted $\alpha=\delta_t/\delta_x$.\\
- Finally, we denote by $j_-$ the index  of the nearest cell
outside the interior region ${\mathcal I}=(x_-,x_+)$ on the left-hand-side of the solid
($j_-$ is the largest integer smaller than $x_-/\delta_x$),
and similarly $j_+$ for the first cell outside the wetted region on
the right-hand-side.\\
\end{notation}

A general finite volume discretization of \eqref{eqnum1} can be
written under the form
\begin{equation}\label{FV}
U^{n+1}_j=U^n_j-\alpha\big({\mathcal F}^n_{j+1/2}-{\mathcal
  F}^n_{j-1/2}\big)+\delta_t S^n_j.
\end{equation}
Our aim is to choose a discretization of the flux and of the source
term that ensures that at machine precision, one has
$\zeta^n_j=\zeta_{{\rm w},j}^n$ for all $j_-<j<j_+$. Let us base our analysis on the most simple stable
scheme for \eqref{eqnum1} when there is no immersed solid,
namely, the Lax-Friedrichs scheme for which the discretization of the
flux is
\begin{equation}\label{LF}
{\mathcal F}_{j+1/2}^n=\frac{1}{2}\big({\mathcal
  F}(U^n_{j+1})+{\mathcal F}(U^n_j)\big)-\frac{1}{2\alpha}\big(U^n_{j+1}-U^n_j\big).
\end{equation}
We show in the next section how to adapt this scheme in the presence of a floating structure.

\subsubsection{Adaptation in the presence of a  structure with a prescribed vertical motion}\label{sectadapt}

In order to take into account the presence of a floating structure, one has to adapt the Lax-Friedrichs scheme \eqref{LF} in the interior region, and to provide a discretization of the source term $S$ in \eqref{eqnum1} ensuring that  one has
$\zeta^n_j=\zeta_{{\rm w},j}^n$ for all $j_-<j<j_+$. \\
The second term in the right-hand-side of \eqref{LF} is a diffusive term that
ensures stability. However, the expression for the wetted pression for
the continuous model relied on the relation $\dt \dx {\mathcal
  F}_1=\dx\dt q$. At the discrete level, the presence of the diffusive
term in the equation for the surface elevation does not seem to be
compatible with a discrete version of this fundamental relation. We
are therefore led to the following adaptation of the Lax-Friedrichs
flux,
\begin{equation}\label{LFadapt1}
{\mathcal F}^n_{1,j+1/2}=\begin{cases}
\frac{1}{2}\big(q^n_{j+1}+q^n_j\big)-\frac{1}{2\alpha}\big(h^n_{j+1}-h^n_j\big)&\mbox{if }
j < j_- \mbox{ or }j\geq j_+,\\
\frac{1}{2}\big(q^n_{j+1}+q^n_j\big)&\mbox{if } j_-\leq j<j_+
\end{cases}
\end{equation}
for the equation on the surface elevation, while the flux for the
momentum equation is the same as for  the standard Lax-Friedrichs
scheme
\begin{equation}\label{LFadapt2}
{\mathcal F}_{2,j+1/2}^n=\frac{1}{2}\big({\mathcal
  F}_2(U^n_{j+1})+{\mathcal F}_2(U^n_j)\big)-\frac{1}{2\alpha}\big(q^n_{j+1}-q^n_j\big).
\end{equation}
It is well known that the Lax-Friedrichs scheme is unstable if the
diffusive term is removed. The reason why stability is (numerically) preserved in
our case is because we are able to choose a discretization of the
source term that ensures that on the wetted region (where the
diffusive terms are removed), the surface elevation $\zeta$ is exactly
equal to the parametrization $\zeta_{\rm w}$ of the solid structure,
so that no instability occurs. This is done in the following
proposition where we use the following notations which corresponds to a transposition of Notation \ref{notav} at the discrete level.
\begin{notation}\label{notavdisc}
- We use the notation   $\sum_{\sharp k}^l$, $\sum_k^{\sharp l}$ and $\sum_{\sharp k}^{\sharp l}$ for the following modified summations
$$
\sum_{j=\sharp k}^l a_j=\frac{1}{2}a_{k}+\sum_{j=k+1}^{l}a_j, \qquad \sum_{j=k}^{\sharp l} a_j=\frac{1}{2}a_{l}+\sum_{k}^{l-1}a_j,\quad\mbox{etc.}
$$
- For any vector $(f)_{j_-\leq j \leq j_+}$ we define $\av{f}$ and $f^*=(f^*_j)_{j_-\leq j \leq j_+}$ by
$$
\av{f}^n=\frac{\sum_{\sharp j_-}^{\sharp j_+} f^n_j/h^n_j }{\sum_{\sharp j_-}^{\sharp j_+}1/h^n_j}
\quad \mbox{\and }\quad
f^{*,n}_j=f^n_j-\av{f}^n.
$$
- We also define $D_0{\mathcal F}_2$ as
$$
(D_0{\mathcal F}^n_2)_j=\frac{{\mathcal F}^n_{2,j+1/2}-{\mathcal
  F}^n_{2,j-1/2}}{\delta_x}.
$$
\end{notation}
We also recall that the {\it interior cells} corresponding to the discretization of the interior domain $\cI$ correspond to the indexes $j_-<j<j_+$.
\begin{proposition}\label{propdiscSW}
Let us consider a floating body in purely vertical motion and denote by $z_G$ the vertical coordinate of its center of mass at time $t^n$. Let the discretization of the nonlinear shallow water equations  \eqref{eqnum1}-\eqref{eqnum3} be furnished by \eqref{FV} with fluxes \eqref{LFadapt1} and \eqref{LFadapt2}. Defining $S_j^n$ ($j_-\leq j\leq j_+$) as
\begin{equation}\label{Snj}
S^n_j=(D_0{\mathcal F}^n_2)^*_j-\ddot{z}_G^{n+1} (x-x_G)^{*,n}_j-g\frac{(h_{j_+}^n-h_{j_+-1}^n) -  (h_{j_-}^n-h_{j_-+1}^n) }{\sum_{\sharp j_-}^{\sharp j_+}1/h^n_j},
\end{equation}
and provided that the initial condition $(h^0,q^0)$ satisfies, for all $ j_-<j<j_+$, 
\begin{equation}\label{discreteCI}
 h^0_j=h^0_{{\rm w},j}\quad\mbox{ and }\quad h^1_{{\rm w},j}=h_{{\rm w},j}^0-\alpha({\mathcal F}^0_{1,j+1/2}-{\mathcal F}^0_{1,j-1/2}),
\end{equation}
then, for all $n\in \N$ and $j_-<j<j_+$, one has $h^n_j=h^n_{{\rm w}, j}$.
\end{proposition}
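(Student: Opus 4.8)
The plan is to prove the assertion by induction on $n$, establishing at the discrete level the analogue of the continuous identity ``$\dt^2\zeta=\dt^2\zeta_{\rm w}$ on $\cI$'' that was the mechanism behind Proposition \ref{propprescSW}. Introduce the error $e^n_j:=h^n_j-h^n_{{\rm w},j}$ on the interior cells $j_-<j<j_+$. The two base cases come for free from the compatibility condition \eqref{discreteCI}: $e^0_j=0$ is its first part, and $e^1_j=0$ follows because the source term in \eqref{FV} acts only on the momentum component (recall \eqref{eqnum1}), so the first component of \eqref{FV} gives $h^1_j=h^0_j-\alpha({\mathcal F}^0_{1,j+1/2}-{\mathcal F}^0_{1,j-1/2})$; since $h^0_j=h^0_{{\rm w},j}$ the fluxes here are the same as those computed from $(h^0_{\rm w},q^0)$, so the right-hand side is exactly the prescribed value $h^1_{{\rm w},j}$ of the second part of \eqref{discreteCI}.

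The heart of the argument is a discrete second-time-difference computation showing that $e^{n+2}_j-2e^{n+1}_j+e^n_j=0$ for every $n\ge 0$ and every interior $j$. First, for an interior cell both fluxes ${\mathcal F}^m_{1,j\pm 1/2}$ are the diffusion-free ones of \eqref{LFadapt1}, so the first component of \eqref{FV} reduces to $h^{m+1}_j-h^m_j=-\tfrac{\alpha}{2}(q^m_{j+1}-q^m_{j-1})$; writing this at $m=n+1$ and $m=n$ and subtracting gives $h^{n+2}_j-2h^{n+1}_j+h^n_j=-\tfrac{\alpha}{2}\big[(q^{n+1}_{j+1}-q^n_{j+1})-(q^{n+1}_{j-1}-q^n_{j-1})\big]$. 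Next, the momentum component of \eqref{FV}, which uses the \emph{unmodified} Lax--Friedrichs flux \eqref{LFadapt2} everywhere, gives for any index $k$ with $j_-\le k\le j_+$ (in particular $k=j\pm 1$ when $j$ is interior, so that $S^n_k$ is the quantity \eqref{Snj}) the identity $q^{n+1}_k-q^n_k=\delta_t\big(S^n_k-(D_0{\mathcal F}^n_2)_k\big)$. Inserting \eqref{Snj} and noting that $(D_0{\mathcal F}^n_2)^*_k-(D_0{\mathcal F}^n_2)_k=-\av{D_0{\mathcal F}^n_2}$ is independent of $k$, as is the contact-line term $g\big[(h^n_{j_+}-h^n_{j_+-1})-(h^n_{j_-}-h^n_{j_-+1})\big]/\sum_{\sharp j_-}^{\sharp j_+}1/h^n_j$, both of these cancel in the difference between $k=j+1$ and $k=j-1$. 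Only the term carrying $\ddot z_G^{n+1}$ survives, and since $(x-x_G)^{*,n}_{j+1}-(x-x_G)^{*,n}_{j-1}=x_{j+1}-x_{j-1}=2\delta_x$ together with $\alpha\delta_x=\delta_t$, one is left with $h^{n+2}_j-2h^{n+1}_j+h^n_j=\delta_t^2\,\ddot z_G^{n+1}$.

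To finish, because the solid moves only vertically one has $h_{{\rm w},j}=h_{{\rm w,eq},j}+\delta_G$ with $h_{{\rm w,eq},j}$ time-independent and $\delta_G=z_G-z_{G,{\rm eq}}$, so $h^{n+2}_{{\rm w},j}-2h^{n+1}_{{\rm w},j}+h^n_{{\rm w},j}=\delta_t^2\ddot z_G^{n+1}$ as well; subtracting yields $e^{n+2}_j-2e^{n+1}_j+e^n_j=0$. Together with $e^0_j=e^1_j=0$, this homogeneous two-step recursion propagates to give $e^n_j=0$ for all $n$ and all interior $j$, which is precisely $h^n_j=h^n_{{\rm w},j}$. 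I expect the only real difficulty to be bookkeeping rather than conceptual: one must check that the indices $j\pm1$ remain in $[j_-,j_+]$ where $S^n_k$ is defined, and track the modified summations of Notation \ref{notavdisc} so that exactly the $k$-independent pieces of $S^n_k$ cancel; the single genuine simplification that makes everything collapse is the telescoping identity for $(x-x_G)^*$ combined with $\alpha\delta_x=\delta_t$.
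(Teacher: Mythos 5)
Your argument is correct, and its computational core coincides with the paper's: the paper also combines the diffusion-free mass update $h^{m+1}_j-h^m_j=-\tfrac{\alpha}{2}(q^m_{j+1}-q^m_{j-1})$ on interior cells with the momentum update of \eqref{FV}--\eqref{LFadapt2} to reduce everything to the discrete identity $h^{n+2}_j-2h^{n+1}_j+h^n_j=\delta_t^2\ddot z_G^{n+1}$, and then concludes exactly by your induction from \eqref{discreteCI}. The only difference is one of direction: the paper does not take \eqref{Snj} as given but \emph{derives} it, by positing $S^n_j=-\tfrac{h^n_j}{\rho}\tfrac{P^n_{j+1/2}-P^n_{j-1/2}}{\delta_x}$ with boundary values $\uP_{\rm i}^{\pm}=\rho g(\zeta_{\rm e}^\pm-\zeta_{\rm i}^\pm)$ and solving the resulting discrete elliptic problem through a discrete analogue of Lemma \ref{lemexplicit}, which produces the oscillating-part structure $g^*_j$ plus the $j$-independent jump term; your proof replaces this construction by the observation that $(D_0{\mathcal F}^n_2)^*_k-(D_0{\mathcal F}^n_2)_k$ and the contact-point term are $k$-independent, so they cancel in the $k=j\pm1$ difference and only the telescoping $(x-x_G)^{*}$ term survives. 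What the paper's route buys is the interpretation of $S^n_j$ as coming from a genuine discrete interior pressure (a discrete Lagrange multiplier), which is used again for the freely floating case; what your route buys is a shorter, self-contained verification of the stated proposition, with correct bookkeeping of the indices $k\in[j_-,j_+]$ and of Notation \ref{notavdisc}.
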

\begin{proof}
Instead of seeking a discretization of $S_{\rm i}=-\frac{1}{\rho}\dx \uP_{\rm i}$ based on its continuous expression, we shall rather mimic the approach used in the continuous case and look for a discretization of the interior pressure $\uP_{\rm i}$ as a discrete Lagrange multiplier associated to the constraint $\zeta_{\rm i}=\zeta_{\rm w}$.\\
According to \eqref{FV} and \eqref{LFadapt1}, one has for all $j_- <j<j_+$,
\begin{align*}
(h_j^{n+2}-h_j^{n+1})-(h_j^{n+1}-h^{n}_j)=&-\frac{\alpha}{2}\big(q_{j+1}^{n+1}-q_{j-1}^{n+1}\big)
+\frac{\alpha}{2}\big(q_{j+1}^n-q_{j-1}^n\big)\\
=&-\frac{\alpha}{2}\big(q_{j+1}^{n+1}-q_{j+1}^n\big)+\frac{\alpha}{2}\big(q_{j-1}^{n+1}-q_{j-1}^n\big).
\end{align*}
Using the discretization of the momentum equation and the fact that by definition
$h_j^n=h_{{\rm w},j}^n$ for
$j_-<j<j_+$, we therefore want the source terms $S_j^n$ to be such that
$$
\delta_t^2\ddot{\zeta}_{{\rm w},j}^{n+1}=-\frac{\alpha}{2}\delta_t \big(S_{j+1}^n-S_{j-1}^n\big)+\frac{\alpha^2}{2}\big[\big({\mathcal F}^n_{2,j+3/2}-{\mathcal
  F}^n_{2,j+1/2}\big)- \big({\mathcal F}^n_{2,j-1/2}-{\mathcal
  F}^n_{2,j-3/2}\big)\big]
$$
or equivalently
\begin{equation}\label{eqDS}
 \frac{S_{j+1}^n-S_{j-1}^n}{2\delta_x}=-\ddot{\zeta}_{{\rm w},j}^{n+1}+\frac{1}{2\delta_x}\big[(D_0{\mathcal F}^n)_{j+1}- (D_0{\mathcal F}^n)_{j-1}\big].
\end{equation}
On the other hand, $S_j^n$ should be an approximation on the cell $j$ and at time $t^n$ of $S=-h\frac{\dx \uP_{\rm i}}{\rho}$ and we thus look for it under the form
\begin{equation}\label{eqSP1}
\forall j_-<  j < j_+,\qquad S_j^n=-\frac{h_j^n}{\rho}\frac{P^n_{j+1/2}-P^n_{j-1/2}}{\delta_x},
\end{equation}
and, at the boundary points $j=j_\pm^n$, 
\begin{equation}\label{eqSP2}
S_{j_-}^n=-\frac{h_{j_-}^n}{\rho}\frac{P^n_{j_-+1/2}-\uP_{\rm
    i}^-}{\delta_x/2}
\quad\mbox{ and }\quad
S_{j_+}^n=-\frac{h_{j_+}^n}{\rho}\frac{\uP_{\rm i}^+- P^n_{j_+-1/2}}{\delta_x/2},
\end{equation}
where $\uP_{\rm i}^\pm$ denote the interior pressure at $x=x_\pm$.\\
We shall now use the following lemma, in which the superscript $n$ for the time dependance is omitted for the sake of clarity.
\begin{lemma}
A solution $(P_{j+1/2})_{j_-\leq j <j_+}$ to the equation
$$
\forall j_-<j<j_+,\qquad  \frac{S_{j+1}-S_{j-1}}{2\delta_x}=\frac{g_{j+1}-g_{j-1}}{2\delta_x}
$$
with $(S_j)_{j_-\leq j\leq j_+}$ as in \eqref{eqSP1}-\eqref{eqSP2} is given by
$$
\forall j_-\leq j<j_+,\qquad -\frac{P_{j+1/2}}{\rho}=-\frac{\uP_{\rm i}^-}{\rho}+\delta_x\big[\sum_{k=\sharp j_-}^j \frac{g_k}{h_k} +c\sum_{k=\sharp j_-}^j \frac{1}{h_k} \big].
$$
where the constant $c$ is given by
$$
c=\big[-\frac{\jump{\uP_{\rm i}^\pm}}{\rho}-\delta_x\sum_{j=\sharp j_-}^{\sharp j_+}\frac{g_j}{h_j}\big]\times \frac{1}{\delta_x\sum_{\sharp j_-}^{\sharp j_+}\frac{1}{h_j}}.
$$
In particular, one has
$$
S_j=g^*_j-\frac{\jump{\uP_{\rm i}^\pm}}{\rho}\frac{1}{\delta_x\sum_{\sharp j_-}^{\sharp j_+}\frac{1}{h_j}}.
$$
\end{lemma}
\begin{proof}[Proof of the lemma]
It suffices to prove that there exists a constant $c$ such \eqref{eqSP1}-\eqref{eqSP2} are satisfied with $S_j=g_j+c$. Rewriting \eqref{eqSP1} under the form
$$
-\frac{P_{j+1/2}-P_{j-1/2}}{\rho}=\delta_x \frac{S_j}{h_j}=\delta_x \frac{g_j+c}{h_j}
$$
and using the first equation of \eqref{eqSP2} one easily obtains that
$$
\forall j_-\leq j<j_+,\qquad -\frac{P_{j+1/2}}{\rho}=-\frac{\uP_{\rm i}^-}{\rho}+\delta_x\big[\sum_{j=\sharp j_-}^j \frac{g_j}{h_j} +c\sum_{j=\sharp j_-}^j \frac{1}{h_j} \big].
$$
The value of this expression at $j=j_+-1$ should match the one provided by the second equation of \eqref{eqSP2}; this is possible only with a particular choice of $c$, which is the one given in the statement of the lemma.
\end{proof}
In the configuration considered here, the solid is only allowed to move in vertical translation, so that $\ddot\zeta^{n+1}_{\rm w,j}=\ddot{z}_G^{n+1}$, where $z_G$ is the vertical coordinate of the center of mass (or any other point of the solid). In particular, this quantity does not depend on $j$, and one can write
$$
\ddot\zeta^{n+1}_{\rm w,j}=\ddot{z}_G^{n+1} \frac{1}{2\delta x}\big( x_{j+1}- x_{j-1}\big).
$$
The expression for $S_j$ given in the statement of the proposition follows therefore from \eqref{eqDS},  the above lemma, and the fact that $\uP_{\rm i}^\pm=\rho g(\zeta_{\rm e}^\pm-\zeta_{\rm i}^\pm)$, which at the discrete level, reads
$$
\uP_{\rm i}^{-,n}=\rho g\big(h_{j_-}^n-h^n_{j_-+1}\big)\quad\mbox{ and }\quad \uP_{\rm i}^{+,n}=\rho g\big(h_{j_+}^n-h^n_{j_+-1}\big).
$$
Conversely, if the source term is given by the expression stated in the proposition, the above computations show that
$$
\forall j_-<j<j_+, \qquad \ddot{h}_j^{n+1}=\ddot{h}_{\rm w,j}^{n+1}.
$$
If the assumption made in the statement of the proposition on the initial data $(h^0,q^0)$ holds, then one has ${h}_j^{0}={h}_{\rm w,j}^{0}$ and ${h}_j^{1}={h}_{\rm w,j}^{1}$ and a straightforward induction shows that ${h}_j^{n}={h}_{\rm w,j}^{n}$ for all $n\in \N$.
\end{proof}

\subsubsection{The case of a freely floating object}\label{sectfreenum}

The difficulty in the discretization of the wave-structure interaction is that according to Proposition \ref{propdiscSW} the discretization of the source term $S^n$ in \eqref{FV} requires the knowledge of $\ddot{z_G}^{n+1}$ and therefore the position of $z_G$ (or equivalently the distance $\delta_G$ to its equilibrium position) at time $t^{n+2}$.
The time discretization for the ODE \eqref{ODE1} is the following
\begin{equation}\label{ODE1disc}
\big({\mathfrak m}+{m}_{\rm a}(\delta_G^{n})\big)\ddot \delta_G^{n+1}=-{\mathfrak c}\delta_G^n+\rho g \big(\zeta_{\rm e,+}^n x_+^{*,n}-\zeta^n_{\rm e,-}x_-^{*,n}\big)+F_{\rm NL}(\delta_G^n,q_{\rm i}^n),\end{equation}
where  $F_{\rm NL}$ and $m_{\rm a}$ are as defined\footnote{The nonlinear term $F_{\rm NL}(\delta_G,\dot\delta_G,\av{q})$ that appears in Proposition \ref{propODE} depends on $\dot{\delta}_G$ and $\av{q}$ through $q_{\rm i}=\av{q}-x^*\dot\delta_G$ only. Since we choose to compute $q_{\rm i}$ through the second equation of \eqref{eqnum1} (which is of course equivalent to the equation on $\av{q}$ in Proposition \ref{propODE}), we rather write $F_{\rm NL}(\delta_G,q_{\rm i})$ here.}
 in Proposition \ref{propODE}.\\
 The iterative scheme we use to compute $U^{n+1}$ and $\delta_G^{n+2}$ in terms of $U^n$ and $(\delta_G^{n+1},\delta_G^n)$ is given in Algorithm \ref{algolin}.
\begin{algorithm}\label{algolin}
\caption{The wave-structure coupling algorithm}
\begin{algorithmic}
\REQUIRE The quantities $U^n=(h^n,q^n)$ and $(\delta_G^{n+1},\delta_G^n)$ are known
\STATE Compute $\rho g \big(\zeta_{\rm e,+}^n x_+^{*,n}-\zeta^n_{\rm e,-}x_-^{*,n}\big)$, $F_{\rm NL}(\delta_G^n,q_{\rm i}^n)$ and ${m}_{\rm a}(\delta_G^n)$
\STATE Compute $\ddot{\delta}_G^{n+1}$ through \eqref{ODE1disc}
\STATE Compute the source term $S^n$ with the formula of Proposition \ref{propdiscSW}
\STATE Compute $U^{n+1}$ with \eqref{FV}-\eqref{LFadapt2}.
\end{algorithmic}
\end{algorithm}
\begin{remark}
In \eqref{ODE1disc}, the contribution of $\rho g \big(\zeta_{\rm e,+} x_+^{*}-\zeta_{\rm e,-}x_-^{*}\big)$ and $F_{\rm NL}$ has been treated explicitly. The general fact that $F^{\rm II}_{\rm fluid}$ can be put under the form of an added mass term (see \eqref{pkey}) allows us to treat it here in implicit form, which is of crucial importance for the stability of the scheme. The numerical importance of the added mass effects  has been evidenced in other fluid-structure interactions \cite{FGG}, and discussed in particular in \cite{CausinGerbeauNobile,FWR}.
\end{remark}

\subsection{Using the Boussinesq system as hydrodynamic model}\label{sect_modBouss}

We are considering here the same situation as in \S \ref{sect_modSW} with the only difference that the nonlinear shallow water equations are replaced by the one dimensional Boussinesq equations. 

\subsubsection{The continuous equations}
We recall that Proposition \ref{propprescBouss} describes the Boussinesq equations in the presence of a floating structure; in the particular case of vertical motion considered here, the source term $S^{\rm III}_{\rm B}$ vanishes; moreover, the source term  $S^{\rm II}_{\rm B}$ is the sameas for the shallow water model, $S^{\rm II}_{\rm B}=S^{\rm II}_{\rm SW}$, since the second order derivative in the source term defining $P^{\rm II}_{\rm B,i}$ vanishes in the case of a purely vertical motion. Therefore, in conservative form, the equations take the form
\begin{equation}\label{eqnum1B}
{\mathcal D}\dt U+\dx \big({\mathcal F}_{\rm B}(U))=(0,S_{\rm B})^T
\end{equation}
where the term $q^2/h$ is replaced by $q^2/h_0$ in the flux, 
$$
U=\left( \begin{array}{c}\zeta \\ q \end{array}\right),\qquad
{\mathcal F}_{\rm B}(U)=\left( \begin{array}{c}{\mathcal F}_1(U) \\ {\mathcal F}_{\rm B,2}(U) \end{array}\right)=\left( \begin{array}{c} q \\ \frac{1}{h_0}q^2+\frac{1}{2}g h^2 \end{array}\right),
$$
and where the source term $S_{\rm B}$ is correspondingly given by
\begin{equation}\label{eqnum2B}
S_{\rm B,e}=0,\qquad S_{\rm B,i}=\big[\dx\big(\frac{1}{h_0}q^2+\frac{1}{2}gh^2\big) \big]^*-\ddot z_G x^*-g\jump{\zeta_{\rm e}-\zeta_{\rm i}}\frac{1}{\int_{x_-}^{x_+}1/h};
\end{equation}
finally, the additional dispersive term is
$$
{\mathcal D}=(1,1-\frac{h_0^2}{3}\dx^2)^T.
$$
Of course the same continuity condition \eqref{eqnum2} and compatibility condition on the initial data are made, so that the constraint $\zeta_{\rm i}=\zeta_{\rm w}$ is automatically satisfied at all times.

\begin{remark}\label{remsolB}
In the case where the solid structure is freely floating in the vertical direction, we deduce from Proposition \ref{propprescBouss} and Remark \ref{remBoussvert} that the vertical coordinate $z_G(t)$ of the center of mass (or equivalently its distance $\delta_G$ to its equilibrium position)  is found by solving the second order nonlinear ODE deduced from \eqref{ODE1} by replacing
$q_{\rm i}^2/h_{\rm w}$  by $q_{\rm i}^2/h_{0}$ in the nonlinear terms  $F_{\rm NL}$ and $H_{\rm NL}$.
\end{remark}
\begin{remark}
The dispersive component of the Boussinesq system does not contribute to the ODE governing the motion of the solid; indeed, the third diagonal coefficient of the dispersive correction ${\mathcal M}_{\rm B}$ of the added mass matrix vanishes in Proposition \ref{propprescBouss}. They have however an incidence on the motion of the object since they modify the wave field and in particular the terms $\zeta_{\rm e,\pm}$ that appear in the damping/excitation force $\rho g (\zeta_{\rm e,+}x^*_+-\zeta_{\rm e,-}x^*_-)$.
\end{remark}
\begin{remark}\label{remvanish}
More generally, the dispersive term does not play any role in the interior region. Indeed, from the first equation of \eqref{eqnum1B}, one knows that $q$ is linear in $x$ in the interior region $\cI$. It follows that the dispersive term $-\frac{h_0^2}{3}\dx^2\dt q$ identically vanishes on $\cI$.
\end{remark}
\subsubsection{The discrete equations}

The approach is the same as for the shallow water equations in \S \ref{sectnumSW}; therefore, we only sketch the adaptations one has to perform.\\
The Boussinesq equations \eqref{eqnum1B} differ from the nonlinear shallow water equations \eqref{eqnum1} by the presence of the dispersive term ${\mathcal D}$ and a slight modification in the second component of the flux, namely, ${\mathcal F}$ must be replaced by ${\mathcal F}_B$ as in \eqref{eqnum1B}. We shall therefore use a numerical scheme based on  the  finite volume discretization \eqref{FV}, namely, 
\begin{equation}\label{FV_B}
({\mathcal D}U)^{n+1}_j=({\mathcal D}U)^n_j-\alpha\big({\mathcal F}^n_{j+1/2}-{\mathcal
  F}^n_{j-1/2}\big)+\delta_t S^n_j,
\end{equation}
where the flux numerical flux ${\mathcal F}_{j+1/2}^n$ is still given by \eqref{LFadapt1}-\eqref{LFadapt2} ---  {\it for the sake of clarity, we shall denote ${\mathcal F}$ instead of ${\mathcal F}_B$ throughout this section.}

 The only thing that remains to be specified is the discretization we shall use for $({\mathcal D}U)^{n}_j$. We use 
$$
({\mathcal D}U)^n_j=\big(\zeta^n_j , q^n_j-\frac{h_0^2}{3}{\mathfrak d}^2_jq^n \big),
$$
where ${\mathfrak d}^2_jq^n$ is the classical centered second order approximation of $\dx^2$ except in the interior region where it is equal to zero,\begin{equation}\label{d2sing}
{\mathfrak d}^2_j Q=\frac{1}{\dx^2}\begin{cases}
Q_{j-1}-2 Q_j +Q_{j+1}& \mbox{if } j<j_--2\mbox{ or }j>j_++2\\
0 & \mbox{if } j_--2\leq j\leq j_++2.
\end{cases}
\end{equation}
This discretization is motivated by Remark \ref{remvanish} that shows that the dispersive term identically vanishes in the interior region. We also discarded the dispersive term in the first two cells of the exterior domain in order to avoid the singularity at the contact line. This means that in these cells,  the hydrodynamical model considered locally is the nonlinear shallow water system, which is still physically relevant but less precise than the Boussinesq model\footnote{In the nonlinear shallow water model, terms of order $O(\mu)$ are discarded, while only $O(\mu^2)$ terms are neglected in the Boussinesq model.}. This strategy consisting in switching locally in the vicinity of a singularity to a less precise but more robust model is often used to handle wave-breaking for instance \cite{BCLMT}. The following proposition generalizes to the Boussinesq system the result proved in Proposition \ref{propdiscBouss} for the nonlinear shallow water equations. We omit the proof.
\begin{proposition}\label{propdiscBouss}
Let us consider a floating body in purely vertical motion and denote by $z_G$ the vertical coordinate of its center of mass at time $t^n$. Let the discretization of the Boussinesq equations be furnished by \eqref{FV_B}-\eqref{d2sing} with flux \eqref{LFadapt1}-\eqref{LFadapt2} and with the source term provided by \eqref{Snj}.\\
If the initial condition $(h^0,Q^0)$ satisfies \eqref{discreteCI}, then, for all $n\in \N$ and $j_-<j<j_+$, one has $h^n_j=h^n_{{\rm w}, j}$. 
\end{proposition}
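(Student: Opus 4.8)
The plan is to transcribe the proof of Proposition \ref{propdiscSW}, the only genuinely new ingredient being that the dispersive operator ${\mathfrak d}^2_j$ defined in \eqref{d2sing} decouples from the interior cells. I would proceed as follows. First I would observe that the dispersive correction ${\mathcal D}$ is the identity on the first component, so that the discrete mass equation in \eqref{FV_B} reduces to $\zeta^{n+1}_j=\zeta^n_j-\alpha({\mathcal F}^n_{1,j+1/2}-{\mathcal F}^n_{1,j-1/2})$ (the first component of the source being zero); since the modified flux \eqref{LFadapt1} is the same one used for the shallow water scheme, the second-difference identity
\[
(h^{n+2}_j-h^{n+1}_j)-(h^{n+1}_j-h^n_j)=-\frac{\alpha}{2}\big(q^{n+1}_{j+1}-q^n_{j+1}\big)+\frac{\alpha}{2}\big(q^{n+1}_{j-1}-q^n_{j-1}\big)
\]
holds for every interior index $j_-<j<j_+$ exactly as in the proof of Proposition \ref{propdiscSW}.

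Next I would exploit the key structural fact --- the discrete counterpart of Remark \ref{remvanish} --- that by \eqref{d2sing} one has ${\mathfrak d}^2_j q\equiv 0$ for all $j$ with $j_--2\le j\le j_++2$, whatever $q$ is. Since for $j_-<j<j_+$ the indices $j\pm 1$ all lie in $[j_-,j_+]$, the second component of ${\mathcal D}U$ reduces to $q_{j\pm1}$ at those indices, and the momentum update \eqref{FV_B} there is formally identical to the shallow water one, $q^{n+1}_{j\pm1}-q^n_{j\pm1}=-\delta_t(D_0{\mathcal F}^n_2)_{j\pm1}+\delta_t S^n_{j\pm1}$ (the replacement of ${\mathcal F}_2$ by ${\mathcal F}_{\rm B,2}$ plays no role in the algebra). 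Substituting this into the identity above, I would recover verbatim the requirement \eqref{eqDS} on $S^n_j$, and then invoke the discrete elliptic lemma from the proof of Proposition \ref{propdiscSW} --- which only uses the form \eqref{eqSP1}--\eqref{eqSP2} of the source together with the boundary values $\uP_{\rm i}^{-}=\rho g(h_{j_-}-h_{j_-+1})$ and $\uP_{\rm i}^{+}=\rho g(h_{j_+}-h_{j_+-1})$, and is insensitive to the particular flux --- to check that the choice \eqref{Snj} is precisely the one for which $\ddot h^{n+1}_j=\ddot h^{n+1}_{{\rm w},j}$ for all $j_-<j<j_+$. An induction using the compatibility condition \eqref{discreteCI} on the first two time levels then yields $h^n_j=h^n_{{\rm w},j}$ for all $n$ and all interior $j$.

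The main obstacle is a bookkeeping one: I must verify that the stencils of ${\mathfrak d}^2_j$ at every index entering the interior mass update genuinely fall inside the zero zone $j_--2\le j\le j_++2$, so that the two-cell buffer of \eqref{d2sing} is wide enough for the dispersive contributions to cancel identically between consecutive time levels --- with no need for any regularity or boundary argument. Once this purely combinatorial point is settled, the remainder is a direct transcription of the shallow water argument, which is why the statement is given here without a separate full proof.
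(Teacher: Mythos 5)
Your proposal is correct and is exactly the ``straightforward adaptation'' of the proof of Proposition \ref{propdiscSW} that the paper omits: since the first component of ${\mathcal D}$ is the identity and ${\mathfrak d}^2_{j}q=0$ on $j_--2\leq j\leq j_++2$ by \eqref{d2sing}, the mass and momentum updates at the only indices entering the interior second-difference identity (namely $j\pm 1\in[j_-,j_+]$ for $j_-<j<j_+$, comfortably inside the zero zone, so your bookkeeping concern is immediate) coincide with the shallow water ones, and the discrete Lagrange-multiplier lemma plus induction from \eqref{discreteCI} apply verbatim, the change of flux from ${\mathcal F}_2$ to ${\mathcal F}_{\rm B,2}$ being immaterial to the algebra.
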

\begin{remark}
In the case of a freely floating object, we follow the same lines as in \S \ref{sectfreenum} and Remark \ref{remsolB} to find the motion of the solid.
\end{remark}

\section{Numerical simulations}\label{sectcomput}

We present here some numerical computations based on the schemes introduced in Section \ref{sectdiscrete}. We first consider in \S \ref{sectnumSW} the case of the nonlinear shallow water equations, and then in \S \ref{sectnumBouss} the Boussinesq equations.

\subsection{Numerical simulations for the nonlinear shallow water model}\label{sectnumsimSW}

Throughout this section, we shall consider a floating object as in Figure \ref{fig_shape}. It consists of the union of a rectangular box of width $2R$ and height $2R\sin(\pi/3)-R$ and, at the lower bottom, of a portion of disk of radius $2R$ and whose center is located at the vertical of the middle of the top of the solid, denoted by $C$. In all the computations presented below, we take $R=10{\mathtt m}$.\\
For the fluid, we shall always assume that the depth at rest is $h_0=15{\mathtt m}$, that the density of water is $\rho=1000 \mathtt{kg}\cdot {\mathtt m}^{-3}$, we also take $g=9.81 {\mathtt m}\cdot {\mathtt s}^{-2}$ for the acceleration of gravity.

\begin{figure}
\includegraphics[width=0.5\textwidth]{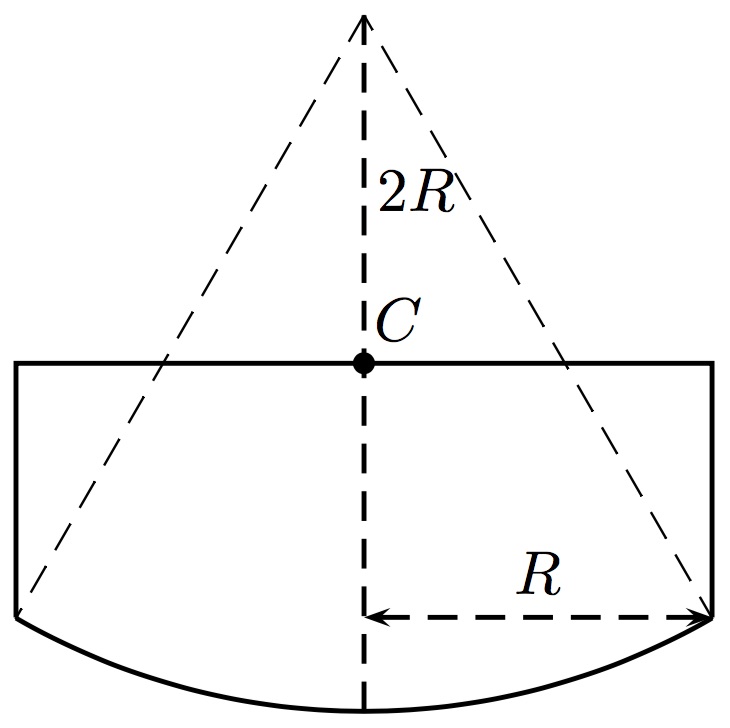}
\caption{Shape of the floating structure}
\label{fig_shape}
\end{figure}

\subsubsection{The case of a fixed object}

We assume here that the floating object is maintained fixed, at the location $C=(150,4.57)$ (this particular height corresponds to the equilibrium state for the configuration considered in \S  \ref{sectnumfree} below). The fluid is initially at rest but forced on the left boundary ($x=0$) with a periodic incoming swell of amplitude $1{\mathtt m}$ and period $T=15{\mathtt s}$. The solution is represented in Figure \ref{fig_fixed} at different times.\\
We represented the solid structure in the first plot, but not in the others, in order to insist on the fact that we solve the equations on the full computational domain and that, with our choice of pressure, the surface of the fluid matches at machine precision the boundary of the solid in the wetted region. There is no need to impose this matching as an extra constraint, consistently with the result stated in Proposition \ref{propWWst} in the continuous case, and Proposition \ref{propdiscSW} in the discrete case under consideration here.\\
As expected from the conservation of mass equation
$$
\dt \zeta+\dx q=0,
$$
the discharge is constant in space (but not in time) in the wetted region since $\dt\zeta_{\rm w}=0$ when the solid is fixed. \\
We can also see that part of the wave is reflected, while the other part, is transmitted to the other side of the solid; this is of course because the flow is allowed to flow underneath the solid.
\begin{center}
\begin{figure}
\includegraphics[width=0.48\textwidth]{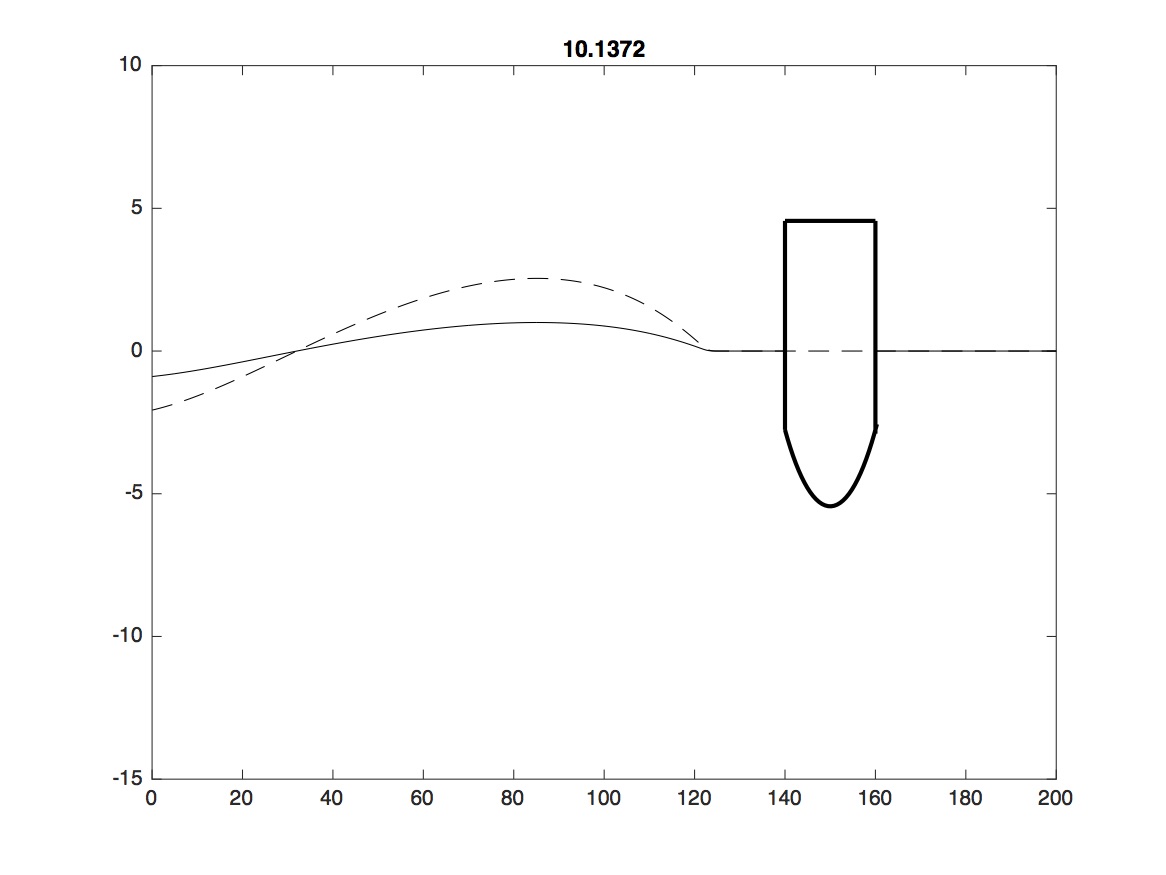}
\includegraphics[width=0.48\textwidth]{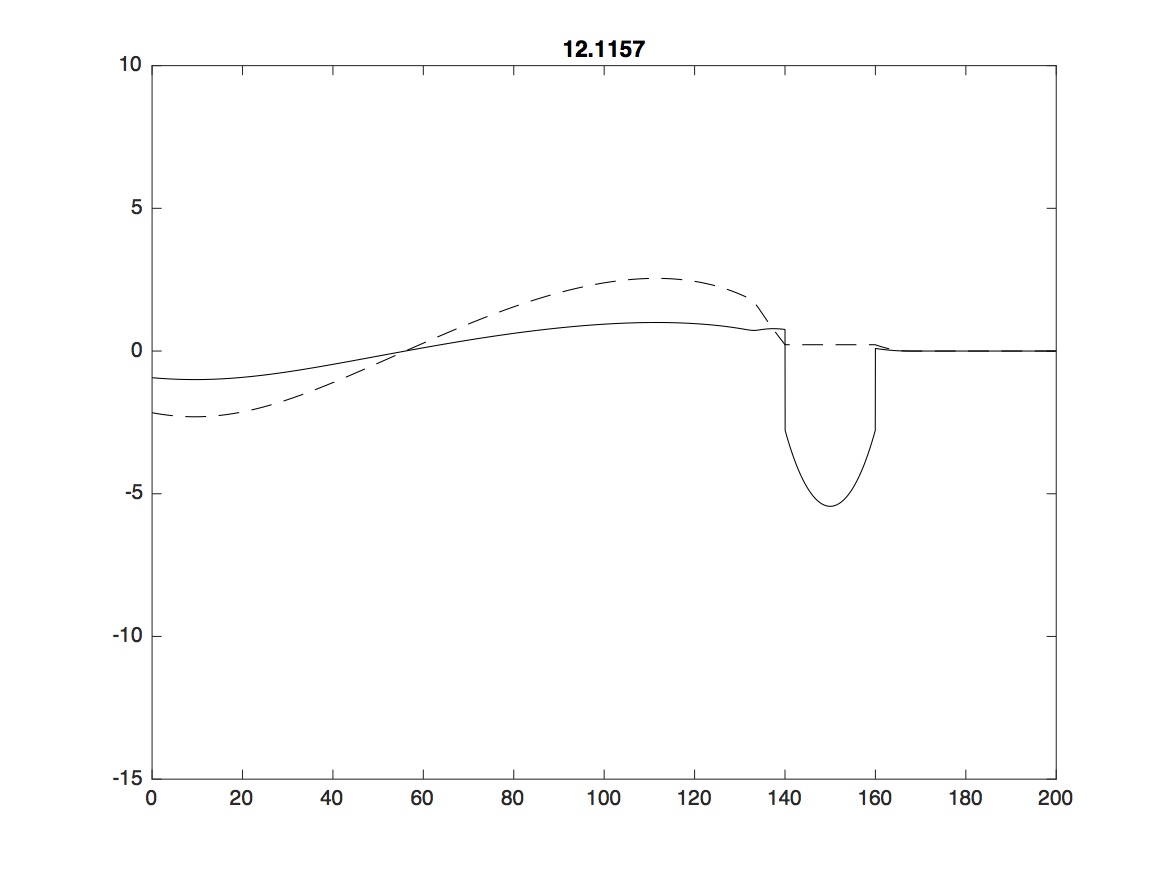}
\includegraphics[width=0.48\textwidth]{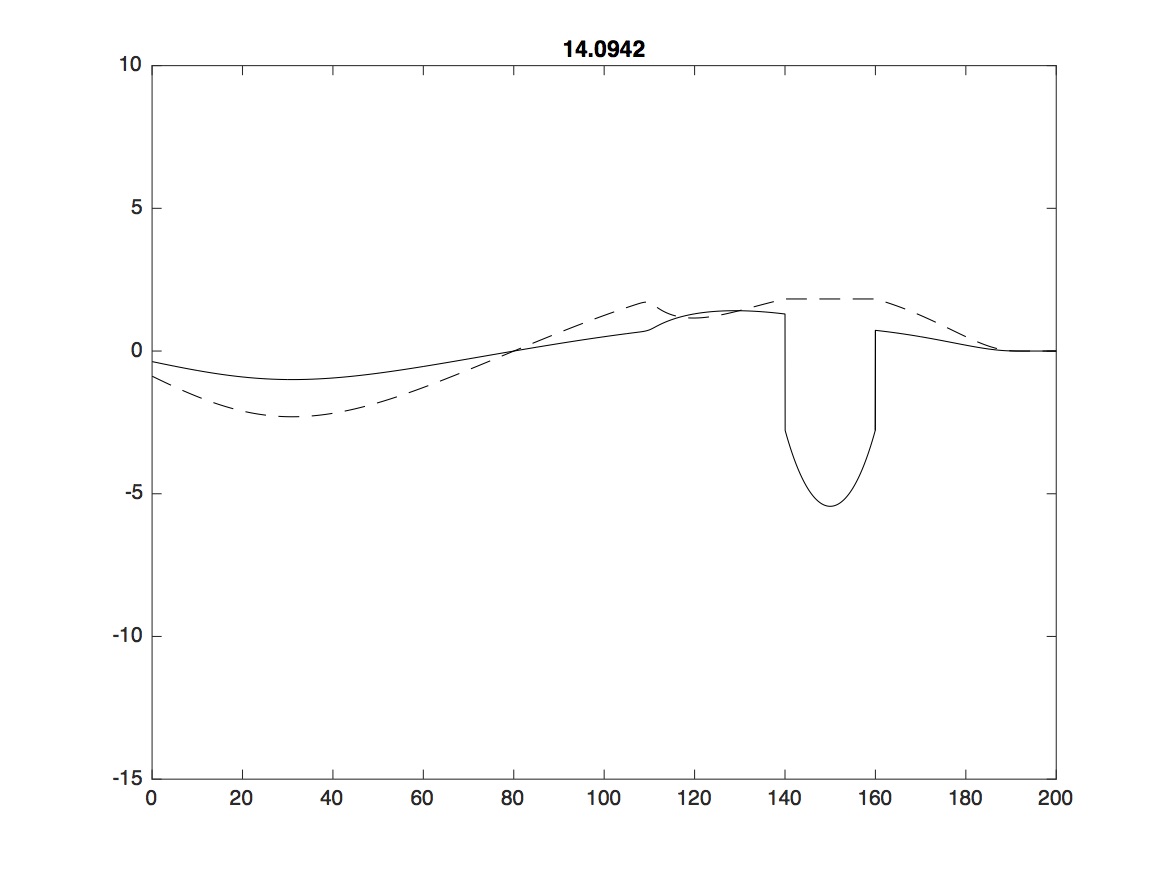}
\includegraphics[width=0.48\textwidth]{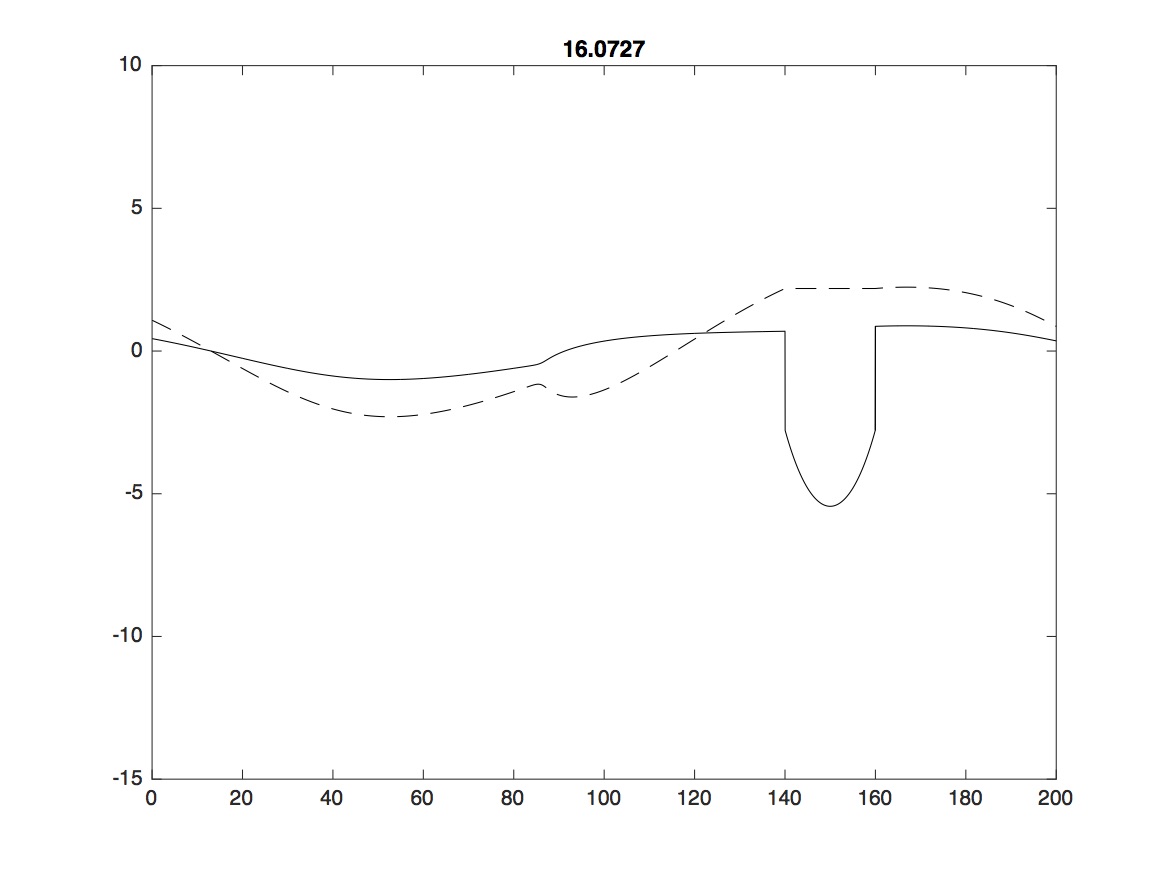}
\caption{A wave arriving on a fixed floating structure. Surface elevation (full) and discharge (divided by a rescaling factor 5, dash).}
\label{fig_fixed}
\end{figure}
\end{center}

\subsubsection{The case of an object in prescribed vertical motion}\label{sectforcednum}

We represent in Figure \ref{fig_forced} the waves created by the floating object when it is in forced vertical motion. We took an initial position corresponding to $z_{\rm C}=4.57{\mathtt m}$, and to an oscillation of $10 {\mathtt s}$ and amplitude $2{\mathtt m}$.\\
The discharge is no longer constant in the wetted region since $\dt{\zeta}_{\rm w}$ is not zero. Since, for a purely vertical motion, it is a function of time only, the discharge is linear in space in the wetted region, as observed in the computations.
\begin{center}
\begin{figure}
\includegraphics[width=0.48\textwidth]{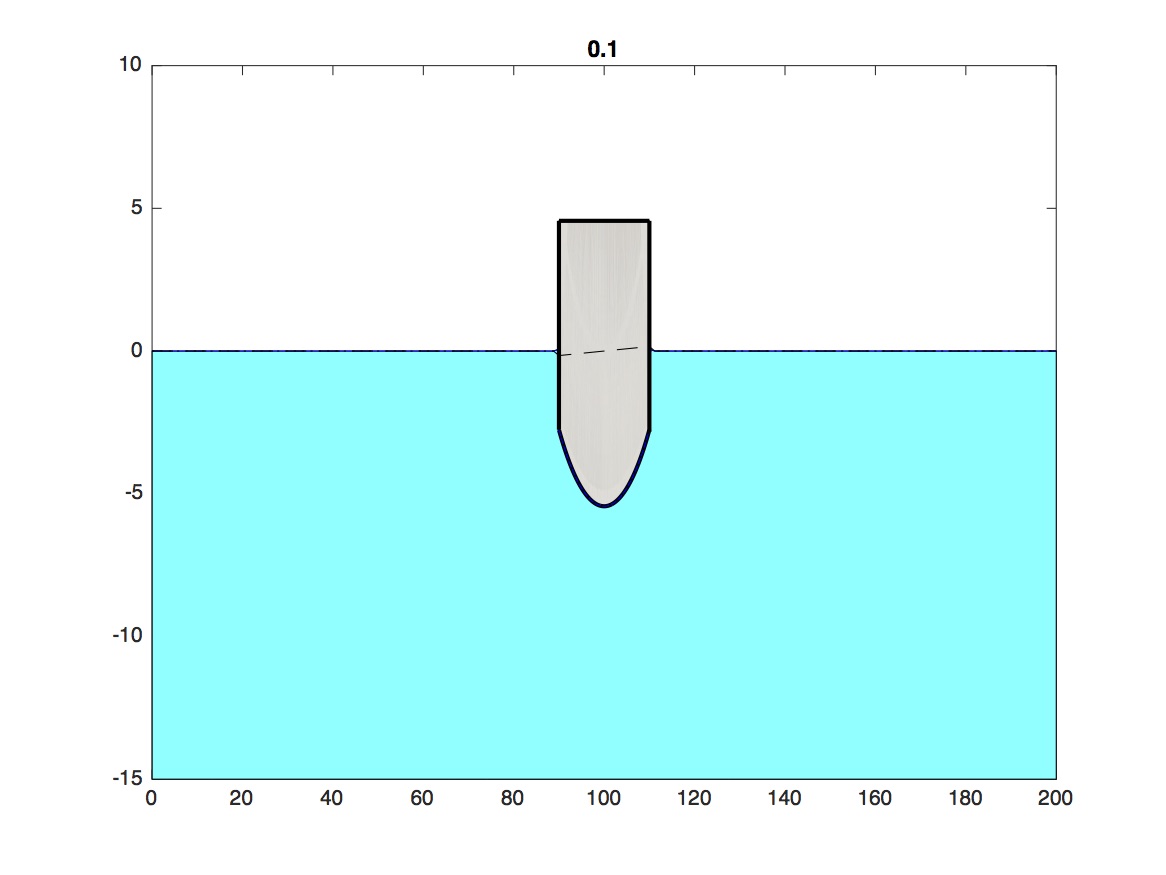}
\includegraphics[width=0.48\textwidth]{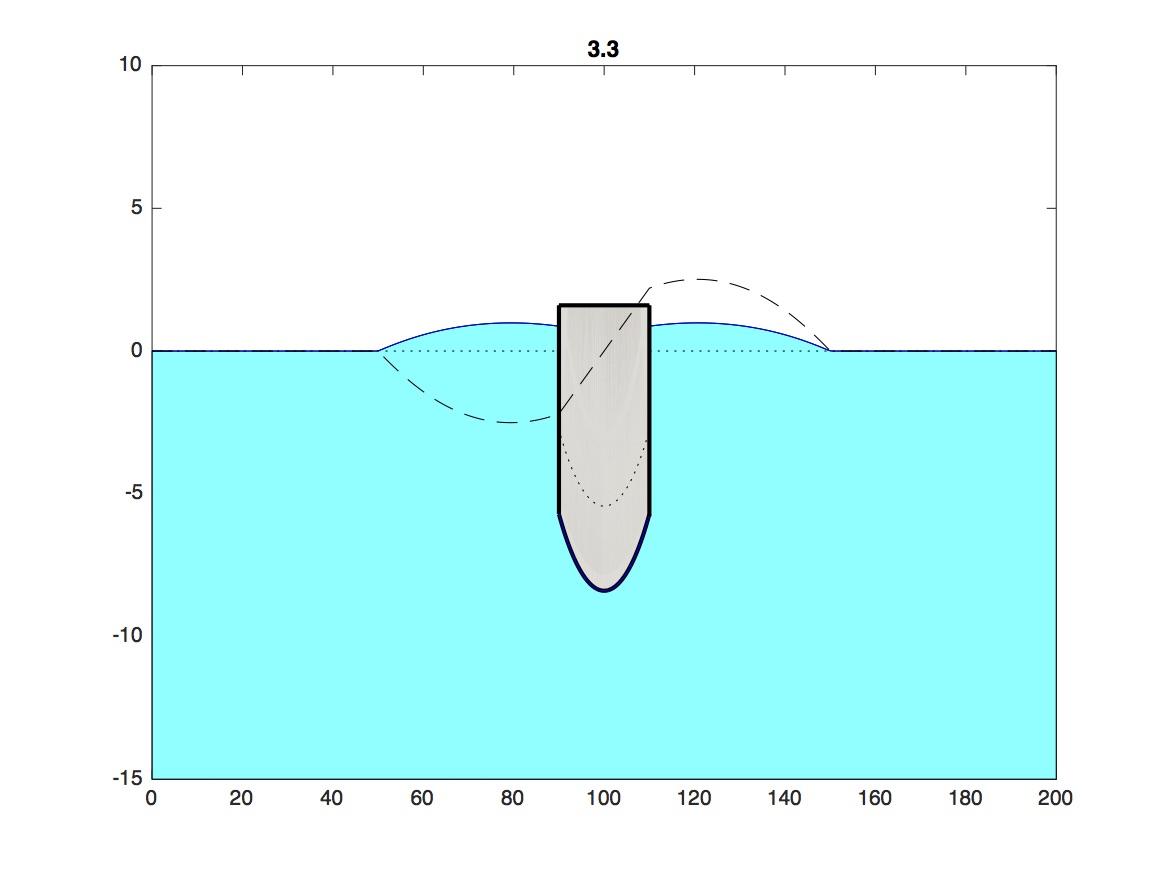}
\includegraphics[width=0.48\textwidth]{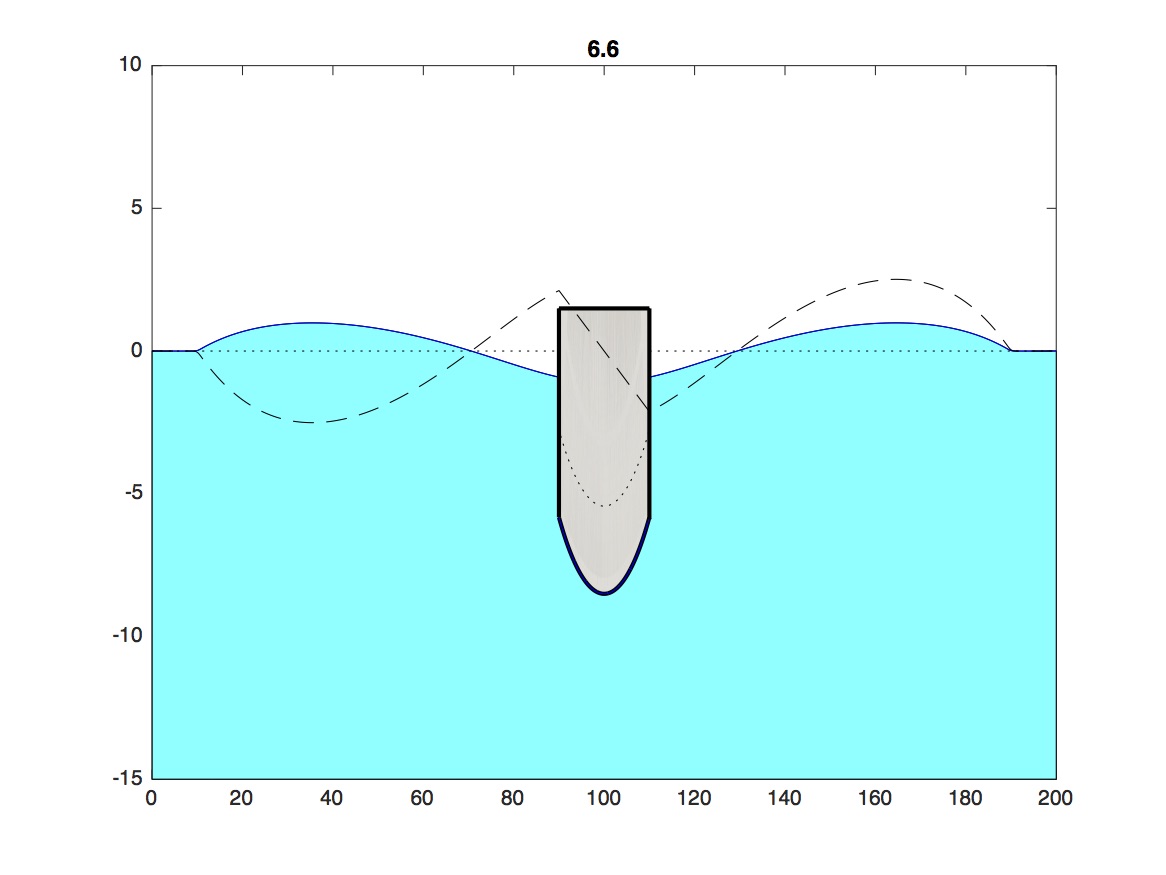}
\includegraphics[width=0.48\textwidth]{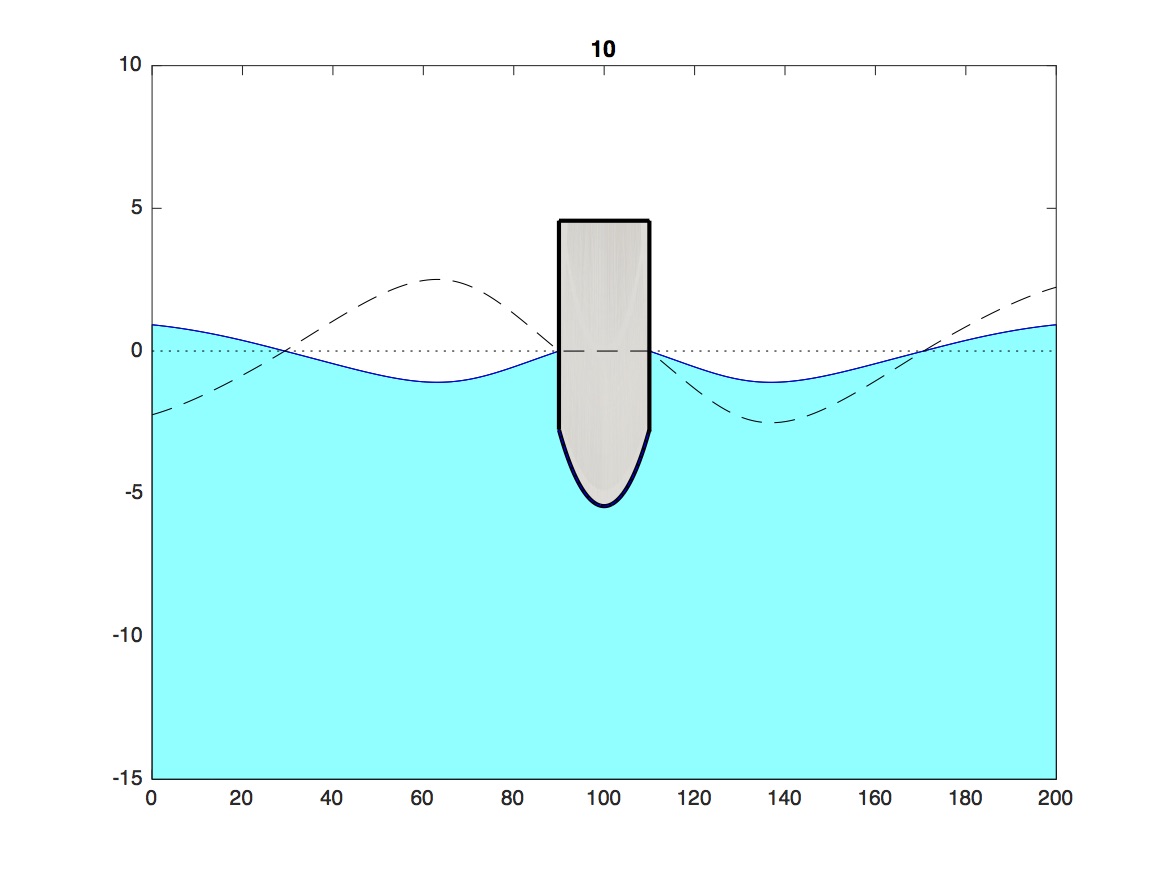}
\caption{A solid in forced vertical motion. Surface elevation (full) and discharge (divided by a rescaling factor 5, dash), initial position (dots).}
\label{fig_forced}
\end{figure}
\end{center}
In order to provide some validation of these computations, we can observe that owing to Remark \ref{remvalforced}, the elevation of the water at the contact points $x_\pm$ is given by
$$
\zeta_{\rm e}(t,x_\pm)=\big(\tau_0(\frac{x_+-x_-}{4\sqrt{g}}\dot{\delta}_G)\big)^2-h_0,
$$
with $\tau_0(\cdot)$ as in \eqref{deftau0}. Since $\dot{\delta}_G$ is in the present case a known function of time, this formula provides an explicit exact solution for the water elevation at the contact points. In Table \ref{tableCV_forced}, we reproduce the error between the solution computed with our numerical scheme and this explicit formula (the configuration considered is the same as in Figure \ref{fig_forced}, and the error computed corresponds to the $L^\infty$-norm of the difference between the exact and computed solutions over one period $T=10{\mathtt s}$). As expected, a first order convergence is observed.
\begin{table}
\begin{tabular}{| l | l | l | l | l |}
\hline
$\delta_x$ & 0.00625 & 0.0125 & 0.0250 & 0.05\\
\hline
 Error & 0.00106 &0.00212& 0.00423 &0.00846\\
 \hline
 \end{tabular}
 \caption{Convergence to the exact solution for a solid in forced motion.}
 \label{tableCV_forced}
\end{table}

\subsubsection{The case of an object freely floating in the vertical direction}\label{sectnumfree}

We now consider the case where the solid is freely floating in the vertical direction, as in \S \ref{sectfreenum}. Since the motion of the solid is found through the resolution of Newton's laws, it is important to specify here the volumic density $\rho_{\rm s}$ of the solid. We take here $\rho_{\rm s}=0.5\rho$. The mass of the solid is then given by ${\mathfrak m}=\rho_{\rm s}{\mathcal V}$ where the volume ${\mathcal V}$ of the solid is then given by
$$
{\mathcal V}=R^2(\sqrt{3}+2\pi/3-2).
$$
One also easily computes the vertical coordinate of the point $C$ at equilibrium,
$$
z_{\rm C,eq}=\frac{R}{2}(1-\frac{\rho_{\rm s}}{\rho})(\sqrt{3}+\frac{2\pi}{3}-2);
$$
in the numerical computations presented in this section, an horizontal line on the floating object marks the contact line at this equilibrium state. In the configuration considered here, one finds $z_{\rm C,eq}=4.57{\mathtt m}$.
    
The first test performed here and represented in Figure \ref{float_eq} represents the return to the equilibrium when the solid is initially below its buoyancy line, with an initial vertical coordinate for the point $C$ given by $z_{\rm C,eq}=2{\mathtt m}$. For this problem, we know from Corollary \ref{coroeq} that the motion of the solid can be found directly by solving the nonlinear ODE \eqref{ODE1expl}. This furnishes a reference solution that we can use to validate our numerical scheme. As shown in Table \ref{tableCV}, there is as expected a first order convergence of the solution computed through the numerical scheme presented in \S \ref{sectnumSW} for the nonlinear shallow water equations in the presence of a floating structure (the error computed in the table corresponds to the $L^\infty$-norm of the difference between the exact and computed positions of the center of mass over the time interval $[0,10{\mathtt s}]$).
\begin{center}
\begin{figure}
\includegraphics[width=0.32\textwidth]{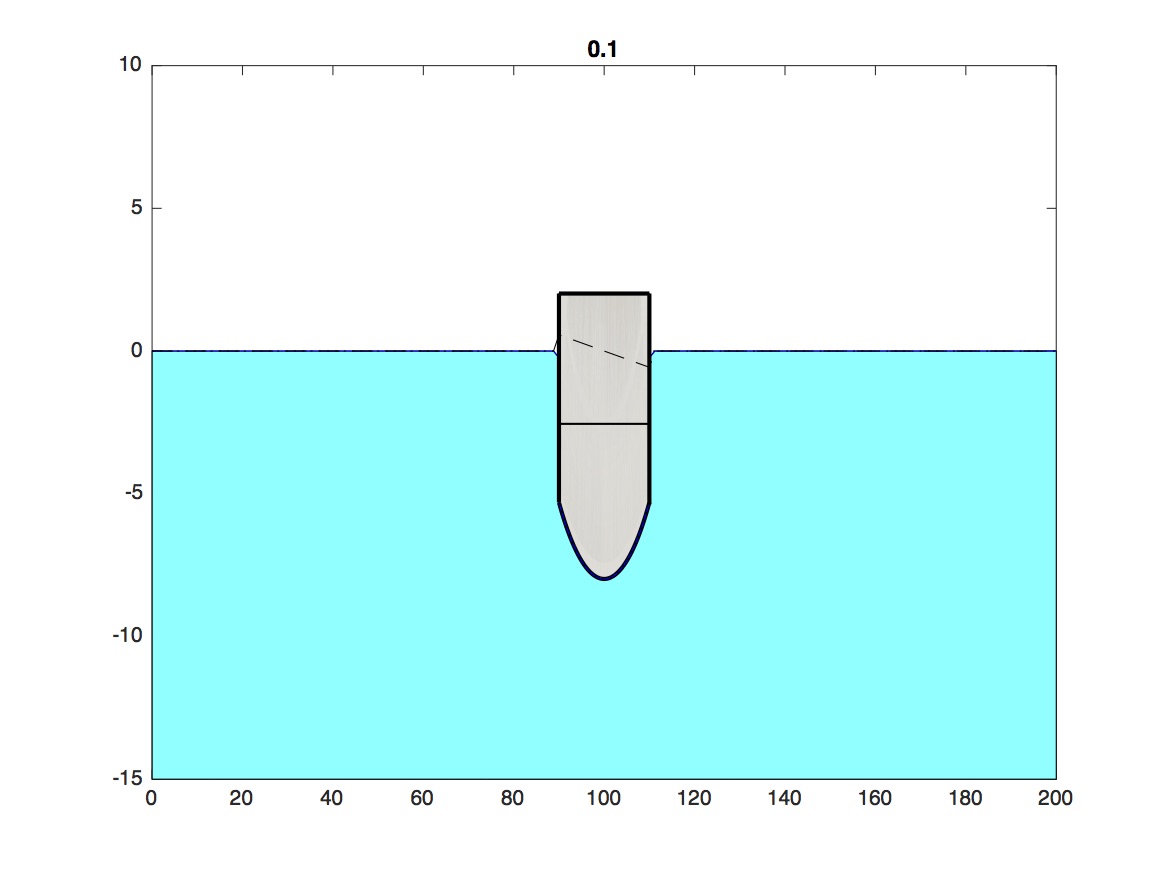}
\includegraphics[width=0.32\textwidth]{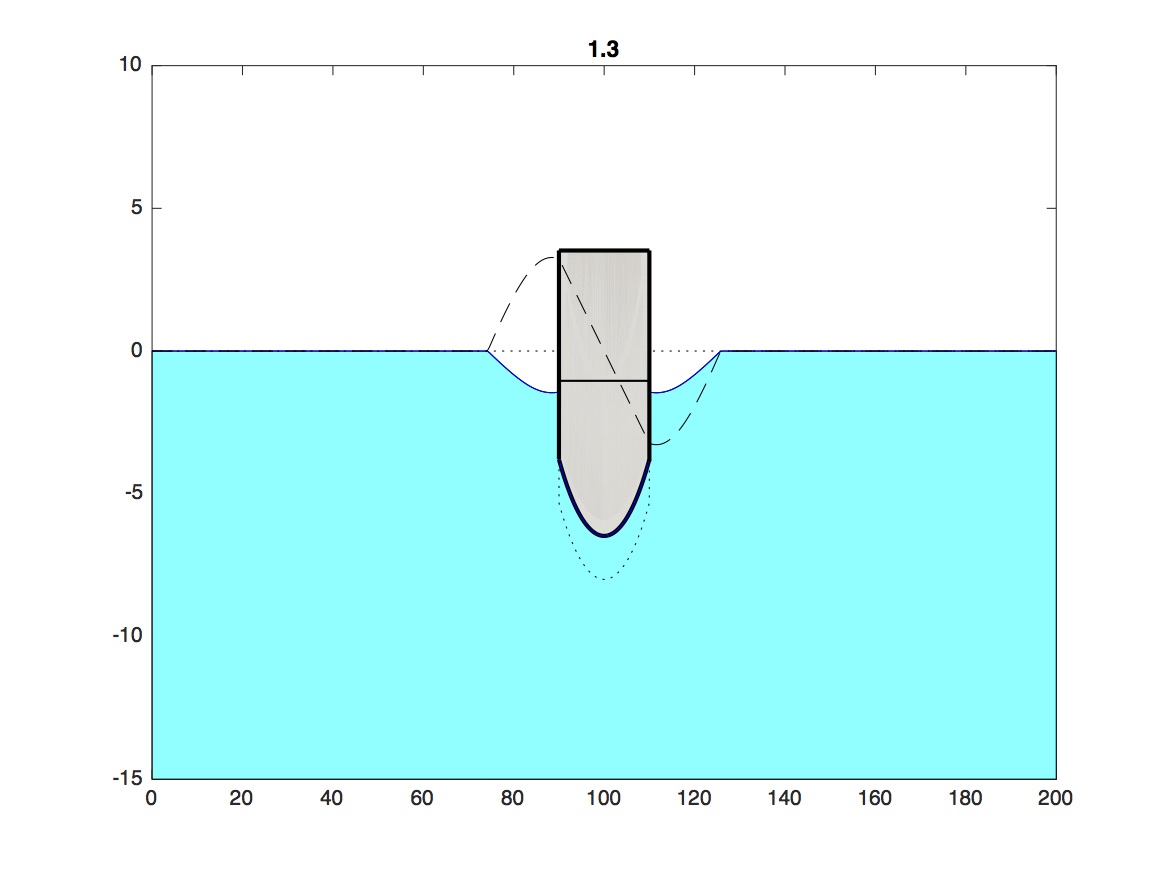}
\includegraphics[width=0.32\textwidth]{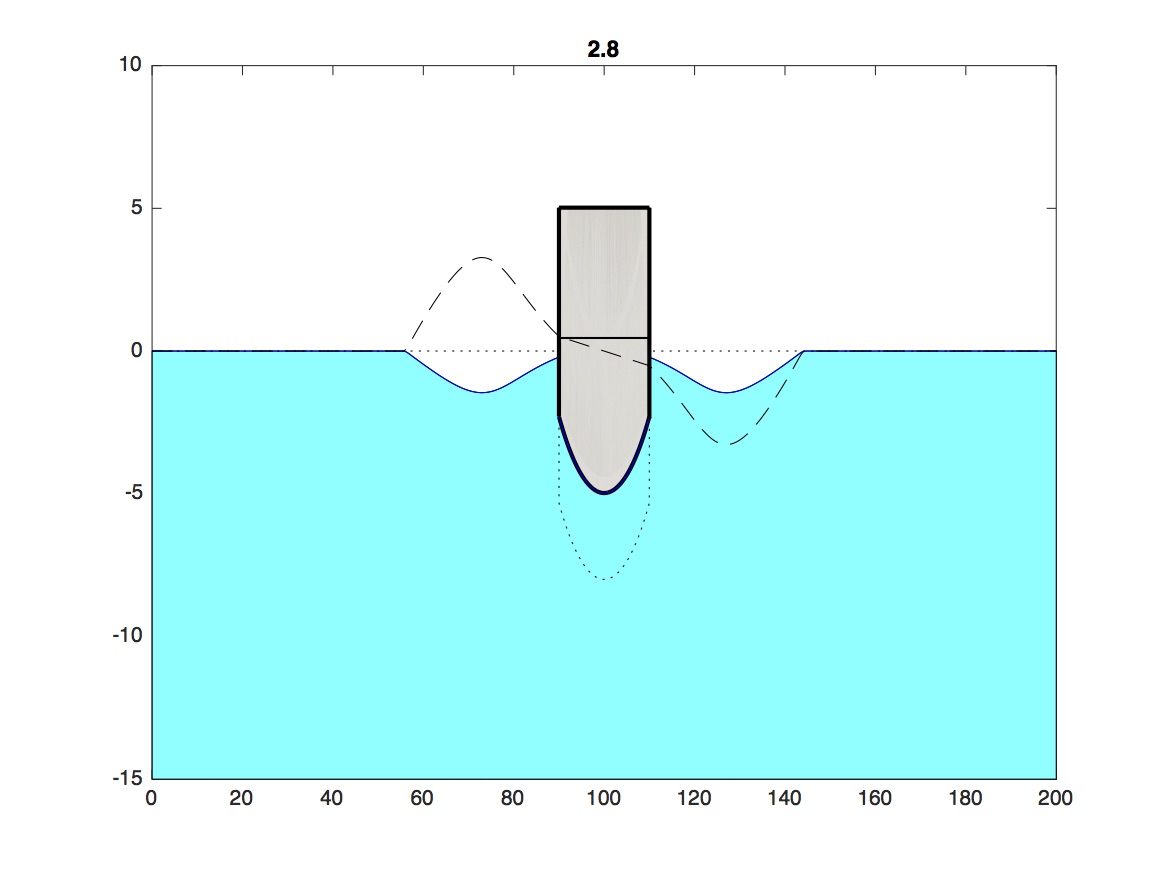}
\includegraphics[width=0.32\textwidth]{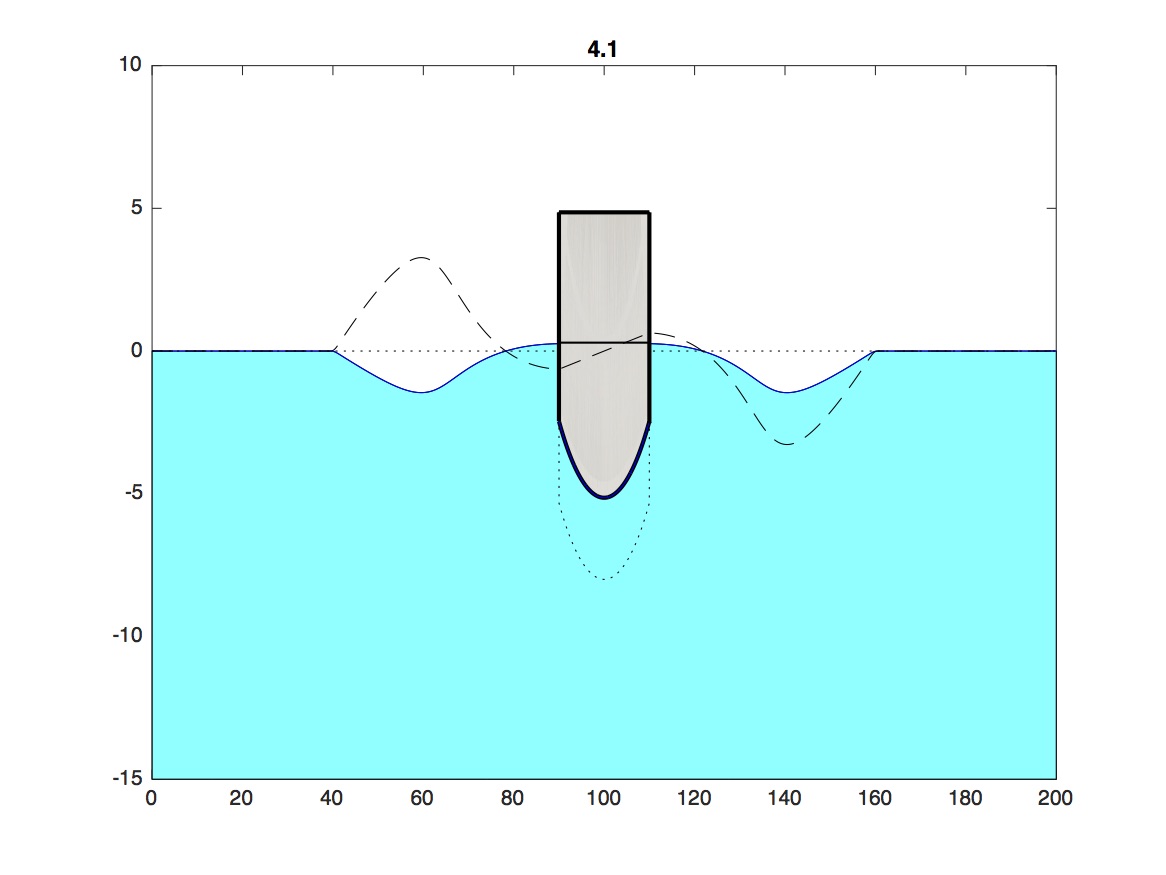}
\includegraphics[width=0.32\textwidth]{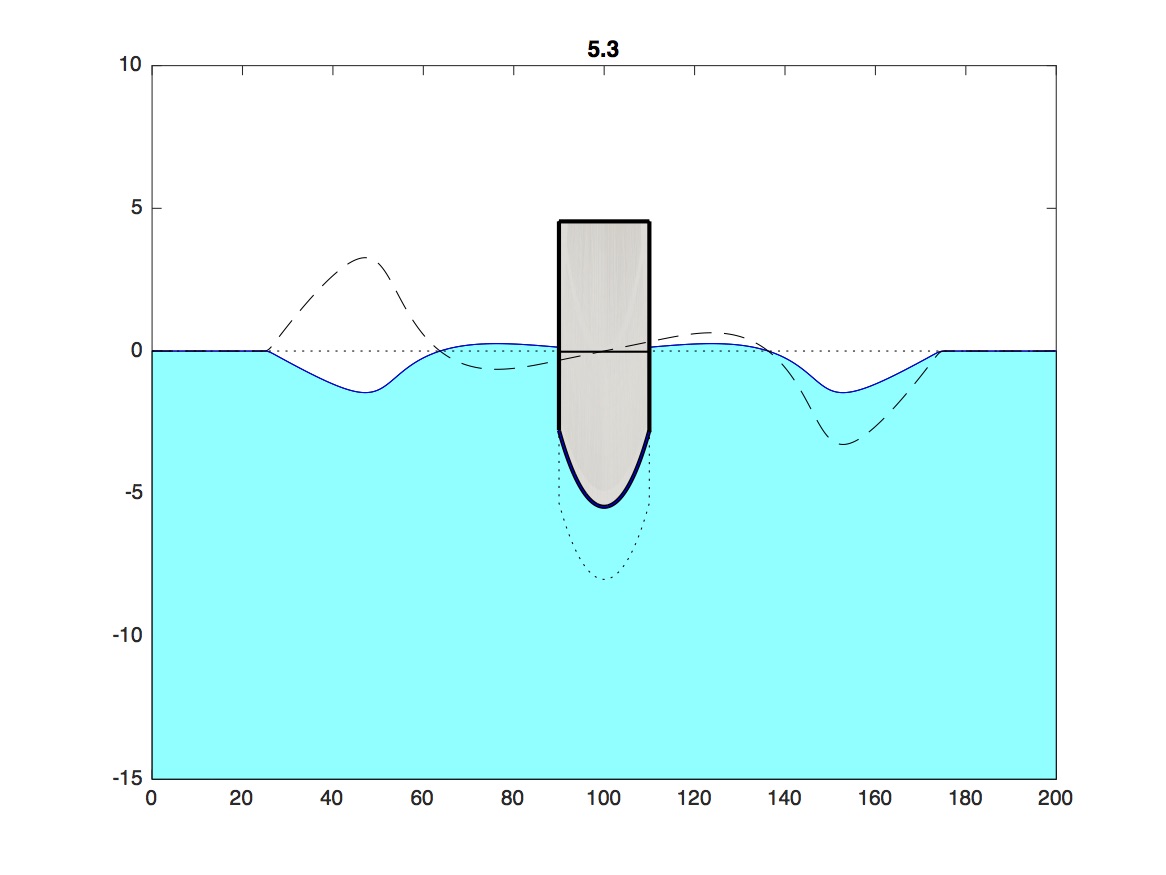}
\includegraphics[width=0.32\textwidth]{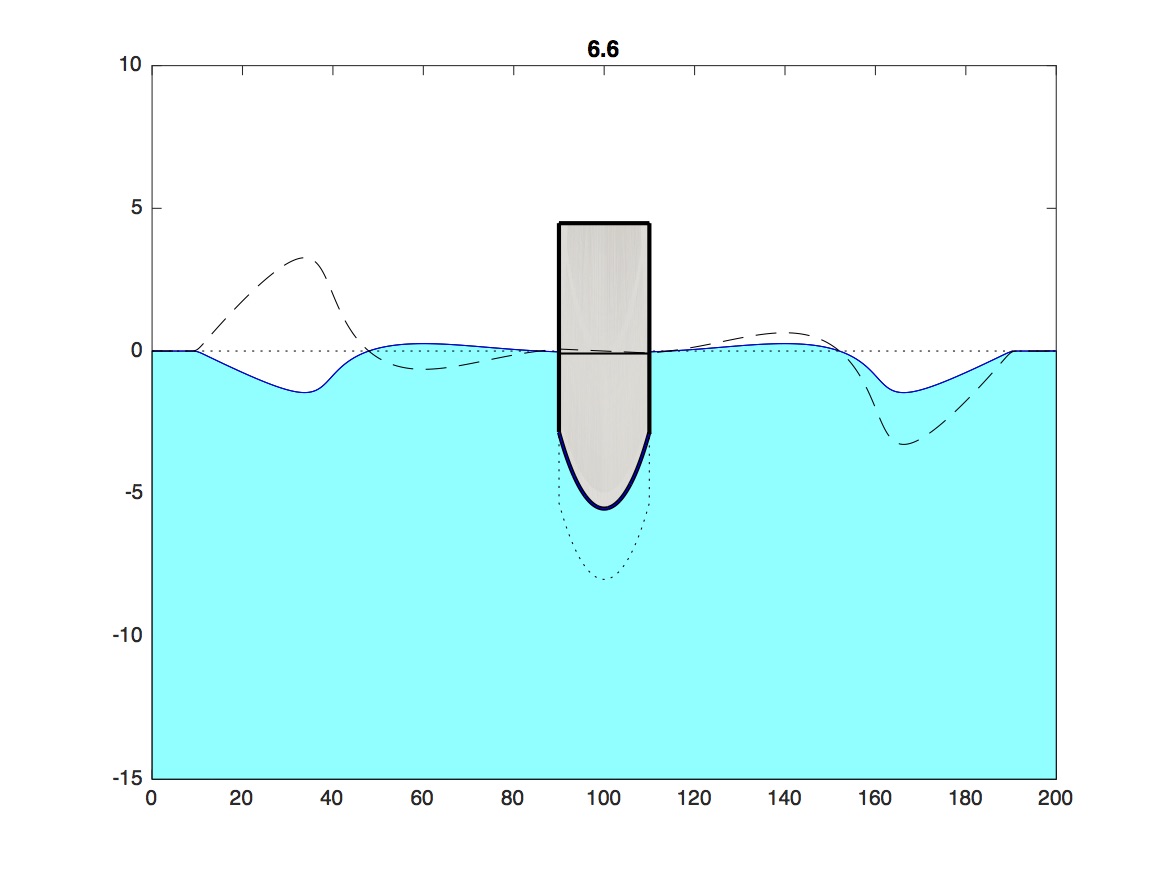}
\caption{Return to equilibrium of a solid initially below its buoyancy line. Surface elevation (full), discharge (divided by a rescaling factor 5, dash), and initial condition (dots). The horizontal line on the structure marks the contact line at the equilibrium.}
\label{float_eq}
\end{figure}
\end{center}
\begin{table}
\begin{tabular}{| l | l | l | l | l |}
\hline
$\delta_x$ & 0.00625 & 0.0125 & 0.0250 & 0.05\\
\hline
 Error & 0.00286 &0.00556& 0.0111 &0.0218\\
 \hline
 \end{tabular}
 \caption{Convergence to the exact solution in the return to equilibrium problem.}
 \label{tableCV}
\end{table}

The second test performed here consists in studying the motion of a solid initially at equilibrium when a waves arrives. The solid is the same as above and the incoming wave is obtained by forcing a sinusoidal wave of amplitude $3.5{\mathtt m}$ and period $20{\mathtt s}$ at the left boundary located at 
$140{\mathtt m}$ from the left boundary of the solid; when it arrives at the floating structure, the wave is near to the point of breaking. The result is represented in Figure \ref{fig_float}.
\begin{center}
\begin{figure}
\includegraphics[width=0.32\textwidth]{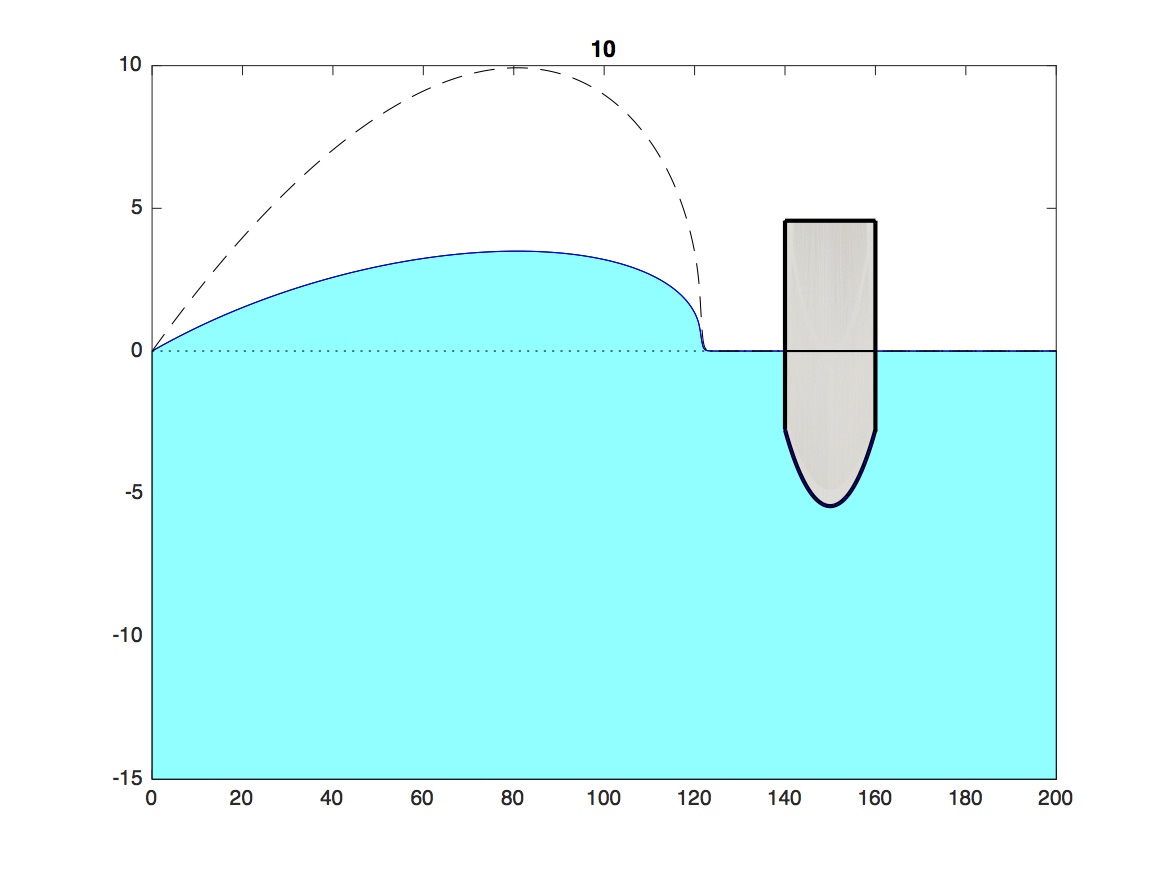}
\includegraphics[width=0.32\textwidth]{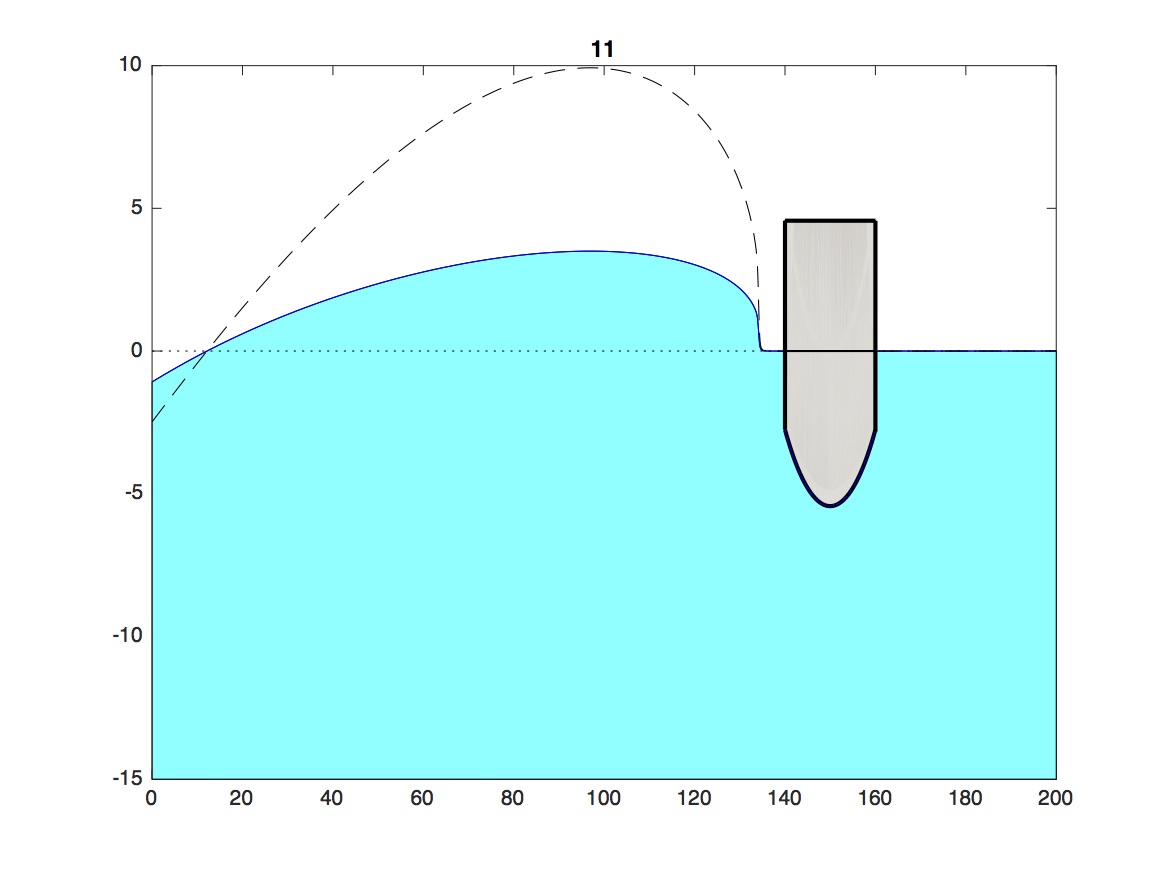}
\includegraphics[width=0.32\textwidth]{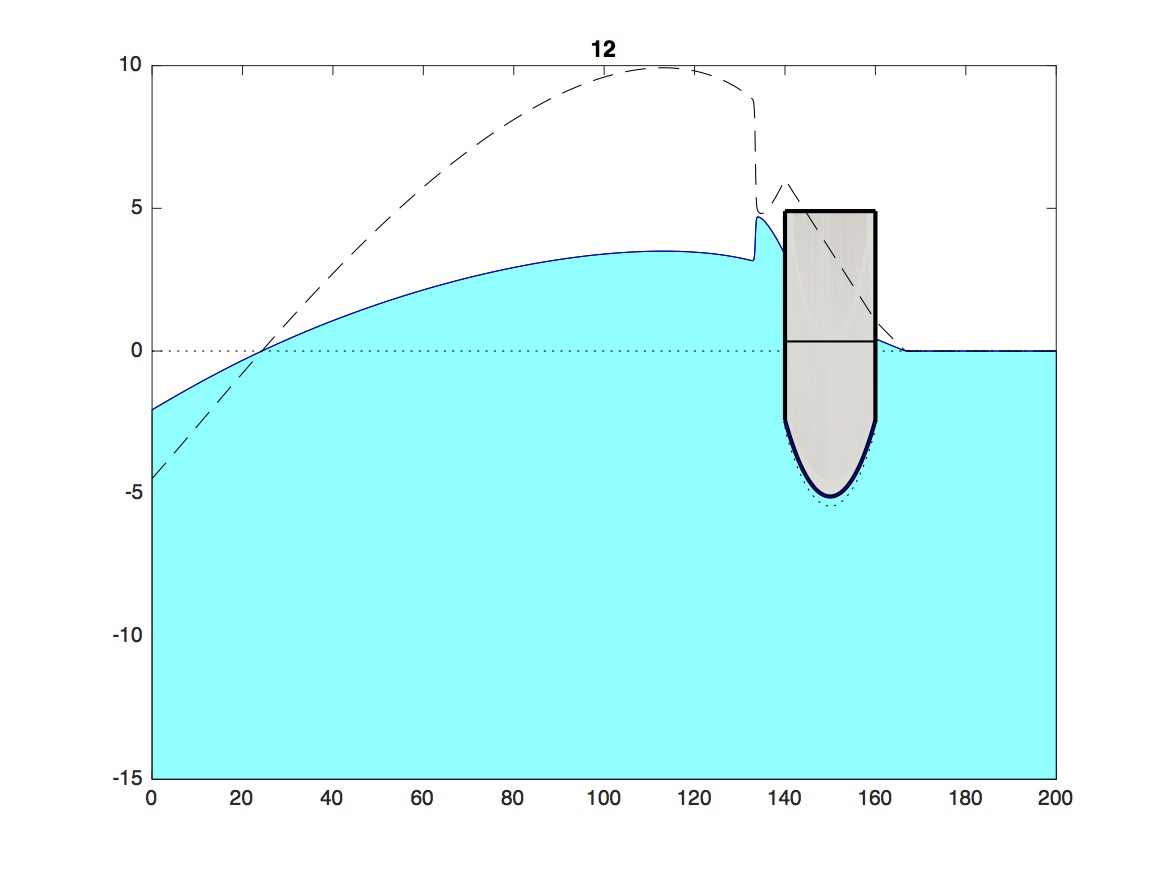}
\includegraphics[width=0.32\textwidth]{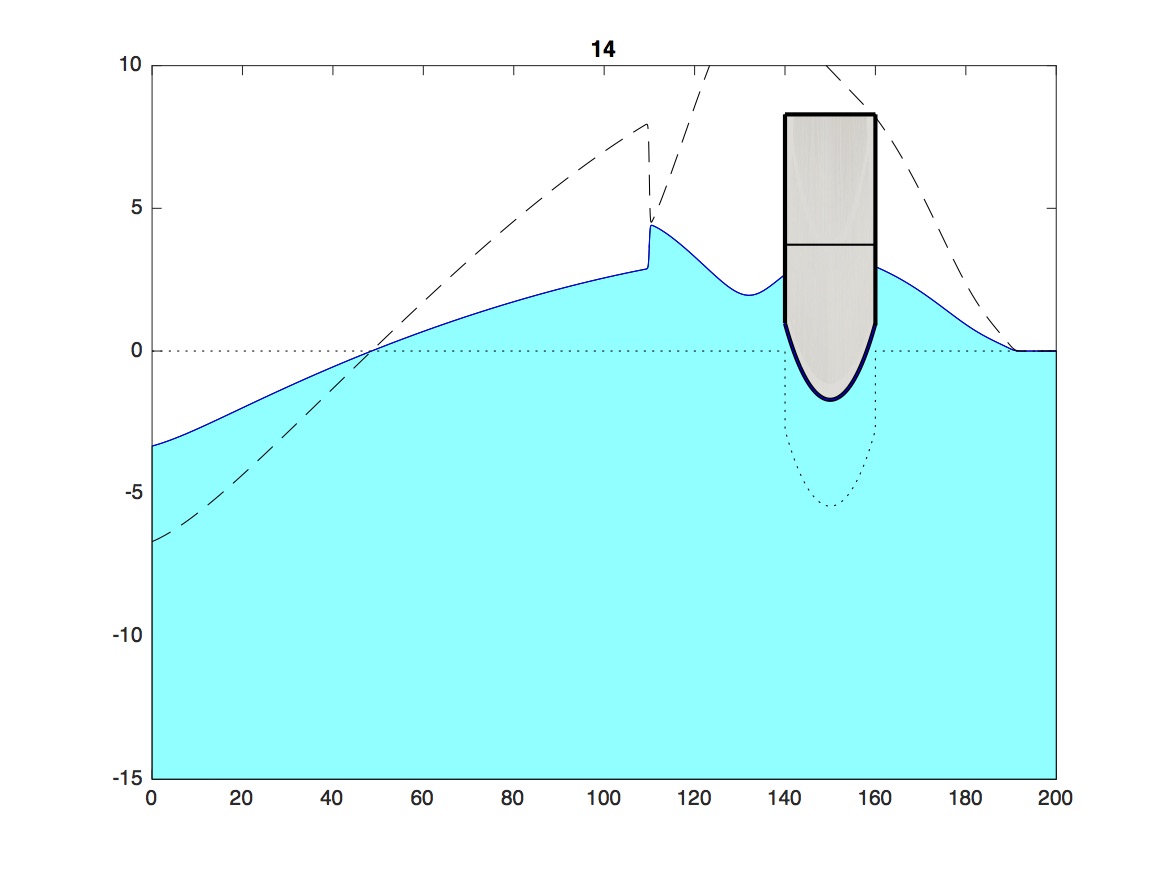}
\includegraphics[width=0.32\textwidth]{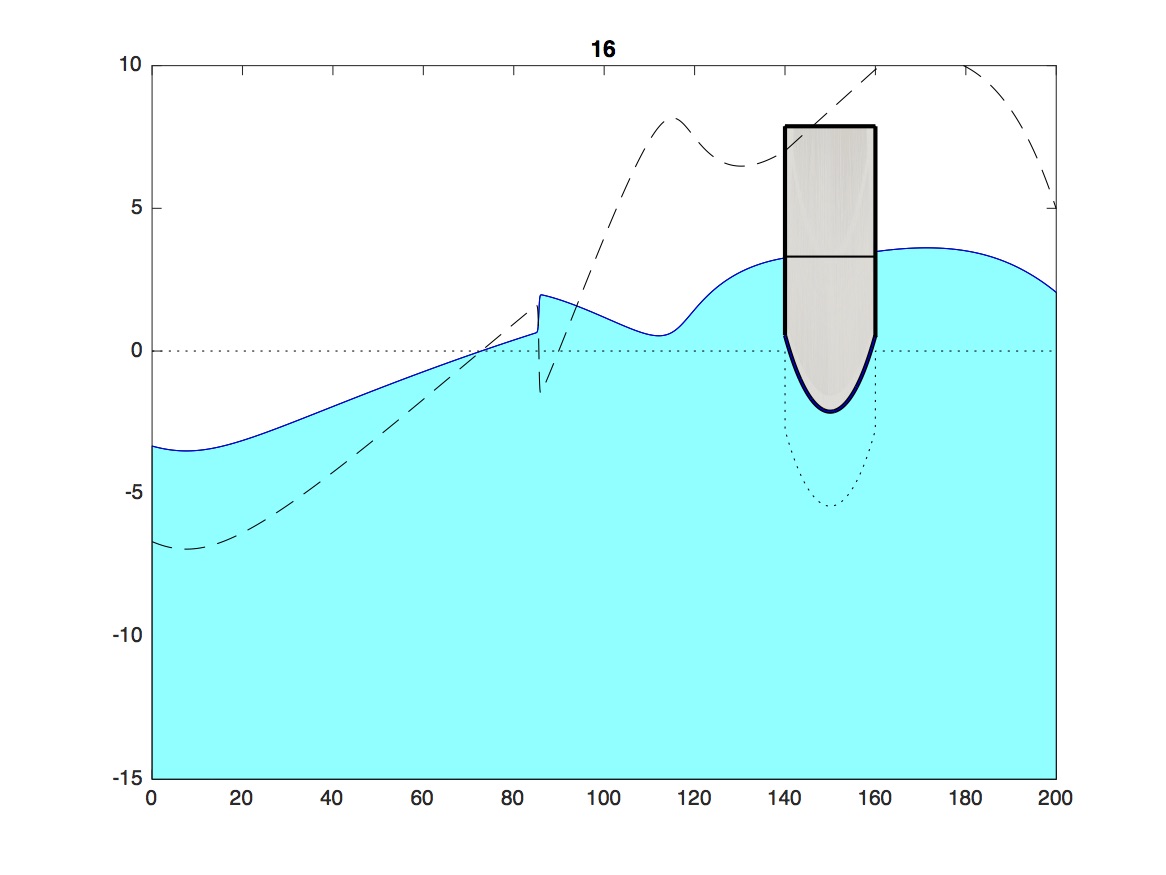}
\includegraphics[width=0.32\textwidth]{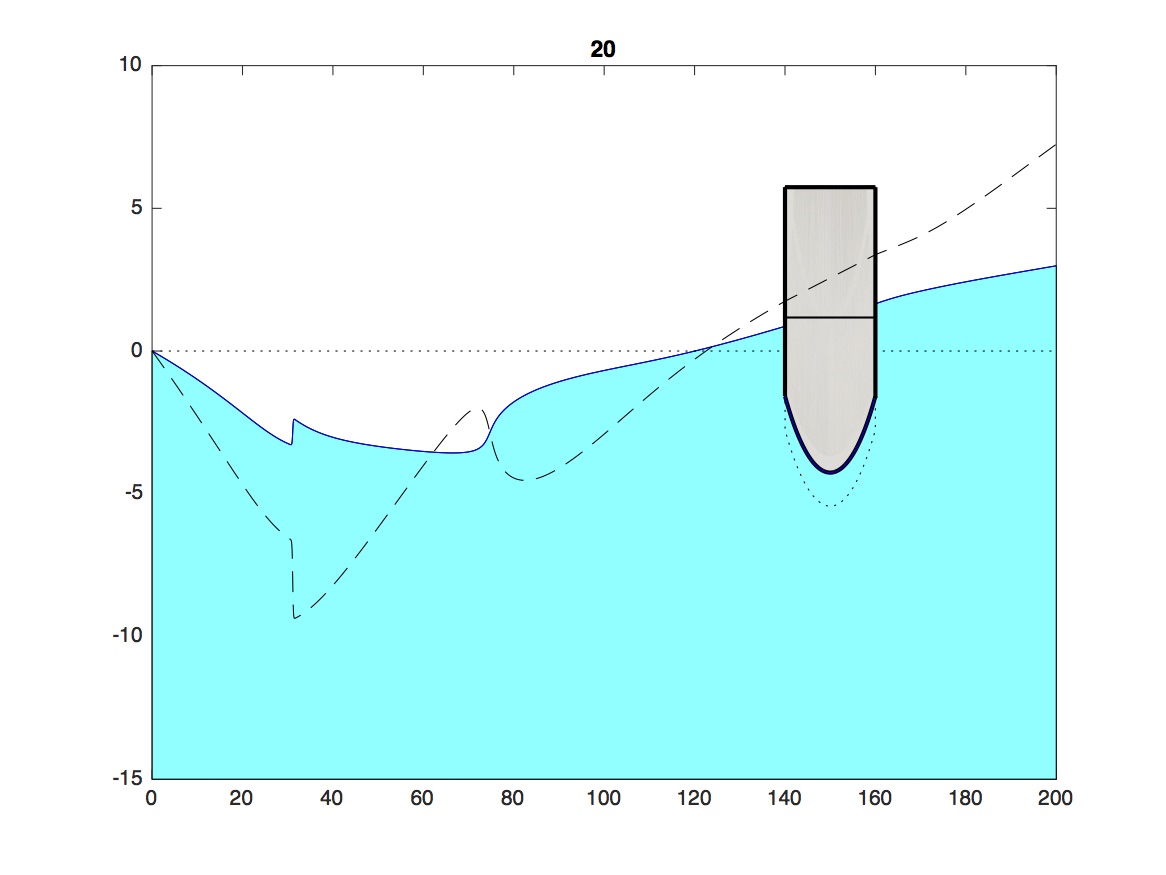}
\caption{A wave arriving on a freely floating structure (nonlinear shallow water model). Surface elevation (full), initial condition (dots) and rescaled discharge (dash). The horizontal line on the structure marks the contact line at the equilibrium.}
\label{fig_float}
\end{figure}
\end{center}

It is also interesting to know the forces exerted on the solid; let us first recall that the sum of the vertical component of these forces can be decomposed into four components
\begin{equation}\label{Fvert}
F_{\rm vert}=\underbrace{-{\mathfrak c}\delta_G}_{F_{\rm restor}}\underbrace{-m_a(\delta_G)\ddot\delta_G}_{:=F_{\rm added}}+\underbrace{\rho g (\zeta_{\rm e,+}w_+^*-\zeta_{\rm e,-}^*)}_{:=F_{\rm D+E}}+F_{\rm NL}(\delta_G,\dot\delta_G,\av{q}),
\end{equation}
where we used the same notations as in \eqref{ODE1};  the component $F_{\rm restor}$ is the resulting restoring force (weight plus Archimedes' force), while $F_{\rm added}$ is the force due to the added mass effect, $F_{\rm D+E}$ stands for the damping and excitation force, and $F_{\rm NL}$ is the nonlinear correction. The forces  exerted on the solid in the configuration considered just above are represented in Figure \ref{fig_forces}.
\begin{center}
\begin{figure}
\includegraphics[width=\textwidth]{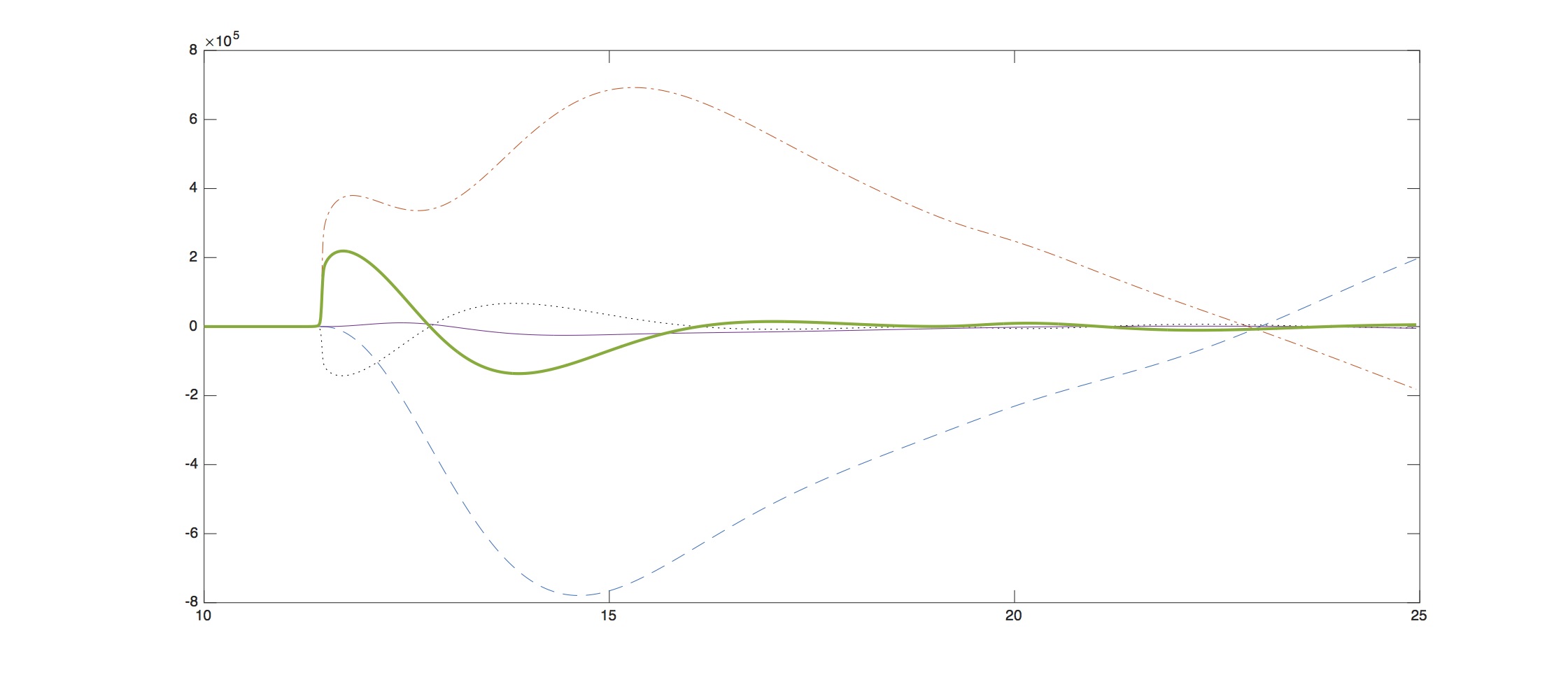}
\caption{Decomposition of the vertical force $F_{\rm vert}$ (thick solid line) exerted on the structure, as in \eqref{Fvert}. Damping-excitation force $F_{\rm D+E}$ (dash-dot), restoring force $F_{\rm restor}$ (dash), force due to added mass $F_{\rm added}$ (dots) and nonlinear corrections $F_{\rm NL}$ (solid).}
\label{fig_forces}
\end{figure}
\end{center}

\begin{remark}
Archimede's principle states that the upward buoyant force that is exerted on a body immersed in a fluid is equal to the weight of the fluid that the body displaces. In the case of a floating object, this quantity is easily computed when the fluid is at rest, but otherwise not intuitive since in order to compute the displaced fluid, one would need to know what the free surface would have been without the solid. In $F_{\rm restor}$ we have only taken into account the standard Archimedes force; the corrections due to the perturbations of the free surface have been included in the damping/excitation force $F_{\rm D+E}$.
\end{remark}

\subsection{Numerical simulations for the Boussinesq model}\label{sectnumBouss}

In absence of any floating structure, the Boussinesq equations \eqref{Boussf} admit in horizontal dimension $d=1$ solitary waves of the form
$$
\zeta=a\big[\mbox{sech}\big(K(x-ct)\big)\big]^2, \qquad q=c \zeta
$$
with 
$$
K=\sqrt{\frac{9a}{12h_0^3+4a h_0^2}} \quad\mbox{ and }\quad c=\sqrt{\frac{gh_0}{1-\frac{4h_0^2K^2}{3}}}.
$$
As a brief illustration of the possibility to implement our approach to nonlinear dispersive wave models, we show in Figure \ref{fig_solit} the numerical simulation of a solitary wave of amplitude $a=3m$ arriving on the floating structure initially at equilibrium. It can be observed that a solitary wave of slightly smaller amplitude is transmitted on the other side of the solid, and that a small part dispersive trail is reflected. The decomposition of the vertical force exerted on the solid during this experiment is reproduced in Figure \ref{fig_forces_sol}.
\begin{center}
\begin{figure}
\includegraphics[width=0.32\textwidth,height=3cm]{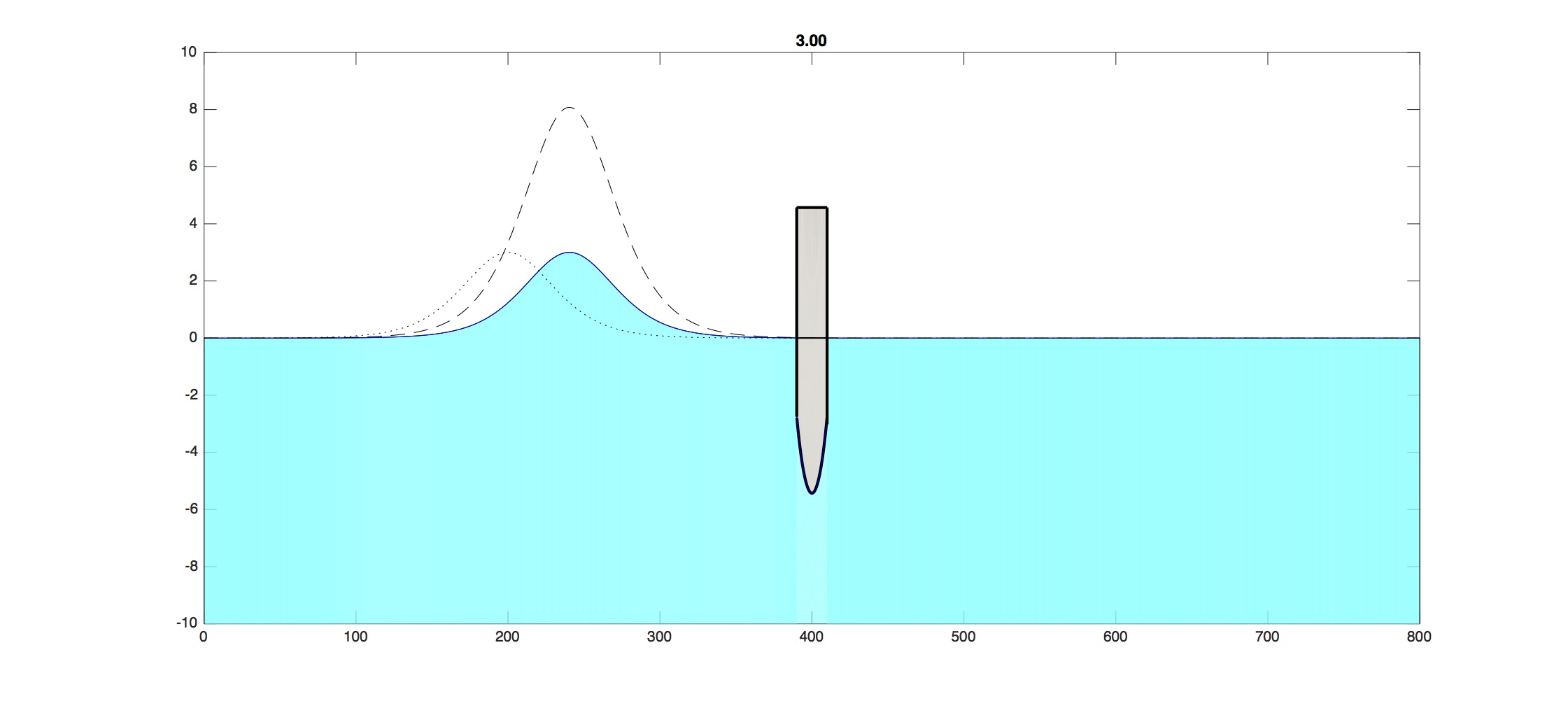}
\includegraphics[width=0.32\textwidth,height=3cm]{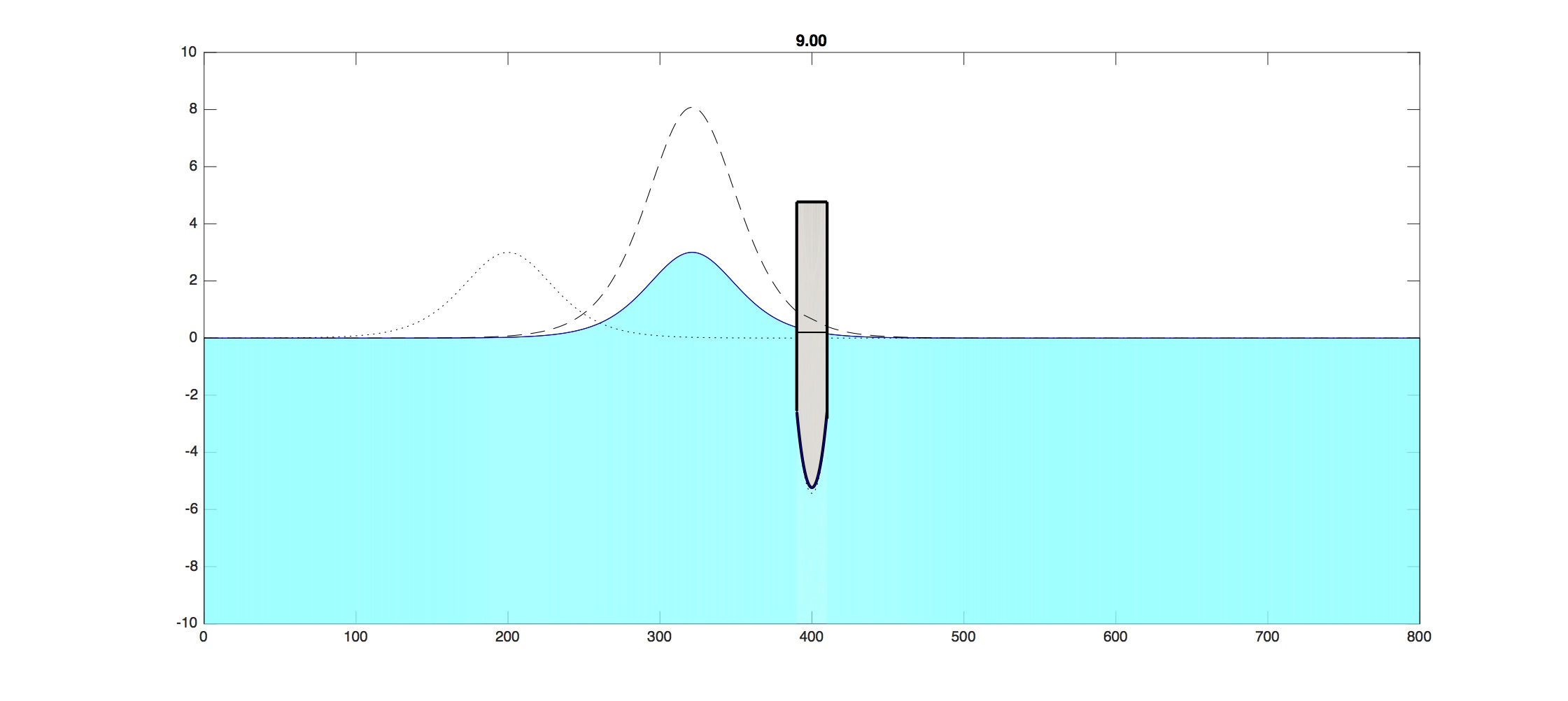}
\includegraphics[width=0.32\textwidth,height=3cm]{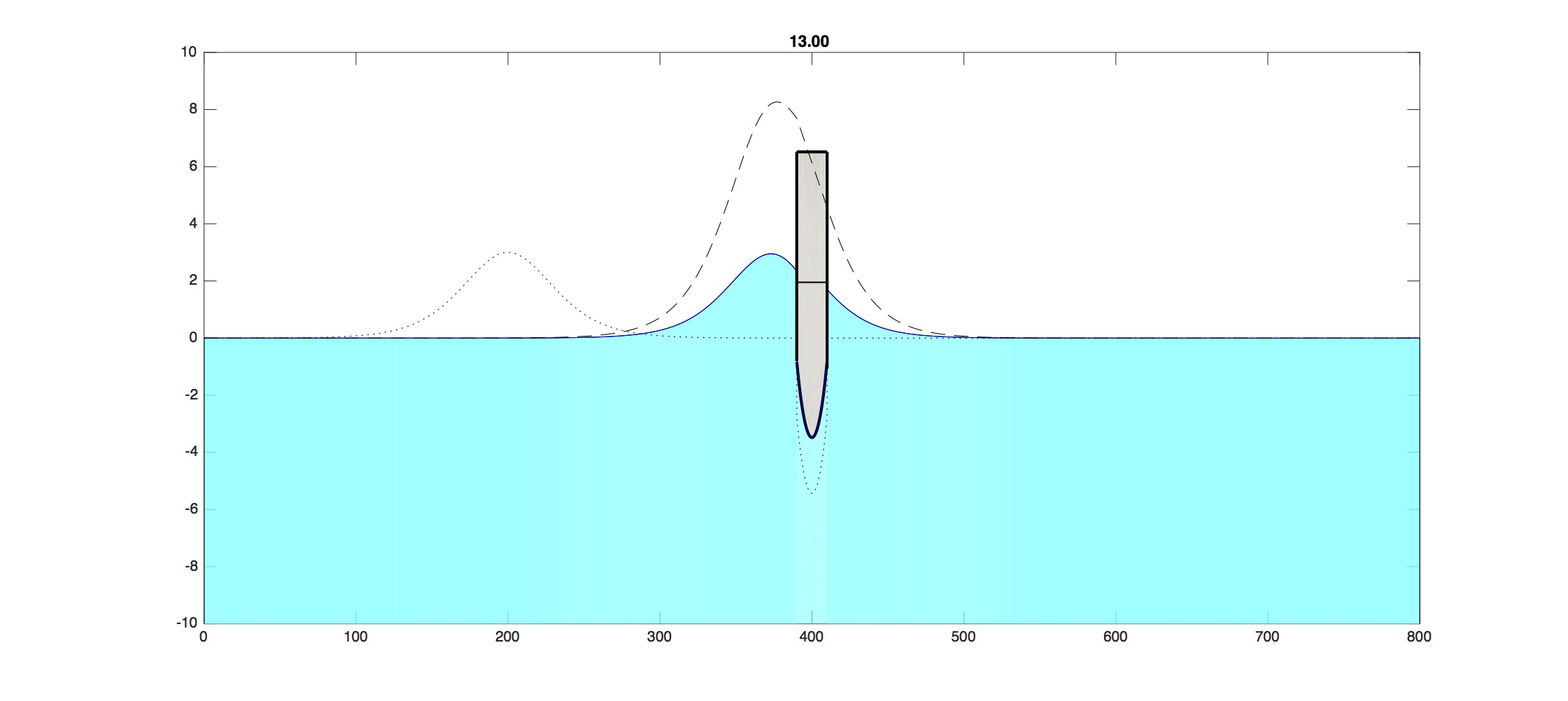}
\includegraphics[width=0.32\textwidth,height=3cm]{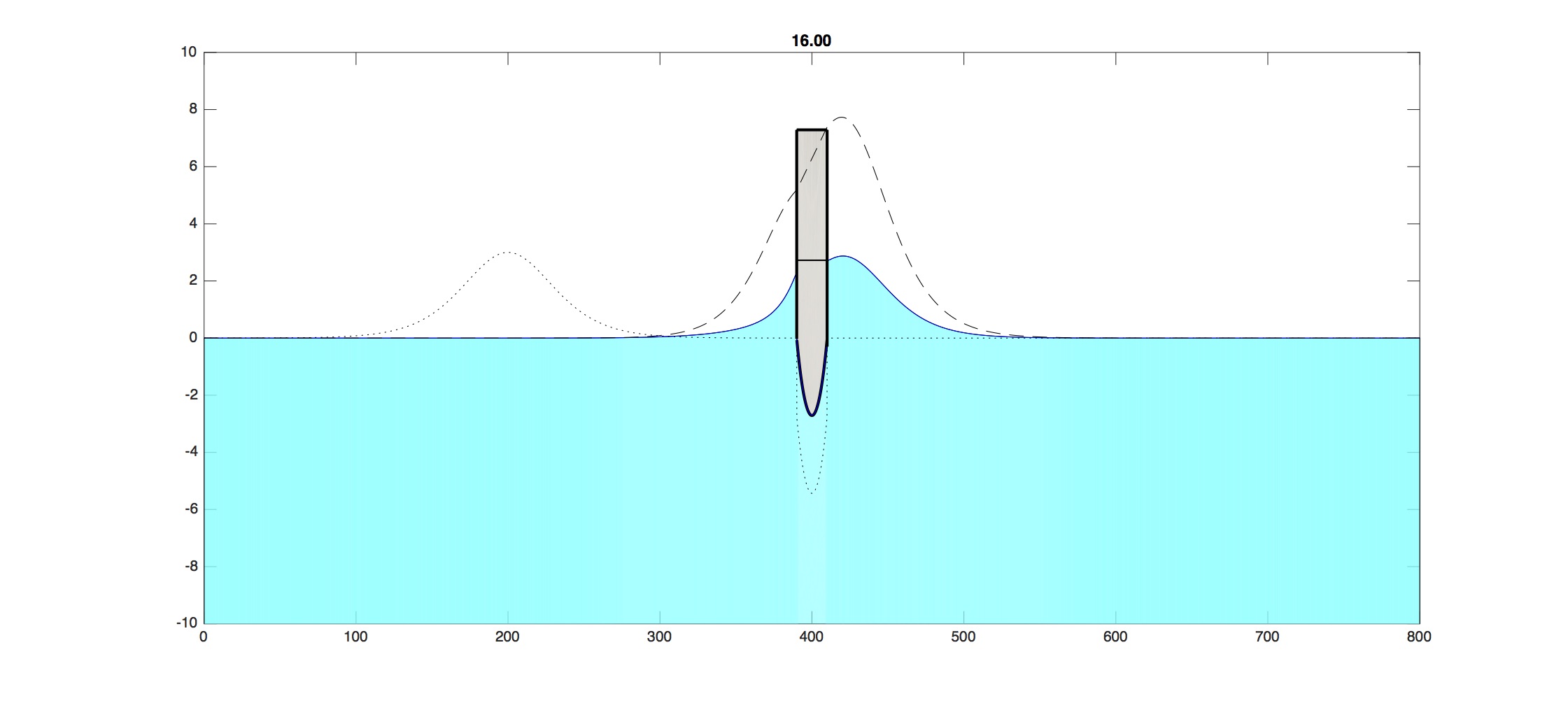}
\includegraphics[width=0.32\textwidth,height=3cm]{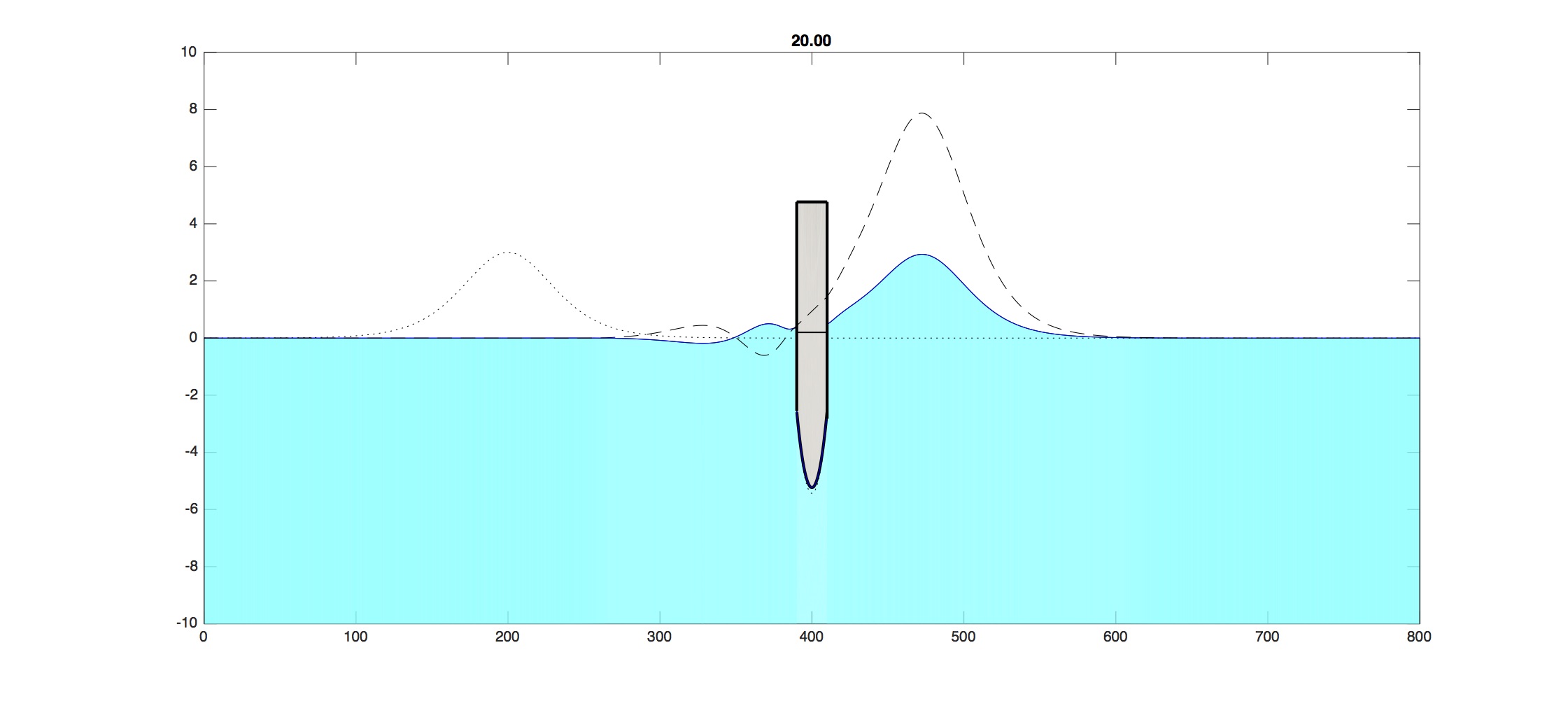}
\includegraphics[width=0.32\textwidth,height=3cm]{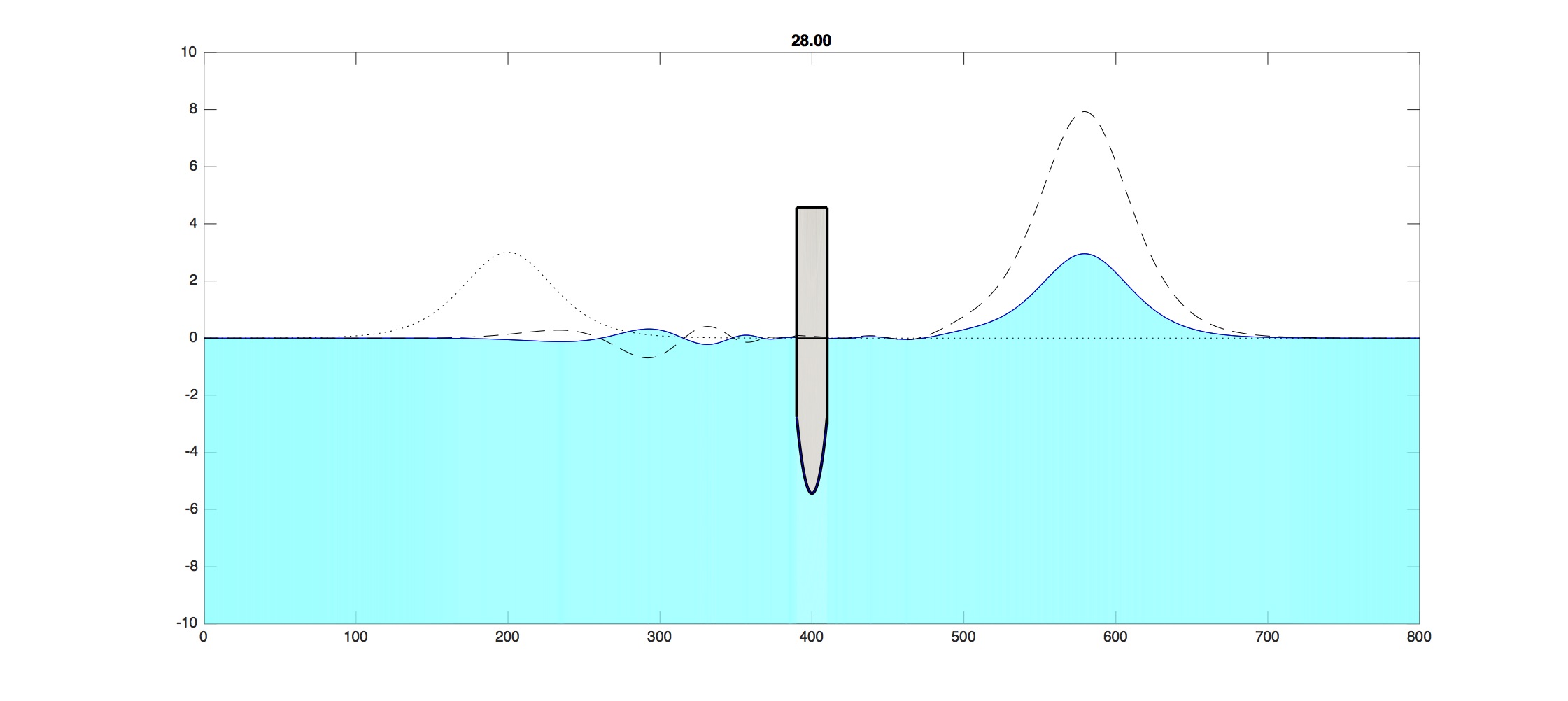}
\caption{A solitary wave arriving on a freely floating structure (Boussinesq model). Surface elevation (full), initial condition (dots) and rescaled discharge (dash). The horizontal line on the structure marks the contact line at the equilibrium.}
\label{fig_solit}
\end{figure}
\end{center}
\begin{center}
\begin{figure}
\includegraphics[width=1\textwidth]{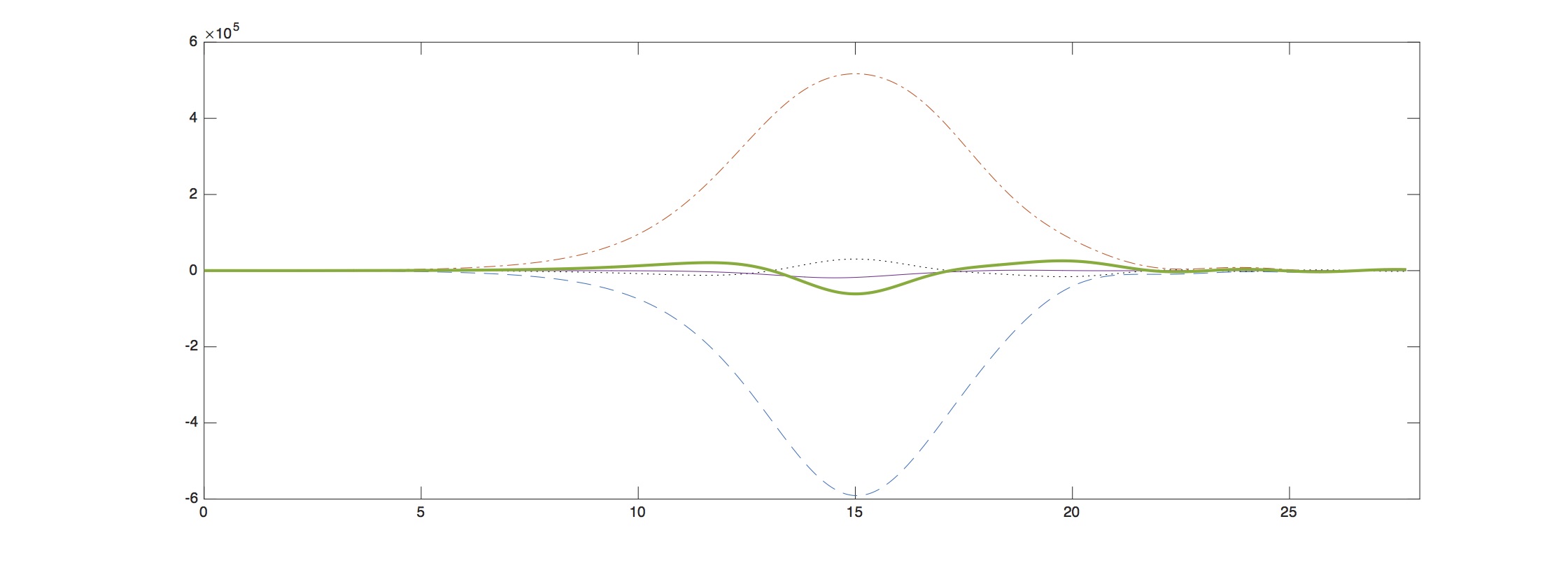}
\caption{Decomposition of the vertical force $F_{\rm vert}$ (thick solid line) exerted on the structure, as in \eqref{Fvert}. Damping-excitation force $F_{\rm D+E}$ (dash-dot), restoring force $F_{\rm restor}$ (dash), force due to added mass $F_{\rm added}$ (dots) and nonlinear corrections $F_{\rm NL}$ (solid).}
\label{fig_forces_sol}
\end{figure}
\end{center}

\begin{appendix}

\section{An alternative equation for the interior pressure}\label{Appinteriorpressure}

\subsection{Derivation of the equation}

We derived in Proposition \ref{propWWst} the elliptic equation \eqref{eqPw} for the interior pressure. If we express the time derivatives of the source term (coming from the non-hydrostatic acceleration in  ${\bf a}_{\rm FS}$) in terms of the pressure field using Euler's equation, one obtains another equation for the interior pressure. In this alternative equation, the time derivatives of the velocity field have been removed from the source term. We recall that $\uU=(\uV,\uw)$ denotes the trace at the surface of the velocity field, and that the Dirichlet-Neumann operator is as in Definition \ref{defDN}.
\begin{proposition}\label{propWWst_full}
The surface pressure $\uP$ in \eqref{Eulerav2} satisfies the following equations
$$
\begin{cases}
\dsp \uP_{\rm e}= P_{\rm atm},\\
\dsp \frac{1}{\rho}G[\zeta](\uP- P_{\rm atm})=-\dt^2\zeta_{\rm w} +\nabla\cdot \big(\nabla\cdot(h\ovV) \uV\big)-G[\zeta]\big(g\zeta+\frac{1}{2}\abs{\underline{\bU}^2}\big)\quad\mbox{ on }\quad \cI(t)\\
\uP_{\rm i}= P_{\rm atm}\quad \mbox{ on }\quad \Gamma(t).
\end{cases}
$$
\end{proposition}
\begin{proof}
Using the same notations as in the proof of Proposition \ref{propclosed}, one can write $\bU=\nabla_{X,z}\Phi$, and $\Phi$ satisfies the Bernoulli equation which, when evaluated at the surface, can be written
$$
\dt \psi -\dt \zeta \dz \Phi_{\vert_{z=\zeta}}+g\zeta+\frac{1}{2}\abs{\underline{\bU}^2}=-\frac{1}{\rho}(\uP-P_{\rm atm}).
$$
Applying the Dirichlet-Neumann operator to this equation, one gets
$$
\dt G[\zeta]\psi+\big([G[\zeta],\dt]\psi-\dt \zeta  \dz \Phi_{\vert_{z=\zeta}}\big)+G[\zeta]\big(g\zeta+\frac{1}{2}\abs{\underline{\bU}^2}\big)=-\frac{1}{\rho}G[\zeta](\uP-P_{\rm atm}).
$$
We can now use the shape derivative formula of \cite{Lannes_JAMS} to get
$$
[G[\zeta],\dt]\psi=G[\zeta]\big(\dt \zeta (\dz \Phi_{\vert_{z=\zeta}})\big)+\nabla\cdot (\dt \zeta \uV)
$$
so that
$$
\dt G[\zeta]\psi+\nabla\cdot (\dt \zeta \uV)+G[\zeta]\big(g\zeta+\frac{1}{2}\abs{\underline{\bU}^2}\big)=-\frac{1}{\rho}G[\zeta](\uP- P_{\rm atm}).
$$
Taking the restriction of this identity on the interior region $\cI(t)$ and using the constraint \eqref{constraint} then yields the result.
\end{proof}

\subsection{On the solvability of the interior pressure equation}

In the previous section, we derived an equation for the surface pressure $\uP$, namely,
\begin{equation}\label{eqPw_full2}
\frac{1}{\rho}G[\zeta](\uP- P_{\rm atm})=-\dt^2\zeta_{\rm w} +\nabla\cdot \big(\nabla\cdot(h\ovV)  \uV\big)-G[\zeta]\big(g\zeta+\frac{1}{2}\abs{\underline{\bU}^2}\big)\quad\mbox{ on }\quad \cI(t).
\end{equation}
We must show that there exists a unique solution $\uP-\uP_{atm}$ to this equations that vanishes on the exterior region ${\mathcal E}(t)$ and such that its trace on $\Gamma(t)$ also vanishes. If this is true, then the interior pressure $\uP_{\rm i}$ will simply be given by
$$
\uP_{\rm i}=P_{\rm atm}+\uP \quad \mbox{ in }\quad \cI(t).
$$
Before proving such a result, we need to introduce some functional spaces. Let us first define the spaces $H^{1/2}(\cI)$ and $\widetilde{H}^{1/2}(\cI)$ as follows.
\begin{definition}\label{defSob}
Let $\cI\subset \R^d$ be a bounded domain with Lipschitz boundary.\\
{\bf i.} We denote by $H^{1/2}(\cI)$ the Banach space consisting of the restriction to $\cI$ of all the elements of $H^{1/2}(\R^d)$, and we endow it with its canonical norm.\\
{\bf ii.} We denote by $\widetilde{H}^{1/2}(\cI)$ the set of all $f\in L^2(\cI)$ such that $\tilde f\in H^{1/2}(\R^d)$, where $\tilde f\in L^2(\R^d)$ stands for the extension of $f$ by zero outside $\cI$, and endowed with the norm
$$
\abs{f}_{\widetilde{H}^{1/2}(\cI)}=\abs{\widetilde{f}}_{H^{1/2}(\R^d)}.
$$
\end{definition}
We can now state the following proposition for nonlocal elliptic equation of the kind \eqref{eqPw_full2}. We recall that the Beppo-Levi space $\dot{H}^{1/2}(\R^d)$ is defined in \eqref{BL1}.
\begin{proposition}\label{propeqGN}
Let $\zeta,b\in W^{1,\infty}(\R^d)$ be such that $inf_{\R^d}(h_0+\zeta-b)\geq h_{\rm min}>0$. Let also $\cI\subset \R^d$ be a bounded domain with Lipschitz boundary. Then, for all $F\in H^{1/2}(\cI)^d$ and all $g\in \dot{H}^{1/2}(\R^d)$, there exists a unique  $f\in \widetilde{H}^{1/2}(\cI)$ such that
$$
G[\zeta]\widetilde{f}=\nabla\cdot {F}+G[\zeta]{g} \quad \mbox{ on }\quad \cI.
$$
Moreover, one has
$$
\abs{\widetilde{f}}_{{\dot{H}}^{1/2}(\R^d)}\leq C\big(\frac{1}{h_{\rm min}},\abs{\zeta,b}_{W^{1,\infty}}\big)\big(\abs{F}_{{H}^{1/2}(\cI)}+\abs{{g}}_{\dot{H}^{1/2}(\R^d)}\big).
$$
\end{proposition}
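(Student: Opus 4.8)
The plan is to recast the problem as a coercive variational problem on the Beppo-Levi-type space $\widetilde{H}^{1/2}(\cI)$, using the well-known fact (see \cite{L_book}) that $G[\zeta]$ is, up to the weight $h$, the Dirichlet-to-Neumann operator and in particular satisfies the variational identity
$$
\langle G[\zeta]u,v\rangle = \int_\Omega \nabla_{X,z}\Phi_u\cdot\nabla_{X,z}\Phi_v,
$$
where $\Phi_u,\Phi_v\in\dot H^1(\Omega)$ are the harmonic extensions of $u,v$ with vanishing Neumann data at the bottom. From this one gets the two-sided bound $\langle G[\zeta]u,u\rangle \simeq \abs{u}_{\dot H^{1/2}(\R^d)}^2$, with constants depending only on $h_{\rm min}$ and $\abs{\zeta,b}_{W^{1,\infty}}$ (Remark 3.14 / Proposition 3.3 in \cite{L_book}). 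The equation $G[\zeta]\widetilde f = \nabla\cdot F + G[\zeta]g$ on $\cI$, with $\widetilde f$ the zero-extension of $f\in\widetilde H^{1/2}(\cI)$, should be interpreted in the sense that for every test function $\varphi\in\widetilde H^{1/2}(\cI)$ (equivalently, every $\widetilde\varphi\in\dot H^{1/2}(\R^d)$ supported in $\overline\cI$),
$$
\langle G[\zeta]\widetilde f,\widetilde\varphi\rangle = -\int_\cI F\cdot\nabla\varphi + \langle G[\zeta]g,\widetilde\varphi\rangle.
$$

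First I would set $u:=\widetilde f - $ (a suitable correction): actually it is cleaner to keep $g$ on the right-hand side and look directly for $\widetilde f\in V:=\{v\in\dot H^{1/2}(\R^d): v=0 \text{ a.e. on }\mathcal E(t)\}$, which is a closed subspace of $\dot H^{1/2}(\R^d)$ and is isometric to $\widetilde H^{1/2}(\cI)$ by Definition \ref{defSob}. On $V$ the bilinear form $a(u,v):=\langle G[\zeta]u,v\rangle$ is continuous (boundedness of $G[\zeta]:\dot H^{1/2}\to\dot H^{-1/2}$) and coercive by the lower bound above, using that elements of $V$ genuinely live in the homogeneous space so the seminorm is a norm on $V$. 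The linear functional $\ell(v):=-\int_\cI F\cdot\nabla v + \langle G[\zeta]g,v\rangle$ is bounded on $V$: the second term is controlled by $\abs{g}_{\dot H^{1/2}}\abs{v}_{\dot H^{1/2}}$, and for the first term one writes $-\int_\cI F\cdot\nabla v = \int_{\R^d}\Lambda^{1/2}\widetilde{F}\cdot\Lambda^{-1/2}\nabla\widetilde v$ (with $\widetilde F$ an $H^{1/2}(\R^d)$ extension of $F$, and $\widetilde v = v$ since $v\in V$ already vanishes outside $\cI$ — here one must be careful that $v$, not $F$, is the zero-extended object), bounded by $\abs{F}_{H^{1/2}(\cI)}\abs{v}_{\dot H^{1/2}}$ exactly as in the estimate displayed in the proof of Lemma \ref{lemmarec}. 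Lax-Milgram then yields a unique $\widetilde f\in V$ with $a(\widetilde f,v)=\ell(v)$ for all $v\in V$, together with the quantitative bound $\abs{\widetilde f}_{\dot H^{1/2}}\le C(\tfrac1{h_{\rm min}},\abs{\zeta,b}_{W^{1,\infty}})(\abs{F}_{H^{1/2}(\cI)}+\abs{g}_{\dot H^{1/2}})$.

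Finally I would check that this variational solution does solve the stated equation in the distributional sense announced, i.e. that testing against $\widetilde H^{1/2}(\cI)$ reproduces $G[\zeta]\widetilde f = \nabla\cdot F + G[\zeta]g$ as an identity of elements of the dual of $\widetilde H^{1/2}(\cI)$ (which is how a nonlocal equation "on $\cI$" with the homogeneous boundary condition $f_{|\Gamma}=0$ built into the space must be understood). The boundary condition $\widetilde f_{|\Gamma(t)}=0$ and the vanishing of $\widetilde f$ on $\mathcal E(t)$ are automatic from membership in $\widetilde H^{1/2}(\cI)$; uniqueness is immediate from coercivity.

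The main obstacle I anticipate is not Lax-Milgram itself but the functional-analytic bookkeeping around the two spaces $H^{1/2}(\cI)$ and $\widetilde H^{1/2}(\cI)$: one must be sure that $V$ is exactly the zero-extension image of $\widetilde H^{1/2}(\cI)$, that the pairing $\langle G[\zeta]g,\widetilde\varphi\rangle$ makes sense for $g\in\dot H^{1/2}(\R^d)$ only (not necessarily vanishing anywhere) tested against $\widetilde\varphi$ supported in $\overline\cI$, and that the duality $\int_\cI F\cdot\nabla v$ is legitimate — the subtle point being whether $\nabla\varphi$ for $\varphi\in\widetilde H^{1/2}(\cI)$ has a well-defined $\dot H^{-1/2}$ trace pairing with $F\in H^{1/2}(\cI)^d$ rather than needing $F$ itself to be the extended object. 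The resolution is that $v\in V$ is already the zero-extension, so $\Lambda^{-1/2}\nabla v$ is a bona fide $L^2(\R^d)$ function and one pairs it with $\Lambda^{1/2}$ of any $H^{1/2}(\R^d)$-extension of $F$, the value being independent of the extension since $v$ is supported in $\overline\cI$. Once this is pinned down the rest is routine.
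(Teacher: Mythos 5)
Your argument is correct in substance, and it reaches the result by running Lax--Milgram ``downstairs'' on the trace space, whereas the paper runs it ``upstairs'' in the fluid domain: the paper introduces the space $H^1_{0,{\mathcal I}}(\Omega)$ of potentials whose surface trace vanishes outside $\cI$, gets coercivity of the bulk Dirichlet form from a Poincar\'e inequality there, solves the variational problem for $\Phi$, and only then sets $f=\Phi_{\vert_{z=\zeta}}$; you instead work directly with the bilinear form $a(u,v)=\langle G[\zeta]u,v\rangle$ on $V\cong\widetilde{H}^{1/2}(\cI)$ and invoke the two-sided estimate $(\psi,G[\zeta]\psi)\simeq \abs{\psi}_{\dot{H}^{1/2}}^2$ (which the paper itself quotes, but only for the final bound). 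Both routes treat the right-hand side identically, i.e.\ as a bounded functional on $\widetilde{H}^{1/2}(\cI)$ via the $\Lambda^{\pm 1/2}$ duality for $\nabla\cdot F$ (the paper cites Grisvard for exactly this mapping property) and the continuity of the Dirichlet--Neumann form for $G[\zeta]g$, so the proofs are close cousins; your version is more economical in that it stays $d$-dimensional and makes the solution concept (identity in the dual of $\widetilde{H}^{1/2}(\cI)$, boundary condition encoded in the space) explicit. The price is the one point you only gesture at: you need $V$ to be a Hilbert space for the seminorm, i.e.\ completeness of the zero-extension space and the equivalence on it of $\abs{\cdot}_{\dot{H}^{1/2}(\R^d)}$ with the $H^{1/2}(\R^d)$ norm (so ``isometric to $\widetilde{H}^{1/2}(\cI)$'' should read ``isomorphic with equivalent norms''). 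This is true because $\cI$ is bounded --- for $u$ supported in $\overline{\cI}$ one has $\abs{u}_2\leq C(\cI)\abs{\nabla u}_{H^{-1/2}}$, e.g.\ by splitting frequencies and using $\abs{\hat u}_\infty\leq \abs{\cI}^{1/2}\abs{u}_2$ to absorb the low-frequency piece --- but it does require this extra line, and it is precisely the ingredient the paper's formulation in $\Omega$ avoids needing for existence, since there coercivity comes from the Poincar\'e inequality on $H^1_{0,{\mathcal I}}(\Omega)$ instead.
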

\begin{remark}
With the notations of Section  \ref{sectsolid}, one can write
 $$
 \dt \zeta_{{\rm w}}=(\bug+\bom\times {\bf r}_G)\cdot N_{\rm w},
$$ 
which can be put in divergence form as follows
 $$
 \dt \zeta_{\rm w}=\nabla\cdot \big(-(\zeta_{\rm w}-z_G)V_G+\frac{1}{d}w_g (X-X_G)  +\frac{1}{2}\abs{{\bf r}_G}^2\bom_h^\perp -\omega_v (\zeta_{\rm w}-z_G)(X-X_G)^\perp\big).
 $$
 Time differentiating this expression, the term $\dt^2 \zeta_{\rm w}$ is also in divergence form, and the right-hand-side of \eqref{eqPw_full2} can be put under the form $\nabla\cdot F+G[\zeta]g$ for some $F$ and $g$. If $F$ and $g$ have the required regularity, then  the proposition implies that there exists a unique solution $\uP- P_{\rm atm}\in \tilde H^{1/2}(\R^d)$ to \eqref{eqPw_full2}.
 \end{remark}
\begin{remark}
A solution $\uP- P_{\rm atm}\in \tilde H^{1/2}(\R^d)$ to \eqref{eqPw_full2} clearly solves the first two equation of the system derived in Proposition \ref{propWWst_full}. Using a standard characterization of the $\tilde H^{1/2}(\cI)$ spaces (see Lemma 1.3.2.6 in \cite{Grisvard}), the third one, namely, the continuity condition $\uP_{\rm i}- P_{\rm atm}=0$ on $\Gamma$, is satisfied in the following sense
$$
\int_{\cI}\abs{\uP_{\rm i}(X)- P_{\rm atm}}^2\frac{1}{d(X;\Gamma)}dX<\infty,
$$
where $d(X;\Gamma)$ denotes the distance between $X$ and the boundary $\Gamma$ of $\cI$.
\end{remark}

\begin{proof}
Denoting as usual by $\Omega\subset{\R^{d+1}}$ the domain delimited from above by $\{z=\zeta\}$ and from below by $\{z=-h_0+b\}$, let us first define $H^1_{0,{\mathcal I}}(\Omega)$ as the completion for the canonical norm of $H^1(\Omega)$ of the set of all $C^\infty(\overline{\Omega})$ functions with support in $\Omega\cup \{z=-h_0+b\}\cup \{z=\zeta(X),X\in{\mathcal I}\}$.
The main step is to show that there exists a unique $\Phi\in H^1_{0,{\mathcal I}}(\Omega)$ such that 
 the following variational identity,
$$
\forall \varphi \in H^1_{0,{\mathcal I}}(\Omega),\qquad
\int_\Omega \nabla_{X,z}\Phi\cdot \nabla_{X,z}\varphi=\int_{\cI} \nabla\cdot F\varphi_{\vert_{z=\zeta}}+\int_{\cI}(G[\zeta]g )\varphi_{\vert_{z=\zeta}}.
$$
By Poincar\'e inequality, the left-hand side defines a continuous and coercive bilinear form on $H^1_{0,{\mathcal I}}(\Omega)$; moreover, one has $\varphi_{\vert_{z=\zeta}}\in \widetilde{H}^{1/2}(\cI)$, and we know that $\partial_j$ ($1\leq j\leq d$) maps $H^{1/2}(\cI)$ into the dual of $\widetilde{H}^{1/2}(\cI)$ (see Remark 1.4.4.7 in \cite{Grisvard}); similarly, we know that $G[\zeta]$ also maps  $H^{1/2}(\cI)$ into the dual of $\widetilde{H}^{1/2}(\cI)$ (Proposition 3.3 in \cite{L_book}). The right-hand-side of the above variational identity therefore defines a continuous linear form on $\widetilde{H}^{1/2}(\cI)$, and consequently (by the trace theorem), on $\varphi \in H^1_{0,{\mathcal I}}(\Omega)$. The result follows therefore from Lax-Milgram's theorem.\\
 It then follows from the definition of the Dirichlet-Neumann operator that $f:=\Phi_{\vert_{z=\zeta}}$ furnishes a solution to the equation $G[\zeta]f=\nabla\cdot F$; the uniqueness of the solution easily follows from the coercivity property.\\
 Finally, the estimate is obtained upon multiplying the equation by $\widetilde{f}$ and using Cauchy-Schwarz and the following inequalities (\cite{L_book}, Proposition 3.12), for all $\psi \in \dot{H}^{1/2}(\R^d)$,
 \begin{align*}
 (\psi,G[\zeta]\psi)&\leq  C\big(\frac{1}{h_{\rm min}},\abs{\zeta,b}_{W^{1,\infty}}\big)  \abs{\psi}_{\dot{H}^{1/2}}^2\\
  \abs{\psi}_{\dot{H}^{1/2}}^2& \leq  C\big(\frac{1}{h_{\rm min}},\abs{\zeta,b}_{W^{1,\infty}}\big)  (\psi,G[\zeta]\psi).
 \end{align*}
 \end{proof}
 
\subsection{An alternative formulation for the motion of the solid structure}\label{appadded}

Proceeding as for Proposition \ref{proppresc} -- and with the same notations -- but using the formulation of Proposition \ref{propWWst_full} for the interior pressure, we can decompose $\uP$ as
$$
\uP={\mathtt P}^{\rm I}+{\mathtt P}^{\rm II}+{\mathtt P}^{\rm III}
$$
where ${\mathtt P}^{\rm I}- P_{\rm atm}$, ${\mathtt P}^{\rm II}$ and ${\mathtt P}^{\rm III}$ vanish on the exterior domain ${\mathcal E}(t)$ and  satisfy the following equations in the interior region $\cI(t)$,
\begin{align*}
\frac{1}{\rho}G[\zeta]({\mathtt P}^{\rm I}- P_{\rm atm})&=\nabla\cdot \big(\nabla\cdot(h\ovV)  \uV\big)-G[\zeta]\big(g\zeta+\frac{1}{2}\abs{\underline{\bU}^2}\big)\\
\frac{1}{\rho}G[\zeta]{\mathtt P}^{\rm II}-&=-(\dbug+\dot\bom\times {\bf r}_G)\cdot N_{\rm w} \\
\frac{1}{\rho}G[\zeta]{\mathtt P}^{\rm III}-&={\mathcal Q}[{\bf r}_G](V_H,\bom),
\end{align*}
together with the continuity conditions ${\mathtt P}^{\rm I}_{\rm i}- P_{\rm atm}={\mathtt P}^{\rm II}_{\rm i}={\mathtt P}^{\rm III}_{\rm i}=0$ on $\Gamma(t)$. The force-torque corresponding to ${\mathtt P}^{\rm I}$ and ${\mathtt P}^{\rm III}$ is then given by
\begin{equation}\label{defFtilde}
\left(\begin{array}{c}{\mathtt F}\\ {\mathtt T}\end{array}\right)=\left(\begin{array}{c} \dsp \int_{\cI(t)} ({\mathtt P}^{\rm I}+{\mathtt P}^{\rm III}- P_{\rm atm}) N_{\rm w} \\
\dsp \int_{\cI(t)}  ({\mathtt P}^{\rm I}+{\mathtt P}^{\rm III}- P_{\rm atm}){\bf r}_G \times N_{\rm w}
\end{array}\right).
\end{equation}
 In order to exhibit the added mass effect associated to ${\mathtt P}^{\rm II}$, we need to work with different elementary potentials than those introduced in Definition \ref{defelempot} -- note that the existence of these elementary potentials in $\tilde{H}^{1/2}(\cI)$ is provided by Proposition \ref{propeqGN}.
\begin{definition}\label{defelempot_full}
Under the same assumptions and with the same notations as in Definition \ref{defelempot}, we  define the elementary potentials ${\Psi}_{\mathcal I}^{(j)}$ ($j=1,\dots,6$) as the unique solutions in $\tilde{H}^{1/2}(\cI)$ of the boundary value problems, for $j=1,2,3$,
$$
\begin{cases}
G[\zeta] {\Psi}^{(j)}_{{\mathcal I}}=(N_{\rm w})_j \quad \mbox{on}\quad {\mathcal I}\\
{\Psi}^{(j)}_{{\mathcal I}}\,_{\vert_{\Gamma}}=0
\end{cases}
\mbox{and }\quad
\begin{cases}
G[\zeta] {\Psi}^{(j+3)}_{{\mathcal I}}=({\bf r}_G\times N_{\rm w})_j \quad \mbox{on}\quad {\mathcal I}\\
{\Psi}^{(j+3)}_{{\mathcal I}}\,_{\vert_{\Gamma}}=0.
\end{cases}
$$
\end{definition}
We can then give the following alternative equation to the one given in Proposition \ref{propfloat} for the motion of a freely floating body. The proof is a straightforward adaptation and is omitted.
\begin{proposition}\label{propfloat_full}
Under the same assumptions as in Proposition \ref{propfloat} and with the same notations,  the velocity $\bug$ of the center of mass and the angular velocity $\bom$ satisfy the ODE 
$$\big({\mathcal M}+{\mathtt M}_{\rm a}[h,\boldsymbol{\Psi}_{{\mathcal I}}]\big)\left(\begin{array}{c} {\dbug} \\ \dot{\bom} \end{array}\right)=\left(\begin{array}{c} -{\mathfrak m}g {\bf e}_z \\
{\mathfrak I}\bom\times \bom 
\end{array}\right)
+{\mathtt F},
$$
with ${\mathtt F}$ as in \eqref{defFtilde} and with the added mass-inertia matrix given by
$$
{\mathtt M}_{\rm a}[h,\boldsymbol{\Psi}_{{\mathcal I}}]:=\rho \big( \int_{\R^d} \widetilde{\Psi}_\cI^{(j)}G[\zeta]\widetilde{\Psi}_\cI^{(k)}\big)_{1\leq j,k\leq 6},
$$
and where we recall that $\widetilde{\Psi}_\cI^{(j)}$ denotes the extension by $0$ outside $\cI$.
\end{proposition}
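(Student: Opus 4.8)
Looking at this final statement (Proposition \ref{propfloat_full}), I need to prove an alternative ODE formulation for a freely floating body, using the interior pressure equation from Proposition \ref{propWWst_full} (the ``Dirichlet-Neumann'' version) rather than the elliptic equation \eqref{eqPw}.

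The plan is to mirror exactly the proof of Proposition \ref{propfloat}, replacing the elliptic operator $-\nabla\cdot(\frac{h}{\rho}\nabla\cdot)$ by the Dirichlet-Neumann operator $\frac{1}{\rho}G[\zeta]$ and the elementary potentials $\Phi_\cI^{(j)}$ by $\Psi_\cI^{(j)}$. First I would recall that $F_{\rm fluid}$ and $T_{\rm fluid}$ decompose according to the splitting $\uP={\mathtt P}^{\rm I}+{\mathtt P}^{\rm II}+{\mathtt P}^{\rm III}$ given just before Definition \ref{defelempot_full}, writing $F_{\rm fluid}={\mathtt F}+F^{\rm II}_{\rm fluid}$ and $T_{\rm fluid}={\mathtt T}+T^{\rm II}_{\rm fluid}$ where ${\mathtt F},{\mathtt T}$ are exactly the force-torque \eqref{defFtilde} associated to ${\mathtt P}^{\rm I}$ and ${\mathtt P}^{\rm III}$. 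So the only thing to prove, exactly as in the proof of Proposition \ref{propfloat}, is the added-mass identity
$$
\left(\begin{array}{c} F_{\rm fluid}^{\rm II} \\ T_{\rm fluid}^{\rm II} \end{array}\right)=-{\mathtt M}_{\rm a}[h,\boldsymbol{\Psi}_{{\mathcal I}}] \left(\begin{array}{c} \dbug \\ \dot\bom \end{array}\right).
$$

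The key computational step is the ``integration by parts'' for the Dirichlet-Neumann operator, i.e. the self-adjointness relation $\int_{\R^d}\widetilde u\, G[\zeta]\widetilde v=\int_{\R^d}\widetilde v\, G[\zeta]\widetilde u=\int_\Omega\nabla_{X,z}\Phi_u\cdot\nabla_{X,z}\Phi_v$ (valid in the variational sense, with the potentials lying in $H^1_{0,\cI}(\Omega)$ as constructed in the proof of Proposition \ref{propeqGN}), which replaces the Green's identity $\int_\cI u\,\nabla\cdot(h\nabla v)=-\int_\cI\nabla u\cdot h\nabla v=\int_\cI v\,\nabla\cdot(h\nabla u)$ used previously. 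Concretely: by Definition \ref{defelempot_full}, $(N_{\rm w})_j=G[\zeta]\widetilde\Psi^{(j)}_\cI$ and $({\bf r}_G\times N_{\rm w})_j=G[\zeta]\widetilde\Psi^{(j+3)}_\cI$, so
$$
F^{\rm II}_{\rm fluid}=\sum_{j=1}^3\int_\cI {\mathtt P}^{\rm II}\,(G[\zeta]\widetilde\Psi^{(j)}_\cI)\,{\bf e}_j=\rho\sum_{j=1}^3\int_\cI \big(\tfrac1\rho G[\zeta]{\mathtt P}^{\rm II}\big)\widetilde\Psi^{(j)}_\cI\,{\bf e}_j
$$
using self-adjointness, and then substituting $\frac1\rho G[\zeta]{\mathtt P}^{\rm II}=-(\dbug+\dot\bom\times{\bf r}_G)\cdot N_{\rm w}=-(\dbug+\dot\bom\times{\bf r}_G)\cdot\sum_k(G[\zeta]\widetilde\Psi^{(k)}_\cI){\bf e}_k$ (and likewise the $\Psi^{(k+3)}$ for the $\dot\bom$ part) and applying self-adjointness once more yields the quadratic form $\int_{\R^d}\widetilde\Psi^{(j)}_\cI G[\zeta]\widetilde\Psi^{(k)}_\cI$, i.e. $-{\mathtt M}_{\rm a}$ acting on $\dbug$; the torque $T^{\rm II}_{\rm fluid}$ is handled identically with $\Psi^{(j+3)}_\cI$. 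Since ${\mathtt M}_{\rm a}$ is manifestly symmetric and positive (it is a Gram-type matrix for the positive bilinear form $(u,v)\mapsto\int\widetilde u\,G[\zeta]\widetilde v$), nothing further is needed.

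The main obstacle, compared to Proposition \ref{propfloat}, is purely functional-analytic: one must justify that ${\mathtt P}^{\rm II}$, being only in $\widetilde H^{1/2}(\cI)$ rather than $H^2$, can legitimately be paired against $\widetilde\Psi^{(j)}_\cI$ via $G[\zeta]$, and that the self-adjointness identity holds in this low-regularity setting. This is exactly what Proposition \ref{propeqGN} and its proof provide: all the relevant quantities are traces on $\{z=\zeta\}$ of $H^1_{0,\cI}(\Omega)$ functions, $G[\zeta]$ maps $H^{1/2}(\cI)$ into the dual of $\widetilde H^{1/2}(\cI)$, and the symmetric bilinear form is $\int_\Omega\nabla_{X,z}\cdot\nabla_{X,z}$ of the harmonic extensions — so all the formal manipulations above are in fact legitimate duality pairings. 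Hence the proof is, as the statement says, a straightforward adaptation, and I would simply indicate these substitutions rather than rewrite every line.
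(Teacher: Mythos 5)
Your proposal is correct and follows exactly the route the paper intends: the paper omits the proof, stating it is a straightforward adaptation of Proposition \ref{propfloat}, and your argument is precisely that adaptation, with the self-adjointness of $G[\zeta]$ (in the variational/duality sense underlying Proposition \ref{propeqGN}) playing the role of the integrations by parts against $\nabla\cdot(h\nabla\cdot)$, and the elementary potentials $\Psi^{(j)}_\cI$ replacing the $\Phi^{(j)}_\cI$. The identity $\big(F^{\rm II}_{\rm fluid},T^{\rm II}_{\rm fluid}\big)=-{\mathtt M}_{\rm a}[h,\boldsymbol{\Psi}_\cI]\big(\dbug,\dot\bom\big)$ is established exactly as you describe, so nothing is missing.
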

\begin{remark}
The most important thing to insist on is that the added mass-inertia matrix ${\mathtt M}_{\rm a}[h,\boldsymbol{\Psi}_{{\mathcal I}}]$ differs from the added mass inertia matrix ${\mathcal M}_{\rm a}[h,\boldsymbol{\Phi}_{{\mathcal I}}]$ exhibited in Proposition \ref{propfloat}. This is because some
of the components of the resulting force ${\mathcal F}[h,\boldsymbol{\Phi}_\cI](S_{\rm i}^{\rm I}+S_{\rm i}^{\rm III})$ in Proposition \ref{propfloat} can be put as a (possibly negative) added mass-inertia term. 
\end{remark}
\begin{remark}
One can see the added mass-inertia matrix ${\mathcal M}_{\rm a}[h,\boldsymbol{\Phi}_{{\mathcal I}}]$ of Proposition \ref{propfloat} as a shallow water approximation of ${\mathtt M}_{\rm a}[h,\boldsymbol{\Psi}_{{\mathcal I}}]$. Indeed, it is known that in shallow water, one has
at leading order (see Proposition 3.8 in \cite{AL1} or \S 3.6 in \cite{L_book}),
$$
G[\zeta]\psi\sim -\nabla\cdot (h\nabla \psi)
$$
Replacing $G[\zeta]$ by this expression in Definition \ref{defelempot_full}, one recovers the same elementary potentials introduced in Definition \ref{defelempot_full}, and doing the same substitution in the definition of ${\mathtt M}_{\rm a}[h,\boldsymbol{\Psi}_{{\mathcal I}}]$, one recovers ${\mathcal M}_{\rm a}[h,\boldsymbol{\Phi}_{{\mathcal I}}]$.
\end{remark}
\section{Floating structure equations in a body frame}\label{appbodyframe}

In Proposition \ref{propfloat} we gave a formulation of the water waves equations with a freely floating body where the equations for the motion of the floating structure are given in the Eulerian frame ${\mathfrak E}=(Oxyz)$. Another natural possibility is to use a system of coordinates moving with the rigid body, whose axis are the principal axes of inertia of the body, and whose origin is the center of mass. We denote by ${\mathfrak B}(t)=(G(t)x'y'z')$ this body frame. \\
Since both ${\mathfrak E}$ and the body frame ${\mathfrak B}(0)$ at $t=0$ are orthogonal, there exists a rotation matrix $\Theta_0\in SO(3)$ sending the unit directional vectors  $({\bf e}_x,{\bf e}_y,{\bf e}_z)$ of ${\mathfrak E}$ to their counterparts $({\bf e}^0_{x'},{\bf e}^0_{y'},{\bf e}^0_{z'})$ in ${\mathfrak B}(0)$.  If $A$ and $A'$ represent the coordinates of some vector in ${\mathfrak E}$ and ${\mathfrak B}(0)$ respectively, one has therefore
$$
A=\Theta_0 A'\quad\mbox{ and }\quad
{\mathfrak I}(0)=\Theta_0{\mathfrak I}_0\Theta_0^T
\quad\mbox{ with }\quad
{\mathfrak I}_0=\mbox{diag}({\mathfrak i}_1,{\mathfrak i}_2,{\mathfrak i}_3),
$$
the scalars ${\mathfrak i}_j$ ($j=1,2,3$) denoting the principal moment of inertia of the solid. Combining this with \eqref{eqI} and \eqref{eqTheta}, the above relations for the change of frame become at time $t$ follow
\begin{equation}\label{bodyframe}
A=\widetilde\Theta(t) A'\quad\mbox{ and }\quad
{\mathfrak I}(t)=\widetilde\Theta(t){\mathfrak I}_0\widetilde\Theta(t)^T
\quad\mbox{ with }\quad
\widetilde\Theta(t)=\Theta(t)\Theta_0
\end{equation}
(we recall that $\Theta(t)$ stands for the rotation matrix defined by \eqref{eqTheta}). \\
The main advantage in working in the body frame is that the mass-inertia matrix becomes independent of time; we denote it $\underline{{\mathcal M}}_0$, with
$$
\underline{{\mathcal M}}_0=\mbox{{\rm diag}}({\mathfrak m},{\mathfrak m},{\mathfrak m},{\mathfrak i}_1,{\mathfrak i}_2,{\mathfrak i}_3).
$$
For the added mass matrix and the source terms, we also need to replace the elementary potential $\boldsymbol{\Phi}_\cI$ by $\boldsymbol{\Phi}'_\cI$, where for all $1\leq j\leq 3$,
$$
{\Phi_\cI'}^{(j)}=\Phi_\cI^{(j)}
\quad \mbox{and }\quad
\begin{cases}
-\nabla\cdot h\nabla {\Phi'}^{(j+3)}_{{\mathcal I}}=({\bf r}'_G\times N'_{\rm w})_j \quad \mbox{on}\quad {\mathcal I}\\
{\Phi'}^{(j+3)}_{{\mathcal I}}\,_{\vert_{\Gamma}}=0
\end{cases}
$$
(i.e. ${\Phi_\cI'}^{(j+3)}$ is defined using the $j$-th coordinates of ${\bf r}_G\times N_{\rm w}$ in the body frame rather than in the inertial frame). The equations of motion for the floating structure can then be written as follows.
\begin{proposition}\label{propfloatbis}
The equations for  the velocity $\bug$ of the center of mass and the angular velocity $\bom$ given in Proposition \ref{propfloat} can be replaced by
$$\big(\underline{\mathcal M}_0+{\mathcal M}_{\rm a}[h,\boldsymbol{\Phi}'_{{\mathcal I}}]\big)\left(\begin{array}{c} {\dbug} \\ \dot{\bom} \end{array}\right)=\left(\begin{array}{c} -{\mathfrak m}g {\bf e}_z \\
{\mathfrak I}_0\bom\times \bom 
\end{array}\right)
+{\mathcal F}[h,\boldsymbol{\Phi}'_{\mathcal I}](S_{\rm i}^{\rm I}+S_{\rm i}^{\rm III}\big).
$$
\end{proposition}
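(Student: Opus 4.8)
The plan is to derive the body-frame equations of motion from the Eulerian-frame equations of Proposition \ref{propfloat} by a straightforward change of variables, exploiting the fact that the only frame-dependent objects in the ODE are the inertia matrix ${\mathfrak I}(t)$, the angular momentum term ${\mathfrak I}\bom\times\bom$, and the last three elementary potentials $\Phi_\cI^{(j+3)}$ (which are built from the torque source ${\bf r}_G\times N_{\rm w}$). First I would recall from \eqref{bodyframe} that ${\mathfrak I}(t)=\widetilde\Theta(t){\mathfrak I}_0\widetilde\Theta(t)^T$, so that ${\mathcal M}(t)=\mbox{diag}({\mathfrak m}\mbox{Id}_{3\times 3},{\mathfrak I}(t))$ can be conjugated by the block rotation $\mbox{diag}(\mbox{Id}_{3\times 3},\widetilde\Theta(t))$ into the time-independent matrix $\underline{\mathcal M}_0$. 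The key algebraic point is that, just as in the computation leading to \eqref{identangular} in the proof of Proposition \ref{propfloat}, the time derivative $\frac{d}{dt}({\mathfrak I}\bom)$ can be rewritten using $\dot{\widetilde\Theta}=\bom\times\widetilde\Theta$ (which follows from \eqref{eqTheta} since $\Theta_0$ is constant) as ${\mathfrak I}\dot\bom-{\mathfrak I}\bom\times\bom$; conjugating back to the body frame turns ${\mathfrak I}$ into ${\mathfrak I}_0$ and $\bom$ into the body-frame angular velocity, producing the term ${\mathfrak I}_0\bom\times\bom$ on the right-hand side, exactly as in the Euler equations for a rigid body.

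Next I would address the added-mass and source terms. Since $N_{\rm w}$ has a fixed vertical component and its horizontal components are $-\nabla\zeta_{\rm w}$, the first three elementary potentials $\Phi_\cI^{(j)}$ ($j=1,2,3$) are unchanged: the translational directions are the same in both frames up to the constant overall rotation of the fluid domain's horizontal plane, and more importantly the force $F_{\rm fluid}$ is computed as an integral over $\cI(t)$ against $N_{\rm w}$, which is a geometric object independent of the frame chosen for the solid's internal coordinates. For the torque, however, ${\bf r}_G\times N_{\rm w}$ must be expressed in the body axes, i.e. one replaces $({\bf r}_G\times N_{\rm w})_j$ by $({\bf r}'_G\times N'_{\rm w})_j$, which is precisely the definition of ${\Phi'}_\cI^{(j+3)}$ given just before the statement. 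Then the added mass-inertia matrix ${\mathcal M}_{\rm a}[h,\boldsymbol{\Phi}'_{\cI}]$, defined by the same bilinear pairing $\rho\big(\int_\cI\frac1h(h\nabla\Phi'^{(j)}_\cI)\cdot(h\nabla\Phi'^{(k)}_\cI)\big)$, and the forcing functional ${\mathcal F}[h,\boldsymbol{\Phi}'_\cI]$ are obtained from the Eulerian-frame ones by the same conjugation by $\mbox{diag}(\mbox{Id}_{3\times 3},\widetilde\Theta(t))$; multiplying the ODE of Proposition \ref{propfloat} on the left by this block-rotation matrix and using that it commutes with $({\dbug},\dot\bom)^T$ appropriately (bearing in mind that $\bug$ itself is a genuine vector in $\R^3$, so only the angular block gets rotated, and one must check the time-derivative of the rotation acts correctly — this is exactly the same computation as for the angular momentum) yields the claimed equation.

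The main obstacle — and the reason the paper says "the proof is a straightforward adaptation and is omitted" — is bookkeeping: one must be careful that the Coriolis-type terms coming from $\dot{\widetilde\Theta}$ when differentiating the rotated angular velocity are exactly the ones that get absorbed into ${\mathfrak I}_0\bom\times\bom$, and that no spurious cross terms survive between the translational block (which is \emph{not} rotated, since $\bug$ is expressed in the Eulerian frame in both formulations — note the proposition keeps $-{\mathfrak m}g{\bf e}_z$, a genuine Eulerian vector) and the rotational block. Concretely, I would write $\bom_{\mathfrak B}=\widetilde\Theta^T\bom$ for the body-frame angular velocity, substitute ${\mathfrak I}(t)\bom=\widetilde\Theta{\mathfrak I}_0\bom_{\mathfrak B}$, apply $\frac{d}{dt}$, use \eqref{identangular}, and recognize that the resulting identity is precisely $\underline{\mathcal M}_0(\dbug,\dot\bom_{\mathfrak B})^T = (-{\mathfrak m}g{\bf e}_z,{\mathfrak I}_0\bom_{\mathfrak B}\times\bom_{\mathfrak B})^T + \widetilde\Theta^T\big(\text{torque terms}\big)$, at which point identifying the torque terms with ${\mathcal F}[h,\boldsymbol{\Phi}'_\cI](S_{\rm i}^{\rm I}+S_{\rm i}^{\rm III})$ via the integration-by-parts argument of Proposition \ref{propfloat} (with $\boldsymbol{\Phi}_\cI$ replaced by $\boldsymbol{\Phi}'_\cI$) finishes the proof. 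I would then remark, to keep the notation consistent with the statement, that $\bom$ in Proposition \ref{propfloatbis} denotes the angular velocity resolved in the body frame, which is the conventional choice for Euler's rigid-body equations.
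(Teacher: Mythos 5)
Your proposal is correct and follows essentially the same route as the paper's (omitted but sketched) argument: multiply the angular momentum equation on the left by $\widetilde\Theta^T$, use $\dot\Theta=\bom\times\Theta$ to produce ${\mathfrak I}_0\bom'\times\bom'$, and rerun the integration-by-parts identification of the torque with the primed elementary potentials $\boldsymbol{\Phi}'_\cI$, the translational block and $-{\mathfrak m}g{\bf e}_z$ being left untouched. Your alternative "conjugation by $\mathrm{diag}(\mbox{Id},\widetilde\Theta)$" packaging is fine too, resting only on the easily checked facts that $\dot\bom=\widetilde\Theta\dot\bom'$ (since $\dot{\widetilde\Theta}\bom'=\bom\times\bom=0$) and that, by linearity of the elliptic problems, ${\Phi'}^{(j+3)}_\cI=\sum_k\widetilde\Theta_{kj}\Phi^{(k+3)}_\cI$, so that ${\mathcal M}_{\rm a}$ and ${\mathcal F}$ transform by the stated conjugation.
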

\begin{proof}
We show how to rewrite the equation \eqref{Newton2} for the angular momentum in the body frame. The adaptations for the equation \eqref{Newton1} for the linear momentum are similar and therefore omitted. 
Owing to \eqref{bodyframe}, one has
\begin{align*}
\frac{d}{dt}({\mathfrak I}\bom)=&\frac{d}{dt}(\widetilde \Theta {\mathfrak I}_0 \bom')\\
=&\dot\Theta \Theta_0{\mathfrak I}_0 \bom'+\widetilde\Theta{\mathfrak I}_0 \dot{\bom}';
\end{align*}
with \eqref{eqTheta}, this gives
$$
\frac{d}{dt}({\mathfrak I}\bom)=\bom\times(\widetilde\Theta {\mathfrak I}_0\bom')+\widetilde\Theta{\mathfrak I}_0 \dot{\bom}'.
$$
Multiplying \eqref{Newton2} on the left by $\widetilde\Theta^T$, we obtain therefore
$$
{\mathfrak I}_0\dot{\bom}'+\bom'\times {\mathfrak I}_0\bom'=T'_{\rm fluid}
$$
with 
\begin{align*}
T'_{\rm fluid}=&\int_{\cI}(\uP_{\rm i}-P_{\rm atm})\widetilde\Theta^T ({\bf r}_G\times N_{\rm w})\\
=&\int_{\cI}(\uP_{\rm i}-P_{\rm atm}) {\bf r}'_G\times N'_{\rm w}.
\end{align*}
Using the definition of $\boldsymbol{\Phi}'_\cI$, and integrating by parts, we therefore have
\begin{align*}
T'_{\rm fluid}\cdot {\bf e}'_j&=-\int_\cI \nabla \cdot h \nabla \uP_{\rm i}^{\rm II}{\Phi_\cI'}^{(j+3)}-\rho\int_{\cI}(S_{\rm i}^{\rm I}+S_{\rm i}^{\rm III})\cdot \nabla {\Phi_{\cI}'}^{(j+3)}\\
&=-\rho \int_\cI (\dbug+\dot\bom\times {\bf r}_G)\cdot N_{\rm w} {\Phi_\cI'}^{(j+3)}+\big({\mathcal F}[h,\boldsymbol{\Phi}'_{\mathcal I}](S_{\rm i}^{\rm I}+S_{\rm i}^{\rm III})\big)_{j+3},
\end{align*}
with $S_{\rm i}^{\rm I}$, $S_{\rm i}^{\rm II}$ and $S_{\rm i}^{\rm III}$ as in Proposition \ref{proppresc}. Remarking that 
\begin{align*}
(\dot\omega\times {\bf r}_G)\cdot N_{\rm w}=&({\bf r}'_G\times N_{\rm w}')\cdot \dot\bom'\\
=-&\sum_{k=1}^3 \nabla\cdot (h \nabla {\Phi'_{\cI}}^{(k+3)}){\bf e}'_k\cdot \dot\bom',
\end{align*}
one can conclude the proof as for Proposition \ref{propfloat}.
\end{proof}

\end{appendix}

\nomenclature[ga ]{$d$}{Horizontal dimension}
\nomenclature[gb ]{$X=(x,y)$}{Horizontal variable if $d=2$; $X=x$ if $d=1$}
\nomenclature[gb ]{$z$}{Vertical variable $z$}
\nomenclature[gc ]{$\nabla$}{Gradient operator with respect to the horizontal variables}
\nomenclature[gc ]{$\nabla^\perp=(-\dy,\dx)^T$}{Orthogonal horizontal gradient operator}
\nomenclature[gc ]{$\nabla_{X,z}$}{Full gradient operator}
\nomenclature[gd ]{$d$}{Horizontal dimension, $d=1,2$}
\nomenclature[gd ]{$f_{\rm e}$}{Restriction of the function $f$ to the exterior region ${\mathcal E}(t)$}
\nomenclature[gd ]{$f_{\rm i}$}{Restriction of the function $f$ to the interior region ${\mathcal I}(t)$}
\nomenclature[gd ]{$\av{f}$}{The average part of $f$, see Notation \ref{notav}}
\nomenclature[gd ]{$f^*$}{The oscillating part of $f$, see Notation \ref{notav}}
\nomenclature[ge ]{$\mbox{\rm Var}(f)$}{The variance of $f$, see Notation \ref{notavar}}
\nomenclature[ge ]{${\bf A}=({\bf A}_{\rm h},A_{\rm v})$}{${\bf A}_h$ and $A_v$  are the horizontal and vertical components of ${\bf A}$}
\nomenclature[gf ]{${\bf A}_h^\perp$}{Given by $(-A_2,A_1)$ if ${\bf A}_{\rm h}=(A_1,A_2)$}.
\nomenclature[gg ]{$\underline{A}$}{The trace of ${\bf A}$ on the surface of the fluid}
\nomenclature[gh ]{}{}

\nomenclature[La ]{$\alpha$}{Ratio $\delta_t/\delta_x$ of the time step and cell size}
\nomenclature[La ]{$\Gamma(t)$}{Horizontal projection of the contact line}
\nomenclature[Lb ]{$\Phi_{\cI}^{(j)}$}{Elementary potential, see Definition \ref{defelempot}}
\nomenclature[Lc ]{$\boldsymbol{\Phi}_{\cI}$}{Vectors whose components are the elementary potentials $\Phi_{\cI}^{(j)}$}
\nomenclature[Lc ]{$\zeta$}{Surface elevation}
\nomenclature[Lc ]{$\zeta_{\rm w}$}{Parametrization of the bottom of the solid}
\nomenclature[Ld ]{$\Theta$}{Rotation matrix, see \eqref{eqTheta}}
\nomenclature[Lda ]{$\nu$}{Unit normal vector to the $d$-dimensional curve $\Gamma(t)$}
\nomenclature[Le ]{$\bom=(\bom_{\rm h},\omega_{\rm v})$}{Angular velocity}
\nomenclature[Le ]{$\omega$}{Angular velocity in dimension $d=1$}
\nomenclature[Le ]{$\Omega(t)$}{Domain occupied by the fluid at time $t$}
\nomenclature[Le ]{$\Omega(t)$}{Domain occupied by the fluid at time $t$}

\nomenclature[Lf ]{${\bf a}_{\rm FS}$}{Acceleration of the free surface, see \eqref{defba}}
\nomenclature[Lf ]{${\bf a}_{\rm NH}$}{Non-hydrostatic accelaration, see \eqref{defFNH}}
\nomenclature[Lg ]{${\mathfrak A}[\zeta ]$}{Average mapping, see Prop. \ref{propclosed}}
\nomenclature[Lh ]{$b$}{$z=-h_0+b(X)$ is the bottom parametrization}
\nomenclature[Li ]{${\mathcal C}(t)$}{Domain occupied by the solid at time $t$}
\nomenclature[Li ]{${\mathcal E}(t)$}{Exterior region at time $t$}
\nomenclature[Lj ]{$F_{\rm fluid}$}{Force exerted by the fluid on the solid, see \eqref{forcetorque}}
\nomenclature[Lk ]{$ {\mathcal F}[h,{\boldsymbol{\Phi}_\cI}]S_{\rm i}$}{Force-torque generated by $S_{\rm i}$, see Definition \ref{defmassiner}}
\nomenclature[Ll ]{$ \widetilde{\mathcal F}[h,{\bf r}_G]S_{\rm i}$}{Force-torque generated by $S_{\rm i}$ if $d=1$, see \S \ref{sect1dfloat}}
\nomenclature[Lm ]{$G=(X_G,z_G)$}{Position of the center of mass}
\nomenclature[Lm ]{$G[\zeta]$}{Dirichlet-Neumann operator, see Definition \ref{defDN}}
\nomenclature[Ln ]{$h(t,X)$}{Water depth $h=h_0+\zeta(t,X)-b(X)$}
\nomenclature[Ln ]{$h_0$}{Reference water depth at rest}
\nomenclature[Lo ]{$H^{1/2}(\cI), \widetilde{H}^{1/2}(\cI)$}{Sobolev spaces on the bounded domain $\cI$, see Definition \ref{defSob}}
\nomenclature[Lo ]{$\dot{H}^s(\R^d), \dot{H}^1(\Omega)$}{Beppo-Levi spaces, see \eqref{BL1}}
\nomenclature[Lp ]{${\mathcal I}(t)$}{Interior region at time $t$}
\nomenclature[Lq ]{$ {\mathfrak i}_0$}{Inertia coefficient if $d=1$}
\nomenclature[Lq ]{$ {\mathfrak I}(t)$}{Inertia matrix of the solid, see \eqref{eqI}}
\nomenclature[Lq ]{$ {\mathfrak m}$}{Mass of the solid}
\nomenclature[Lr ]{$ {\mathcal M}(t)$}{Mass-inertia matrix of the solid, see Definition \ref{defmassiner}}
\nomenclature[Lr ]{$ {\mathcal M}_0$}{Mass-inertia matrix of the solid when $d=1$}
\nomenclature[Ls ]{$ {\mathcal M}_{\rm a}[h,{\boldsymbol{\Phi}_\cI}]$}{Added mass-inertia matrix of the solid, see Definition \ref{defmassiner}}
\nomenclature[Ls ]{$ \widetilde{\mathcal M}_{\rm a}[h,{\bf r}_G]$}{Added mass-inertia matrix if $d=1$, see \S \ref{sect1dfloat}}
\nomenclature[Lt ]{$N$}{Non unit upward normal vector at the surface $N=(-\nabla\zeta,1)$}
\nomenclature[Lt ]{$N_{\rm w}$}{Non unit upward normal vector on the wetted surface}
\nomenclature[Lt ]{$N_{\rm b}$}{Non unit upward normal vector at the bottom $N_{\rm b}=(-\nabla b,1)$}
\nomenclature[Lt ]{$P$}{Pressure field}
\nomenclature[Lu ]{$\uP_{\rm i}^{\rm I},\uP_{\rm i}^{\rm II},\uP_{\rm i}^{\rm III}$}{Decomposition of the interior pressure, see Prop. \ref{proppresc}}
\nomenclature[Lv ]{$\uP_{\rm i}^{\rm IV}$}{Additional pressure for vertical walls, see Prop.  \ref{proppresc_vert}}
\nomenclature[Lw ]{$P_{\rm atm}$}{Atmospheric pressure (constant)}
\nomenclature[Lw ]{$P_{\rm {NH}}$}{Non-hydrostatic pressure, see \eqref{PNH}}
\nomenclature[Lx ]{$q$}{Horizontal discharge when $d=1$}
\nomenclature[Lx ]{$Q$}{Horizontal discharge}
\nomenclature[Lx ]{${\mathcal Q}[{\bf r}_G]$}{Quadratic source term for the interior pressure, see \S \ref{sectpresc}}
\nomenclature[Ly ]{${\bf r}_G$}{Position with respect to the center of mass, see \eqref{defUc}}
\nomenclature[Ly ]{${\mathbf R}[h,Q]$}{Reynolds tensor, see \eqref{defRey}}
\nomenclature[Lya ]{${\mathfrak R}[\zeta ]$}{Reconstruction mapping, see Prop. \ref{propclosed}}
\nomenclature[Lz ]{$\bU$}{Velocity field for the fluid}
\nomenclature[Lz ]{${\mathcal U}_{\mathcal C}=(V_{\mathcal C},w_{\mathcal C})$}{Velocity field for the solid}
\nomenclature[Lza ]{$\bU_G=(V_G,w_G)$}{Velocity of the center of mass}
\nomenclature[Lza ]{$\underline{U}_{{\rm w}}$}{Velocity of the solid on the wetted surface}
\nomenclature[Lzb ]{$\underline{U}_{{\rm w,\tau}}$}{See \S \ref{sectpresc}}
\nomenclature[Lzc ]{$V$}{Horizontal component of the velocity field}
\nomenclature[Lzz ]{$\ovV$}{Vertically averaged horizontal velocity, see \eqref{defavV}}
\nomenclature[Lzz ]{$w$}{Vertical component of the velocity field}
\nomenclature[Lzzz ]{$x_\pm(t)$}{Boundaries of the interior region in dimension $d=1$}

\newpage
\printnomenclature[3cm]

\end{document}